\documentclass[11pt, reqno]{amsart}
\usepackage{amsmath,amssymb,amsthm,amsfonts,verbatim}
\usepackage{microtype}
\usepackage[all,2cell]{xy}
\usepackage{mathtools}
\usepackage{graphicx}
\usepackage{pinlabel}
\usepackage{hyperref}
\usepackage{mathrsfs}
\usepackage{color}
\usepackage[dvipsnames]{xcolor}
\usepackage{enumitem}
\usepackage{cite}
\usepackage{soul}
\usepackage{tikz}

\allowdisplaybreaks

\CompileMatrices

\usepackage[top=1.2in,bottom=1.2in,left=1in,right=1in]{geometry}

\usepackage{hyperref} %Hyperlink reference
\hypersetup{          %Set up  for hyperlinks
	colorlinks=true, breaklinks, linkcolor=[RGB]{51 102 204}, filecolor=Orchid, urlcolor=[RGB]{51 102 204},
	citecolor=Orchid, linktoc=all, }
\usepackage[nameinlink]{cleveref}
\theoremstyle{plain}
\newtheorem{theorem}{Theorem}[section]
\newtheorem{maintheorem}{Theorem}

\newtheorem{proposition}[theorem]{Proposition}
\newtheorem{lemma}[theorem]{Lemma}

\newtheorem{corollary}[theorem]{Corollary}

\theoremstyle{definition}
\newtheorem{definition}[theorem]{Definition}
\newtheorem{example}[theorem]{Example}
\newtheorem{remark}[theorem]{Remark}

\newcommand{\ZZ}{\mathbb{Z}}
\newcommand{\QQ}{\mathbb{Q}}
\newcommand{\CC}{\mathbb{C}}

\DeclareMathOperator{\Sp}{Sp}
\DeclareMathOperator{\GL}{GL}
\DeclareMathOperator{\SL}{SL}

\DeclareMathOperator{\Mod}{Mod}

\DeclareMathOperator{\Aut}{Aut}

\DeclareMathOperator{\Teich}{Teich}

\counterwithin{figure}{section}
\counterwithin{equation}{section}

\title{Rigidity of the period map up to finite covers}
\author{Xiyan Zhong}

\begin{document}
\maketitle
\vspace{-2em}
\begin{abstract}
We first give a complete classification of bi-affine representations of mapping class groups of surfaces with finitely many boundary components or punctures. We also show that every linear representation of the mapping class group of a genus-$g$ surface with two boundary components of dimension at most $3g-3$ is bi-affine.

We then classify low-dimensional symplectic representations of the mapping class group associated to triple covers. Let $[\beta]\in H_1(S_g;\mathbb Z/3\mathbb Z)^*$, and let $\widetilde{S}\to S_g$ be the corresponding triple cover with deck transformation $\sigma$. For $h\le g$, every non-abelian homomorphism from either $\Mod(S_g,[\beta])$, the stabilizer of $[\beta]$ in $\Mod(S_g)$, or $\Mod(\widetilde{S},\sigma)$, the centralizer of $\sigma$ in $\Mod(\widetilde{S})$, to $\Sp_{2h}(\mathbb Z)$ is, up to conjugation, the standard symplectic representation on $H_1(S_g;\mathbb Z)$.

As an application, we obtain a rigidity theorem for holomorphic maps from the moduli space $R_g^{(3)}$ of genus-$g$ curves equipped with a $3$-sheeted (unbranched) normal covering to the moduli space $\mathcal{A}_h$ of $h$-dimensional principally polarized abelian varieties. We prove that, for $g\ge 6$ and $h\le g$, the unique nonconstant holomorphic map from $R_g^{(3)}$, equipped with either of its two natural complex-orbifold structures, to $\mathcal{A}_h$ is the period map sending a cover $Y\to X$ to the Jacobian of the base curve $X$.
\end{abstract}
\vspace{0em}
\tableofcontents

\section{Introduction}
Let $\mathcal{M}_g$ denote the moduli space of smooth closed curves of genus $g$, and let $\mathcal{A}_h$ denote the moduli space of $h$-dimensional principally polarized abelian varieties. The period map
\[J\colon \mathcal{M}_g\longrightarrow\mathcal{A}_g,
\qquad
X\longmapsto\operatorname{Jac}(X),\]
is rigid in a strong global sense: Farb \cite[Theorem 1.1]{FarbRigidity} proved that the period map $J$ is the unique nonconstant holomorphic map $\mathcal{M}_g\to\mathcal{A}_h$ for $g\ge 3$ and $h\le g$.

There are similar rigidity results for holomorphic maps between moduli spaces. Serv\'an \cite[Theorem 1.1]{CarlosPrym} proved that the Prym map for unbranched double covers is the unique nonconstant holomorphic map from the corresponding moduli space to $\mathcal{A}_h$ for $g\ge4$ and $h\le g-1$. Antonakoudis--Aramayona--Souto \cite[Theorem 1.1]{AAS}, with an improved range obtained by De-Pool--Souto \cite[Theorem 1.1]{Souto2}, proved that every nonconstant holomorphic map 
\[\mathcal{M}_{g,r}\longrightarrow \mathcal{M}_{g',r'}\]
between moduli spaces of curves with marked points is induced by forgetting marked points, for $g\ge 4$ and $g'\le 3\cdot 2^{g-3}$.

In this paper, we show that the rigidity of the period map persists after passing to a certain finite cover of $\mathcal{M}_g$. We consider the moduli space of genus-$g$ curves equipped with a $3$-sheeted (unbranched) normal covering, defined by
\[R_g^{(3)}=\left\{(X,\theta_X)
\mathrel{}\middle|\mathrel{}
\begin{array}{c}X\text{ is a smooth curve of genus }g,\\
\theta_X\in H^1(X;\mathbb Z/3\mathbb Z)^*\end{array}\right\}\bigg/\sim,\]
where $(X_1,\theta_{X_1})\sim(X_2,\theta_{X_2})$ if and only if there exists a biholomorphism
$f\colon X_1\to X_2$ such that
\[f^*(\theta_{X_2})=\theta_{X_1}.\]
The period map naturally lifts to
\[\begin{aligned}
J^{(3)}\colon R_g^{(3)}&\longrightarrow\mathcal{A}_g,\\
(X,\theta_X)&\longmapsto\operatorname{Jac}(X).
\end{aligned}\]
We show that this map has the following global rigidity.
\begin{maintheorem}[\protect{Theorem~\ref{thm: unique holomorphic map}}]\label{thm: A}
Let $g\ge6$ and $h\le g$. If
\[F\colon R_g^{(3)}\longrightarrow\mathcal{A}_h\]
is a nonconstant holomorphic map of complex orbifolds, then $h=g$ and
$F=J^{(3)}$.
\end{maintheorem}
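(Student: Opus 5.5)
Following Farb's strategy for $\mathcal{M}_g$, I will reduce the statement to the representation-theoretic classification announced in the abstract, and then use rigidity of holomorphic maps to locally symmetric varieties. First, identify the orbifold fundamental group of $R_g^{(3)}$. With the complex structure pulled back from $\mathcal{M}_g$, the space $R_g^{(3)}$ is the quotient $\Teich(S_g)/\Mod(S_g,[\beta])$, so $\pi_1^{\mathrm{orb}}(R_g^{(3)})\cong\Mod(S_g,[\beta])$. With the other natural structure, coming from the covering curve $\widetilde S$, it is $\Teich(\widetilde S)^{\sigma}/\Mod(\widetilde S,\sigma)$, with $\pi_1^{\mathrm{orb}}\cong\Mod(\widetilde S,\sigma)$. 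In both cases the universal orbifold cover is a Teichm\"uller space (or its $\sigma$-fixed locus, which is isomorphic to $\Teich(S_g)$), hence contractible.

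Since $\mathcal{A}_h=\mathfrak{H}_h/\Sp_{2h}(\ZZ)$, the holomorphic map $F$ induces a homomorphism
\[\rho=F_*\colon \pi_1^{\mathrm{orb}}(R_g^{(3)})\to\Sp_{2h}(\ZZ),\]
well defined up to conjugation, and lifts to an equivariant holomorphic map $\widetilde F\colon\Teich\to\mathfrak{H}_h$.

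\textbf{Step 1: $\rho$ is non-abelian.} Suppose $\rho$ has abelian image. For $g\ge 3$ the finite-index subgroups $\Mod(S_g,[\beta])$ and $\Mod(\widetilde S,\sigma)$ have finite abelianization; I would check this via Putman's results on finite-index subgroups containing the level subgroup, or via the bi-affine classification. Hence the image of $\rho$ is finite. Passing to a finite cover, $F$ then lifts to a holomorphic map from a finite cover of $R_g^{(3)}$ (a moduli space with level structure) into the bounded domain $\mathfrak{H}_h$. Such a cover has a partial compactification whose boundary has codimension $\ge2$ for $g\ge3$ (Satake/Baily--Borel-type arguments as in Farb), and the bounded holomorphic functions extend. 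Hence $\widetilde F$ is constant, contradicting the hypothesis that $F$ is nonconstant.

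\textbf{Step 2: identify $\rho$.} By the classification stated in the abstract (the main representation theorem of the paper, valid for $h\le g$ and $g$ in this range), $\rho$ is conjugate to the standard symplectic representation on $H_1(S_g;\ZZ)$. In particular $h=g$.

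\textbf{Step 3: rigidity of the map.} Now $F$ and $J^{(3)}$ are two holomorphic maps $R_g^{(3)}\to\mathcal{A}_g$ inducing the same monodromy up to conjugation, and their lifts $\widetilde F,\widetilde J\colon\Teich\to\mathfrak{H}_g$ are $\rho$-equivariant. I would argue as Farb does, using the rigidity theorem for maps into quotients of bounded symmetric domains (a Schwarz--Pick/Kobayashi-distance argument or a harmonic-map uniqueness theorem). Compose with the Kobayashi-contracting structure of $\mathfrak{H}_g$: the function $d(\widetilde F,\widetilde J)$ is $\rho$-invariant and plurisubharmonic. Since the image of $\rho$ is Zariski dense in $\Sp_{2g}$, the equivariant holomorphic map is unique; equivalently, the difference descends to a bounded plurisubharmonic function on a quasi-projective $R_g^{(3)}$, which must be constant. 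Equality of the maps then follows because $\Sp_{2g}(\ZZ)$ fixes no point of $\mathfrak{H}_g$ equivariantly. I would quote this rigidity from Farb's proof, as the essential input there is only the monodromy, together with the fact that the orbifold is a finite cover of $\mathcal{M}_g$ with equal fundamental group data.

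The main obstacle is Step 2, the classification of homomorphisms to $\Sp_{2h}(\ZZ)$, which the paper proves separately via its bi-affine results. For the theorem itself, the delicate points are Step 1 (finite abelianization, and excluding finite-image $\rho$ with nonconstant $F$) and verifying that Farb's uniqueness argument transfers to both orbifold structures. This is where $g\ge6$ enters.
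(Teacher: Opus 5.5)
Your Step~1 (finite abelianization, then a bounded-domain argument on a finite cover) is fine, and your overall outline matches the paper's. There is a genuine gap at Steps~2--3, and a second weaker point in Step~3.

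\textbf{The anti-symplectic case.} Theorem~\ref{thm: classify Mod to Sp} does \emph{not} say that $\rho$ is conjugate to $\Phi$ in $\Sp_{2g}(\ZZ)$ or in $\Sp_{2g}(\mathbb{R})$. It says $\rho=A\Phi A^{-1}$ with $A$ in the extended group $\Delta_{2g}(\ZZ)$, and the anti-symplectic case $A^tJA=-J$ really occurs for homomorphisms. Any such integral $A$ normalizes $\Sp_{2g}(\ZZ)$, so $A\Phi A^{-1}$ is a homomorphism into $\Sp_{2g}(\ZZ)$.

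Over $\CC$ this $\rho$ is conjugate to $\Phi$ by the complex symplectic matrix $iA$, which is why ``up to conjugation'' in the abstract sounds harmless. It is not conjugate to $\Phi$ by any element of $\Sp_{2g}(\mathbb{R})$: by Zariski density and Schur's lemma, the conjugator must be a real multiple of $A$, which is anti-symplectic.

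Since $A$ acts on $\mathfrak{H}_g$ anti-holomorphically (for $A=\operatorname{diag}(I_g,-I_g)$ it is $\tau(Z)=-\overline{Z}$), the map $A^{-1}\circ\widetilde F$ is $\Phi$-equivariant but \emph{anti}-holomorphic. So neither your distance argument nor a Farb/Borel--Narasimhan comparison of two holomorphic maps with the same monodromy applies. This case must be excluded separately.

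The paper does this, following Serv\'an. In this case $\tau\circ F$ would be homotopic to $J^{(3)}$, but $\tau$ reverses the K\"ahler form of $\mathcal{A}_g$. Integrating both pullbacks over a complete curve in $R_g^{(3)}$ and applying Stokes then forces $F$ to be constant on that curve.

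\textbf{Step~3 as written.} Plurisubharmonicity of $x\mapsto d(\widetilde F(x),\widetilde J(x))$ is fine, since both maps are pluriharmonic into a nonpositively curved target. The remaining claims are not justified:
\begin{itemize}
\item Nothing you say bounds the descended function on the non-compact $R_g^{(3)}$.
\item ``Zariski density implies uniqueness of the equivariant holomorphic map'' is what has to be proved, not an input.
\item Even a constant distance would not give $\widetilde F=\widetilde J$ in the higher-rank space $\mathfrak{H}_g$ without a further flat-strip argument.
\end{itemize}

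The paper instead uses Borel--Narasimhan, which only needs the two maps to agree at one point. It produces that point by making the homotopy holomorphic and then algebraic on a curve $C\subset\mathcal{M}_g(3)\setminus\mathcal{O}$. The curve is chosen by Bertini--Lefschetz so that $\pi_1(C)$ surjects onto $\Sp_{2g}(\ZZ,3)$ and some local monodromy $T_\gamma^3$ has infinite order. Saito's theorem then makes $J^{(3)}|_C$ an isolated point of $\operatorname{Mor}(C,\mathcal{A}_g)$. Alternatively, the paper uses simplicity of the pulled-back PVHS, Schmid's rigidity, and the fineness of $\mathcal{A}_g(3)$.

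Deferring this to Farb is reasonable in spirit, but these steps carry the argument and have to be checked for $R_g^{(3)}$. Finally, the hypothesis $g\ge 6$ enters only through the classification of homomorphisms, not through transferring Farb's argument.
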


\begin{remark}
There are two natural complex-orbifold structures on $R_g^{(3)}$, introduced in \cite[Section~2]{CarlosPrym}; see Definition~\ref{def: two orbifold structures}. Theorem~\ref{thm: A} holds for both structures. The Prym map is defined for one of these structures and gives a holomorphic map to $\mathcal{A}_{2g-2}$. We conjecture that, for $g<h< 2g-2$, every nonconstant holomorphic map from $R_g^{(3)}$ to $\mathcal{A}_h$ is obtained by taking the product of the period map $J^{(3)}$ with a constant principally polarized abelian variety of dimension $h-g$.
\end{remark}

The proof of Theorem~\ref{thm: A} follows the general strategy developed by Farb \cite{FarbRigidity} using Teichm\"uller theory; see Section~\ref{sec: rigidity}. We also present an alternative approach to one of the steps in Section~\ref{sec: PVHS}, using variations of Hodge structures, following a suggestion of Hain to Farb.

The crucial first step in the proof of Theorem~\ref{thm: A} is to classify the homomorphisms induced on orbifold fundamental groups. Let $S=S_g$ be a closed surface of genus $g$. Let $[\beta]\in H_1(S_g;\mathbb Z/3\mathbb Z)^*$, and let $\widetilde{S}\to S$ be the corresponding unbranched triple cover with deck transformation $\sigma$. For the two complex-orbifold structures on $R_g^{(3)}$ introduced in Definition~\ref{def: two orbifold structures}, the orbifold fundamental group is respectively
\[\Mod(S,[\beta])=\operatorname{Stab}_{\Mod(S)}([\beta]),\]
the stabilizer of $[\beta]$ in the mapping class group $\Mod(S)$, and
\[\Mod(\widetilde{S},\sigma)=C_{\Mod(\widetilde{S})}(\sigma),\]
the centralizer of $\sigma$ in $\Mod(\widetilde{S})$. The map induced by $J^{(3)}$ on orbifold fundamental groups is the standard symplectic action on $H_1(S_g;\mathbb Z)$ in both cases.

We classify the corresponding homomorphisms on orbifold fundamental groups as follows.
\begin{maintheorem}[\protect{Theorem~\ref{thm: classify Mod to Sp}}]\label{thm: B}
Let $g\ge6$, and $h\le g$. Suppose that
\[\rho\colon\Mod(S,[\beta])\longrightarrow\Sp_{2h}(\mathbb Z)\]
or
\[\rho\colon\Mod(\widetilde{S},\sigma)\longrightarrow\Sp_{2h}(\mathbb Z)\]
is a homomorphism. Then either $\rho$ has finite abelian image, or $h=g$ and $\rho$ is conjugate to the standard symplectic action on $H_1(S_g;\mathbb Z)$ by an element of the extended symplectic group
\[\Delta_{2g}(\mathbb Z)=\{X\in\GL_{2g}(\mathbb Z):X^tJX=\pm J\},\]
where $J$ denotes the matrix of the standard symplectic form.
\end{maintheorem}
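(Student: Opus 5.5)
The proof follows the template of Korkmaz's and Franks--Handel's classification of low-dimensional representations of $\Mod(S_g)$, combined with the bi-affine classification announced in the abstract. Both groups $\Mod(S,[\beta])$ and $\Mod(\widetilde S,\sigma)$ contain a large subgroup isomorphic to (a quotient of) the mapping class group of a subsurface of $S$ that misses a curve carrying $[\beta]$. Concretely, choose a nonseparating simple closed curve $c$ with $[\beta](c)\neq 0$, or a pair of such curves, and let $S'\subset S$ be the complement of a neighborhood. Then $S'$ has genus $g-1$ and two boundary components, and every mapping class supported in $S'$ fixes $[\beta]$. In the $\widetilde S$ case, $S'$ is evenly covered by three copies, and simultaneous lifts commute with $\sigma$. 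Thus we get a homomorphism $\Mod(S_{g-1}^2)\to$ our group. Since $g\ge 6$, we have $2h\le 2g\le 3(g-1)-3$, so the earlier result applies: every representation of $\Mod(S_{g-1}^2)$ of dimension at most $3(g-1)-3$ is bi-affine. The restriction of $\rho$ to this subgroup is therefore bi-affine, and the classification of bi-affine representations tells us it is either trivial on the Torelli-type part (factoring through $\Sp$ of the relative homology of $S'$ or a quotient of it) or has abelian image.

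First I rule out the degenerate case. If the restriction to $\Mod(S')$ has abelian image, then all Dehn twists about nonseparating curves in $S'$ map to elements that commute, and by the lantern/chain relations they become equal and of finite order. I then propagate this to generators of the whole group. The stabilizer $\Mod(S,[\beta])$ is generated by twists about curves $c$ with $[\beta](c)=0$, together with cubes of the other twists. Every such twist is conjugate, within the group, to one supported in some copy of $S'$. Hence $\rho$ has abelian image. Abelianizations of these finite-index subgroups are finite for $g\ge 3$ (Putman's results on level subgroups and stabilizers), so the image is finite abelian. For $\Mod(\widetilde S,\sigma)$, I use the Birman--Hilden type exact sequence $1\to\langle\sigma\rangle\to \Mod(\widetilde S,\sigma)\to \Mod(S,[\beta])\to 1$ and handle $\sigma$ separately: it is central, hence commutes with a nonabelian image, and it is torsion.

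In the nonabelian case, the bi-affine classification forces $\rho|_{\Mod(S')}$ to be, up to conjugacy, the symplectic action on $H_1$ of the closed-up surface of genus $g-1$, extended by a $2$-dimensional complement on which twists act unipotently. In particular, $h\ge g-1$, and the image of a nonseparating twist in $S'$ is a symplectic transvection. Conjugacy and connectivity of the curve complex restricted to these curves then show that every twist $T_a$ or $T_a^3$ in a generating set maps to a transvection $v\mapsto v+\langle v,w_a\rangle w_a$ for a primitive vector $w_a$. The assignment $a\mapsto \pm w_a$ preserves algebraic intersection numbers. This follows because braid relations force $|\langle w_a,w_b\rangle|=i(a,b)$ for $i\le 1$, and for $T_a^3$ one uses that cube roots of transvections in $\Sp_{2h}(\ZZ)$ are rigid. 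These vectors span a lattice containing an image of $H_1(S_g;\ZZ)$ with the intersection form, which is unimodular of rank $2g$. This forces $h=g$, gives an isomorphism $H_1(S_g;\ZZ)\cong\ZZ^{2h}$ intertwining the actions up to the sign ambiguity (hence conjugacy in $\Delta_{2g}(\ZZ)$), and shows $\sigma$ maps trivially.

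The main obstacle is the third step. I need to show that the images of twists about curves with $[\beta](c)\neq 0$ (only their cubes lie in the group) are determined, and that the local identification on $S'$ extends consistently to the whole group. I would first try to find, for each needed curve, a second subsurface $S''$ of the same type containing it in its interior, apply the bi-affine classification there too, and glue the two identifications along their overlap. The overlap still has genus at least $4$ for $g\ge 6$, so its homology representation has trivial centralizer up to scalars $\pm1$.
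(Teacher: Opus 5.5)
There is a genuine gap, and it sits exactly where the paper does its main work.

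\textbf{A preliminary slip.} You must cut along a curve $\beta$ representing the class, not along a curve $c$ with $[\beta](c)\neq0$. For $x\subset S\setminus c$ the twist $T_x$ fixes the class only if $\omega(\beta,x)\equiv0\pmod 3$, and the cover is not trivial over $S\setminus c$.

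\textbf{The missing case.} After Theorem~\ref{thm: any low-dim rep is bi-affine}, a nontrivial $\rho|_{\Mod(S_{g-1}^2)}$ falls into one of two genuinely different cases, which you merge into one. Either it is affine or co-affine, and then by Lemma~\ref{lem: affine std rep} (using the invariant symplectic form) it is the standard action on $H_1(S_{g-1};\ZZ)$ plus a trivial summand. Or it is neither, and then, since $T_{\partial_1}T_{\partial_2}^{-1}$ is killed, Theorem~\ref{thm: boundary kills} makes it the standard action on $H_1(S_g;\ZZ)$.

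The first case must be excluded, and your plan has no mechanism for this. Relations over $\CC$ alone cannot do it. The twisted Prym representation of Remark~\ref{rmk: GL false} is non-abelian of dimension $2g-2$, so by Korkmaz its restriction is the standard action on $H_1(S_{g-1};\CC)$. In this case $\rho(T_\alpha^3)$ centralizes $\rho(\Mod(S_{g-1}^1))$, so it is a scalar, not the cube of a transvection; also $\rho(T_\beta)=I$. Your claim that every generator $T_a^3$ maps to a transvection cube is therefore exactly what fails here, and your lattice argument for $h=g$ rests on that claim.

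The paper kills this case in Lemma~\ref{lem: obstruction by scalar} and Proposition~\ref{prop: std sym rep fails}, using $p=3$ and integrality:
\begin{enumerate}
\item The lantern relation gives $\rho(F_k)=\rho(T_{a_k'}T_{a_1}^{-1})$, which is diagonal by commutation.
\item $\rho(F_k)^3=I$, because $T_{d_k}^3$ and $T_{d_k'}^3$ are conjugate to $T_\alpha^3$.
\item In an integral symplectic matrix $\zeta_3$ and $\zeta_3^2$ occur with equal multiplicities, which forces $\rho(F_k)=I$.
\item In $(T_\alpha^3T_{c_1})^3=T_d$, the element $T_d$ is a product of bounding pair maps in $S\setminus\beta$ and conjugates of the $F_k$, so its image is scalar, while the left side is not.
\end{enumerate}

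\textbf{The generators outside $S\setminus\beta$.} Even in the good case, what you call the main obstacle is the heart of the proof, and your proposed fix cannot work. If $\Mod(S'')\subset\Mod(S,[\beta])$, then every curve $x\subset S''$ satisfies $\omega(\beta,x)\equiv0\pmod 3$. So a curve $c$ with $[\beta](c)\neq0$ never lies in such an $S''$, and there is nothing to glue.

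The paper instead proceeds in three steps:
\begin{enumerate}
\item It pins down $\rho(T_\alpha^3)=\operatorname{diag}(A,I_{2g-2})$ from two relations: the chain relation $(T_\alpha^3T_\beta)^3=T_d$, and its conjugate by $T_{\alpha'}^3T_\alpha^{-3}$. After excluding $r=0$ by a separate quotient argument, this forces $r=3$ (Proposition~\ref{prop: image of T a cubic}).
\item It shows $\rho(F_k)$ and $\rho(\sigma)$ are scalar. Then $\rho\Phi^{-1}$ factors through the $3$-torsion abelianization, and $\zeta_3I\notin\Sp_{2g}(\ZZ)$ gives $\rho=\Phi$.
\item It gets the $\Delta_{2g}(\ZZ)$ conclusion from a separate integrality lemma for the conjugating matrix.
\end{enumerate}

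\textbf{The degenerate case.} This has the same defect. The cubes $T_c^3$ with $[\beta](c)\neq0$ are never conjugate into a copy of $S'$. Your generating set also omits the bounding pair maps $F_k$ of Dey--Dhanwani--Patil--Rajeevsarathy. By my own check, it cannot generate when $g\equiv1\pmod 3$: all such cubes are conjugate in $\Mod(S,[\beta])$, and the twists with $[\beta](c)=0$ die in the abelianization. So the image of your set there is cyclic, whereas Lemma~\ref{lem: abelianization} gives $(\ZZ/3\ZZ)^2$. The paper's argument is simply that the generators outside $\Mod(S_{g-1}^2)$, namely $T_\alpha^3$, $F_k$ and $\sigma$, pairwise commute.
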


The proof of Theorem~\ref{thm: B} is based on the finite generating sets for $\Mod(S,[\beta])$ and $\Mod(\widetilde{S},\sigma)$ given by Dey--Dhanwani--Patil--Rajeevsarathy \cite[Theorem~2]{Dey}, together with a detailed analysis of linear representations of the mapping class group of the genus-$(g-1)$ surface with two boundary components obtained by cutting $S_g$ along a simple closed curve representing $[\beta]$.

A second part of the paper is devoted to studying linear representations of mapping class groups. Let $S_{g,r}^n$ denote a genus-$g$ surface with $n$ boundary components and $r$ punctures, and let $\Mod(S_{g,r}^n)$ denote its mapping class group. We omit $n$ or $r$ from the notation when it is zero. We obtain a complete classification of homomorphisms
\[\Mod(S_g^2)\to\GL_m(\mathbb C)\qquad\text{and}\qquad\Mod(S_{g,2})\to\GL_m(\mathbb C),\qquad \text{for } m\le 3g-3.\]
We also give a general framework for bi-affine representations and classify bi-affine representations of $\Mod(S_g^n)$ and $\Mod(S_{g,n})$ for arbitrary $n$, using cohomological calculations of Kawazumi--Morita \cite{KawazumiMoritaStable}.

We now explain these results. For any group $G$ and a self-dual $G$-module $H$, following \cite{reps3g-3}, we call a representation of $G$ \emph{bi-affine with core $H$} if its underlying module $V$ admits a filtration
\[0\subset V_1\subset V_2\subset V\]
such that $V_1$ and $V/V_2$ are trivial $G$-modules and $V_2/V_1\cong H$. If $V_1=0$, we call the representation \emph{affine}; if $V=V_2$, we call it \emph{co-affine}. In Proposition~\ref{prop: classify bi-affine}, we classify bi-affine representations in terms of two extension classes and the vanishing of their contracted cup product. In the case
\[H=H_1(S_g;\mathbb C),\qquad G=\Mod(S_{g,r}^n),\]
we compute the contracted cup product explicitly. This leads to the following classification of bi-affine representations of mapping class groups of surfaces with boundary components or punctures.
\begin{maintheorem}[\protect{Theorem~\ref{thm: classify bi-affine for n bdry}, Theorem~\ref{thm: classify bi-affine for n punctures}}]\label{thm: C}
Let $g\ge4$ and $m,n\ge 0$.
\begin{enumerate}
    \item The isomorphism classes of affine or co-affine representations of $\Mod(S_g^n)$ or $\Mod(S_{g,n})$ with core $H=H_1(S_g;\CC)$ of dimension $2g+m$ are classified by
    \[H^1(\Mod(S_g^n);H)\otimes\mathbb C^m \cong H^1(\Mod(S_{g,n});H)\otimes\mathbb C^m \cong \mathbb C^n\otimes\mathbb C^m, \]
    modulo the natural action of $\GL_m(\mathbb C)$ on $\mathbb C^m$.

    \item The isomorphism classes of bi-affine representations of $\Mod(S_g^n)$ with core $H=H_1(S_g;\CC)$ of dimension $2g+m$ that are neither affine nor co-affine are classified by pairs of nonzero elements
    \[x=\sum_{j=1}^d
    \big(\sum_{i=1}^n s_i^{(j)}[\kappa_i]\big)\otimes v_j
    \in H^1(\Mod(S_g^n);H)\otimes\mathbb C^d \]
    and
    \[y=\sum_{j=1}^{m-d}
    \big(\sum_{i=1}^n t_i^{(j)}[\kappa_i]\big)\otimes w_j
    \in H^1(\Mod(S_g^n);H)\otimes\mathbb C^{m-d}, \]
    where $0<d<m$, $\{v_j\}$ and $\{w_j\}$ are the standard bases of $\mathbb C^d$ and $\mathbb C^{m-d}$, respectively, and $[\kappa_i]$, $1\le i\le n$, form a basis of
    \[H^1(\Mod(S_g^n);H)\cong\mathbb C^n.\]
    These pairs satisfy
    \[\big(\sum_{i=1}^n s_i^{(k)}\big)
    \big(\sum_{i=1}^n t_i^{(\ell)}\big)=0,
    \qquad
    1\le k\le d,\quad 1\le\ell\le m-d,\]
    and are considered modulo the natural action of
    \[ \GL_d(\mathbb C)\times\GL_{m-d}(\mathbb C).\]

    For $\Mod(S_{g,n})$, the same description holds, with the additional conditions
    \[ s_i^{(k)}T^{(\ell)}
    +t_i^{(\ell)}S^{(k)}
    +2(g-1)s_i^{(k)}t_i^{(\ell)}=0,\]
    for all $i,k,\ell$, where
    \[ S^{(k)}=\sum_{i=1}^n s_i^{(k)},
    \qquad
    T^{(\ell)}=\sum_{i=1}^n t_i^{(\ell)}. \]
\end{enumerate}
\end{maintheorem}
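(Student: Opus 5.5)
The plan is to reduce everything to the general classification of bi-affine representations (Proposition~\ref{prop: classify bi-affine}) and then compute the two pieces of input it needs for $G=\Mod(S_g^n)$ or $\Mod(S_{g,n})$ with $H=H_1(S_g;\CC)$. These are the group $H^1(G;H)$, and the contracted cup product
\[
H^1(G;H)\otimes H^1(G;H)\to H^2(G;\CC),
\]
obtained by cupping and contracting with the intersection form.

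\emph{Affine and co-affine case.} An affine representation of dimension $2g+m$ with core $H$ is an extension $0\to H\to V\to\CC^m\to 0$. Such extensions are classified by $\operatorname{Ext}^1_G(\CC^m,H)\cong H^1(G;H)\otimes\CC^m$, modulo $\GL_m(\CC)$ acting on the quotient. The co-affine case follows by duality, using $H\cong H^*$. It then remains to show $H^1(G;H)\cong\CC^n$. For this I will use Morita's result that $H^1(\Mod(S_{g,1});H)\cong\CC$ is spanned by the class $k$ (for $g\ge 2$), together with the Birman exact sequences.
\begin{itemize}
\item For punctures, the forgetful map $\Mod(S_{g,n})\to\Mod(S_g)$ has kernel $\pi_1$ of a configuration space. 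Since $H^1(\Mod(S_g);H)=0$, the five-term sequence gives $H^1(G;H)\cong\operatorname{Hom}_{\Mod}(H_1(\text{kernel}),H)$. This is spanned by the $n$ point-pushing classes $[\kappa_i]$.
\item For boundary components, the capping sequences have central $\ZZ$ kernels acting trivially on $H$. These do not change $H^1$, so the two groups agree.
\end{itemize}
Both computations are in Kawazumi--Morita \cite{KawazumiMoritaStable}, which I will cite.

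\emph{Neither affine nor co-affine case.} Here Proposition~\ref{prop: classify bi-affine} says such a representation is determined by two extension classes $x\in H^1(G;H)\otimes\CC^d$ (from $0\to\CC^d\to V_2\to H\to0$, dualized) and $y\in H^1(G;H)\otimes\CC^{m-d}$. The realizability condition is that every contracted cup product $\langle x_k\cup y_\ell\rangle\in H^2(G;\CC)$ vanishes. The answer is taken modulo $\GL_d\times\GL_{m-d}$, where $d=\dim V_1$ and $0<d<m$. So the main step is to compute the bilinear map $\langle[\kappa_i]\cup[\kappa_j]\rangle\in H^2(G;\CC)$.

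For this I will use Kawazumi--Morita's description of $H^2(\Mod(S_{g,r}^n);\CC)$ in the stable range ($g\ge4$ is enough). It is spanned by $e_1$ and the classes $e_i$, $i=1,\dots,n$, which are Euler classes of the punctures or boundary components. Their contraction formulas are
\[
\langle k_i\cup k_i\rangle = -e_1 + (2-2g)e_i,\qquad \langle k_i\cup k_j\rangle = e_i+e_j\ \ (i\neq j),
\]
up to normalization, which I will fix from their paper.

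\emph{Bounded case.} For $\Mod(S_g^n)$, all boundary Euler classes $e_i$ vanish, since the boundary twists are central and the boundary is trivialized. Hence every pairing is a multiple of $e_1$ alone, with coefficient $1$ for each pair $(i,j)$. Then
\[
\bigl\langle \textstyle\sum_i s_i\kappa_i \cup \sum_j t_j\kappa_j\bigr\rangle = \pm\, S\,T\, e_1,
\]
and since $e_1\ne0$ the vanishing condition becomes $S^{(k)}T^{(\ell)}=0$.

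\emph{Punctured case.} For $\Mod(S_{g,n})$, the $e_i$ survive and are linearly independent from $e_1$. Expanding the product and collecting coefficients gives two conditions. The $e_1$ coefficient gives $ST=0$. The coefficient of $e_i$ gives
\[
s_iT+t_iS+(\text{const})\,s_it_i=0 .
\]
Getting the constant to be exactly $2(g-1)$ is where the sign and normalization bookkeeping matters.

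The main obstacle is this last step. I need precise normalizations of $k_i$ and of the contraction formula, and I need independence of $e_1,e_1^{(1)},\dots,e_n$ in $H^2$, which holds for $g\ge4$. I will first verify the formula for $n=1$ against Morita's identity $\langle k\cup k\rangle = -e_1+(2-2g)e$. I will then obtain the $i\ne j$ cross terms by pulling back along the forgetful maps $\Mod(S_{g,n})\to\Mod(S_{g,\{i,j\}})$.
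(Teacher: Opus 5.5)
Your overall plan is the paper's: reduce to Proposition~\ref{prop: classify bi-affine}, use $H^1(G;H)\cong\CC^n$ with basis $[\kappa_i]$, and compute the contraction form $\mathcal{B}_G$ from Kawazumi--Morita. Getting the bounded case by pulling back along the capping map $\Mod(S_g^n)\to\Mod(S_{g,n})$, which kills the classes $c_1(\theta_i)$, would be a fine substitute for the paper's embedding $\Mod(S_g^1)\hookrightarrow\Mod(S_g^n)$ with $\operatorname{Cap}_i\circ\operatorname{Emb}=\operatorname{Id}$ and Harer stability.

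The gap is in the cross terms, which carry all the content. Your formula $\langle k_i\cup k_j\rangle=e_i+e_j$ for $i\neq j$ has no $e_1$-component, and no rescaling of the $k_i$ or $e_i$ can create one. Fed into your own expansion, your formulas give an $e_1$-coefficient of $-\sum_i s_it_i$ and an $e_i$-coefficient of $s_iT+t_iS-2g\,s_it_i$. So the bounded condition would read $\sum_i s^{(k)}_i t^{(\ell)}_i=0$ rather than $S^{(k)}T^{(\ell)}=0$, and the punctured conditions would also be wrong. Your claims ``coefficient $1$ for each pair $(i,j)$'' and ``the $e_1$ coefficient gives $ST=0$'' are correct, but they contradict the formulas you propose to derive them from.

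To see that the cross term must contain $e_1$, embed $S_g^1$ in $S_{g,n}$ with all punctures in the complementary disk. Under $\Mod(S_g^1)\to\Mod(S_{g,n})$, forgetting all but the $i$-th puncture becomes the capping map. Hence every $[\kappa_i]$ restricts to the same class $[\kappa]$, every $c_1(\theta_i)$ restricts to $0$, and $e_1$ restricts to $e_1\neq 0$. So all $\mathcal{B}([\kappa_i],[\kappa_j])$, diagonal or not, have the same $e_1$-coefficient as $\mathcal{B}_{\Mod(S_g^1)}([\kappa],[\kappa])=-e_1$.

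That argument fixes the $e_1$-part, but it cannot see the $c_1(\theta_i)$-part of the cross term. Pulling back along forgetful maps to $\Mod(S_{g,\{i,j\}})$, as you propose, only reduces to $n=2$. It does not compute $\mathcal{B}([\kappa_1],[\kappa_2])$ on $\Mod(S_{g,2})$, and that is exactly the missing input.

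The paper supplies it from Kawazumi--Morita's Theorem~6.1 for the difference class $k_0=\frac{1}{2g-2}([\kappa_i]-[\kappa_j])$ of Proposition~\ref{prop: singular pt}, namely $\mathcal{B}(k_0,k_0)=-c_1(\theta_i)-c_1(\theta_j)$. The diagonal value is $\mathcal{B}([\kappa_i],[\kappa_i])=-e_1-4g(g-1)c_1(\theta_i)$ (Kawazumi--Morita, Example~1.4), not your $-e_1+(2-2g)e_i$. Polarizing against it gives
$$\mathcal{B}([\kappa_i],[\kappa_j])=-e_1-(2g-2)\bigl(c_1(\theta_i)+c_1(\theta_j)\bigr),$$
and hence
$$\mathcal{B}(x,y)=-ST\,e_1-2(g-1)\sum_i\bigl(s_iT+t_iS+2(g-1)s_it_i\bigr)c_1(\theta_i).$$
With Looijenga's basis $e_1,c_1(\theta_1),\dots,c_1(\theta_n)$ of $H^2(\Mod(S_{g,n});\CC)$, this yields exactly the stated conditions. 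Its pullback to $\Mod(S_g^n)$ is $-ST\,e_1$, which gives the bounded case.
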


For given cohomology classes, an explicit matrix representative of the corresponding isomorphism class of bi-affine representations is given in Remark~\ref{rmk: matix form}.

We next show that, when there are two boundary components or two punctures, bi-affine representations account for all representations in low dimensions.

\begin{maintheorem}[\protect{Theorem~\ref{thm: any low-dim rep is bi-affine}, Corollary~\ref{cor: 2 puncture only affine}}]\label{thm: D}
Let $g\ge3$ and $m\le3g-3$. Then every representation
\[\rho:\Mod(S_g^2)\longrightarrow\GL_m(\mathbb C)\]
is either trivial, or bi-affine with core $H=H_1(S_g;\mathbb C)$. Moreover, every nontrivial representation
\[\rho:\Mod(S_{g,2})\longrightarrow\GL_m(\mathbb C)\]
is affine or co-affine with core $H=H_1(S_g;\mathbb C)$.
\end{maintheorem}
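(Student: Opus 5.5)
We argue by restricting $\rho$ to a closed subsurface group, applying the known low-dimensional classification there, and then controlling the extra directions coming from the second boundary component. The ingredient from the literature is the classification of low-dimensional representations of mapping class groups with one boundary component or one puncture (Korkmaz and Kasahara; Franks--Handel for $m<2g$; and \cite{reps3g-3} for $m\le 3g-3$). It says that every representation of $\Mod(S_g^1)$ or $\Mod(S_{g,1})$ of dimension at most $3g-3$ is trivial or bi-affine with core $H=H_1(S_g;\mathbb C)$. For $\Mod(S_g^2)$ we use the capping/embedding $S_g^1\hookrightarrow S_g^2$, where one boundary component of $S_g^2$ is enclosed, together with a pair of pants, inside a slightly larger subsurface. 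This induces an inclusion $\Mod(S_g^1)\hookrightarrow\Mod(S_g^2)$ whose image contains all Dehn twists about nonseparating curves. Restricting $\rho$ to it gives, on $V=\mathbb C^m$, a filtration $0\subset V_1\subset V_2\subset V$ that is trivial, trivial, $H$ (or $V$ is trivial). Since $\Mod(S_g^2)$ is normally generated by a single nonseparating Dehn twist (for $g\ge3$), a trivial restriction forces $\rho$ itself to be trivial.

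The main step is to show that this filtration is invariant under all of $\Mod(S_g^2)$. First we write intrinsic descriptions that depend only on $\rho$ and the conjugacy class of a nonseparating twist $T_a$. We take $V_1$ to be the maximal subspace on which every nonseparating twist acts trivially, or more precisely the joint fixed subspace of the normal subgroup generated by squares or commutators of twists. For $V_2$ we use the joint kernel of $(\rho(T_a)-1)^2$ over all nonseparating $a$, or the span of the images $\operatorname{im}(\rho(T_a)-1)$ together with $V_1$. Because the set of nonseparating twists is conjugation-invariant in $\Mod(S_g^2)$, these subspaces are $\Mod(S_g^2)$-invariant. We then check, using the explicit bi-affine form of the restriction, where $\rho(T_a)-1$ is the transvection $x\mapsto\langle x,a\rangle a$ on the core plus rank-one affine corrections, that the intrinsic subspaces coincide with $V_1$ and $V_2$. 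The dimension bound is used here to rule out extra trivial summands or extra copies of $H$, since $2\cdot 2g>3g-3$.

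Next, invariance shows that $\Mod(S_g^2)$ acts on $V_1$, on $V_2/V_1$ and on $V/V_2$. On $V_1$ and $V/V_2$ it acts through a quotient in which all nonseparating twists are trivial. That quotient is trivial, because these twists normally generate the group. On $V_2/V_1\cong H$, the action restricted to $\Mod(S_g^1)$ is the symplectic one. The extension to $\Mod(S_g^2)$ must factor through the same normal closure, so it is the standard action on $H_1(S_g)$, since the boundary twists and point-pushes act trivially on homology. Hence $\rho$ is bi-affine with core $H$.

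For $\Mod(S_{g,2})$ we apply the same argument to obtain a bi-affine structure. We then invoke Theorem~\ref{thm: C}, specifically its punctured condition with $n=2$. Setting $S=s_1+s_2$ and $T=t_1+t_2$, the equations $s_iT+t_iS+2(g-1)s_it_i=0$ for $i=1,2$ together with $ST=0$ should force $x=0$ or $y=0$; this is a short case analysis, treating $S=0$ and $T=0$ separately. Then no genuinely bi-affine representation exists, so $\rho$ is affine or co-affine. I expect the main obstacle to be the invariance of $V_2$: showing that the intrinsic characterization really captures $V_2$ requires careful control of the rank-one affine parts of the twist images in the explicit matrix form.
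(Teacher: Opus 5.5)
Your reduction to $\Mod(S_g^1)$ and your final case analysis for $\Mod(S_{g,2})$ are fine; the latter is the paper's first proof of Corollary~\ref{cor: 2 puncture only affine}. The main step, however, has a genuine gap.

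\textbf{The main step fails.} With $a$ ranging over all nonseparating curves of $S_g^2$, your intrinsic subspaces are $K=\bigcap_a\ker(\rho(T_a)-1)=V^{\Mod(S_g^2)}$ and $W=\sum_a\operatorname{im}(\rho(T_a)-1)$, the augmentation submodule. They are invariant for free, but they cannot be computed from the restriction to $\Mod(S_g^1)$.

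Your claim that the image of $\Mod(S_g^1)\hookrightarrow\Mod(S_g^2)$ contains all nonseparating twists is false. If it did, the image would be all of $\Mod(S_g^2)$, since nonseparating twists generate. Curves such as $\gamma_1^+$ in the proof of Proposition~\ref{prop: singular pt} (class $a_g+b_{g+1}$), or the curves $\gamma_i$ of Lemma~\ref{lem: Mod S_g^2 generators}, are not isotopic into $S_g^1$. The images of their twists are exactly the unknown part of $\rho$.

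The representation $\Phi$ on $H_1(S_{g+1};\CC)$ shows this concretely. Its restriction to $\Mod(S_g^1)$ is the split module $H\oplus\CC^2$, the same as the restriction of ``standard action on $H$ plus trivial $\CC^2$''. Yet for $\Phi$ one has $K=\langle b_{g+1}\rangle$ and $W=\langle b_{g+1}\rangle^\perp$, while for the other representation they are $\CC^2$ and $H$. So no inspection of the ``explicit bi-affine form of the restriction'' can identify these subspaces, and the count $4g>3g-3$ does not help.

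\textbf{What your idea actually gives, and what is missing.} It gives only a reduction. If $W\ne V$ or $K\ne 0$, or more generally if $V$ has any proper invariant subspace, then induction on $m$ together with Proposition~\ref{prop: extend bi-affine} (and $H^1=0$) finishes. The remaining case is $V$ irreducible, and this case contains $V=H$ itself, so it cannot be ruled out. One must show that an irreducible $V$ of dimension at most $3g-3$ has $m=2g$, and your proposal has no mechanism for this.

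The paper supplies one. By the Putman--Church partitioned Torelli theory together with \cite[Proposition~6.7]{reps3g-3} and unipotence of nonseparating twists, $\rho(\mathcal I(\Sigma))$ is abelian and unipotent. Hence its fixed space is a nonzero invariant subspace. If that space is all of $V$, then $\rho|_{\Mod(S_g^1)}\cong H\oplus\CC^{m-2g}$. The twists $T_{\gamma_i}$ off $S_g^1$ are then pinned down by the braid relation with $T_{a_1}$ (Lemma~\ref{lem: determine T gamma2}), the chain relations and the star relation. This produces the invariant subspace $H$ or $\CC v$.

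Separately, your assertion that the action on $V_2/V_1$ is standard ``because it factors through the same normal closure'' is unjustified. Korkmaz's classification of $2g$-dimensional representations is what would supply it.
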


Thus, in these cases, all such low-dimensional representations are completely determined by the classification in Theorem~\ref{thm: C}. Many of these representations also admit explicit geometric interpretations; see Remark~\ref{rmk: geometric interpretation} and Proposition~\ref{prop: singular pt}. The proof of Theorem~\ref{thm: D} uses the theory of partitioned surfaces developed by Putman \cite{cuttingandpasting} and further extended by Church \cite{ChurchPartitionedJohnsonHom}.

In the case of $\Mod(S_{g,2})$, the additional conditions in Theorem~\ref{thm: C} rule out representations that are neither affine nor co-affine. We also give an alternative proof using relations in the surface braid group. As a consequence, the only nonconstant holomorphic maps
\[\mathcal{M}_{g,2}\longrightarrow\mathcal{A}_h,
\qquad 2h\le3g-3,\]
are obtained by taking the product of the period map with a constant $(h-g)$-dimensional principally polarized abelian variety. This result will appear in a subsequent joint paper with Zhong Zhang, using methods from the theory of variations of Hodge structures.

\begin{remark}
Theorem~\ref{thm: D} extends the previous classification of low-dimensional linear representations of mapping class groups. It was known that any nontrivial representation
\[\rho:\Mod(S_{g,r}^n)\longrightarrow\GL_m(\mathbb C)\]
in the following ranges is affine or co-affine with core $H=H_1(S_g;\mathbb C)$:
\begin{itemize}
    \item for $m<2g$, by Korkmaz \cite[Theorem~1]{korkmaz} and Franks--Handel \cite[Theorem~1.1]{FranksHandel};
    \item for $m=2g$, by Korkmaz \cite[Theorem~2]{korkmaz};
    \item for $m=2g+1$, by Kasahara \cite[Theorem~1.1]{kasahara};
    \item for $m\le 3g-3$ and $r+n\le1$, by joint work with Kaufmann, Salter, and Zhang \cite[Theorem~A]{reps3g-3}.
\end{itemize}
\end{remark}

\noindent\textbf{Outline.}
In Section~\ref{sec: 2.1}, we introduce bi-affine representations and give a cohomological criterion for classifying their isomorphism classes. In Section~\ref{sec: 2.2}, we classify bi-affine representations of $\Mod(S_g^n)$. In Section~\ref{sec: 2.3}, we study bi-affine representations of $\Mod(S_g^2)$ and their geometric interpretations. In Section~\ref{sec: 2.4}, we prove that every linear representation of $\Mod(S_g^2)$ of dimension at most $3g-3$ is bi-affine. In Section~\ref{sec: 2.5}, we classify bi-affine representations of $\Mod(S_{g,n})$ and prove that every linear representation of $\Mod(S_{g,2})$ of dimension at most $3g-3$ is affine or co-affine.

In Section~\ref{sec: 3}, we classify symplectic representations of $\Mod(S,[\beta])$ and $\Mod(\widetilde{S},\sigma)$ of dimension at most $2g$, proving Theorem~\ref{thm: B}. In Section~\ref{sec: 4}, we prove Theorem~\ref{thm: A}, which establishes the uniqueness of the period map among nonconstant holomorphic maps $R_g^{(3)}\to\mathcal{A}_h$ for $h\le g$.

\noindent\textbf{Acknowledgements.} I am very grateful to Andrew Putman for suggesting this problem and for many useful comments. I would like to thank Carlos Serv\'an for answering questions related to his paper \cite{CarlosPrym}. I also thank Nariya Kawazumi and Ursula Hamenst\"adt for a helpful conversation. I thank Benson Farb and Nick Salter for useful comments on an earlier draft of this paper. I am grateful to the Max Planck Institute for Mathematics in Bonn for its hospitality.

\section{Linear representations of the mapping class group}
In this section, we introduce bi-affine representations of groups and establish a general classification theorem. We then apply this result to obtain a complete classification of bi-affine representations of $\Mod(S_g^n)$ and $\Mod(S_{g,n})$. Furthermore, we show that when $n=2$, every linear representation of $\Mod(S_g^n)$ and $\Mod(S_{g,n})$ of dimension at most $3g-3$ is bi-affine, and hence completely determined by the classifications obtained in this section.
\subsection{Bi-affine representations}\label{sec: 2.1}
In this subsection, we introduce bi-affine representations and give a cohomological criterion for classifying their isomorphism classes. We begin by recalling the following definition from \cite[Definition~3.1]{reps3g-3}.
\begin{definition}\label{def: biaffine rep}
Let $G$ be a group and $V$ be a $\CC[G]$-module. The associated representation $\rho: G\to \GL(V)$ is called \textbf{bi-affine} with \textbf{core} a fixed $\CC[G]$-module $H$ if there are submodules
\[0\subset V_1 \subset V_2\subset V\]
such that $G$ acts on $V_1$ and $V/V_2$ trivially, and $V_2/V_1\cong H$ as $\CC[G]$-modules. If $V_1=0$, $\rho$ is called \textbf{affine}; if $V_2=V$, $\rho$ is called \textbf{co-affine}.
\end{definition}
\begin{remark}
   Let $V^*=\operatorname{Hom}(V,\CC)$ be the dual $G$-module, then the dual representation \[\rho^*:G\to \GL(V^*)\] is bi-affine with core $H^*$. The filtration
   \[0\subset (V/V_2)^*\subset (V/V_1)^* \subset V^*\]
   consists of $G$-invariant submodules, and \[(V/V_1)^*/(V/V_2)^*\cong (V_2/V_1)^*\cong H^*,\]
   and $(V/V_2)^*$ and $V^*/(V/V_1)^*\cong V_1^*$ are trivial $G$-modules.
\end{remark}

The class of bi-affine representations is closed under extensions by trivial representations, provided that $H^1(G;\CC)=0$.
\begin{proposition}\label{prop: extend bi-affine}
Assume that $H^1(G;\CC)=0$. Let $W$ be a $\CC[G]$-module with a submodule $V$. If $V$ (resp.~$W/V$) is a bi-affine representation with core $H$, and $W/V$ (resp.~$V$) is a trivial $G$-module, then $W$ is also a bi-affine representation with core $H$.
\end{proposition}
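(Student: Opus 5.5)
There are two cases, and in each I will write down an explicit filtration of $W$. The case split is according to whether the trivial piece sits at the top or at the bottom.

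\emph{Case 1: $V$ is bi-affine and $W/V$ is trivial.} Let $0\subset V_1\subset V_2\subset V$ be a bi-affine filtration of $V$. I take $W_1=V_1$ and $W_2=V_2$, viewed inside $W$. Then $W_1$ is trivial and $W_2/W_1\cong H$. What remains is to show that $W/V_2$ is trivial. It sits in a short exact sequence
\[0\to V/V_2\to W/V_2\to W/V\to 0\]
whose outer terms are trivial $G$-modules.

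I will use the following claim, which is the key point. If $H^1(G;\CC)=0$, then every extension of a trivial module $B$ by a trivial module $A$ is itself trivial. To prove it, choose a vector-space splitting. The action then takes the form $g\mapsto\begin{pmatrix}I&c(g)\\0&I\end{pmatrix}$, and $c$ is a homomorphism $G\to\operatorname{Hom}(B,A)\cong\CC^{ab}$. A homomorphism to a trivial module is an element of $H^1(G;\CC)\otimes\operatorname{Hom}(B,A)=0$. Hence $c=0$. This is exactly where the hypothesis enters.

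\emph{Case 2: $W/V$ is bi-affine and $V$ is trivial.} Let $0\subset U_1\subset U_2\subset W/V$ be a bi-affine filtration, and let $\widetilde U_i$ denote the preimages of $U_i$ in $W$. I set $W_1=\widetilde U_1$ and $W_2=\widetilde U_2$. Then $W/W_2\cong (W/V)/U_2$ is trivial, and $W_2/W_1\cong U_2/U_1\cong H$. The submodule $W_1$ is an extension of the trivial module $U_1$ by the trivial module $V$, so the same claim shows that $W_1$ is trivial.

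Alternatively, Case 2 follows from Case 1 by dualizing, using the preceding remark. This requires $H$ to be self-dual, or else the core should be tracked as $H^*$. The direct argument avoids this issue altogether, so I will use it.

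The only nontrivial step is the claim that extensions of trivial modules are trivial when $H^1(G;\CC)=0$. It is a routine computation. One should note that nothing needs to be done with extensions involving $H$ itself, because $H$ stays intact as a subquotient in both constructions.
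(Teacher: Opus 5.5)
Your proof is correct and follows the paper's argument. In the first case the paper uses the same filtration $0\subset V_1\subset V_2\subset W$ and gets triviality of $W/V_2$ from $\operatorname{Ext}^1_G(W/V,V/V_2)\cong H^1(G;\CC)\otimes\operatorname{Hom}(W/V,V/V_2)=0$, which is exactly your explicit matrix computation. For the second case the paper only says it is dual, while you pull back the filtration directly; that is equally valid and avoids passing through $H^*$ and double duals.
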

\begin{proof}
We prove the first case; the second is dual.
Assume that $V$ is bi-affine with core $H$ and that $W/V$ is a trivial $G$-module. Let
\[0\subset V_1\subset V_2\subset V\]
be the filtration defining the bi-affine structure on $V$. Then the short exact sequence of $G$-modules
\[0\to V/V_2\to W/V_2 \to W/V \to 0\]
splits since both $V/V_2$ and $W/V$ are trivial $G$-modules, and
\[\text{Ext}_{G}^1(W/V,V/V_2)\cong H^1(G;\operatorname{Hom}(W/V,V/V_2))\cong H^1(G;\CC)\otimes \operatorname{Hom}(W/V,V/V_2)=0.\]
Hence $W/V_2$ is a trivial $G$-module, and the filtration
\[0\subset V_1 \subset V_2\subset W\]
makes $W$ a bi-affine representation with core $H$.
\end{proof}

In our setting, we are primarily interested in the case
\[G=\Mod(S), \quad H=H_1(S_g;\CC).\]
However, in this subsection we formulate more general results on bi-affine representations, using only the properties of $G$ and $H$ that are needed.

From now on, assume that $G$ satisfies 
\[H^1(G;\CC)=0,\]
and that $H$ is an irreducible $\CC[G]$-module with
\[\text{End}_G(H)=\CC.\]
Suppose further that $H$ admits a non-degenerate alternating $G$-invariant bilinear form
\[\omega: H\otimes H \to \CC,\]
which induces a $G$-equivariant isomorphism $H^*\cong H$. Put
\[\mathcal{E}_G=H^1(G;H).\]
Cup product followed by contraction with $\omega$ defines a symmetric bilinear form
\begin{equation}\label{eq: contraction form}
    \mathcal{B}_G:\mathcal{E}_G\otimes \mathcal{E}_G \to H^2(G;\CC),\quad \mathcal{B}_G(x,y) \coloneq \omega_*(x\cup y).
\end{equation} 
Bi-affine representations of $G$ with core $H$ are then classified as follows.
\begin{proposition}\label{prop: classify bi-affine}
The isomorphism classes of bi-affine representations of $G$ with core $H$ are in one-to-one correspondence with the orbits of pairs
\[x\in \mathcal{E}_G \otimes V_1,\quad y\in \mathcal{E}_G \otimes (V/V_2)^*\]
satisfying 
\[\widehat{\mathcal{B}}_G(x,y)=0\in H^2(G;\CC)\otimes \operatorname{Hom}(V/V_2,V_1),\]
where $\widehat{\mathcal{B}}_G$ is the map induced by $\mathcal{B}_G$ and the natural pairing
\[
V_1\otimes (V/V_2)^*\longrightarrow \operatorname{Hom}(V/V_2,V_1).
\]
The orbits are taken with respect to the action of $\GL(V_1)\times \GL(V/V_2)$ given by
\[(P,R)\cdot(x,y)=(xP,yR^{-1}).\]
\end{proposition}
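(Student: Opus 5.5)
We will work in block-matrix form. Choose a vector-space splitting $V=V_1\oplus H\oplus (V/V_2)$ compatible with the filtration. A bi-affine representation then takes the form
\[
\rho(g)=\begin{pmatrix} I & a(g) & c(g)\\ 0 & h(g) & b(g)\\ 0&0&I\end{pmatrix},
\]
where $h\colon G\to\GL(H)$ is the core action, $a\colon G\to \operatorname{Hom}(H,V_1)\cong H^*\otimes V_1\cong H\otimes V_1$ (identified using $\omega$), $b\colon G\to \operatorname{Hom}(V/V_2,H)\cong H\otimes (V/V_2)^*$, and $c\colon G\to\operatorname{Hom}(V/V_2,V_1)$.

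Expanding $\rho(gg')=\rho(g)\rho(g')$ gives three conditions:
\begin{itemize}
\item $a$ is a $1$-cocycle with values in $H^*\otimes V_1$;
\item $b$ is a $1$-cocycle with values in $H\otimes(V/V_2)^*$;
\item $c(gg')-c(g)-c(g')=a(g)\,g\cdot b(g')$.
\end{itemize}
The right-hand side of the third condition is, up to sign, the cochain cup product $a\cup b$ followed by the pairing $H^*\otimes H\to\CC$, i.e.\ contraction with $\omega$. So $c$ exists if and only if the class $\widehat{\mathcal{B}}_G([a],[b])$ vanishes in $H^2(G;\CC)\otimes\operatorname{Hom}(V/V_2,V_1)$. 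Setting $x=[a]$ and $y=[b]$, this produces the pair in the statement and shows that it satisfies the vanishing condition. Conversely, given such a pair, pick cocycle representatives and a cochain $c$ with $\delta c=$ the contraction; the formula above then defines a bi-affine representation. This gives surjectivity of the correspondence.

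The main step is to show the map is well defined and injective modulo $\GL(V_1)\times\GL(V/V_2)$. For well-definedness, we use that any isomorphism of bi-affine modules preserves the filtration. Indeed, since $H$ is irreducible and nontrivial, $V_1$ equals the invariants $V^G$ restricted appropriately, and $V_2$ is characterized as well. More carefully, since $H^1(G;\CC)=0$, an isomorphism $V\to V'$ must send $V_1$ into $V_1'$, because $\operatorname{Hom}_G(\CC,H)=0$ and there are no nonzero maps from trivial modules into $H$. It similarly sends $V_2$ to $V_2'$. The induced map on $V_2/V_1\cong H$ is then a scalar, since $\operatorname{End}_G(H)=\CC$.

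An isomorphism is thus a block upper-triangular matrix
\[
\begin{pmatrix}P&*&*\\0&\lambda I&*\\0&0&R\end{pmatrix}.
\]
Conjugating by it does three things:
\begin{itemize}
\item it changes $a$ and $b$ by coboundaries, via the off-diagonal blocks $P\to H$ and $H\to R$;
\item it rescales them by $P,\lambda,R$;
\item it changes $c$ by the corner block and by correction terms.
\end{itemize}
The scalar $\lambda$ can be absorbed into $P$ and $R$. This gives the action $(xP,yR^{-1})$ up to the paper's conventions for row and column actions.

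For injectivity, suppose two representations have the same orbit of $(x,y)$. First conjugate so that $a'=a$ and $b'=b$ on the nose. The $c$'s then differ by a $1$-cocycle $c-c'$ in $\operatorname{Hom}(V/V_2,V_1)$, which is a coboundary because $H^1(G;\CC)=0$. A constant matrix is then a coboundary of the trivial module, so the cocycle is actually zero, and hence $c=c'$. If one instead only gets $c-c'$ equal to a coboundary coming from changing $a$ and $b$, adjust the off-diagonal blocks accordingly.

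The main obstacle is the bookkeeping in the injectivity step. The coboundary modifications of $a$ and $b$ change $c$ by terms such as $u\cdot b(g)$ and $a(g)\cdot v$, and we must check these are compensated by the choice of $c$ and the corner block. We handle this by a direct matrix computation with the conjugating unipotent matrix.
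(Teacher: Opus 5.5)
Your existence and injectivity arguments are correct, and they are the paper's own argument written in the matrix form of its remark on matrix forms. The paper phrases the same thing through the long exact $\operatorname{Ext}^*_G(V/V_2,-)$ sequence, whose connecting map is the Yoneda product with $x$. There, $\operatorname{Ext}^1_G(V/V_2,V_1)=0$ plays the role of your observation that $c-c'$ is a homomorphism $G\to\operatorname{Hom}(V/V_2,V_1)$ and hence zero.

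The gap is in well-definedness. You claim every $G$-isomorphism $\phi\colon V\to V'$ sends $V_1$ into $V_1'$ and $V_2$ into $V_2'$, and this is false. Take $V=\CC e_1\oplus H\oplus\CC e_2$ with $G$ acting trivially on $e_1,e_2$, $V_1=\CC e_1$ and $V_2=\CC e_1\oplus H$. Swapping $e_1$ and $e_2$ is a $G$-automorphism that preserves neither subspace. In general $V^G$ is an extension of $\ker(y\colon V/V_2\to\mathcal{E}_G)$ by $V_1$, so it is strictly larger than $V_1$ whenever $y$ is not injective. Your justification via $\operatorname{Hom}_G(\CC,H)=0$ only gives $\phi(V_1)\cap V_2'\subseteq V_1'$, because a fixed vector may project nontrivially to $V'/V_2'$. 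So, as written, you have not shown that the orbit of $(x,y)$ depends only on the isomorphism class of $V$.

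There are two ways to repair this.

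\begin{itemize}
\item Take the filtration to be part of the data and classify up to filtration-preserving isomorphism. This is what the paper's ``up to changes of bases in the trivial modules'' implicitly does, and then your block-triangular computation is exactly right.
\item Prove invariance directly. For fixed $\dim V_1$ and $\dim V/V_2$, the $\GL(V_1)\times\GL(V/V_2)$-orbit of $(x,y)$ is determined by the subspaces $\operatorname{im}(x)$ and $\operatorname{im}(y)$ of $\mathcal{E}_G$, viewing $x\colon V_1^*\to\mathcal{E}_G$ and $y\colon V/V_2\to\mathcal{E}_G$.
\end{itemize}

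For the direct proof, argue as follows.
\begin{itemize}
\item Since $H^G=0$, we have $V_2^G=V_1$.
\item Since $H^1(G;\CC)=0$, a vector of $V/V_2$ lifts to $V^G$ if and only if $y$ kills it.
\item Hence $V/V^G$ is an extension of $(V/V_2)/\ker y$ by $H$ whose class is injective with image $\operatorname{im}(y)$.
\item In $V/V^G$ the copy of $H$ is the unique submodule isomorphic to $H$, because $\operatorname{Hom}_G(H,\CC)=0$.
\item The ambiguity $\operatorname{Aut}_G(H)=\CC^*$ does not move the image subspace, so $\operatorname{im}(y)$ is an invariant of the $G$-module $V$.
\item Applying the same argument to $V^*$ handles $\operatorname{im}(x)$.
\end{itemize}
This module-level version is what the paper actually needs later, e.g.\ for the criterion that a bi-affine representation is self-dual if and only if $V\cong V^*$.
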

\begin{proof}
    Suppose $V$ is a bi-affine representation of $G$ with core $H$ as in Definition \ref{def: biaffine rep}. 
    
    The extension
    \begin{equation}\label{ses: extension 1}
        0\to V_1\to V_2\to H\to 0
    \end{equation}
    determines an element
    \[x\in \text{Ext}_G^1(H,V_1)\cong H^1(G;\operatorname{Hom}(H,V_1))\cong H^1(G;H^*)\otimes V_1 \cong \mathcal{E}_G \otimes V_1,\]
    where the last isomorphism uses the identification $H^*\cong H$ induced by the form $\omega$.
    The extension
    \[0\to V_2\to V\to V/V_2\to 0,\]
    then determines an element in $\text{Ext}^1_G(V/V_2,V_2)$.
    
    Applying the functor $\text{Ext}^*_G(V/V_2,-)$ to \eqref{ses: extension 1} gives the long exact sequence
    \begin{equation}\label{les: ext}
        \cdots\to \text{Ext}^1_G(V/V_2,V_1)\to \text{Ext}^1_G(V/V_2,V_2)\to \text{Ext}^1_G(V/V_2,H)\to \text{Ext}^2_G(V/V_2,V_1)\to \cdots.
    \end{equation}
    Here
    \[\text{Ext}^1_G(V/V_2,V_1)\cong H^1(G;\operatorname{Hom}(V/V_2,V_1))\cong H^1(G;\CC)\otimes \operatorname{Hom}(V/V_2,V_1)=0\]
    where the last equality follows from our assumption. Therefore, the long exact sequence \eqref{les: ext} gives
    \[\text{Ext}^1_G(V/V_2,V_2)\cong \text{Ker}( \text{Ext}^1_G(V/V_2,H)\xrightarrow{\partial_x} \text{Ext}^2_G(V/V_2,V_1),\]
    where the connecting map $\partial_x$ is given by the Yoneda product with $x$:
    \begin{align*}
        \partial_x:\text{Ext}^1_G(V/V_2,H)\cong H^1(G;H)\otimes (V/V_2)^*&\longrightarrow \text{Ext}^2_G(V/V_2,V_1)\cong H^2(G;\CC)\otimes \operatorname{Hom}(V/V_2,V_1) \\
        y&\longmapsto \widehat{B}_G(x,y).
    \end{align*}
    Hence the isomorphism class of the bi-affine representation $V$ is determined by
    \[x\in \mathcal{E}_G \otimes V_1,\quad y\in \mathcal{E}_G \otimes (V/V_2)^*,\] 
    up to changes of bases in the trivial modules $V_1$ and $V/V_2$.
\end{proof}
\begin{remark}\label{rmk: matix form}
   Under the vector-space splitting $V=V_1\oplus H\oplus (V/V_2)$, a bi-affine representation $\rho:G\to \GL(V)$ has the matrix form
   \begin{equation}\label{eq: matrix form}
       \rho(f)=\begin{pmatrix}
   I_{V_1} & \delta(f) & \chi(f) \\
   0 &\Psi(f) & \sigma(f) \\
   0 & 0 & I_{V/V_2}   \end{pmatrix}
   \end{equation}
   where $\Psi:G\to \GL(H)$ is the given action on $H$. The homomorphism equation $\rho(fh)=\rho(f)\rho(g)$ is equivalent to the identities
   \begin{align}
   \delta(fh)&=\delta(f)\Psi(h)+\delta(h),\label{eq: delta cocycle}\\
   \sigma(fh)&=\sigma(f)+\Psi(f)\sigma(h),\label{eq: sigma cocycle}\\
   \chi(fh)&=\chi(f)+\chi(h)+\delta(f)\sigma(h).\label{eq: chi equation}
   \end{align}
   The identities \eqref{eq: delta cocycle} and \eqref{eq: sigma cocycle} imply that
   \[[\delta]\in \mathcal{E}_G \otimes V_1,\quad [\sigma]\in \mathcal{E}_G \otimes (V/V_2)^*.\]
   The condition $\widehat{\mathcal{B}}_G([\delta],[\sigma])=0$ is precisely the condition for the existence of a map $\chi$ satisfying
   \eqref{eq: chi equation}. Since
   \[H^1(G;\operatorname{Hom}(V/V_2,V_1))\cong H^1(G;\CC)\otimes \operatorname{Hom}(V/V_2,V_1)=0,\]
   any two such choices of \(\chi\) differ by a coboundary, and hence correspond to changes of bases.
\end{remark}

\begin{definition}\label{def: self dual}
   Let $\rho:G\to \GL(V)$ be a bi-affine representation determined by the pair
   \[x\in \mathcal{E}_G \otimes V_1,\quad y\in \mathcal{E}_G \otimes (V/V_2)^*.\]
   Then the dual representation\[\rho^*:G\to \GL(V^*)\]
   is bi-affine with core $H^*\cong H$ and is determined by the pair
   \[y\in \text{Ext}_{G}^1(H^*,(V/V_2)^*)\cong \mathcal{E}_G \otimes (V/V_2)^*,\quad x\in \text{Ext}_{G}^1(V_1^*,H^*)\cong  \mathcal{E}_G \otimes V_1.\]
   If $V_1\cong (V/V_2)^*$, and under this identification $x=y$ up to the action of $\GL(V_1)$,
   then we call $\rho$ \textbf{self-dual}.
\end{definition}
\begin{remark}\label{rmk: self dual}
   Indeed, by Proposition \ref{prop: classify bi-affine}, a bi-affine representation $\rho:G\to \GL(V)$ is self-dual if and only if $V\cong V^*$ as $G$-modules.
\end{remark}

\subsection{Classification of bi-affine representations of $\Mod(S_g^n)$}\label{sec: 2.2}
In the previous subsection, we established a general cohomological criterion for classifying the isomorphism classes of bi-affine representations. We now study to the case
\[G=\Mod(S_g^n), \quad H=H_1(S_g;\CC), \]
and obtain a more explicit description of the corresponding isomorphism classes.

We begin with the following result. It was proved by Morita \cite{MoritaI} for $n+r\le 1$ (over $\ZZ$) and generalized by Hain \cite[Proposition~5.2]{HainTorelliGrp}.
\begin{proposition} For $g\ge 3$, we have
    \[H^1(\Mod(S_{g,r}^n);H)\cong \CC^{n+r}.\]
\end{proposition}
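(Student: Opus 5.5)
We will argue by induction on the number $n+r$ of boundary components and punctures, starting from Morita's computation $H^1(\Mod(S_g);H)=0$ and $H^1(\Mod(S_{g,1});H)\cong H^1(\Mod(S_g^1);H)\cong\CC$. The basic tools are the Birman exact sequence and the capping sequence, combined with the inflation–restriction (Hochschild–Serre) five-term exact sequence.

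\textbf{Punctures.} Forgetting one puncture gives the Birman exact sequence
\[1\to \pi_1(S_{g,r-1}^n)\to \Mod(S_{g,r}^n)\to \Mod(S_{g,r-1}^n)\to 1.\]
The five-term sequence reads
\[0\to H^1(\Mod(S_{g,r-1}^n);H)\to H^1(\Mod(S_{g,r}^n);H)\to H^1(\pi_1;H)^{\Mod(S_{g,r-1}^n)}\to H^2(\Mod(S_{g,r-1}^n);H).\]
Here $\pi_1=\pi_1(S_{g,r-1}^n)$ acts trivially on $H$, so $H^1(\pi_1;H)=\operatorname{Hom}(H_1(S_{g,r-1}^n;\CC),H)$. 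For $n+r-1\ge 1$ this group is $\operatorname{Hom}(H\oplus\CC^{n+r-2},H)$. Its invariants are $\operatorname{End}_G(H)\oplus \operatorname{Hom}_G(\CC^{n+r-2},H)=\CC\oplus 0$, using that $H$ is irreducible and nontrivial with $\operatorname{End}_G(H)=\CC$. The invariant class is the identity, and it is realized by the crossed homomorphism coming from the forgotten point (Morita's $k$ for the once-punctured surface, pulled back along forgetful maps). Hence the last map in the sequence is zero on it, and the dimension goes up by exactly one. The $g\ge 3$ hypothesis enters through Morita/Hain's identification of these invariants and of the base case. The unpunctured case $n+r-1=0$ is Morita's original computation, which also gives the initial $\CC$.

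\textbf{Boundary.} Capping a boundary component with a punctured disk gives the central extension
\[1\to\ZZ\to\Mod(S_{g,r}^n)\to\Mod(S_{g,r+1}^{n-1})\to1,\]
with kernel generated by the boundary twist. The boundary twist acts trivially on $H$ and is central, so inflation–restriction gives
\[0\to H^1(\Mod(S_{g,r+1}^{n-1});H)\to H^1(\Mod(S_{g,r}^n);H)\to \operatorname{Hom}(\ZZ,H)^{G}=H^G=0.\]
Therefore $H^1$ is unchanged when a boundary component is traded for a puncture. This reduces everything to the punctured case and gives $\CC^{n+r}$.

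\textbf{Main obstacle.} The key step is showing that the transgression $H^1(\pi_1;H)^{G}\to H^2(\Mod(S_{g,r-1}^n);H)$ vanishes on the identity class. I would handle it by writing down the explicit lifted crossed homomorphism $\kappa_i$, namely the Earle/Morita class attached to the $i$-th marked point, whose restriction to the point-pushing subgroup is the identity. The existence of such a lift forces the transgression to vanish.
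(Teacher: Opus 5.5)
Your proof is correct, and it takes a genuinely different route from the paper. The paper gives no argument for this proposition: it cites Morita for $n+r\le 1$ and Hain's Proposition~5.2 in general. Only afterwards does it prove (Lemma~\ref{lem: two crossed homomorphisms}) that the classes $[\kappa_i]$ are linearly independent, by evaluating on point-pushing elements, and it relies on the cited dimension count to conclude that they span.

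Your induction computes the dimension directly.
\begin{itemize}
\item The capping extension contributes nothing, because the new term is $\operatorname{Hom}(\ZZ,H)^{Q}=H^{Q}=0$.
\item Each Birman step contributes exactly one dimension. The group $H^1(\pi_1;H)^Q$ is one-dimensional, and the transgression vanishes: by Proposition~\ref{prop: image of point pushing}, Morita's $\kappa$, pulled back along the forget/cap map to $\Mod(S_{g,1})$, restricts on the point-pushing subgroup to $(2-2g)$ times the projection $\pi_1\to H$.
\end{itemize}
This needs only $H^1(\Mod(S_g);H)=0$ and the point-pushing formula, which the paper uses anyway. As a by-product it shows directly that the pulled-back Morita classes form a basis. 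For $\Mod(S_g^n)$ these are exactly $[\kappa_i]=\operatorname{Cap}_i^*[\kappa]$, so your argument also subsumes the paper's lemma.

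One step should be stated more carefully. $H_1(S_{g,r-1}^n;\CC)$ is in general \emph{not} $Q$-equivariantly isomorphic to $H\oplus\CC^{n+r-2}$. By Remark~\ref{rmk: geometric interpretation}(3), $H_1(S_g^2;\CC)$ is a non-split co-affine extension; point-pushing a boundary component acts nontrivially on it while acting trivially on $H$. So the invariants should be computed from the left-exact sequence $0\to\operatorname{End}_Q(H)\to\operatorname{Hom}_Q(H_1(S_{g,r-1}^n;\CC),H)\to\operatorname{Hom}_Q(\CC^{n+r-2},H)=0$. The conclusion that the invariants are $\CC$ is unchanged. This step is elementary, using only $H^Q=0$ and $\operatorname{End}_Q(H)=\CC$, so you should not appeal to Hain's identification for it; that would be circular, since Hain's proposition is the statement being proved.
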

In particular,
\[\mathcal{E}_{\Mod(S_g^n)}=H^1(\Mod(S_g^n);H)\cong \CC^n.\]
We give an explicit basis for this cohomology group. We do so in terms of the generator of
\[H^1(\Mod(S_{g,1});H_1(S_g;\ZZ))\cong \ZZ,\]
which is a crossed homomorphism\footnote{A map $\sigma$ from a group $G$ to a $G$-module $M$ is called a \textbf{crossed homomorphism} if $\sigma$ satisfies $\sigma(fh)=\sigma(f)+f\cdot\sigma(h)$ for any $f,h\in G$. A crossed homomorphism is called \textbf{principal} if there exists $u\in M$ such that $\sigma(f)=f\cdot u-u$ for all $f\in G$. The set of crossed homomorphisms module principal ones is isomorphic to $H^1(G;M)$.} constructed explicitly by 
Morita \cite[Section 6]{MoritaI}.
\begin{definition}\label{def: kappa}
The crossed homomorphism
\[\kappa:\Mod(S_{g,1})\longrightarrow H_1(S_g;\ZZ)\]
is defined as follows. Under the identification of $H_1(S_g;\ZZ)$ with its dual induced by the algebraic intersection pairing $\omega$, we view $\kappa$ as the map
\begin{align*}
    \Mod(S_{g,1})\times H_1(S_g;\ZZ) &\longrightarrow \ZZ, \\
    (f,x)&\longmapsto d(f\cdot \gamma)-d(\gamma)
\end{align*}
where $\gamma\in\pi_1(S_{g,1})$ is any element satisfying
$x=[\gamma]\in H_1(S_g;\ZZ)$, and $d$ is the map
\[d=\sum_{k=1}^g d_k:\pi_1(S_{g,1})\to \ZZ\]
where $d_k$ is given by
\begin{align*}
    d_k: \pi_1(S_g^1) \xrightarrow{\mathrm{proj}} \mathbb{F}_2 \langle \alpha_k,\beta_k \rangle &\longrightarrow \ZZ \\
    \alpha_k^{\epsilon_1}\beta_k^{\rho_1}\cdots \alpha_k^{\epsilon_n}\beta_k^{\rho_n} &\longmapsto \sum\limits_{i=1}^n \epsilon_i \sum\limits_{j=i}^n \rho_j-\sum\limits_{i=1}^n \rho_i \sum\limits_{j=i+1}^n \epsilon_j.
\end{align*}
Here $\{\alpha_1,\beta_1,\cdots, \alpha_g,\beta_g\}$ is a chosen basis of $\pi_1(S_{g,1})$ satisfying
\[\omega(\alpha_j,\beta_k)=\delta_j^k,\quad \omega(\alpha_j,\alpha_k)=\omega(\beta_j,\beta_k)=0.\]
\end{definition}
\begin{remark}
   Besides the above combinatorial definition, $\kappa$ also admits several geometric interpretations. Earle \cite{Earle} interpreted it via the moduli space of Riemann surfaces. Trapp \cite{Trapp} and Furuta \cite{FurutabyMorita} (described by Morita) constructed it using the unit tangent bundle and winding numbers. Chen \cite{chen} gave another construction in terms of circle actions and rotation numbers.
\end{remark}
We recall the following useful property of the crossed homomorphism $\kappa$.
\begin{proposition}[\protect{\cite[Proposition 3.1]{MoritaII}}]\label{prop: image of point pushing}
    Let $\kappa:\Mod(S_{g,1})\to H_1(S_g;\ZZ)$ be the map defined in Definition \ref{def: kappa}. Let $\pi_1(S_g)$ be the point-pushing subgroup of $\Mod(S_{g,1})$, which is the kernel of the forgetful map $\Mod(S_{g,1})\to \Mod(S_g)$. Then for any $\gamma\in \pi_1(S_g)$, we have
    \[\kappa(\gamma)=(2-2g)[\gamma].\]
\end{proposition}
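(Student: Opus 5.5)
The plan is to compute $\kappa$ directly on the point-pushing subgroup, using the combinatorial formula of Definition~\ref{def: kappa}. Identifying $H$ with $H^*$ via $\omega$, the statement amounts to the identity
\[
d(\gamma\eta\gamma^{-1})-d(\eta)=(2-2g)\,\omega(\gamma,\eta)
\]
for all $\gamma,\eta\in\pi_1(S_{g,1})$ (up to the sign convention relating the push map to conjugation). Here the point-pushing map $\mathrm{Push}(\gamma)$ acts on $\pi_1(S_{g,1})$, with the puncture as basepoint, by conjugation by $\gamma^{\pm1}$.

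\emph{Step 1: reduce to a statement about $d$ alone.}
\begin{itemize}
\item The restriction of $\kappa$ to $\pi_1(S_g)$ is an honest homomorphism, since $\pi_1(S_g)$ acts trivially on $H$. The right-hand side $(2-2g)[\gamma]$ is also a homomorphism.
\item Hence it suffices to verify the identity for $\gamma$ ranging over a generating set, for instance the standard generators $\alpha_k,\beta_k$ (viewed in $\pi_1(S_g)$ as images of elements of $\pi_1(S_{g,1})$).
\item For each such $\gamma$ one must evaluate $d(\gamma\eta\gamma^{-1})-d(\eta)$ for an arbitrary $\eta$. Well-definedness in $\eta$ is already part of the definition of $\kappa$, so we may take $\eta$ to run over the basis $\alpha_j,\beta_j$ of $H$.
\end{itemize}

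\emph{Step 2: understand how $d$ behaves on products.} Each $d_k$ factors through the free group $\mathbb F_2\langle\alpha_k,\beta_k\rangle$. The formula for $d_k$ is a quadratic ``linking'' count, so I expect a product rule of the form
\[
d_k(uv)=d_k(u)+d_k(v)+\text{(bilinear term in the abelianizations of }u\text{ and }v).
\]
Concretely, the bilinear term should be the $\epsilon(u)\rho(v)$-type term read off from the double sum. Granting this rule:
\begin{itemize}
\item Conjugation changes $d_k$ by an expression involving only the abelian exponents of $\gamma$ and $\eta$ in the $k$-th handle, plus the bilinear defect coming from $\gamma\gamma^{-1}$.
\item Summing over $k$ should produce a multiple of $\omega(\gamma,\eta)$.
\end{itemize}

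\emph{Step 3: account for the boundary relation.} This is where the factor $2-2g$ arises. The conjugation action of $\pi_1(S_g)$ is only defined after passing through the relation $\zeta=\prod_k[\alpha_k,\beta_k]$, where $\zeta$ is the boundary loop. Two consequences must be tracked:
\begin{itemize}
\item Lifting a push map to $\Mod(S_{g,1})$ involves correction terms by $\zeta$.
\item Moving $\gamma$ across the other handles introduces commutator factors $[\alpha_j,\beta_j]$. Each such commutator has $d_j$-value $\pm1$, and those from the $g-1$ handles other than the one supporting $\gamma$ accumulate.
\end{itemize}
I expect the count to come out as roughly $2$ from the local handle minus $2g$ from the total, giving $(2-2g)\,\omega(\gamma,\eta)$.

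The main obstacle is this last bookkeeping: fixing consistent conventions for the push map, the direction of conjugation, and the signs in $d_k$, so that the local and global contributions combine to exactly $2-2g$ rather than, say, $2g-2$ or $-2g$. As a fallback if the combinatorics becomes unwieldy, use the winding-number interpretation of $\kappa$ (Trapp, Furuta). Pushing around $\gamma$ changes the winding number of a curve $\eta$ by $\chi(S_g)\,\omega(\gamma,\eta)=(2-2g)\,\omega(\gamma,\eta)$, since the vector field has total index $\chi(S_g)$ concentrated at the puncture. This gives the result conceptually, and the Step~1 reduction then needs only one generator check to fix the sign.
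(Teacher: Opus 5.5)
The paper gives no proof of this statement; it quotes Morita \cite[Proposition~3.1]{MoritaII}. Your proposal has a genuine gap, and the identity you reduce to is false. The product rule you anticipate in Step~2 is exactly $d(uv)=d(u)+d(v)+\omega([u],[v])$ on the free group $\pi_1(S_g^1)$. It gives $d(u\eta u^{-1})-d(\eta)=2\,\omega(u,\eta)$ for every $g$, so free-group conjugation contributes $2$, not $2-2g$. The point-push acts by conjugation by $\gamma$ only on the closed surface group $\pi_1(S_g,*)$, and $d$ is not defined there: $d([\alpha_k,\beta_k])=2$ (not $\pm1$), hence $d(\zeta)=2g\neq0$ for $\zeta=\prod_k[\alpha_k,\beta_k]$.

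To evaluate $\kappa$ you must lift $\operatorname{Push}(\gamma)$ to $\Mod(S_g^1)$ (e.g.\ $T_{\gamma^+}T_{\gamma^-}^{-1}$) and determine its action on the free group. That action has the form $\eta\mapsto\gamma\eta\gamma^{-1}n_\eta$ with $n_\eta\in\langle\langle\zeta\rangle\rangle$. Since $d(u\zeta^{\pm1}u^{-1})=\pm2g$ and $d$ is additive on $\langle\langle\zeta\rangle\rangle$, you get $\kappa(\operatorname{Push}(\gamma))(\eta)=2\,\omega(\gamma,\eta)+2g\,e(n_\eta)$, where $e$ is the $\zeta$-exponent sum. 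With matching sign conventions, the proposition is therefore equivalent to $e(n_\eta)=-\omega(\gamma,\eta)$. That is the entire content of the result. Step~3 only records an expectation about it, with an incorrect count (commutators of value $\pm1$ accumulating over $g-1$ handles).

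The gap can be closed along your lines. Point-pushes act trivially on $H$, so the crossed-homomorphism identity gives $\kappa(hfh^{-1})=h\cdot\kappa(f)$ for such $f$. Hence $[\gamma]\mapsto\kappa(\operatorname{Push}(\gamma))$ is an $\Sp_{2g}(\ZZ)$-equivariant endomorphism of $H$, i.e.\ $c_g[\gamma]$ for some integer $c_g$. This strengthens your Step~1 to a single case, but one explicit computation of a disk-pushing map on the free generators must still show $c_g=2-2g$.

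Your winding-number fallback is a correct computation for the Trapp--Furuta crossed homomorphism $\kappa_w$. Transferring it to the combinatorially defined $\kappa$ requires $[\kappa_w]=[\kappa]$ with the same normalization; given that, it does suffice, since coboundaries vanish on point-pushes. Without that identification the two classes may a priori differ by an arbitrary scalar, not just a sign. Moreover, values on a single element are class invariants only for elements acting trivially on $H$, such as point-pushes. So the natural normalization check is exactly the computation you have not done.
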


For $i\in\{1,2,\cdots,n\}$, by gluing a punctured disk to the $i$-th boundary component of $S_g^n$ and disks to the remaining boundary components, and extending homeomorphisms by the identity, we obtain induced surjective maps
\[\operatorname{Cap}_i: \Mod(S_{g}^n)\longrightarrow \Mod(S_{g,1}).\]
\begin{lemma}\label{lem: two crossed homomorphisms}
    Let $\kappa:\Mod(S_{g,1})\to H_1(S_g;\ZZ)$ be the crossed homomorphism defined in Definition \ref{def: kappa}. Define
    \[\kappa_i: \Mod(S_g^n)\xrightarrow{\operatorname{Cap}_i}  \Mod(S_{g,1})\xrightarrow{\kappa} H_1(S_g;\ZZ), \quad i\in\{1,2,\cdots,n\}.\] Then the cohomology classes $[\kappa_i]$ ($1\le i \le n$) generate $ \mathcal{E}_{\Mod(S_g^n)}$.
\end{lemma}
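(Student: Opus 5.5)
Since $\mathcal{E}_{\Mod(S_g^n)}\cong\CC^n$ by the preceding Proposition, it suffices to show that the $n$ classes $[\kappa_1],\dots,[\kappa_n]$ are linearly independent over $\CC$. The plan is to evaluate suitable combinations of these crossed homomorphisms on elements of $\Mod(S_g^n)$ that act trivially on $H$. On such elements a crossed homomorphism restricts to a homomorphism, and any principal crossed homomorphism vanishes there. So if $\sum_i c_i[\kappa_i]=0$, then $\sum_i c_i\kappa_i$ must vanish on every element acting trivially on $H$. I will produce test elements on which the values of the $\kappa_i$ form a nondegenerate matrix.

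The test elements come from the pushing maps for the boundary components. For each $j$, consider an arc or loop through the $j$-th boundary component. Concretely, take a simple closed curve $\gamma$ on $S_g$ based near boundary $j$, and the element $P_j(\gamma)=T_{\gamma'}T_{\gamma''}^{-1}$, where $\gamma',\gamma''$ are the two boundary curves of a regular neighborhood of $\gamma\cup\partial_j$ inside $S_g^n$. This element is a product of twists about homologous curves in the capped surface $S_g$, so it acts trivially on $H$.

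Under $\operatorname{Cap}_j$ (puncture on boundary $j$, disks elsewhere), $P_j(\gamma)$ maps to the point-push of $\gamma$ in $\Mod(S_{g,1})$. By Proposition~\ref{prop: image of point pushing}, this gives $\kappa_j(P_j(\gamma))=(2-2g)[\gamma]$. Under $\operatorname{Cap}_i$ with $i\neq j$, boundary $j$ is capped by a disk, so $\gamma'$ and $\gamma''$ become isotopic in $S_{g,1}$. Hence $P_j(\gamma)\mapsto 1$ and $\kappa_i(P_j(\gamma))=0$. I must choose $\gamma'$ and $\gamma''$ so that they do not separate off other boundary components differently, for instance by keeping the neighborhood away from the other boundaries. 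The point to check is that after capping all boundaries except $i$, and with $i$ punctured, the two curves still cobound an annulus; this holds because the region between them contains only boundary $j$, which is now filled by a disk.

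Granting these values, suppose $\sum_i c_i\kappa_i=\delta u$ is principal, with $\delta u(f)=f\cdot u-u$. Evaluating at $P_j(\gamma)$ gives $c_j(2-2g)[\gamma]=0$, since $P_j(\gamma)$ acts trivially on $H$. Choosing $[\gamma]\neq0$ and using $g\ge 2$ forces $c_j=0$. Thus the classes $[\kappa_i]$ are independent, and by dimension count they form a basis, in particular generating $\mathcal{E}_{\Mod(S_g^n)}$.

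The main obstacle is the geometric verification in the third paragraph: confirming that $\operatorname{Cap}_j$ sends the boundary-pushing element $P_j(\gamma)$ exactly to the point-pushing class of $\gamma$ (with correct orientation conventions, which only affect signs), and that it dies under the other caps. I expect this to follow from the standard Birman exact sequence picture.
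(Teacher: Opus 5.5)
Your argument is correct and is essentially the paper's proof: reduce to linear independence using $\mathcal{E}_{\Mod(S_g^n)}\cong\CC^n$, then evaluate $\sum_i s_i\kappa_i$ on lifts of point-pushing maps. These lifts act trivially on $H$, are sent to $(2-2g)[\gamma]$ by $\kappa_\ell$ via Proposition~\ref{prop: image of point pushing}, and are killed by the other caps. Your explicit choice of the lift as the boundary push $T_{\gamma'}T_{\gamma''}^{-1}$ is a useful refinement, since it is exactly what justifies the paper's assertion that the lift lies in $\ker\operatorname{Cap}_i$ for $i\neq\ell$ (an arbitrary lift need not).
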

\begin{proof}
    It suffices to show that $[\kappa_i]$ ($1\le i \le n$) are linearly independent. Suppose that \[s_1[\kappa_1]+\cdots+s_n[\kappa_n]=0\in \mathcal{E}_{\Mod(S_g^n)}, \quad s_i\in \CC.\] Then $\sum\limits_{i=1}^n s_i\kappa_i$ is a principal crossed homomorphism, so there exists $u\in H$ such that
    \[\sum\limits_{i=1}^n s_i\kappa_i(f)=f\cdot u-u, \quad \forall f\in \Mod(S_g^n).\]
    Take an element $\gamma$ in the point-pushing subgroup $\pi_1(S_g)$ of $\Mod(S_{g,1})$ with
    $ 0\neq[\gamma]\in H_1(S_g;\ZZ)$. Let $\gamma_{\ell}\in\Mod(S_g^n)$ be a lift of $\gamma$ under the map $\operatorname{Cap}_{\ell}$ for $1\le \ell \le n$. By Proposition~\ref{prop: image of point pushing},
    \[\kappa_{\ell}(\gamma_{\ell})=\kappa(\gamma)=(2-2g)[\gamma].\]
    Moreover, $\kappa_i(\gamma_{\ell})=0$ for $i\neq {\ell}$, since $\gamma_{\ell}$ lies in the kernel of $\operatorname{Cap}_i$.
    Therefore we have 
    \[\sum\limits_{i=1}^n s_i\kappa_i(\gamma_{\ell})=s_{\ell}(2-2g)[\gamma]=\gamma_{\ell}\cdot u-u=0\]
    since the point-pushing subgroup acts trivially on $H$. Since $[\gamma]\neq 0$ and $g\ge 2$, it follows that $s_{\ell}=0$ for each $1\le \ell\le n$.
\end{proof}
\begin{remark}\label{rmk: dual for crossed}
Dually, the cohomology group
\[H^1(\Mod(S_g^n);\text{Hom}(H_1(S_g;\CC),\CC))\cong \CC^n\]
is generated by $[\kappa_i^*]$ ($1\le i \le n$) where $\kappa_i^*:\Mod(S_g^n)\to \text{Hom}(H_1(S_g;\CC),\CC)$ is the crossed homomorphism defined by
\[\kappa_i^*(f)(x)=\omega(x,\kappa_i(f^{-1})), \quad f\in \Mod(S_g^n).\]\end{remark}

In Proposition~\ref{prop: classify bi-affine}, we gave a general cohomological criterion for classifying bi-affine representations. To apply this criterion, one needs to compute the contraction form $\mathcal{B}_G$ defined in \eqref{eq: contraction form}. We now compute this form explicitly for $G=\Mod(S_g^n)$ using the basis of
$\mathcal{E}_{\Mod(S_g^n)}$ constructed above.
\begin{lemma}\label{lem: contraction form for Mod}
    Let $g\ge 4$ and $n\ge 1$. Let 
    \[x=\sum\limits_{i=1}^n s_i[\kappa_i],\quad y=\sum\limits_{i=1}^n t_i[\kappa_i] \in \mathcal{E}_{\Mod(S_g^n)}.\]
    Then 
    \[\mathcal{B}_{\Mod(S_g^n)}(x,y)=-\big(\sum_{i=1}^n s_i\big)
    \big(\sum_{j=1}^n t_j\big)e_1\in H^2(\Mod(S_g^n);\CC) \]
    where 
    \[H^2(\Mod(S_g^n);\CC)\cong \CC\]
    is generated by the first Miller-Morita-Mumford class $e_1$.
\end{lemma}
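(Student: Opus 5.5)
By bilinearity it suffices to compute $\mathcal{B}(\kappa_i,\kappa_j)=\omega_*(\kappa_i\cup\kappa_j)\in H^2(\Mod(S_g^n);\CC)$ for all $i,j$, and to show that each of these equals $-e_1$. The input is Morita's computation on the once-punctured surface: for $\kappa$ on $\Mod(S_{g,1})$ one has $\omega_*(\kappa\cup\kappa)=-e_1$ (up to Morita's sign/normalization conventions, which we fix so that this holds with $e_1$ pulled back from $\Mod(S_g)$). Since $\kappa_i=\operatorname{Cap}_i^*\kappa$ and $e_1$ is natural under the capping maps, this immediately gives $\mathcal{B}(\kappa_i,\kappa_i)=-e_1$ for every $i$. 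We also use that, for $g\ge 4$, $H^2(\Mod(S_g^n);\CC)\cong\CC$ is generated by $e_1$ (Harer stability, with $e_1$ pulled back from $\Mod(S_g)$). This is why the answer is a scalar multiple of $e_1$.

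The main obstacle is the off-diagonal term $\mathcal{B}(\kappa_i,\kappa_j)$ for $i\neq j$. The map
\[\operatorname{Cap}_{i,j}\colon \Mod(S_g^n)\longrightarrow \Mod(S_{g,2}),\]
obtained by gluing punctured disks to the $i$-th and $j$-th boundary components and disks to the others, is surjective. Moreover, both $\kappa_i$ and $\kappa_j$ factor through it, as $\kappa$ composed with the two forgetful maps $\Mod(S_{g,2})\to\Mod(S_{g,1})$. So it suffices to compute in $H^2(\Mod(S_{g,2});\CC)$, which for $g\ge 4$ is spanned by $e_1$, $e^{(1)}$ and $e^{(2)}$, the Euler classes of the two punctures. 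Write
\[\omega_*(\kappa_1\cup\kappa_2)=a\,e_1+b_1e^{(1)}+b_2e^{(2)}.\]

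Two steps determine the coefficients.

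First, we restrict to $\Mod(S_g^n)$. The pullback of $e^{(k)}$ to the boundary version vanishes, because the boundary component trivializes the tangent line there. Hence only $a\,e_1$ survives, and we only need to find $a$.

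Second, to find $a$, we use Morita's and Kawazumi--Morita's description of twisted MMM classes on $\Mod(S_{g,*})$ via the fiber product / Gysin pushforward. Concretely, we use the relation $\kappa_1-\kappa_2=(2-2g)\,\delta$, where $\delta$ is the crossed homomorphism recording the class of the arc/loop between the two punctures; this follows from Proposition~\ref{prop: image of point pushing} applied on each point-pushing subgroup. Expanding $\omega_*((\kappa_1-\kappa_2)\cup(\kappa_1-\kappa_2))$ and comparing with the known value $\omega_*(\kappa_k\cup\kappa_k)=-e_1+(\text{multiple of }e^{(k)})$ in $\Mod(S_{g,2})$ should yield $a=-1$.

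A cleaner alternative, which I would try first, is to evaluate both sides on explicit abelian cycles or $\ZZ^2$ subgroups detecting $e_1$. For example, take the product of a genus-$\lfloor g/2\rfloor$ subsurface Torelli-type surface bundle sitting in a subsurface disjoint from the boundaries, pushed so that $\kappa_i$ and $\kappa_j$ agree on it up to coboundary. Since $g\ge 4$, there is room to find a subgroup $\Mod(S_{g'}^1)\subset\Mod(S_g^n)$, supported away from all boundary components, whose image already carries $e_1$ nontrivially. On that subgroup all $\kappa_i$ restrict to the same class, because capping any boundary gives the same restriction of $\kappa$. Hence $\mathcal{B}(\kappa_i,\kappa_j)$ and $\mathcal{B}(\kappa_i,\kappa_i)$ agree after restriction. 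Since restriction is injective on $H^2\cong\CC e_1$ (the classes are determined by their value there), this gives $\mathcal{B}(\kappa_i,\kappa_j)=-e_1$. The delicate point, and the real obstacle, is checking that the restriction map $H^2(\Mod(S_g^n))\to H^2(\Mod(S_{g'}^1))$ is injective in the stated range. I would invoke Harer stability with $g'$ chosen large, possibly taking $g'=g-1$ via a subsurface containing all but one handle and absorbing the boundaries into its complement.

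Summing the entries then gives $\mathcal{B}(x,y)=\sum_{i,j}s_it_j(-e_1)=-\big(\sum_i s_i\big)\big(\sum_j t_j\big)e_1$.
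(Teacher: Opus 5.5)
Your Approach B is essentially the paper's proof, and it runs most cleanly with $g'=g$: the paper embeds $S_g^1\subset S_g^n$ with complement a sphere with $n+1$ boundary components. Then $\operatorname{Cap}_i\circ\operatorname{Emb}=\operatorname{Id}$ gives $\operatorname{Emb}^*[\kappa_i]=[\kappa]$ for every $i$ at once, and Harer stability makes $\operatorname{Emb}^*$ an isomorphism on $H^2$. This removes your ``delicate point'' and makes both the diagonal/off-diagonal split and Approach A unnecessary; Approach A as written also lacks its key input, namely that $\omega_*(\delta\cup\delta)$ has no $e_1$-component.

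One correction: $\omega_*(\kappa\cup\kappa)=-e_1$ is the Kawazumi--Morita formula on $\Mod(S_g^1)$, not on $\Mod(S_{g,1})$. On $\Mod(S_{g,1})$ there is an extra Euler-class term (the paper later cites $-e_1-4g(g-1)c_1(\theta)$), which vanishes only after pulling back to $\Mod(S_g^1)$. Your diagonal conclusion on $\Mod(S_g^n)$ is unaffected, because $\operatorname{Cap}_i$ factors through $\Mod(S_g^1)$.
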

\begin{proof}
    The case $n=1$ follows from the contraction formula in \cite[Theorem 6.2]{KawazumiMoritaStable}, which gives 
   \begin{align*}
       \mathcal{B}_{\Mod(S_g^1)}([\kappa],[\kappa])=\omega_*([\kappa]\cup[\kappa])=-e_1.
   \end{align*}
   
   Now assume $n\geq 2$. Embed $S_g^1$ into $S_g^n$ by attaching a sphere with $n+1$ boundary components along the boundary of $S_g^1$. This induces a group inclusion
   \[\text{Emb}:\Mod(S_g^1)\hookrightarrow \Mod(S_g^n).\]
   By the naturality of the contraction pairing, we obtain the following commutative diagram:
   \begin{equation}\label{eq: commuting diagram}
       \xymatrix@C=7em{H^1(\Mod(S_g^n);H)\times H^1(\Mod(S_g^n);H)\ar[r]^-{\mathcal{B}_{\Mod(S_g^n)}} \ar[d]^{\text{Emb}^*\times \text{Emb}^*} &  H^2(\Mod(S_g^n);\CC) \ar[d]^{\text{Emb}^*} \\ H^1(\Mod(S_g^1);H)\times H^1(\Mod(S_g^1);H)\ar[r]^-{\mathcal{B}_{\Mod(S_g^1)}} & H^2(\Mod(S_g^1);\CC).}
   \end{equation}
   The second vertical map in \eqref{eq: commuting diagram}
   \[\text{Emb}^*: H^2(\Mod(S_g^2);\CC)\longrightarrow H^2(\Mod(S_g^1);\CC)\]
   is an isomorphism by Harer stability \cite{Harer} for $g\ge 4$, and by Madsen-Weiss Theorem \cite[Theorem~1.1.1]{Madsen-Weiss}
   \[H^2(\Mod(S_g^1);\CC)\cong \CC\]
   is generated by the first Miller-Morita-Mumford class $e_1$.
   
   By Lemma~\ref{lem: two crossed homomorphisms}, the cohomology class $[\kappa_i]$ is the image of $[\kappa]$ under the map
   \[\text{Cap}_i^*:H^1(\Mod(S_g^1);H)\longrightarrow H^1(\Mod(S_g^n);H),\]
   where $\text{Cap}_i:\Mod(S_g^n)\to \Mod(S_g^1)$ is obtained by capping off all boundary components of $S_g^n$ except the $i$th one. Since
   \[ \text{Cap}_i\circ\text{Emb}= \operatorname{Id},\]
   we obtain
   \[\text{Emb}^*(x)=\text{Emb}^*(\sum\limits_{i=1}^n s_i[\kappa_i])=\sum\limits_{i=1}^n s_i\text{Emb}^*\text{Cap}_i^*([\kappa])=\big(\sum\limits_{i=1}^n s_i\big)[\kappa],\]
   and similarly
   \[\text{Emb}^*(y)=\big(\sum_{j=1}^n t_j\big)[\kappa].\]
   Therefore, the commuting diagram \eqref{eq: commuting diagram} implies that
   \begin{align*}
   \operatorname{Emb}^*\circ \mathcal{B}_{\Mod(S_g^n)}(x,y)&=
   \mathcal{B}_{\Mod(S_g^1)}\bigl(\operatorname{Emb}^*(x),\operatorname{Emb}^*(y)\bigr)\\
   &=\mathcal{B}_{\Mod(S_g^1)}\Big(\big(\sum_{i=1}^n s_i\big)[\kappa],\big(\sum_{j=1}^n t_j\big)[\kappa]\Big)\\
   &=\big(\sum_{i=1}^n s_i\big)\big(\sum_{j=1}^n t_j\big)\mathcal{B}_{\Mod(S_g^1)}([\kappa],[\kappa])\\
   &=-\big(\sum_{i=1}^n s_i\big)
   \big(\sum_{j=1}^n t_j\big)e_1. \qedhere
   \end{align*}
\end{proof}

We now classify bi-affine representations of $\Mod(S_g^n)$
 with core $H$ using this result.
\begin{theorem}\label{thm: classify bi-affine for n bdry}
   Let $g\ge 4$. Every bi-affine representation of $\Mod(S_g^n)$ with core $H=H_1(S_g;\CC)$ of dimension $2g+m$ is of one of the following types:
   \begin{enumerate}
       \item Affine or co-affine. In either case, the isomorphism classes are classified by
       \[(\mathcal{E}_{\Mod(S_g^n)}\otimes \CC^m )/\GL_m(\CC)\cong (\CC^n\otimes \CC^m)/\GL_m(\CC)\cong
      \bigsqcup_{r=0}^{\min(n,m)}\operatorname{Gr}(r,\CC^n),\]
      where $\operatorname{Gr}(r,\CC^n)$ denotes the Grassmannian of $r$-dimensional subspaces of $\CC^n$.
       \item Neither affine nor co-affine. Such representations are classified by pairs of nonzero elements
       \[x=\sum_{j=1}^d\big(\sum\limits_{i=1}^n s_i^{(j)}[\kappa_i] \big)\otimes v_j\in \mathcal{E}_{\Mod(S_g^n)}\otimes \CC^d,\]
       and
       \[y=\sum_{j=1}^{m-d}\big(\sum\limits_{i=1}^n t_i^{(j)}[\kappa_i] \big)\otimes w_j\in \mathcal{E}_{\Mod(S_g^n)}\otimes \CC^{m-d}\]
       for $0<d<m$, where ${v_j}$ and ${w_j}$ denote the standard bases of $\CC^d$ and $\CC^{m-d}$, respectively.
       These pairs satisfy the condition
       \[\big(\sum_{i=1}^n s_i^{(k)}\big)\big(\sum_{j=1}^n t_j^{(\ell)}\big)=0,\qquad 1\le k\le d,\quad 1\le \ell\le m-d,\]
       and are considered modulo the natural action of
       \[\GL_d(\CC)\times \GL_{m-d}(\CC).\]
       Hence the parameter space is identified with the union
       \[ \left(\bigsqcup_{r=1}^{d}\operatorname{Gr}(r,\mathcal{H}^{n-1})\times \bigsqcup_{s=1}^{m-d}\operatorname{Gr}(s,\CC^n)\right)\bigcup\limits_{X}\left(\bigsqcup_{r=1}^{d}\operatorname{Gr}(r,\CC^n)\times\bigsqcup_{s=1}^{m-d}\operatorname{Gr}(s,\mathcal{H}^{n-1})\right),\]
       with the two components glued along
       \[X=\bigsqcup_{r=1}^{d}\bigsqcup_{s=1}^{m-d}\operatorname{Gr}(r,\mathcal{H}^{n-1})\times\operatorname{Gr}(s,\mathcal{H}^{n-1}),\]
       where 
       \[\mathcal{H}^{n-1}=\big\{(s_1,\dots,s_n)\in\CC^n\mid \sum_{i=1}^n s_i=0\big\}.\]
   \end{enumerate}
\end{theorem}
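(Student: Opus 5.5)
The plan is to specialize Proposition~\ref{prop: classify bi-affine} to $G=\Mod(S_g^n)$ and $H=H_1(S_g;\CC)$. First we check its standing hypotheses:
\begin{itemize}
\item $H^1(\Mod(S_g^n);\CC)=0$ for $g\ge 3$, by perfection of the mapping class group;
\item $H$ is irreducible with $\operatorname{End}_G(H)=\CC$, since $\Mod(S_g^n)$ surjects onto $\Sp_{2g}(\ZZ)$, which is Zariski dense in $\Sp_{2g}(\CC)$;
\item $H$ carries the invariant form $\omega$.
\end{itemize}
A bi-affine representation of dimension $2g+m$ then has $\dim V_1=d$ and $\dim V/V_2=m-d$ for some $0\le d\le m$. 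It is encoded by a pair $x\in\mathcal{E}\otimes\CC^d$, $y\in\mathcal{E}\otimes(\CC^{m-d})^*$ with $\widehat{\mathcal{B}}(x,y)=0$, taken modulo $\GL_d\times\GL_{m-d}$. Here $\mathcal{E}=\mathcal{E}_{\Mod(S_g^n)}\cong\CC^n$ has basis $[\kappa_i]$ by Lemma~\ref{lem: two crossed homomorphisms}.

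For case (1), take $d=0$ (co-affine) or $d=m$ (affine). The constraint $\widehat{\mathcal{B}}(x,y)=0$ is then vacuous, so the classes are exactly $(\CC^n\otimes\CC^m)/\GL_m(\CC)$. We view an element as an $n\times m$ matrix with $\GL_m$ acting by column operations. Its orbit is determined by its column space, a subspace of $\CC^n$ of some dimension $r\le\min(n,m)$, and every such subspace occurs. This gives $\bigsqcup_r\operatorname{Gr}(r,\CC^n)$.

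For case (2), we have $0<d<m$. We first argue that $x$ and $y$ may be taken nonzero. If $x=0$, then $V_2\cong V_1\oplus H$, because the extension splits. Composing the quotient onto $H$, we get that $V/V_1$ is a co-affine extension with trivial sub $V_1$ split off. One can then re-choose the filtration to make the representation co-affine; this uses Proposition~\ref{prop: extend bi-affine}-type splitting and $H^1(G;\CC)=0$. The case $y=0$ is symmetric. So "neither affine nor co-affine" forces $x\ne0\ne y$.

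Next, by Lemma~\ref{lem: contraction form for Mod} and bilinearity, the component of $\widehat{\mathcal{B}}(x,y)$ in position $(k,\ell)$ is $-S^{(k)}T^{(\ell)}e_1$. Since $e_1\ne0$ in $H^2(\Mod(S_g^n);\CC)\cong\CC$, the vanishing condition is exactly $S^{(k)}T^{(\ell)}=0$ for all $k,\ell$.

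It remains to describe the parameter space. The vanishing condition says that either all $S^{(k)}=0$ or all $T^{(\ell)}=0$. In the first case the column space of $x$ lies in $\mathcal{H}^{n-1}$; in the second, the column space of $y$ does. Taking $\GL$-orbits as in case (1), the first case gives $\operatorname{Gr}(r,\mathcal{H}^{n-1})\times\operatorname{Gr}(s,\CC^n)$ and the second gives its mirror, with $r\ge1$ and $s\ge1$ by nonzeroness. The two overlap exactly where both column spaces lie in $\mathcal{H}^{n-1}$, which is $X$.

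The main obstacle is the reduction showing that $x=0$ (or $y=0$) really yields an affine or co-affine representation, i.e.\ that the type is intrinsic rather than an artifact of the chosen filtration. The plan is to show directly that when $x=0$ the submodule $V_1$ is a direct summand of $V$. Since $\widehat{\mathcal{B}}(0,y)=0$ and $H^1(G;\operatorname{Hom}(V/V_2,V_1))=0$, Remark~\ref{rmk: matix form} lets us conjugate so that $\delta=0$ and $\chi=0$. A secondary point to confirm is that $e_1\ne0$ over $\CC$ in the stable range $g\ge4$.
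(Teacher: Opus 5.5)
Your proposal is correct and follows the paper's own argument: the paper specializes Proposition~\ref{prop: classify bi-affine} and reads off the condition $S^{(k)}T^{(\ell)}=0$ from Lemma~\ref{lem: contraction form for Mod}. Your hypothesis checks, column-space description of the $\GL$-orbits, and splitting argument for $x=0$ spell out what the paper calls ``direct.''

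Two small remarks. First, under Definition~\ref{def: biaffine rep} the case $V_1=0$ (your $d=0$) is the \emph{affine} one, so your affine/co-affine labels are swapped. Second, for the converse ($x\neq 0$ implies not affine), any submodule $H''\cong H$ of $V$ has zero image in the trivial module $V/V_2$ and meets $V_1$ trivially by irreducibility of $H$. Hence $V_2=V_1\oplus H''$ and $x=0$.
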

\begin{proof}
    This follows directly from Proposition~\ref{prop: classify bi-affine}, which gives the cohomological criterion for classifying bi-affine representations, together with the explicit computation of the contraction form $\mathcal{B}_{\Mod(S_g^n)}$ in Lemma~\ref{lem: contraction form for Mod}.
\end{proof}

The following corollary is an immediate consequence of Theorem~\ref{thm: classify bi-affine for n bdry}.
\begin{corollary}\label{cor: lowest dim to consider}
Let $g\ge 4$.
\begin{enumerate}
\item Every affine (resp.~co-affine) representation of $\Mod(S_g^n)$ of dimension $m\ge 2g+n$ is the direct sum of a $(2g+n)$-dimensional affine (resp.~co-affine) representation and a trivial representation.
\item Every bi-affine representation of $\Mod(S_g^n)$ that is neither affine nor co-affine and has dimension $m\ge 2g+2n-1$ is the direct sum of a $(2g+2n-1)$-dimensional bi-affine representation that is neither affine nor co-affine and a trivial representation.
\end{enumerate}
\end{corollary}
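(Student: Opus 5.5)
The plan is to read both statements off the classification in Theorem~\ref{thm: classify bi-affine for n bdry}, using the matrix form of Remark~\ref{rmk: matix form} to split off trivial summands.

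For part (1), take an affine representation with $V_1=0$ and $\dim V/V_2=m-2g$. It is determined by $y\in\mathcal{E}_{\Mod(S_g^n)}\otimes(V/V_2)^*$, up to the action of $\GL(V/V_2)$. Since $\mathcal{E}_{\Mod(S_g^n)}\cong\CC^n$, we can view $y$ as a linear map $V/V_2\to\CC^n$. Its rank is $r\le n$.

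Choose a basis $w_1,\dots,w_{m-2g}$ of $V/V_2$ such that the kernel of this map is spanned by $w_{r+1},\dots,w_{m-2g}$. In the normal form of Remark~\ref{rmk: matix form}, the cocycle $\sigma$ then has columns $\sigma_{r+1},\dots,\sigma_{m-2g}$ that are principal. After the change of basis $w_j\mapsto w_j-u_j$, with $u_j\in H$ chosen so that $\sigma_j(f)=f\cdot u_j-u_j$, these columns become zero. Hence the span of the corrected $w_{r+1},\dots,w_{m-2g}$ is a trivial direct summand.

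The complementary summand is $H\oplus\langle w_1,\dots,w_r\rangle$, of dimension $2g+r\le 2g+n$. If $m\ge 2g+n$, I pad this summand with $n-r$ of the trivial vectors. This gives a $(2g+n)$-dimensional affine summand plus a trivial summand. The co-affine case follows by dualizing, using the remark after Definition~\ref{def: biaffine rep}.

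For part (2), take $x\in\mathcal{E}\otimes V_1$ and $y\in\mathcal{E}\otimes(V/V_2)^*$, viewed as linear maps $V_1^*\to\CC^n$ and $V/V_2\to\CC^n$ with images $A$ and $B$. The condition in Theorem~\ref{thm: classify bi-affine for n bdry} says that $\epsilon(a)\epsilon(b)=0$ for all $a\in A$ and $b\in B$, where $\epsilon(s)=\sum_i s_i$. So either $A\subseteq\mathcal{H}^{n-1}$ or $B\subseteq\mathcal{H}^{n-1}$.

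Suppose, say, $A\subseteq\mathcal{H}^{n-1}$. Then $\dim A\le n-1$ and $\dim B\le n$. As in part (1), the kernels of $x$ and $y$ split off as trivial summands. On the $V_1$ side, killing a principal column of $\delta$ changes $\chi$ only by a row that is itself a coboundary term. Since $H^1(G;\CC)=0$, that term can then be absorbed, so the summand genuinely splits. What remains is a bi-affine module of dimension $2g+\dim A+\dim B\le 2g+2n-1$. I then pad with trivial vectors on each side so that $d'$ and $m'-d'$ stay positive and the total dimension is exactly $2g+2n-1$. Positivity on each side is preserved because $x,y\neq 0$. The case $B\subseteq\mathcal{H}^{n-1}$ is symmetric.

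The main obstacle is this splitting step on the $V_1$ side in part (2): showing that the vanishing of a column of $\delta$, after the change of basis, really yields a $G$-invariant complement. This needs the vanishing of $H^1(G;\CC)$ to adjust $\chi$, exactly as in Remark~\ref{rmk: matix form}. The dimension count itself is immediate once the splitting is established.
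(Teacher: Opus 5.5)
Your proposal is correct and takes essentially the paper's approach. The paper calls the corollary an immediate consequence of Theorem~\ref{thm: classify bi-affine for n bdry}, and your argument is that reading: the ranks of $x$ and $y$ are at most $n$, and in case (2) the condition $\epsilon(a)\epsilon(b)=0$ forces one of the two images into $\mathcal{H}^{n-1}$, which gives the bound $2n-1$, after which the kernel directions split off as trivial summands. The splitting step you flag works as you sketch it: once a principal component of $\delta$ or $\sigma$ is killed, the corresponding part of $\chi$ becomes a homomorphism from $G$ to a trivial module, so it vanishes because $H^1(G;\CC)=0$; it also follows directly from the injectivity in Proposition~\ref{prop: classify bi-affine}, since $V'\oplus\CC^k$ has the same invariants $(x'\oplus 0,\,y'\oplus 0)$.
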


\subsection{Example: bi-affine representations of $\Mod(S_g^2)$}\label{sec: 2.3}
In this subsection, we specialize the general classification theorem to the case
$G=\Mod(S_g^2)$ and give a more explicit description of the resulting bi-affine representations, including their geometric interpretations.

By Corollary~\ref{cor: lowest dim to consider}, in order to classify all bi-affine representations of $\Mod(S_g^2)$, it suffices to consider affine/co-affine representations of dimension at most $2g+2$ and bi-affine representations that are neither affine nor co-affine of dimension at most $2g+3$. The classification follows from Theorem~\ref{thm: classify bi-affine for n bdry}, and is stated explicitly as follows.
\begin{corollary}\label{cor: 2g+2 bi-affine rep}
Let $g\ge 4$. Every $(2g+2)$-dimensional bi-affine representation of $\Mod(S_g^2)$ with core $H$ is of one of the following types:
\begin{enumerate}
    \item Affine or co-affine. In either case, the isomorphism classes are classified by
    \[(\mathcal{E}_{\Mod(S_g^2)}\otimes \CC^2)/\GL_2(\CC)\cong(\CC^2\otimes\CC^2)/\GL_2(\CC).\]
    Corresponding to tensors of rank $0$, $1$, and $2$, the $\GL_2(\CC)$-orbits are represented by $0$,
    \[(s_1[\kappa_1]+s_2[\kappa_2])\otimes(1,0),
    \qquad [s_1:s_2]\in\CC\mathbb{P}^1,\]
    and
    \[[\kappa_1]\otimes(1,0)+[\kappa_2]\otimes(0,1).\]
    \item Neither affine nor co-affine. Such representations are classified by pairs
    \[x=s_1[\kappa_1]+s_2[\kappa_2],\qquad
    y=t_1[\kappa_1]+t_2[\kappa_2]
    \in \mathcal{E}_{\Mod(S_g^2)}\setminus\{0\}, \]
    satisfying
    \[(t_1+t_2)(s_1+s_2)=0,\]
    modulo the natural action of
    \[\GL_1(\CC)\times\GL_1(\CC)\cong\CC^*\times\CC^*.\]
    Equivalently, the isomorphism classes are parametrized by
    \[\CC\mathbb{P}^1\vee\CC\mathbb{P}^1.\]
\end{enumerate}
Moreover, up to taking duals, there is a unique isomorphism class of $(2g+3)$-dimensional bi-affine representations that are not direct sums of a $(2g+2)$-dimensional bi-affine representation and a trivial representation. This representation is indexed by the pair
\[([\kappa_1]-[\kappa_2])\in \mathcal{E}_{\Mod(S_g^2)}\otimes\CC,
\qquad
[\kappa_1]\otimes(1,0)+[\kappa_2]\otimes(0,1)
\in \mathcal{E}_{\Mod(S_g^2)}\otimes\CC^2.\]
\end{corollary}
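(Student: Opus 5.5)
The plan is to specialize Theorem~\ref{thm: classify bi-affine for n bdry} to $n=2$ and $m=2$, and then to handle dimension $2g+3$ by a rank count on the two extension classes. Throughout, the identification $\mathcal{E}_{\Mod(S_g^2)}\cong\CC^2$ with basis $[\kappa_1],[\kappa_2]$ from Lemma~\ref{lem: two crossed homomorphisms} is used.

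\emph{Affine and co-affine case in dimension $2g+2$.} By Theorem~\ref{thm: classify bi-affine for n bdry}(1), the classes are elements of $\CC^2\otimes\CC^2$ modulo $\GL_2(\CC)$ acting on the second factor. View such a tensor as a $2\times 2$ matrix acted on by column operations. Its orbits are determined by the row space, that is, by the image of the dual map $\CC^{2*}\to\mathcal{E}$. This gives $0$ in rank $0$. In rank $1$ the row space is a line in $\CC^2$, recorded as $[s_1:s_2]\in\CC\mathbb P^1$, with representative $(s_1[\kappa_1]+s_2[\kappa_2])\otimes(1,0)$. In rank $2$ there is a single orbit, with representative $[\kappa_1]\otimes(1,0)+[\kappa_2]\otimes(0,1)$.

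\emph{Neither affine nor co-affine, dimension $2g+2$.} Here $d=1=m-d$. By Theorem~\ref{thm: classify bi-affine for n bdry}(2), the classes are pairs of nonzero $x,y\in\CC^2$ with $(s_1+s_2)(t_1+t_2)=0$, modulo $\CC^*\times\CC^*$ scaling. The condition splits into two cases, $x\in\mathcal{H}^1$ or $y\in\mathcal{H}^1$.
\begin{itemize}
\item The first case gives the single point $[1:-1]$ for $x$ times $\CC\mathbb P^1$ for $y$.
\item The second case gives $\CC\mathbb P^1$ for $x$ times the single point $[1:-1]$ for $y$.
\end{itemize}
These two copies of $\CC\mathbb P^1$ meet in exactly one point, $([1:-1],[1:-1])$, so the parameter space is the wedge $\CC\mathbb P^1\vee\CC\mathbb P^1$.

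\emph{Dimension $2g+3$.} By duality (Definition~\ref{def: self dual}), which swaps $x$ and $y$, we may assume $\dim V_1\le\dim V/V_2$. The affine and co-affine cases, as well as the case $(d,m-d)=(0,3)$, reduce by Corollary~\ref{cor: lowest dim to consider}(1) to dimension $2g+2$ plus a trivial summand. So the only case left is $\dim V_1=1$ and $\dim V/V_2=2$.

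The key observation is how rank controls splitting. If the tensor $y\in\mathcal{E}\otimes\CC^2$ has rank at most $1$, a change of basis in $\GL_2$ makes one column zero. The corresponding quotient line then splits off, because by Remark~\ref{rmk: matix form} the $\chi$-part of that column can be absorbed: $H^1(G;\CC)=0$ and the condition on $\widehat{\mathcal{B}}$ holds trivially there. This gives a direct sum with a trivial representation. The same reasoning applies if $x=0$.

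So in the remaining case $y$ has rank $2$, and we normalize $y=[\kappa_1]\otimes(1,0)+[\kappa_2]\otimes(0,1)$. The vanishing condition becomes $(s_1+s_2)\cdot 1=0$ for both columns, which forces $x\in\mathcal{H}^1$. Since $x\neq 0$, scaling gives $x=[\kappa_1]-[\kappa_2]$. The remaining stabilizer freedom does not change this orbit, so the class is unique.

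The step I expect to need the most care is justifying that rank-deficient data genuinely splits off a trivial direct summand, rather than only a trivial subquotient. I would argue this directly from the matrix form \eqref{eq: matrix form}: once the $\sigma$-column vanishes, \eqref{eq: chi equation} shows that the corresponding $\chi$-column is a homomorphism to $V_1$, hence zero because $H^1(G;\CC)=0$.
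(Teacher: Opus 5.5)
Your proposal is correct and takes the paper's route: the paper only says the corollary follows from Theorem~\ref{thm: classify bi-affine for n bdry} with $n=2$. Your rank argument in dimension $2g+3$ spells out the splitting mechanism behind Corollary~\ref{cor: lowest dim to consider}: make a cohomologically trivial $\sigma$-column literally zero, then kill the matching $\chi$-column using $H^1(\Mod(S_g^2);\CC)=0$.

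You leave one check undone, namely the existence half of ``unique''. For the pair $([\kappa_1]-[\kappa_2],\,[\kappa_1]\otimes(1,0)+[\kappa_2]\otimes(0,1))$, the matrix form gives $V^G=V_1$ (because $y$ has rank $2$ and $H^G=0$), while every $G$-invariant functional vanishes on $V_2$ (because $x\neq0$). Hence no trivial direct summand can split off, and this representation really is not such a direct sum.
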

\begin{remark}\label{rmk: geometric interpretation} Some of the representations in Corollary~\ref{cor: 2g+2 bi-affine rep} admit natural geometric interpretations.
\begin{enumerate}
    \item The affine/co-affine representation indexed by $0$ is the split extension of $H$ by a trivial module. 
    \item By \cite[Theorem 2.2]{Trapp}, the co-affine representation indexed by the rank-one tensor
   \[[\kappa_i]\otimes(1,0),\quad i=1,2,\]
   is isomorphic to the direct sum of the standard action of $\Mod(S_g^2)$ on
   \[H_1(UT(S_g^1);\CC)\]
   and a trivial representation. Here $UT(S_g^1)$ denotes the unit tangent bundle of $S_g^1$, viewed as a $\Mod(S_g^2)$-module via the homomorphism
   \[\Mod(S_g^2)\longrightarrow \Mod(S_g^1)\]
   induced by capping the boundary component other than the $i$-th one.
   \item The co-affine representation indexed by the rank-one tensor
   \[([\kappa_1]-[\kappa_2])\otimes(1,0)\]
   is isomorphic to the direct sum of the standard action of $\Mod(S_g^2)$ on $H_1(S_g^2;\CC)$ and a trivial representation. This follows from the computation in the proof of Proposition~\ref{prop: singular pt}.
   \item Combining (2) and (3), the co-affine representation indexed by the rank-two tensor
    \[[\kappa_1]\otimes(1,0)+[\kappa_2]\otimes(0,1)\]
    is isomorphic to the standard action of $\Mod(S_g^2)$ on
    \[H_1(UT(S_g^2);\CC).\]
\end{enumerate}
\end{remark}
The singular locus
\[s_1+s_2=0,\quad t_1+t_2=0,\]
in the second case of Corollary~\ref{cor: 2g+2 bi-affine rep} corresponds to a unique isomorphism class of self-dual $(2g+2)$-dimensional bi-affine representations of $\Mod(S_g^2)$ that is neither affine nor co-affine. We now show that this representation admits a natural geometric interpretation.
\begin{proposition}\label{prop: singular pt}
    Let $g\ge 4$. Let
    \[\rho:\Mod(S_g^2)\to \GL_{2g+2}(\CC)\]
    be the bi-affine representation that is neither affine nor co-affine and corresponds to the pair
    \[x=[\kappa_1]-[\kappa_2],\quad y=[\kappa_1]-[\kappa_2]\in \mathcal{E}_{\Mod(S_g^2)}.\]
    Then $\rho$ is isomorphic to the standard action of $\Mod(S_g^2)$ on $H_1(S_{g+1};\CC)$, where $S_g^2$ embeds into $S_{g+1}$ via gluing a pair of pants attached to the two boundary components.
\end{proposition}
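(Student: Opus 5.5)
We will show that $V=H_1(S_{g+1};\CC)$, with the action of $\Mod(S_g^2)$ extended by the identity on the pair of pants $P$, has the bi-affine structure described in the statement. We then read off its extension classes and apply Proposition~\ref{prop: classify bi-affine}.

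\textbf{Step 1: the filtration.} Let $c_1,c_2$ be the two boundary curves of $S_g^2$, which become homologous up to sign in $S_{g+1}$. Set $V_1=\langle [c_1]\rangle$, which is fixed by $\Mod(S_g^2)$. Let $V_2=V_1^{\perp}$ with respect to the intersection form. Then $V_2$ is the image of $H_1(S_g^2;\CC)$, since a class has zero intersection with $c_1$ exactly when it is supported away from an arc crossing $P$. This gives $V_2/V_1\cong H_1(S_g;\CC)=H$, via capping both boundary components. Finally $V/V_2\cong\CC$ is spanned by the class of a closed curve $\delta$ that runs through $P$ and through $S_g^2$ along an arc $a$ from $c_1$ to $c_2$. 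The group acts trivially on $V/V_2$ because the intersection number with the invariant class $[c_1]$ is preserved. So $\rho$ is bi-affine with $\dim V_1=\dim V/V_2=1$.

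\textbf{Step 2: the two cocycles.} Since $V$ is self-dual via the intersection form, and that form pairs $V_1$ with $V/V_2$, Remark~\ref{rmk: self dual} shows $x=y$ up to scalars. So it suffices to identify a single class, say $y\in\mathcal{E}_{\Mod(S_g^2)}$, given by $\sigma(f)=[f(\delta)-\delta]$ projected to $H$, i.e.\ $f\mapsto [f(a)\cdot a^{-1}]\in H$. We write $y=t_1[\kappa_1]+t_2[\kappa_2]$ and evaluate on lifts of point-pushes, as in Lemma~\ref{lem: two crossed homomorphisms}.

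Take $\gamma_1$, a lift under $\operatorname{Cap}_1$ of the point-push of the first boundary around a loop $\gamma$, chosen as a boundary-pushing map, i.e.\ a product of Dehn twists about the two boundary curves of a neighbourhood of $c_1\cup\gamma$. Pushing $c_1$ around $\gamma$ drags the endpoint of $a$ and changes $[\delta]$ by $\pm[\gamma]$. So $\sigma(\gamma_1)=\pm[\gamma]$, while $\kappa_1(\gamma_1)=(2-2g)[\gamma]$ and $\kappa_2(\gamma_1)=0$ by Proposition~\ref{prop: image of point pushing}. Pushing $c_2$ gives the opposite sign, since the arc is traversed in the opposite direction. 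Principal crossed homomorphisms vanish on these elements, which act trivially on $H$, so this comparison is legitimate. It yields $t_1=-t_2\neq 0$, hence $y\propto[\kappa_1]-[\kappa_2]$, and likewise $x$.

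\textbf{Step 3: conclusion and main obstacle.} The classes are nonzero, so $V$ is neither affine nor co-affine. Moreover $(s_1+s_2)(t_1+t_2)=0$, consistent with Theorem~\ref{thm: classify bi-affine for n bdry}. Proposition~\ref{prop: classify bi-affine} then identifies $\rho$ with $H_1(S_{g+1};\CC)$ up to the $\CC^*\times\CC^*$ rescaling. The main obstacle is Step 2. One must check carefully that the boundary-pushing lift changes $\delta$ by exactly $\pm[\gamma]$, with opposite signs for the two boundaries. One must also check that extra Dehn twists about boundary-parallel curves contribute nothing to $\sigma$ or to the $\kappa_i$. The boundary twists act trivially on $V/V_1$, so their contribution dies in $H$; the point is that only the relative sign matters.
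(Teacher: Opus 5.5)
Your proposal is correct and follows essentially the same route as the paper: the filtration $\langle b_{g+1}\rangle\subset\langle b_{g+1}\rangle^{\perp}$ by the boundary class, self-duality to get $x=y$ up to scalars, and evaluation of the crossed homomorphism on the boundary-pushing maps $T_{\gamma_i^+}T_{\gamma_i^-}^{-1}$, compared against $\kappa(\gamma)=(2-2g)[\gamma]$. The sign check you flag as the main obstacle is exactly the paper's explicit computation $\operatorname{Push}(\gamma_1)(a_{g+1})=a_{g+1}-a_g-b_{g+1}$, together with its analogue using $[\gamma_2^+]=a_g-b_{g+1}$; this gives $s_1=-s_2=\frac{1}{2g-2}$ and confirms your opposite signs.
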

\begin{proof}
    First, we show that the representation
    \[\Phi:\Mod(S_g^2)\longrightarrow \GL(H_1(S_{g+1};\CC))\]
    is bi-affine. Let
    \[\{a_1,b_1,\cdots, a_g,b_g\}\]
    be a symplectic basis of $H=H_1(S_g^1;\CC)$
    satisfying \[\omega(a_j,b_k)=\delta_j^k,\quad \omega(a_j,a_k)=\omega(b_j,b_k)=0.\]
    Extend this to a symplectic basis of $H_1(S_{g+1};\CC)$
    \[\{a_1,b_1,\cdots, a_g,b_g,a_{g+1},b_{g+1}\}\]
    where $b_{g+1}$ is represented by a loop around one of the boundary components of $S_g^2$ after the gluing construction. Let 
    \[\langle b_{g+1}\rangle^{\perp}=\{v\in H_1(S_{g+1};\CC)\mid\omega(v,b_{g+1})=0\}.\]
    Then 
    \[0\subset \langle b_{g+1} \rangle \subset \langle b_{g+1}\rangle^{\perp} \subset H_1(S_{g+1};\CC)\]
    is a filtration by $\Mod(S_g^2)$-submodules that satisfies
    \[\langle b_{g+1}\rangle^{\perp}/\langle b_{g+1} \rangle \cong H,\]
    and $\langle b_{g+1}\rangle$ and $ H_1(S_{g+1};\CC)/\langle b_{g+1}\rangle^\perp$ are trivial $\Mod(S_g^2)$-modules, since $\Mod(S_g^2)$ fixes $b_{g+1}$ and preserves $\omega$. Therefore $\Phi$ is bi-affine, determined by a pair
    \[x=s_1[\kappa_1]+s_2[\kappa_2],\quad y=t_1[\kappa_1]+t_2[\kappa_2]\in \mathcal{E}_{\Mod(S_g^2)}.\]
    The key observation is that as $\Mod(S_g^2)$-modules
    \[H_1(S_{g+1};\CC)\cong \operatorname{Hom}(H_1(S_{g+1};\CC),\CC)\]
    induced by the algebraic intersection pairing. Hence, by Remark \ref{rmk: self dual} $\Phi$ is self dual, and therefore
    \[x=y,\]
    up to the scalar action of $\CC^*$. Corollary \ref{cor: 2g+2 bi-affine rep} then gives
    \[(s_1+s_2)(s_1+s_2)=0,\]
    so $s_2=-s_1$. It remains to show that $s_1\neq 0$, or equivalently, that $\Phi$ is neither affine nor co-affine.
    
    Instead, we give a direct computation of the classes $x,y\in\mathcal{E}_{\Mod(S_g^2)}$, using only the property of $\kappa$ in Proposition \ref{prop: image of point pushing}, without relying on the self-duality relation $x=y$.
    
    For $i=1,2$, let $\gamma_i$ be a loop based at the $i$-th boundary component of $S_g^2$ that winds once around $a_g$. Let $\operatorname{Push}(\gamma_i)$ denote the corresponding point-pushing map
    \[\operatorname{Push}(\gamma_i)=T_{\gamma_i^+} T_{\gamma_i^-}^{-1}\] where $\gamma_i^+$ is the simple closed curve lying to the left of $\gamma_i$ and $\gamma_i^-$ is the simple closed curve lying to the right of $\gamma_i$. Then
    \[[\gamma_1^+]=a_g+b_{g+1}, \quad [\gamma_1^-]=a_g, \quad  [\gamma_2^+]=a_g-b_{g+1}, \quad [\gamma_2^-]=a_g.\]
    Then we apply $\text{Push}(\gamma_1)$ to $a_{g+1}$:
    \begin{align*}
        \text{Push}(\gamma_1)(a_{g+1})&=T_{\gamma_1^+}  T_{\gamma_1^-}^{-1}(a_{g+1}) \\
        &=T_{\gamma_1^+}(a_{g+1}-\omega([\gamma_1^-],a_{g+1})[\gamma_1^-])\\
        &=T_{\gamma_1^+}(a_{g+1})\\
        &=a_{g+1}+\omega([\gamma_1^+],a_{g+1})[\gamma_1^+]\\
        &=a_{g+1}-a_g-b_{g+1}.
    \end{align*}
    Let $\sigma:\Mod(S_g^2)\to H$ be the crossed homomorphism appearing in the matrix form \eqref{eq: matrix form} of $\Phi$, representing the cohomology class $x=s_1[\kappa_1]+s_2[\kappa_2]$. The above computation gives
    \[\sigma(\text{Push}(\gamma_1))=-a_g.\]
    On the other hand, Proposition \ref{prop: image of point pushing} gives
    \[\sigma(\text{Push}(\gamma_1))=s_1\kappa_1(\text{Push}(\gamma_1))+s_2\kappa_2(\text{Push}(\gamma_1))=s_1(2-2g)[\gamma_1]=s_1(2-2g)a_g.\]
    Hence\[s_1=\frac{1}{2g-2}.\]Similarly, computing the action of $\operatorname{Push}(\gamma_2)$ on $a_{g+1}$ gives \[s_2=-\frac{1}{2g-2}.\]
    An analogous computation gives $y=\frac{1}{2g-2}([\kappa_1]-[\kappa_2])$.
\end{proof}

In particular, this geometric representation is the unique non-affine, non-co-affine bi-affine representation of $\Mod(S_g^2)$ that factors through the image of
\[
\Mod(S_g^2)\longrightarrow \Mod(S_{g+1}),
\]
as stated in the following theorem.
\begin{theorem}\label{thm: boundary kills}
   Let $g\ge 4$ and $m\ge 2g+2$. Let 
   \[\rho:\Mod(S_g^2)\to \GL_{m}(\CC)\]
   be a bi-affine representation with core $H$ that is neither affine nor co-affine. Suppose that
   \[\rho(T_{\partial_1}T_{\partial_2}^{-1})=I,\]
   where $\partial_i$ denotes the boundary loop around the $i$-th boundary component of $S_g^2$.
   Then $\rho$ is isomorphic to the direct sum of the standard representation in Proposition \ref{prop: singular pt}
   \[\Phi:\Mod(S_g^2)\longrightarrow \GL(H_1(S_{g+1};\CC)),\]
   and a trivial representation.
\end{theorem}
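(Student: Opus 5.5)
The plan is to work in the matrix form \eqref{eq: matrix form} and read off what the hypothesis $\rho(T_{\partial_1}T_{\partial_2}^{-1})=I$ says about the two extension classes $x=\sum_i s_i^{(j)}[\kappa_i]\otimes v_j$ and $y=\sum_i t_i^{(\ell)}[\kappa_i]\otimes w_\ell$. Write $\delta$ and $\sigma$ for the crossed homomorphisms representing $x$ and $y$. The element $T_{\partial_1}T_{\partial_2}^{-1}$ acts trivially on $H$. The key is to evaluate $\kappa_1$ and $\kappa_2$ on it. Capping the second boundary (for $\operatorname{Cap}_1$) turns $T_{\partial_2}$ trivial and $T_{\partial_1}$ into the Dehn twist about the puncture, which is trivial in $\Mod(S_{g,1})$. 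That direct approach yields no information.

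I would therefore use a better element. Since $T_{\partial_1}$ and $T_{\partial_2}$ are central, the hypothesis forces $\rho(T_{\partial_1})=\rho(T_{\partial_2})$. I would then combine this with a relation, the lantern or chain relation, expressing $T_{\partial_1}T_{\partial_2}^{-1}$ through point-pushing-type elements. Concretely, take a separating curve $c$ cutting off a pair of pants containing $\partial_1,\partial_2$. Then $\operatorname{Push}(\gamma)$ of the boundary along an arc, or the element $T_{\gamma_1^+}T_{\gamma_1^-}^{-1}$ as in Proposition~\ref{prop: singular pt}, conjugated appropriately, relates to $T_{\partial_1}T_{\partial_2}^{-1}$. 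The cleaner route I would pursue is to show that $\rho$ factors through the image of $\Mod(S_g^2)\to\Mod(S_{g+1})$. That kernel is normally generated by $T_{\partial_1}T_{\partial_2}^{-1}$, a standard fact from the capping/gluing exact sequence. The image $\Gamma$ is the stabilizer of the nonseparating curve $b_{g+1}$ with orientation, and it contains the point-pushing elements. In $\Gamma$, the pushes $\operatorname{Push}(\gamma_1)$ and $\operatorname{Push}(\gamma_2)$ about the same loop $a_g$ become inverse to each other up to a twist about $b_{g+1}$, which is again $T_{\partial_i}$. This is because pushing either side of the annulus around $b_{g+1}$ gives the same mapping class up to orientation.

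Next I would evaluate cocycles. By Proposition~\ref{prop: image of point pushing} and the computation in Proposition~\ref{prop: singular pt}, $\delta(\operatorname{Push}(\gamma_1))=(2-2g)\sum_j s_1^{(j)}a_g\otimes v_j$ and $\delta(\operatorname{Push}(\gamma_2))=(2-2g)\sum_j s_2^{(j)}a_g\otimes v_j$. Any relation $\operatorname{Push}(\gamma_1)\operatorname{Push}(\gamma_2)=(\text{element acting trivially on }H\text{ and killed by }\rho)$ then forces $s_1^{(j)}+s_2^{(j)}=0$ for all $j$. Similarly $t_1^{(\ell)}+t_2^{(\ell)}=0$ for all $\ell$. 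So every column of $x$ and $y$ is a multiple of $[\kappa_1]-[\kappa_2]$. Modulo $\GL_d\times\GL_{m-d}$, $x=([\kappa_1]-[\kappa_2])\otimes v_1$ and $y=([\kappa_1]-[\kappa_2])\otimes w_1$, both nonzero because $\rho$ is neither affine nor co-affine. The condition $\widehat{\mathcal B}=0$ holds automatically.

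Finally, by Proposition~\ref{prop: classify bi-affine} and Theorem~\ref{thm: classify bi-affine for n bdry}, this pair is exactly the data of the representation $\Phi$ of Proposition~\ref{prop: singular pt} plus a trivial summand. The scalars $\frac1{2g-2}$ there are absorbed by the $\CC^*\times\CC^*$ action, and the remaining basis vectors of $V_1$ and $V/V_2$ split off as trivial summands, with the $\chi$ block adjusted as in Remark~\ref{rmk: matix form}. The main obstacle is the middle step: producing an explicit relation in $\Mod(S_g^2)$ expressing $\operatorname{Push}(\gamma_1)\operatorname{Push}(\gamma_2)$ (or $\operatorname{Push}(\gamma_1)$ times the inverse of a conjugate of $\operatorname{Push}(\gamma_2)^{-1}$) in terms of $T_{\partial_1}T_{\partial_2}^{-1}$ and elements acting trivially on $H$ with trivial $\delta$, so that the cocycle values can be compared. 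I would first try the lantern relation in the four-holed sphere bounded by $\partial_1,\partial_2,\gamma_1^-$, and a parallel copy. Failing that, I would directly use that $\rho$ factors through $\Gamma$ and compute the $\Gamma$-cocycles from $\Phi$.
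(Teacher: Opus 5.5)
There is a genuine gap in your middle step. The problem is structural, not just a missing relation: no argument that extracts the linear constraints $s_1^{(j)}+s_2^{(j)}=0$ and $t_1^{(\ell)}+t_2^{(\ell)}=0$ from cocycle data can work.

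As you observed, $\kappa_i(T_{\partial_j})=0$ and $\kappa_i^*(T_{\partial_j})=0$. So in the matrix form, the hypothesis $\rho(T_{\partial_1}T_{\partial_2}^{-1})=I$ puts no condition on $\delta$ or $\sigma$. It is exactly the condition $\chi(T_{\partial_1})=\chi(T_{\partial_2})$ on the corner entry, which involves $x$ and $y$ jointly. Equivalently, factoring through $\Gamma=\Mod(S_g^2)/\langle T_{\partial_1}T_{\partial_2}^{-1}\rangle$ imposes nothing on the classes separately. Each $[\kappa_i]$ is pulled back along $\operatorname{Cap}_i$, which kills both boundary twists, so both $[\kappa_1]$ and $[\kappa_2]$ descend to $\Gamma$.

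For example, take the co-affine representation indexed by $x=[\kappa_1]$ (the $H_1(UT(S_g^1);\CC)$ example of Remark~\ref{rmk: geometric interpretation}). It satisfies your hypothesis but has $s_1+s_2=1$. So any argument for $s_1+s_2=0$ that never uses $y\neq0$ must fail.

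Concretely, the relation you are looking for does not exist. The crossed homomorphism $\kappa_1$ descends to $\Gamma$ and vanishes on the twist about $b_{g+1}$. On the other hand, $\kappa_1(\operatorname{Push}(\gamma_1)\,h)=(2-2g)a_g\neq0$ for every conjugate $h$ of $\operatorname{Push}(\gamma_2)^{\pm1}$, because $\operatorname{Cap}_1$ kills $h$. Geometrically, in $\Mod(S_{g,2})$ the two pushes move different punctures, and forgetting the second puncture sends the product to a nontrivial point-push. Lantern or chain relations in $\Mod(S_g^2)$ cannot help either, since $\delta$ and $\sigma$ are genuine crossed homomorphisms and are compatible with every relation.

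The missing idea is to extract a \emph{bilinear} condition from the $\chi$-entry, where $x$ and $y$ interact, and then combine it with the cup-product condition. The paper applies $\rho$ to the surface braid relation $\prod_{r=1}^g[B_{r,i},A_{r,i}]=C_{12}T_{\partial_i}^{2g-2}$ for $i=1,2$.
\begin{itemize}
\item The pushes $A_{r,i},B_{r,i}$ act trivially on $H$, and $\delta,\sigma$ take the values $(2-2g)s_i$ and $(2-2g)t_i$ times $a_r,b_r$ (or their duals) on them.
\item Hence each commutator contributes $-2(2-2g)^2s_it_i$ to the corner entry, giving $-2g(2-2g)^2s_it_i=\chi(C_{12})+(2g-2)\chi(T_{\partial_i})$.
\item Subtracting the cases $i=1,2$ and using $\chi(T_{\partial_1})=\chi(T_{\partial_2})$ yields $s_1t_1=s_2t_2$; column by column, $s_1^{(k)}t_1^{(\ell)}=s_2^{(k)}t_2^{(\ell)}$.
\item Together with $(s_1+s_2)(t_1+t_2)=0$ from Theorem~\ref{thm: classify bi-affine for n bdry} and $x,y\neq0$, this forces $s_1+s_2=0=t_1+t_2$.
\end{itemize}
From that point your last paragraph goes through: normalize to $x=y=[\kappa_1]-[\kappa_2]$ up to $\CC^*\times\CC^*$, split off the trivial summands, and invoke Proposition~\ref{prop: singular pt}.
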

\begin{proof}
    By Corollary~\ref{cor: lowest dim to consider}, it suffices to consider the cases $m=2g+2$ and $m=2g+3$.

    First assume that $m=2g+2$. Let $\rho$ be indexed by a pair of nonzero elements
    \[x=s_1[\kappa_1]+s_2[\kappa_2],\quad y=t_1[\kappa_1]+t_2[\kappa_2]\in \mathcal{E}_{\Mod(S_g^2)}.\]
    By Corollary~\ref{cor: 2g+2 bi-affine rep}, these satisfy
    \[(s_1+s_2)(t_1+t_2)=0.\]
    Under the assumption
    \[\rho(T_{\partial_1}T_{\partial_2}^{-1})=I,\]
    we will prove that
    \begin{equation}\label{eq: st12}
    s_1t_1=s_2t_2.
    \end{equation}
    Together, these two relations imply
    \[s_1+s_2=0,\qquad t_1+t_2=0,\]
    and hence the theorem follows from Proposition~\ref{prop: singular pt}.

    We now prove \eqref{eq: st12}. Let the matrix form (see Remark \ref{rmk: matix form}) of $\rho$ be
    \begin{equation*}
       \rho(f)=\begin{pmatrix}
   1 & \delta(f) & \chi(f) \\
   0 &\Psi(f) & \sigma(f) \\
   0 & 0 & 1   \end{pmatrix},
   \end{equation*}
   where $\Psi$ is the standard action on $H$. We may take 
   \[\delta=s_1\kappa_1^*+s_2\kappa_2^*,\quad \sigma=t_1\kappa_1+t_2\kappa_2,\]
   where $\kappa_i^*$ is the crossed homomorphism defined in Remark~\ref{rmk: dual for crossed}.
   
   A presentation of the kernel of the capping map
   \[\Mod(S_g^n)\longrightarrow \Mod(S_g),\]
   known as the surface braid group, is given in \cite[Theorem~8]{Braids}. The relation we need is
   \begin{equation}\label{eq: surface braid relation}
       \prod\limits_{r=1}^g[B_{r,i},A_{r,i}]=C_{12}(T_{\partial_i})^{2g-2},\qquad i=1,2.
   \end{equation}
   Given a symplectic basis
   \[\{a_1,b_1,\ldots,a_g,b_g\}\]
   of $H$ satisfying
   \[\omega(a_j,b_k)=\delta_j^k,\quad \omega(a_j,a_k)=\omega(b_j,b_k)=0,\]
   the mapping class $B_{r,i}$ is obtained by pushing the $i$-th boundary component along a loop representing $b_r$, while $A_{r,i}$ is obtained by pushing the $i$-th boundary component along a loop representing $a_r$. Here $C_{12}$ denotes the mapping class obtained by pushing one boundary component once around the other.
   
   We now apply $\rho$ to the relation \eqref{eq: surface braid relation}. From the definition of $\kappa_i$ in Lemma \ref{lem: two crossed homomorphisms} and the property in Proposition \ref{prop: image of point pushing}, we have
   \[\kappa_i(B_{r,i})=(2-2g)b_r,\quad \kappa_i(A_{r,i})=(2-2g)a_r,\qquad 1\le r \le g,\ i=1,2, \]
   and $\kappa_j(B_{r,i})=\kappa_j(A_{r,i})=0$ if $i\neq j$. From the definition of $\kappa_i^*$ in Remark~\ref{rmk: dual for crossed}, we similarly have
   \[\kappa_i^*(B_{r,i})=(2-2g)b_r^*,\quad \kappa_i^*(A_{r,i})=(2-2g)a_r^*,\qquad 1\le r \le g,\ i=1,2, \]
   and $\kappa_j^*(B_{r,i})=\kappa_j^*(A_{r,i})=0$ if $i\neq j$. Here
   \[a_r^*(x)=\omega(a_r,x),\qquad b_r^*(x)=\omega(b_r,x), \qquad \forall x\in H.\] Hence
   \[\rho(B_{r,i})=\begin{pmatrix}
   1 & \delta(B_{r,i}) & \chi(B_{r,i}) \\
   0 &\Psi(B_{r,i}) & \sigma(B_{r,i}) \\
   0 & 0 & 1   \end{pmatrix}=\begin{pmatrix}
   1 & (2-2g)s_i b_r & \chi(B_{r,i}) \\
   0 & I & (2-2g)t_i b_r^* \\
   0 & 0 & 1   \end{pmatrix}, \]
   and 
   \[ \rho(A_{r,i})=\begin{pmatrix}
   1 & \delta(A_{r,i}) & \chi(A_{r,i}) \\
   0 &\Psi(A_{r,i}) & \sigma(A_{r,i}) \\
   0 & 0 & 1   \end{pmatrix}=\begin{pmatrix}
   1 & (2-2g)s_i a_r & \chi(A_{r,i}) \\
   0 & I & (2-2g)t_i a_r^* \\
   0 & 0 & 1   \end{pmatrix}.\]
   Then a direct computation gives
   \[\rho([B_{r,i},A_{r,i}])=\begin{pmatrix}
   1 & 0 & (2-2g)^2s_it_i\big(b_r^*(a_r)-a_r^*(b_r)\big) \\
   0 & I & 0 \\
   0 & 0 & 1   \end{pmatrix}=\begin{pmatrix}
   1 & 0 & -2(2-2g)^2s_it_i \\
   0 & I & 0 \\
   0 & 0 & 1   \end{pmatrix}.\]
   Moreover, from the definition, we have
   \[\kappa_i(T_{\partial_j})=0, \quad \kappa_i^*(T_{\partial_j})=0.\]
   Then
   \[ \rho(T_{\partial_i})=\begin{pmatrix}
   1 & \delta(T_{\partial_i}) & \chi(T_{\partial_i}) \\
   0 &\Psi(T_{\partial_i}) & \sigma(T_{\partial_i}) \\
   0 & 0 & 1   \end{pmatrix}=\begin{pmatrix}
   1 & 0 & \chi(T_{\partial_i}) \\
   0 & I & 0 \\
   0 & 0 & 1   \end{pmatrix}.\]
   Applying $\rho$ to the relation \eqref{eq: surface braid relation}, we obtain in the $\chi$-component, for $i=1,2$,
   \begin{equation}\label{eq: compare chi}
   \begin{aligned}
   \chi\left(\prod_{r=1}^g[B_{r,i},A_{r,i}]\right)
   &=\chi\left(C_{12}(T_{\partial_i})^{2g-2}\right),\\
   -2g(2-2g)^2s_it_i
   &=\chi(C_{12})+(2g-2)\chi(T_{\partial_i}).
   \end{aligned}
   \end{equation}
   By the assumption $\rho(T_{\partial_1}T_{\partial_2}^{-1})=I$,
   we have
   \[\chi(T_{\partial_1})=\chi(T_{\partial_2}).\]
   Since $g\ge 3$, comparing the two equations in \eqref{eq: compare chi} for $i=1,2$ gives
   \[s_1t_1=s_2t_2.\]
   This proves the desired relation.
   
   It remains to consider the case $m=2g+3$. Assume that the representation is indexed by the pair
   \[s_1[\kappa_1]+s_2[\kappa_2]\in \mathcal{E}_{\Mod(S_g^2)}\otimes\CC,\]
   and\[
   \big(t_1^{(1)}[\kappa_1]+t_2^{(1)}[\kappa_2]\big)\otimes(1,0)+\big(t_1^{(2)}[\kappa_1]+t_2^{(2)}[\kappa_2]\big)\otimes(0,1)
   \in \mathcal{E}_{\Mod(S_g^2)}\otimes\CC^2.\] 
   The parameters satisfy
   \[(s_1+s_2)(t_1^{(i)}+t_2^{(i)})=0,\qquad i=1,2.\]
   By the same computation as above, we also obtain
   \[s_1t_1^{(i)}=s_2t_2^{(i)},\qquad i=1,2.\]
   Combining these relations gives
   \[s_1+s_2=0,\qquad t_1^{(i)}+t_2^{(i)}=0,\qquad i=1,2.\]
   By the classification in Theorem~\ref{thm: classify bi-affine for n bdry}, the representation is conjugate to the direct sum of a $(2g+2)$-dimensional bi-affine representation and a trivial representation.  The result now follows from the case $m=2g+2$ proved above.
\end{proof}
This theorem will be useful in excluding many bi-affine representations in our classification of linear representations of $\Mod(S,[\beta])$ and $\Mod(\widetilde{S},\sigma)$ in the next section. It also has consequences for the classification of linear representations of $\Mod(S_{g,2})$, which we discuss in a later subsection.

\subsection{Low-dimensional linear representations of $\Mod(S_g^2)$ are bi-affine}\label{sec: 2.4}

We have classified bi-affine representations of $\Mod(S_g^n)$. We now return to the study of general linear representations. The preceding classification becomes effective once one knows that every sufficiently low-dimensional nontrivial linear representation is bi-affine.

This is already known in several cases:
\begin{enumerate}
\item Korkmaz \cite[Theorem~1]{korkmaz} and Franks--Handel \cite[Theorem~1.1]{FranksHandel} proved that for $g\ge 3$, every linear representation of $\Mod(S_{g,r}^n)$ of dimension at most $2g-1$ is trivial.
\item Korkmaz \cite[Theorem~2]{korkmaz} proved that for $g\ge 3$, every nontrivial $2g$-dimensional linear representation of $\Mod(S_{g,r}^n)$ is conjugate to the standard symplectic representation on $H=H_1(S_g;\CC)$.
\item Kasahara \cite[Theorem~1.1]{kasahara} proved that for $g\ge 7$, every nontrivial $(2g+1)$-dimensional linear representation of $\Mod(S_{g,r}^n)$ is bi-affine with core $H$. The genus assumption was subsequently improved to $g\ge 4$ in \cite[Theorem~A]{reps3g-3}.
\item More recently, the author's joint work with Kaufmann, Salter, and Zhang \cite[Theorem~A]{reps3g-3} extended Kasahara's result to linear representations of dimension at most $3g-3$ when $r+n\le 1$.
\end{enumerate}

We now establish the corresponding result for $\Mod(S_g^2)$.
\begin{theorem}\label{thm: any low-dim rep is bi-affine}
Let $g\ge 3$ and $m\le 3g-3$. Then every nontrivial linear representation
\[
\rho:\Mod(S_g^2)\longrightarrow \GL_m(\CC)
\]
is bi-affine with core $H=H_1(S_g;\CC)$.
\end{theorem}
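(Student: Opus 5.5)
We reduce to the already-known case of one boundary component, \cite[Theorem~A]{reps3g-3}, via the partitioned-surface framework of Putman \cite{cuttingandpasting} and Church \cite{ChurchPartitionedJohnsonHom}. The strategy is to restrict $\rho$ to subsurface mapping class groups that see only one boundary component, and then to glue the resulting information back together.

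\emph{Step 1: restriction to an embedded $S_g^1$.} Embed $S_g^1\hookrightarrow S_g^2$ by attaching a pair of pants, as in the proof of Lemma~\ref{lem: contraction form for Mod}. Then $\rho|_{\Mod(S_g^1)}$ is a representation of dimension $m\le 3g-3$. By \cite[Theorem~A]{reps3g-3} it is either trivial or bi-affine with core $H$; after dualizing if necessary, we may use the affine, co-affine or bi-affine filtration coming from that result. If it is trivial, then $\rho$ kills every Dehn twist about a nonseparating curve lying in the embedded $S_g^1$. Every nonseparating curve of $S_g^2$ is $\Mod(S_g^2)$-conjugate to such a curve, and $\Mod(S_g^2)$ is generated by nonseparating twists (for $g\ge 3$, the boundary twists are products of these via lantern/chain relations). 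Hence $\rho$ is trivial in this case. So assume $\rho|_{\Mod(S_g^1)}$ is nontrivial.

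\emph{Step 2: the filtration is $\Mod(S_g^2)$-invariant.} The plan is to characterize the filtration intrinsically. Let $T$ be the subgroup generated by twists on separating curves of genus $1$, or alternatively the Torelli-type subgroup coming from partitioned surfaces. Set
\[
V^{\mathcal{I}}=\{v\in V : \text{all such twists act trivially on } v\},
\]
or use the span of the images $(\rho(T_a)-I)V$ over nonseparating twists $T_a$. These subspaces are defined conjugation-invariantly in $\Mod(S_g^2)$. Because every relevant twist is conjugate into $\Mod(S_g^1)$, their images are determined by the bi-affine structure from Step~1. Concretely:
\begin{itemize}
\item $V_2$ is taken to be the span of the images $\operatorname{im}(\rho(T_a)-I)$ together with $V_1$;
\item $V_1$ is taken to be the intersection of this span with the invariants of the group generated by nonseparating twists.
\end{itemize}
Both are $\Mod(S_g^2)$-invariant. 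Moreover $V_2/V_1$ carries an action in which the Torelli group acts trivially. It is therefore a representation of $\Sp_{2g}(\ZZ)$ of dimension at most $3g-3<4g$. By Margulis superrigidity and the dimension bound, it is $H$ plus trivial summands; the trivial summands are then absorbed into $V_1$ or $V/V_2$ using $H^1(\Mod(S_g^2);\CC)=0$ and Proposition~\ref{prop: extend bi-affine}.

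\emph{Step 3: trivial action on the ends.} It remains to show that $\Mod(S_g^2)$ acts trivially on $V_1$ and on $V/V_2$. On these pieces all nonseparating twists act trivially, since they do so already on the $\Mod(S_g^1)$ filtration. These twists generate $\Mod(S_g^2)$, so the action on both pieces is trivial.

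\emph{Main obstacle.} The hard step is Step~2: showing that the partial Torelli group of $S_g^2$ (including the boundary-difference twist $T_{\partial_1}T_{\partial_2}^{-1}$ and bounding-pair maps separating the two boundaries, which do not lie in any single embedded $S_g^1$) acts unipotently, compatibly with the filtration. This is where Church's partitioned Torelli groups, generated by genus-$1$ separating twists and bounding pair maps conjugate into the subsurface, would be used. The first thing to try is to show that each such generator commutes with a copy of $\Mod(S_{g-1}^1)$ or $\Mod(S_{g-2}^1)$ whose restricted representation already contains $H_1(S_{g-1})$, and then to use Schur's lemma to force block-unipotence.
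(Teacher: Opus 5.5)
Your Step~1 is fine: all nonseparating twists are conjugate into the embedded $S_g^1$ and generate $\Mod(S_g^2)$. The paper argues differently there, using that the extra generators $T_{\gamma_i}$ pairwise commute and that $\Mod(S_g^2)$ is perfect. The genuine gap is Step~2, which is the whole content of the theorem and which you leave as the ``main obstacle.''

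Your candidate filtration $W=\sum_a\operatorname{im}(\rho(T_a)-I)$ (over nonseparating $a$), with $V_1=W\cap V^{\Mod(S_g^2)}$, makes $V_1$ and $V/W$ trivial essentially by definition, so Step~3 is tautological. But nothing shows that $W/V_1\cong H$. The assertion that ``the Torelli group acts trivially on $V_2/V_1$'' is exactly what must be proved. Your justification, that the subspaces $\operatorname{im}(\rho(T_a)-I)$ are determined by $\rho|_{\Mod(S_g^1)}$, is circular. For $a$ not in the embedded $S_g^1$, this subspace is $\rho(f)\bigl(\operatorname{im}(\rho(T_{a_0})-I)\bigr)$ with $f\notin\Mod(S_g^1)$, and $\rho(f)$ is precisely the unknown.

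This is a real issue, not a formality. The hardest case is when $\rho$ is trivial on the partitioned Torelli group $\mathcal{I}(\Sigma)$, as happens for $\Phi$ on $H_1(S_{g+1};\CC)$. Then $\rho$ factors through the image of $\Mod(S_g^2)$ in $\Sp_{2g+2}(\ZZ)$. That group fixes $b_{g+1}$ and maps onto $\Sp_{2g}(\ZZ)$ with a unipotent, Heisenberg-type kernel. To get an $\Sp_{2g}(\ZZ)$-module in the middle, you must show this kernel acts trivially on the middle graded piece of an \emph{arbitrary} representation of dimension at most $3g-3$. Superrigidity only enters after that, and is not where the difficulty lies.

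The paper supplies the missing content in two steps.
\begin{enumerate}
\item Lemma~\ref{lem: abel and unipotent} shows $\rho(\mathcal{I}(\Sigma))$ is abelian and unipotent. The reasons are: $\rho$ kills the partitioned Johnson kernel $\mathcal{K}(\Sigma)$, whose generators are separating twists lying in a copy of $S_g^1$, where \cite[Proposition~6.7]{reps3g-3} applies; $\mathcal{I}(\Sigma)/\mathcal{K}(\Sigma)$ is abelian by Church; and the generating bounding pair maps are products of commuting unipotents. This yields a nonzero $\Mod(S_g^2)$-submodule $V^{\mathcal{I}(\Sigma)}$. If it is proper, one inducts on $m$, using perfectness, Proposition~\ref{prop: extend bi-affine}, and $m\le 3g-3<4g$.
\item If $\rho|_{\mathcal{I}(\Sigma)}=1$, the restriction to $\Mod(S_g^1)$ is $H\oplus\CC^{m-2g}$, since $\kappa$ is nonzero on $\mathcal{I}(S_g^1)$. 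One must still produce a proper invariant subspace by hand. Here $\rho(T_{\gamma_2})$ commutes with $\rho(T_{b_1}),\rho(T_{a_2}),\dots,\rho(T_{b_g})$ and braids with $\rho(T_{a_1})$. This forces the block form of Lemma~\ref{lem: determine T gamma2} with data $(M',v,w^*)$. Chain and star relations then show that $H$ (if $v=0$) or $\CC v$ (if $v\neq 0$) is invariant.
\end{enumerate}

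Your suggested Schur-lemma argument would at best pin down such a twist on the $H_1(S_{g-1})$-block it commutes with. It says nothing about the $\CC a_1\oplus\CC b_1\oplus\CC^{m-2g}$ block, which is exactly where the work is. Also, Schur yields scalars, not unipotence, so a separate input such as \cite[Proposition~5.1]{reps3g-3} would still be required.
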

\begin{remark}\label{rmk: quotient group also bi-affine}
   Since $\Mod(S_{g,r}^n)$ is a quotient of $\Mod(S_g^2)$ whenever
$r+n\le 2$, the same result applies to $\Mod(S_{g,r}^n)$
for $r+n\le 2$.
\end{remark}

The proof of Theorem~\ref{thm: any low-dim rep is bi-affine} uses the theory of partitioned surfaces developed by Putman \cite{cuttingandpasting} and further extended by Church \cite{ChurchPartitionedJohnsonHom}. This theory is likely to play an important role in obtaining analogous results for mapping class groups of surfaces with more than two boundary components. We recall the results from this theory that will be used in the proof.
\begin{enumerate}
    \item Let
    \[\Sigma=(S_g^2,P),\qquad P=\{\{\partial_1,\partial_2\}\},\]
    be the surface $S_g^2$ equipped with the partition $P$ consisting of a singleton of its two boundary components. The canonical embedding
    \[\operatorname{Emb}:S_g^2\hookrightarrow S_{g+1}^1,\]
    obtained by gluing a pair of pants, respects the partition $P$ and induces a homomorphism
    \[\operatorname{Emb}_*:\Mod(S_g^2)\longrightarrow \Mod(S_{g+1}^1).\]

    The partitioned Torelli group $\mathcal{I}(\Sigma)$ is defined by
    \[\mathcal{I}(\Sigma)=\{f\in\Mod(S_g^2)\mid \operatorname{Emb}_*(f)\text{ acts trivially on }H_1(S_{g+1}^1;\ZZ)\}.\]

    The definition of the partitioned Johnson kernel $\mathcal{K}(\Sigma)$ is more technical, since it also involves the arc connecting the two boundary components. Instead, using the functoriality of the partitioned Johnson kernel, we recall the equivalent characterization
    \[\mathcal{K}(\Sigma)=\{f\in\Mod(S_g^2)\mid
    \operatorname{Emb}_*(f)\in\mathcal{K}(S_{g+1}^1)\},\]
    where $\mathcal{K}(S_{g+1}^1)$ denotes the ordinary Johnson kernel of $\Mod(S_{g+1}^1)$.
    \item By \cite[Theorem~A]{JohnsonKernel}, the partitioned Johnson kernel $\mathcal{K}(\Sigma)$ is generated by 
    \begin{equation}\label{gen: Pseparatingtwists}
    \{\,T_\gamma \mid \operatorname{Emb}(\gamma)\text{ is a separating simple closed curve in }S_{g+1}^1\,\}.
    \end{equation}
    \item By \cite[Theorem~1.3]{cuttingandpasting}, $\mathcal{I}(\Sigma)$ is generated
    by the separating twists in \eqref{gen: Pseparatingtwists} and bounding pair
    maps $T_{\gamma_1}T_{\gamma_2}^{-1}$, where $\gamma_1,\gamma_2$ are disjoint homologous nonseparating simple closed curves in $S_{g+1}^1$ contained in
    $S_g^2$.
    \item The quotient
    \[\mathcal{I}(\Sigma)/\mathcal{K}(\Sigma)\]
    is abelian. More precisely, \cite[Corollary~5.7]{ChurchPartitionedJohnsonHom} identifies it with
    \[\wedge^3\!\left(\operatorname{Emb}_*\big(H_1(S_g^2;\ZZ)\big)\right).\]
\end{enumerate}
The restriction of $\rho$ to the partitioned Torelli group $\mathcal{I}(\Sigma)$ satisfies the following structural property.

\begin{lemma}\label{lem: abel and unipotent}
Let $g\ge 3$ and $m\le 3g-3$. Let
\[\rho:\Mod(S_g^2)\longrightarrow \GL_m(\CC)\]
be a homomorphism. Then the image of the partitioned Torelli group $\mathcal{I}(\Sigma)$ under $\rho$ is an abelian unipotent subgroup of $\GL_m(\CC)$.
\end{lemma}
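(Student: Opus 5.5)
Our plan is to show two things: $\rho$ kills the partitioned Johnson kernel $\mathcal{K}(\Sigma)$, and every generator of $\mathcal{I}(\Sigma)$ has unipotent image. Once $\rho(\mathcal{K}(\Sigma))$ is trivial, $\rho|_{\mathcal{I}(\Sigma)}$ factors through the abelian group $\mathcal{I}(\Sigma)/\mathcal{K}(\Sigma)\cong\wedge^3(\operatorname{Emb}_*H_1(S_g^2;\ZZ))$ of item (4). Hence $\rho(\mathcal{I}(\Sigma))$ is abelian. It is generated by the images of separating twists and bounding pair maps (item (3)). Commuting unipotent matrices generate a unipotent group, so it suffices to show each such image is unipotent.

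\emph{Unipotence of twist images.} For $T_\gamma$ with $\gamma$ nonseparating in $S_g^2$, the standard argument of \cite{reps3g-3} (following Korkmaz and Franks--Handel) applies. The dimension bound $m\le 3g-3$ is used together with the fact that the centralizer of $T_\gamma$ contains $\Mod(S_{g-1}^{k})$ for a subsurface of genus $g-1$. Restricted to this copy, the generalized eigenspaces of $\rho(T_\gamma)$ are invariant, and a counting argument shows $\rho(T_\gamma)$ has a single eigenvalue. Since all nonseparating twists are conjugate and satisfy braid/lantern-type relations forcing that eigenvalue to be $1$ (using $H^1(\Mod(S_g^2);\CC)=0$ for $g\ge3$), $\rho(T_\gamma)$ is unipotent. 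Then a bounding pair map $T_{\gamma_1}T_{\gamma_2}^{-1}$ is a product of two commuting unipotents, hence unipotent. A separating twist $T_\delta$ can be written via the lantern relation as a product of commuting twists and bounding pair maps in a suitable subsurface, or handled by the same eigenspace argument, so its image is unipotent as well.

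\emph{Killing $\mathcal{K}(\Sigma)$.} By item (2), $\mathcal{K}(\Sigma)$ is generated by twists $T_\delta$ with $\operatorname{Emb}(\delta)$ separating in $S_{g+1}^1$. For such $\delta$ of genus $h$ on one side, choose a chain or lantern configuration in the subsurface cut off by $\delta$, and express $T_\delta$ via the chain relation $(T_{c_1}\cdots T_{c_{2h}})^{4h+2}=T_\delta$, or via a product of commutators of elements supported on the two sides. The aim is to show that $\rho(T_\delta)$ is unipotent and, through a commutator expression, lies in $[\rho(\mathcal{I}),\rho(\mathcal{I})]$. We would then use the smallness of $m$ relative to $3g-3$ to force $\rho(T_\delta)=I$.

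\emph{Main obstacle.} We expect the hardest step to be this last one, namely proving $\rho(\mathcal{K}(\Sigma))=1$ within the range $m\le 3g-3$. Our first attempt would be to adapt the argument of \cite{reps3g-3} for one boundary component. There one considers the fixed subspace $\ker(\rho(T_\delta)-I)$, which is invariant under the centralizer and is acted on by mapping class groups of both complementary pieces. One then shows, via Korkmaz's dimension bound ($<2g'$ forces trivial), that if $\rho(T_\delta)\neq I$ then $m$ must exceed $3g-3$. For the partitioned case, the complementary pieces of $\delta$ in $S_g^2$ may contain both boundary components, so the genus count must use the genus in $S_{g+1}^1$. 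This requires care exactly when $\delta$ cuts off a small-genus piece containing $\partial_1$ and $\partial_2$.
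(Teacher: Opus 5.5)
\textbf{There is a genuine gap, at exactly the step you flag as the main obstacle: you never show $\rho(\mathcal{K}(\Sigma))=1$.} The rest of your architecture matches the paper's. You kill $\mathcal{K}(\Sigma)$, factor through the abelian quotient $\mathcal{I}(\Sigma)/\mathcal{K}(\Sigma)$, and note that each bounding pair map goes to a product of two commuting unipotents. Unipotence of nonseparating twists comes from \cite[Proposition~5.1]{reps3g-3}; that result needs $g\ge4$, so for $g=3$ the paper uses Korkmaz's classification instead, a case your sketch omits. Your separate treatment of unipotence for separating twists is unnecessary. The separating-twist generators of $\mathcal{I}(\Sigma)$ are exactly the generators of $\mathcal{K}(\Sigma)$, so they map to $I$ once that step is done.

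Neither route you propose for killing $\mathcal{K}(\Sigma)$ closes. The commutator route is circular as stated. Knowing $\rho(T_\delta)\in[\rho(\mathcal{I}(\Sigma)),\rho(\mathcal{I}(\Sigma))]$ forces nothing until you know $\rho(\mathcal{I}(\Sigma))$ is abelian, which is what you are trying to prove. The eigenspace and genus-counting adaptation is only announced, and the difficulty you anticipate is left unresolved.

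The missing idea is a reduction that avoids any new counting. Suppose $\operatorname{Emb}(\delta)$ is separating in $S_{g+1}^1$.
\begin{enumerate}
\item The glued pair of pants is connected and disjoint from $\delta$, so $\partial_1$ and $\partial_2$ lie on the same side of $\delta$.
\item The other side is a copy of $S_h^1$, $h\ge1$, containing no boundary component of $S_g^2$.
\item On the side containing the boundaries, cut off a pair of pants around $\partial_1\cup\partial_2$. What remains of $S_g^2$ is a subsurface $S_g^1$ in which $\delta$ is separating. If that side is itself a pair of pants, then $\delta=\partial S_g^1$.
\end{enumerate}
Hence every generator $T_\delta$ of $\mathcal{K}(\Sigma)$ lies in the Johnson kernel of a copy of $\Mod(S_g^1)\subset\Mod(S_g^2)$. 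Then \cite[Proposition~6.7]{reps3g-3} applies directly: every representation of $\Mod(S_g^1)$ of dimension at most $3g-3$ kills $\mathcal{K}(S_g^1)$. This is the paper's argument. In particular, the ``small-genus piece containing $\partial_1$ and $\partial_2$'' causes no difficulty. Only a pair of pants inside that piece is discarded, and no genus count on it is ever needed.
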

\begin{proof}
    First, we prove that the image of $\mathcal{I}(\Sigma)$ is abelian. It suffices to show that $\rho$ is trivial on the partitioned Johnson kernel $\mathcal{K}(\Sigma)$, since the quotient
    \[\mathcal{I}(\Sigma)/\mathcal{K}(\Sigma)\]
    is abelian. We know that $\mathcal{K}(\Sigma)$ is generated by Dehn twists
    $T_\gamma$ where $\gamma$ is a separating simple closed curve in $S_{g+1}^1$
    contained in $S_g^2$. Since the two boundary components of $S_g^2$ lie on the same side of $\gamma$, the curve $\gamma$ is contained in a subsurface of $S_g^2$ homeomorphic to $S_g^1$. Thus, it suffices to consider the restriction
    \[\Mod(S_g^1)\hookrightarrow \Mod(S_g^2)\xrightarrow{\rho}\GL_m(\CC).\]
    By \cite[Proposition~6.7]{reps3g-3}, any homomorphism
    \[\Mod(S_g^1)\to \GL_m(\CC),\qquad m\le 3g-3,\]
    kills the Johnson kernel $\mathcal{K}(S_g^1)$, which is generated by
    separating twists. Hence $\rho(T_\gamma)=I$ for every generator $T_\gamma$ of
    $\mathcal{K}(\Sigma)$, and therefore $\rho$ is trivial on $\mathcal{K}(\Sigma)$.

    It remains to prove that the image is unipotent. By \cite[Proposition~5.1]{reps3g-3},
    for any homomorphism
    \[\Mod(S_{g,r}^n)\longrightarrow \GL_m(\CC)\]
    with $g\ge4$ and $m\le 4g-3$, the image of every Dehn twist about a nonseparating simple closed curve is unipotent. For $g=3$ and $m\le6$, the same conclusion follows from \cite[Theorem~2]{korkmaz}.
    
    Recall that $\mathcal{I}(\Sigma)$ is generated by the generators of
    $\mathcal{K}(\Sigma)$ and bounding pair maps
    \[T_{\gamma_1}T_{\gamma_2}^{-1},\]
    where $\gamma_1,\gamma_2$ are disjoint homologous nonseparating simple closed curves in $S_{g+1}^1$ contained in $S_g^2$. Since $\rho$ is trivial on $\mathcal{K}(\Sigma)$, it remains to consider the bounding pair maps. By the argument above, both $\rho(T_{\gamma_1})$ and $\rho(T_{\gamma_2})$ are
    unipotent. Moreover, they commute, and hence 
    \[\rho(T_{\gamma_1}T_{\gamma_2}^{-1})=\rho(T_{\gamma_1})\rho(T_{\gamma_2})^{-1}\]
    is unipotent. Since $\rho(\mathcal{I}(\Sigma))$ is abelian and generated by unipotent elements, every element of $\rho(\mathcal{I}(\Sigma))$ is unipotent.
\end{proof}

Before proving Theorem~\ref{thm: any low-dim rep is bi-affine}, we establish several
useful lemmas. We first give a finite generating set for $\Mod(S_g^2)$.

\begin{lemma}\label{lem: Mod S_g^2 generators}
Let $S_g^1$ denote the subsurface of $S_g^2$ obtained by cutting off a pair of
pants. Then $\Mod(S_g^2)$ is generated by a finite generating set of
$\Mod(S_g^1)$ together with the Dehn twists about the curves
$\gamma_i$ ($1\le i\le 2g+1$) shown in Figure~\ref{fig: gens of mod S_g^2}.
\begin{figure}
\centering
\includegraphics[width=0.48\linewidth,height=3.5cm,keepaspectratio]{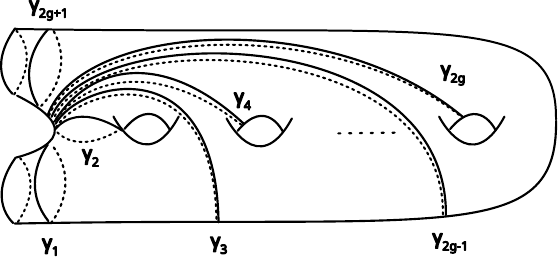}
\caption{Generators of $\Mod(S_g^2)$ not supported on $S_g^1$}
\label{fig: gens of mod S_g^2}
\end{figure}
\end{lemma}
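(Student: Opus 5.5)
We compare $\Mod(S_g^2)$ with $\Mod(S_g^1)$ through the Birman-type exact sequence obtained by capping. Capping the second boundary component $\partial_2$ with a disk gives a surjection
\[\Mod(S_g^2)\longrightarrow \Mod(S_g^1),\]
and the inclusion $S_g^1\hookrightarrow S_g^2$ (the complement of the pair of pants) splits it. The kernel of the capping map is generated by the boundary twist $T_{\partial_2}$ together with the point-pushing (disk-pushing) subgroup $\pi_1(UT(S_g^1))$. The plan is therefore:
\begin{enumerate}
\item show that the subgroup generated by a generating set of $\Mod(S_g^1)$ (supported on $S_g^1$) and the twists $T_{\gamma_i}$ maps onto $\Mod(S_g^1)$;
\item show that this subgroup contains the kernel of the capping map.
\end{enumerate}
Step (1) is automatic from the splitting. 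So everything reduces to showing that the kernel lies in the subgroup $\Gamma$ generated by $\Mod(S_g^1)$ and the $T_{\gamma_i}$.

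For step (2), we use the standard fact that the disk-pushing subgroup is normally generated, as a subgroup normalized by $\Mod(S_g^1)$, by finitely many pushes $\operatorname{Push}(\gamma)=T_{\gamma^+}T_{\gamma^-}^{-1}$. Here $\gamma$ ranges over loops based at $\partial_2$ representing the standard generators $a_1,b_1,\dots,a_g,b_g$ of $\pi_1(S_g^1)$. The curves $\gamma^-$ lie in (an isotopic copy of) $S_g^1$, so $T_{\gamma^-}\in\Gamma$. The curves $\gamma^+$ are exactly the curves that enclose $\partial_2$ together with a nonseparating curve, and I expect these to be the $\gamma_i$ of Figure~\ref{fig: gens of mod S_g^2}: $2g$ of them correspond to pushing $\partial_2$ around each $a_r,b_r$. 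The remaining curve handles $T_{\partial_2}$ itself, or equivalently the twist about the curve surrounding both boundary components, which recovers $T_{\partial}$ of $S_g^1$ up to $T_{\partial_1}T_{\partial_2}$. The lantern relation in the pair of pants, applied with two pushing curves, then expresses $T_{\partial_2}$ through elements already in $\Gamma$.

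Since $\Mod(S_g^1)\subset\Gamma$ acts on these pushes by conjugation, $\Gamma$ contains the entire normal closure, hence the kernel. Therefore $\Gamma=\Mod(S_g^2)$.

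The main obstacle is the bookkeeping in step (2). It requires three things:
\begin{itemize}
\item matching the curves $\gamma_i$ of the figure with the curves $\gamma^{\pm}$ for a fixed choice of basepoint arcs;
\item verifying that the chosen finitely many pushes together with $\Mod(S_g^1)$-conjugation really generate $\pi_1(UT(S_g^1))$, where the fiber generator is handled by $T_{\partial_2}$ and the lantern relation;
\item checking that the twist about the boundary $\partial S_g^1$ (the curve separating off the pants) lies in $\Gamma$, which should follow from its membership in the generating set of $\Mod(S_g^1)$.
\end{itemize}

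Alternatively, one could cite Humphries/Gervais-type generators for surfaces with two boundary components and check that they are conjugate into the stated set. I would first attempt the direct Birman exact sequence argument.
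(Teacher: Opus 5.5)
Your exact-sequence framework is sound. Capping $\partial_2$ is split by $\Mod(S_g^1)\hookrightarrow\Mod(S_g^2)$, the kernel is the disk-pushing group (which contains $T_{\partial_2}$), and with the right choice of side each $\gamma^-$ is isotopic into $S_g^1$. What this proves, however, is that $\Mod(S_g^1)$, $T_{\partial_2}$ and your $2g$ twists $T_{\gamma^+}$ (for loops representing $a_r,b_r$) generate $\Mod(S_g^2)$. That is not the lemma, which is about the specific curves of Figure~\ref{fig: gens of mod S_g^2}.

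\textbf{The gap.} The identification you postpone as bookkeeping is false. The $\gamma_i$ are pairwise disjoint, and the proof of Theorem~\ref{thm: any low-dim rep is bi-affine} uses exactly this: their twists commute. Your curves cannot be among them. The classes of your $\gamma^+$ for $a_r$ and for $b_r$ equal $a_r$ and $b_r$ modulo multiples of $[\partial_2]$. Since $[\partial_2]$ lies in the radical of the intersection form on $H_1(S_g^2;\ZZ)$, their algebraic intersection is $\pm1$, so they must intersect. As planned, your argument yields a different generating set, and one that could not serve the later application.

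\textbf{How to close it within your framework.} The missing step is passing from the actual $\gamma_i$ to your push twists, and $\Mod(S_g^1)$-conjugation does this. Let $\epsilon$ be the arc from $\partial_1$ to $\partial_2$ with $S_g^1=S_g^2\setminus N(\partial_1\cup\epsilon\cup\partial_2)$.
\begin{enumerate}
\item Every nonseparating curve $c$ with $i(c,\epsilon)=1$ is a $\gamma^+$. Its partner $\gamma^-$, the band sum of $c$ with $\partial_2$ along the sub-arc of $\epsilon$, is disjoint from $\epsilon$ and hence lies in $S_g^1$.
\item By change of coordinates in $S_g^2$ cut along $\epsilon$, all such $c$ form a single $\Mod(S_g^1)$-orbit. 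So it suffices that one $\gamma_i$ is such a curve; you must check, for instance, that $\gamma_2$ (which meets $a_1$ once) meets $\epsilon$ exactly once. Then every push twist about a nonseparating loop lies in $\Gamma$, and this gives the kernel modulo $T_{\partial_2}$.
\item $T_{\partial_2}$ itself comes from the lantern relation in the four-holed sphere $N(\partial_2\cup\gamma\cup\gamma')$, for two simple loops with non-interleaved ends at the basepoint (e.g.\ representing $a_1,a_2$). Its boundary curves are $\partial_2$, $\gamma^-$, $\gamma'^-$ and the push curve $w$ of $\gamma\gamma'$. The relation expresses $T_{\partial_2}$ through $T_{\gamma^+}$, $T_{\gamma'^+}$, $T_w$ and twists supported in $S_g^1$. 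It does not come from a lantern ``in the pair of pants'', which has only three boundary components.
\end{enumerate}
With this, your route actually proves a stronger statement than the lemma. The paper instead takes your fallback: it observes that the generators in \cite[Theorem~1]{GenerateModNonclosed} not supported on $S_g^1$ are exactly the $T_{\gamma_i}$.
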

\begin{proof}
An explicit finite generating set for $\Mod(S_g^n)$ was given in
\cite[Theorem~1]{GenerateModNonclosed}. We observe that, for the case
$\Mod(S_g^2)$, the generators in this set that are not supported on the
subsurface $S_g^1$ are precisely the Dehn twists about the curves
\[
\{\gamma_i:1\le i\le 2g+1\}
\]
shown in Figure~\ref{fig: gens of mod S_g^2}.
\end{proof}

We note the following linear-algebra fact.
\begin{lemma}\label{lem: determine T gamma2}
Let $V=H\oplus \CC^{m-2g}$, and let
\[\{a_1,b_1,\dots,a_g,b_g\}\]
be a symplectic basis of $H$ such that
\[\omega(a_j,b_k)=\delta_{jk},\qquad\omega(a_j,a_k)=\omega(b_j,b_k)=0.\]
For $x\in H$, let $T_x\in \GL(V)$ be the transvection
\[T_x(y)=y+\omega(x,y)x,\]
extended by the identity on $\CC^{m-2g}$.

Suppose $M\in \GL(V)$ is unipotent, commutes with
\[T_{b_1},T_{a_2},T_{b_2},\dots,T_{a_g},T_{b_g},\]
and satisfies the braid relation 
\begin{equation}\label{eq: braid assumption}
    T_{a_1}MT_{a_1}=MT_{a_1}M.
\end{equation}
Then $M$ fixes $b_1,a_2,b_2,\dots,a_g,b_g$, and there exist
\[
M'\in \GL(\CC^{m-2g}),\qquad v\in \CC^{m-2g},\qquad w^*\in \operatorname{Hom}(\CC^{m-2g},\CC)
\]
such that
\[M(a_1)=a_1-b_1+v,\qquad M(x)=M'x+w^*(x)b_1\quad \forall x\in \CC^{m-2g},
\]
and
\begin{equation}\label{eq: relations in matrix}
    M'v=v, \qquad w^*M'=w^*,\qquad w^*(v)=0,\qquad (M')^2-M'=-v\otimes w^*.
\end{equation}
\end{lemma}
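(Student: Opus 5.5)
Our plan is to pin down $M$ in two stages: commutation first, then the braid relation. Throughout we write $U=\CC^{m-2g}$ and $V=H\oplus U$.

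\textbf{Stage 1: what commutation gives.} For each $x\in\{b_1,a_2,b_2,\dots,a_g,b_g\}$ put $N_x=T_x-I$, so $N_x(y)=\omega(x,y)x$ is rank one with image $\langle x\rangle$.
\begin{itemize}
\item Since $M$ commutes with $T_x$, it commutes with $N_x$. Hence $M$ preserves $\operatorname{im}N_x=\langle x\rangle$ and $\ker N_x=x^{\perp}\oplus U$.
\item So each such $x$ is an eigenvector of $M$. As $M$ is unipotent, the eigenvalue is $1$, and $M$ fixes $b_1,a_2,\dots,b_g$.
\item $M$ preserves $W=\bigcap_x\ker N_x$. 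This is the $\omega$-orthogonal of $\langle b_1,a_2,\dots,b_g\rangle$ in $H$, plus $U$, so $W=\langle b_1\rangle\oplus U$.
\item Since $M(b_1)=b_1$, every $u\in U$ satisfies $M(u)=M'u+w^*(u)b_1$ for some $M'\in\operatorname{End}(U)$ and $w^*\in U^*$.
\item For $a_1$, use $MN_x=N_xM$. Then $N_x(Ma_1)=M(\omega(x,a_1)x)$. This vanishes for $x\ne b_1$, and equals $\omega(b_1,a_1)b_1=-b_1$ for $x=b_1$.
\item Hence $Ma_1=\lambda a_1+\mu b_1+v$ with $v\in U$, where $\omega(b_1,\lambda a_1)=-1$ forces $\lambda=1$. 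So $Ma_1=a_1+\mu b_1+v$.
\end{itemize}
Invertibility together with the block-triangular shape shows $M'$ is invertible, which is consistent with the claimed form.

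\textbf{Stage 2: the braid relation.} Write $T=T_{a_1}$, with $T(y)=y+\omega(a_1,y)a_1$. Thus $T$ fixes $a_1$ and $U$, and sends $b_1\mapsto b_1+a_1$.

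We evaluate both sides of \eqref{eq: braid assumption} on $b_1$ first:
\[
TMT(b_1)=T(b_1+a_1+\mu b_1+v)=(1+\mu)b_1+(2+\mu)a_1+v.
\]
\[
MTM(b_1)=MT(b_1)=M(b_1+a_1)=(1+\mu)b_1+a_1+v.
\]
Comparing the $a_1$-coefficients gives $2+\mu=1$, so $\mu=-1$. This yields $M(a_1)=a_1-b_1+v$.

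Next we evaluate on $u\in U$:
\[
TMT(u)=T(M'u+w^*(u)b_1)=M'u+w^*(u)(b_1+a_1).
\]
\[
MTM(u)=MT(M'u+w^*(u)b_1)=M(M'u)+w^*(u)M(a_1+b_1).
\]
Here $M(M'u)=M'^2u+w^*(M'u)b_1$ and $M(a_1+b_1)=a_1+v$. Comparing components:
\begin{itemize}
\item the $a_1$-components agree automatically;
\item the $b_1$-component gives $w^*(u)=w^*(M'u)$, i.e.\ $w^*M'=w^*$;
\item the $U$-component gives $M'u=M'^2u+w^*(u)v$, i.e.\ $M'^2-M'=-v\otimes w^*$.
\end{itemize}

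Finally we evaluate on $a_1$:
\[
TMT(a_1)=T(a_1-b_1+v)=-b_1+v,
\]
\[
MTM(a_1)=MT(a_1-b_1+v)=M(-b_1+v)=-b_1+M'v+w^*(v)b_1.
\]
Comparing gives $M'v=v$ and $w^*(v)=0$.

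The remaining basis vectors $a_j,b_j$ with $j\ge2$ are fixed by both $T$ and $M$, so they give nothing new. Every statement in the lemma is now accounted for.

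The main risk is bookkeeping: getting the sign conventions of $\omega$ and the transvection right, on which $\mu=-1$ depends. We will cross-check this using the transvection formula exactly as written in the statement.
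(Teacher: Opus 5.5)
Your proof is correct and follows the paper's route. Commutation with the transvections forces $M$ to fix $b_1,a_2,\dots,b_g$ and to preserve $\CC b_1\oplus\CC^{m-2g}$ and $\CC a_1\oplus\CC b_1\oplus\CC^{m-2g}$; the braid relation, which you expand on $b_1$, $u\in\CC^{m-2g}$ and $a_1$ rather than as the paper's $3\times 3$ block-matrix identity, then yields $c=-1$ and the four relations, and your use of $\omega(b_1,Ma_1)=-1$ cleanly justifies the $a_1$-coefficient being $1$, which the paper leaves implicit.
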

\begin{proof}
Note that
\[\operatorname{Im}(T_{a_i}-I)=\CC a_i,\qquad 
\operatorname{Im}(T_{b_j}-I)=\CC b_j,\qquad 2\leq i\leq g,\ 1\leq j\leq g.\]
These subspaces are preserved by $M$ since $M$ commutes with
\[T_{b_1},T_{a_2},T_{b_2},\dots,T_{a_g},T_{b_g}.\]
Moreover, since $M$ is unipotent, it fixes 
$b_1,a_2,b_2,\dots,a_g,b_g$.

On the other hand, $M$ preserves the subspaces
\[\left(\bigcap_{i=2}^g\operatorname{Ker}(T_{a_i}-I)\right)
\cap
\left(\bigcap_{j=1}^g\operatorname{Ker}(T_{b_j}-I)\right)
=
\CC b_1\oplus \CC^{m-2g}\]
and
\[\left(\bigcap_{i=2}^g\operatorname{Ker}(T_{a_i}-I)\right)
\cap
\left(\bigcap_{j=2}^g\operatorname{Ker}(T_{b_j}-I)\right)=
\CC a_1\oplus\CC b_1\oplus \CC^{m-2g}.\]
Therefore,
\[M(a_1)=a_1+cb_1+v,\qquad 
M(x)=M'x+w^*(x)b_1\quad \forall x\in\CC^{m-2g},\]
for some
\[
M'\in\GL(\CC^{m-2g}),\qquad 
v\in\CC^{m-2g},\qquad 
w^*\in\operatorname{Hom}(\CC^{m-2g},\CC),\qquad 
c\in\CC.\]

We now apply the braid relation \eqref{eq: braid assumption}. It gives, in $\GL(\CC a_1\oplus\CC b_1\oplus \CC^{m-2g})$,
\[\begin{pmatrix}
1&1&0\\
0&1&0\\
0&0&I
\end{pmatrix}\cdot
\begin{pmatrix}
1&0&0\\
c&1&w^*\\
v&0&M'
\end{pmatrix}\cdot
\begin{pmatrix}
1&1&0\\
0&1&0\\
0&0&I
\end{pmatrix}
=
\begin{pmatrix}
1&0&0\\
c&1&w^*\\
v&0&M'
\end{pmatrix}\cdot
\begin{pmatrix}
1&1&0\\
0&1&0\\
0&0&I
\end{pmatrix}\cdot
\begin{pmatrix}
1&0&0\\
c&1&w^*\\
v&0&M'
\end{pmatrix}.\]
Expanding both sides, we obtain
\[
\begin{pmatrix}
1+c&2+c&w^*\\
c&1+c&w^*\\
v&v&M'
\end{pmatrix}
=
\begin{pmatrix}
1+c&1&w^*\\
c(2+c)+w^*(v)&1+c&(1+c)w^*+w^*M'\\
(1+c)v+M'v&v&v\otimes w^*+(M')^2
\end{pmatrix}.
\]
Comparing entries gives $c=-1$ and the relations in
\eqref{eq: relations in matrix}.
\end{proof}

\begin{corollary}\label{cor: M2 preserve imply M}
Let $M\in \GL(H\oplus \CC^{m-2g})$ be as in Lemma~\ref{lem: determine T gamma2}. If $M^2$ preserves $H$ (resp.~$\CC^{m-2g}$), then $M$ also preserves $H$ (resp.~$\CC^{m-2g}$).
\end{corollary}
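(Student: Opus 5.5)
We will work in the decomposition $V=\CC a_1\oplus\CC b_1\oplus H'\oplus\CC^{m-2g}$, with $H'=\operatorname{span}\{a_2,b_2,\dots,a_g,b_g\}$, where Lemma~\ref{lem: determine T gamma2} gives $M$ explicitly. The plan is to compute $M^2$ in this form and read off the blocks that measure failure to preserve $H$ or $\CC^{m-2g}$. The obstructions to preservation are the vectors $v$ and $w^*$. Indeed, $M$ preserves $H$ exactly when $v=0$, since $M$ fixes $b_1$ and $H'$ and $M(a_1)=a_1-b_1+v$. Likewise, $M$ preserves $\CC^{m-2g}$ exactly when $w^*=0$, since $M(x)=M'x+w^*(x)b_1$.

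First we compute $M^2$ on the relevant vectors. On $a_1$:
\[M^2(a_1)=M(a_1-b_1+v)=a_1-b_1+v-b_1+M'v+w^*(v)b_1.\]
Using $M'v=v$ and $w^*(v)=0$ from \eqref{eq: relations in matrix}, this gives $M^2(a_1)=a_1-2b_1+2v$. On $x\in\CC^{m-2g}$:
\[M^2(x)=M(M'x+w^*(x)b_1)=(M')^2x+w^*(M'x)b_1+w^*(x)b_1.\]
Using $w^*M'=w^*$, this gives $M^2(x)=(M')^2x+2w^*(x)b_1$. The vectors $b_1$ and $H'$ are fixed by $M^2$.

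Next we conclude each case. If $M^2$ preserves $H$, then $M^2(a_1)\in H$ forces $2v=0$, so $v=0$ and $M$ preserves $H$ by the observation above. If $M^2$ preserves $\CC^{m-2g}$, then the $b_1$-component $2w^*(x)$ must vanish for all $x$, so $w^*=0$ and $M$ preserves $\CC^{m-2g}$.

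The only delicate point is that the cross terms cancel rather than accumulate, and this uses exactly the identities $M'v=v$, $w^*M'=w^*$ and $w^*(v)=0$ supplied by Lemma~\ref{lem: determine T gamma2}. We are over $\CC$, so dividing by $2$ is harmless. The relation $(M')^2-M'=-v\otimes w^*$ is not needed.
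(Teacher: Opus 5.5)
Your proposal is correct and matches the paper's proof: both compute $M^2(a_1)=a_1+(w^*(v)-2)b_1+2v$ and $M^2(x)=(M')^2x+2w^*(x)b_1$ using $M'v=v$ and $w^*M'=w^*$, then read off $v=0$ or $w^*=0$ respectively. The only difference is that you also invoke $w^*(v)=0$ to simplify the $b_1$-coefficient, which is harmless but not needed.
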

\begin{proof}
Suppose first that $M^2$ preserves $H$. We compute
\[
M^2(a_1)=M(a_1-b_1+v)
=a_1-b_1+v-b_1+M'v+w^*(v)b_1
=a_1+(w^*(v)-2)b_1+2v,
\]
where the last equality follows from the relation $M'v=v$. Since $M^2(a_1)\in H$, we obtain $v=0$. Hence $M$ preserves $H$.

Now suppose that $M^2$ preserves $\CC^{m-2g}$. For $x\in \CC^{m-2g}$, we have
\[
M^2(x)=M(M'x+w^*(x)b_1)
=(M')^2x+w^*(M'x)b_1+w^*(x)b_1
=(M')^2x+2w^*(x)b_1,
\]
where the last equality follows from the relation $w^*M'=w^*$. Since $M^2(x)\in \CC^{m-2g}$ for all $x\in\CC^{m-2g}$, we must have $w^*=0$. Hence $M$ preserves $\CC^{m-2g}$.
\end{proof}

We now prove that any linear representation of $\Mod(S_g^2)$ of dimension at most $3g-3$ must be bi-affine.
\begin{proof}[Proof of Theorem \ref{thm: any low-dim rep is bi-affine}]
    By Lemma~\ref{lem: abel and unipotent}, the image of $\mathcal{I}(\Sigma)$ under $\rho$ is an abelian unipotent subgroup of $\GL_m(\CC)$. Hence the invariant subspace
    \[I=\{v\in \CC^m\mid f\cdot v=v,\ \forall f\in \mathcal{I}(\Sigma)\}\] is nonzero. Since $\mathcal{I}(\Sigma)$ is a normal subgroup of $\Mod(S_g^2)$, the subspace $I$ is preserved by the action of $\Mod(S_g^2)$. Therefore, $\rho$ induces representations
    \[\rho|_{I}:\Mod(S_g^2)\longrightarrow \GL(I),\]
    and
    \[\rho|_{\CC^m/I}:\Mod(S_g^2)\longrightarrow \GL(\CC^m/I).\]

    Suppose first that $\dim_{\CC}(I)<m$. Then both induced representations have dimension strictly smaller than $m$, and we argue by induction on $m$. The base case is immediate. By the induction hypothesis, both $\rho|_I$ and $\rho|_{\CC^m/I}$ are either trivial or bi-affine with core $H$. If both are trivial, then $\rho$ has solvable image, and hence is trivial since $\Mod(S_g^2)$ is perfect. 

    Suppose that one of the two induced representations is bi-affine with core $H$. Then the other one must be trivial since $m\le 3g-3<4g$. Therefore, Proposition~\ref{prop: extend bi-affine} implies that $\rho$ is bi-affine with core $H$.

    It remains to consider the case $\dim_{\CC} I=m$, namely when $\rho$ is trivial on $\mathcal{I}(\Sigma)$. Let $S_g^1$ be the subsurface of $S_g^2$ obtained by cutting off a pair of pants. This induces an inclusion
    \[\Mod(S_g^1)\hookrightarrow\Mod(S_g^2).\]
    Let $\rho|_{\Mod(S_g^1)}$ denote the restriction of $\rho$ to this subgroup. By \cite[Theorem~A]{reps3g-3}, the representation $\rho|_{\Mod(S_g^1)}$ is either trivial or affine/co-affine with core $H$. In the latter case, $\rho|_{\Mod(S_g^1)}$ must be a direct sum of the standard action on $H$ and a trivial representation. Indeed, the crossed homomorphism $\kappa$ generating $\mathcal{E}_{\Mod(S_g^1)}$ does not vanish on the Torelli subgroup $\mathcal{I}(S_g^1)$ by Proposition~\ref{prop: image of point pushing}. If $\rho|_{\Mod(S_g^1)}$ corresponds to a nonzero element of
    \[\mathcal{E}_{\Mod(S_g^1)}\otimes \CC^{m-2g},\]
    then its restriction to $\mathcal{I}(S_g^1)$ would be nontrivial, contradicting our assumption that $\rho$ is trivial on $\mathcal{I}(\Sigma)$, since $\mathcal{I}(S_g^1)$ is a subgroup of $\mathcal{I}(\Sigma)$.
    
    Suppose first that $\rho|_{\Mod(S_g^1)}$ is trivial. By Lemma~\ref{lem: Mod S_g^2 generators}, the group $\Mod(S_g^2)$ is generated by a finite generating set of $\Mod(S_g^1)$ together with the Dehn twists about the curves $\gamma_i$ ($1\leq i\leq 2g+1$) shown in Figure~\ref{fig: gens of mod S_g^2}. Since the curves $\gamma_i$ are pairwise disjoint, their Dehn twists commute. Therefore, the image of $\rho$ is abelian, and hence is trivial since $\Mod(S_g^2)$ is perfect.
    
    It remains to consider the case where $\rho|_{\Mod(S_g^1)}$ is a direct sum of the standard action on $H$ and an $(m-2g)$-dimensional trivial representation. By induction, it suffices to show that there exists a proper subspace of $\CC^m$ preserved by the action of $\Mod(S_g^2)$. Then the conclusion follows from the extension property of bi-affine representations (Proposition~\ref{prop: extend bi-affine}) together with the bound on $m$.
    
    \begin{figure}
        \centering\includegraphics[width=0.50\linewidth]{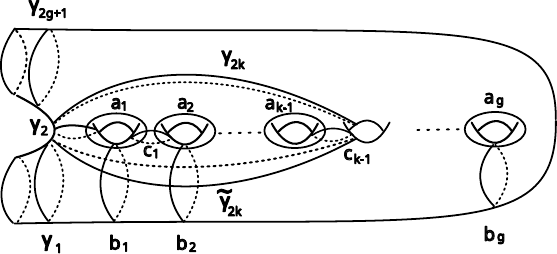}\caption{Chain relation for $\gamma_{2k}$\label{fig: relation for even gamma}}\end{figure}
        
    Realize the symplectic basis of $H$
    \[a_1,b_1,\cdots,a_g,b_g\]
    as simple closed curves in Figure~\ref{fig: relation for even gamma}, with the same notation. The curve $\gamma_2$ is disjoint from
    \[b_1,a_2,b_2,\cdots,a_g,b_g\]
    and intersects $a_1$ once. Therefore, $\rho(T_{\gamma_2})$ commutes with
    \[\rho(T_{b_1}),\rho(T_{a_2}),\rho(T_{b_2}),\cdots,\rho(T_{a_g}),\rho(T_{b_g}),\]
    and satisfies the braid relation (\cite[Proposition 3.11]{Primer})
   \[\rho(T_{a_1})\rho(T_{\gamma_2})\rho(T_{a_1})=\rho(T_{\gamma_2})\rho(T_{a_1})\rho(T_{\gamma_2}).\]
   Hence, by Lemma~\ref{lem: determine T gamma2} $\rho(T_{\gamma_2})$ fixes $b_1,a_2,b_2,\dots,a_g,b_g\in H$, and there exist
    \[M'\in \GL(\CC^{m-2g}),\qquad v\in \CC^{m-2g},\qquad w^*\in \operatorname{Hom}(\CC^{m-2g},\CC)\]
    such that
    \[\rho(T_{\gamma_2})(a_1)=a_1-b_1+v,\qquad \rho(T_{\gamma_2})(x)=M'x+w^*(x)b_1\quad \forall x\in \CC^{m-2g},\]
    and
    \begin{equation}\label{eq: relations in matrix-re}
    M'v=v, \qquad w^*M'=w^*,\qquad w^*(v)=0,\qquad (M')^2-M'=-v\otimes w^*.
    \end{equation}
    
   We proceed by considering two cases depending on whether $v$ vanishes. In either case, we construct a proper $\Mod(S_g^2)$-invariant subspace of $\CC^m$.
   \begin{enumerate}
   \item Suppose that $v=0$. We show that $H$ is preserved by $\Mod(S_g^2)$. By Lemma~\ref{lem: Mod S_g^2 generators}, it suffices to show that $\CC^{m-2g}$ is preserved by $\rho(T_{\gamma_i})$ for $1\leq i\leq 2g+1$.
   
   First, note that, by definition,
   \[T_{\gamma_1}T_{\gamma_{2k+1}}^{-1}\in\mathcal{I}(\Sigma),
    \qquad 1\leq k\leq g.\]
    Therefore, it suffices to consider $T_{\gamma_1}$ for the odd-indexed curves. Since $\gamma_1$ is disjoint from the curves
   \[a_1,b_1,\cdots,a_g,b_g,\]
   the element $\rho(T_{\gamma_1})$ preserves
   \[\operatorname{Span}\left(\operatorname{Im}(T_{a_i}-I),\operatorname{Im}(T_{b_i}-I)\mid 1\leq i\leq g\right)=H.\]
   
   For $T_{\gamma_{2k}}$ with $1\leq k\leq g$, we already know that $T_{\gamma_2}$ preserves $H$ since $v=0$. For the remaining even-indexed curves, we apply the chain relation \cite[Proposition 4.12]{Primer} indicated in Figure~\ref{fig: relation for even gamma}:
   \[(T_{\gamma_2}T_{a_1}T_{c_1}T_{a_2}T_{c_2}\cdots
   T_{a_{k-1}}T_{c_{k-1}})^{2k}=
   T_{\gamma_{2k}}T_{\widetilde{\gamma}_{2k}},\]
   where $\widetilde{\gamma}_{2k}$ is the curve obtained by applying the hyperelliptic involution to $\gamma_{2k}$. Since
   \[T_{\gamma_{2k}}T_{\widetilde{\gamma}_{2k}}^{-1}\in\mathcal{I}(\Sigma),\]
   and $\rho$ is trivial on $\mathcal{I}(\Sigma)$, we obtain
   \begin{equation}\label{eq: chain relation for gamma2k}
   \left(\rho(T_{\gamma_2}T_{a_1}T_{c_1}T_{a_2}T_{c_2}\cdots T_{a_{k-1}}T_{c_{k-1}})\right)^{2k}=\rho(T_{\gamma_{2k}})^2.\end{equation}
   The left-hand side preserves $H$, and therefore $\rho(T_{\gamma_{2k}})^2$ preserves $H$.

   Observe that $\gamma_{2k}$ is disjoint from
   \[a_1,b_1,\cdots,\widehat{a_k},b_k,\cdots,a_g,b_g,\]
   and intersects $a_k$ once. Hence $\rho(T_{\gamma_{2k}})$ has the form described in Lemma~\ref{lem: determine T gamma2}. By Corollary~\ref{cor: M2 preserve imply M}, since $\rho(T_{\gamma_{2k}})^2$ preserves $H$, the same is true for $\rho(T_{\gamma_{2k}})$.

   \item Suppose that $v\neq0$. We show that $v$ is fixed by $\Mod(S_g^2)$. First, $\rho(T_{\gamma_2})$ fixes $v$, since
   \[\rho(T_{\gamma_2})(v)=M'v+w^*(v)b_1=v,\]
   where the last equality follows from the relations in \eqref{eq: relations in matrix-re}. By the above discussion, it remains to show that $\rho(T_{\gamma_1})$ and $\rho(T_{\gamma_{2k}})$ for $2\leq k\leq g$ also fix $v$.

   Since $\rho(T_{\gamma_1})$ and $\rho(T_{\gamma_{2k}})$ are unipotent by \cite[Proposition~5.1]{reps3g-3}, it suffices to show that their squares fix $v$. The chain relation \eqref{eq: chain relation for gamma2k} implies that $\rho(T_{\gamma_{2k}})^2$ fixes $v$, since the left-hand side of \eqref{eq: chain relation for gamma2k} fixes $v$.
   
   \begin{figure} \centering\includegraphics[width=0.48\linewidth]{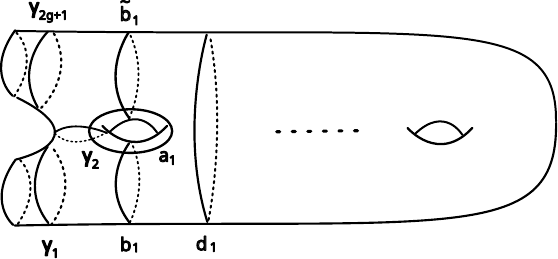}\caption{Star relation for $\gamma_1$\label{fig: star relation}}\end{figure}

   For $\rho(T_{\gamma_1})^2$, consider the star relation \cite[Theorem 1]{GenerateModNonclosed} indicated in Figure~\ref{fig: star relation}:
   \[(T_{\gamma_2}T_{\widetilde{b}_1}T_{b_1}T_{a_1})^3=T_{\gamma_{2g+1}}T_{\gamma_1}T_{d_1}.\]
   Applying $\rho$ to both sides gives
   \[\big(\rho(T_{\gamma_2})\rho(T_{b_1})^2\rho(T_{a_1})\big)^3=\rho(T_{\gamma_1})^2,\]
   where we have used the fact that \[T_{\gamma_1}T_{\gamma_{2g+1}}^{-1}\in\mathcal{I}(\Sigma)\]
   and that $\rho$ is trivial on $\mathcal{I}(\Sigma)$. Hence $\rho(T_{\gamma_1})^2$ fixes $v$, since the left-hand side fixes $v$. \qedhere
    \end{enumerate}
\end{proof}

\subsection{A discussion on $\Mod(S_{g,n})$}\label{sec: 2.5}
So far, we have focused on linear representations of $\Mod(S_g^n)$, where $S_g^n$ is a genus-$g$ surface with $n$ boundary components. We now study bi-affine representations of $\Mod(S_{g,n})$, where $S_{g,n}$ is a genus-$g$ surface with $n$ punctures. In contrast to the case of surfaces with boundary, new phenomena arise for punctured surfaces. As an application, we show that every linear representation of $\Mod(S_{g,2})$ of dimension at most $3g-3$ is affine or co-affine.

There are two natural approaches to classfifying bi-affine representations of $\Mod(S_{g,n})$.

The first approach is to relate $\Mod(S_{g,n})$ to $\Mod(S_g^n)$ via the capping map. By capping each boundary component of $S_g^n$ with a punctured disk, we obtain the central extension
\[1\to \ZZ^n \to \Mod(S_g^n)\to \Mod(S_{g,n})\to 1,\]
where the kernel is generated by the Dehn twists about the boundary components of $S_g^n$.

Consequently, every bi-affine representation of $\Mod(S_{g,n})$ with core $H$ induces a bi-affine representation of $\Mod(S_g^n)$ with the same core $H$, which is completely classified in Theorem~\ref{thm: classify bi-affine for n bdry}. Conversely, a bi-affine representation
\[\rho:\Mod(S_g^n)\longrightarrow \GL(V)\]
with core $H$ descends to a bi-affine representation of $\Mod(S_{g,n})$ if and only if
\[\rho(T_{\partial_i})=I,\qquad 1\le i\le n,\]
where $\partial_i$ denotes a loop around the $i$-th boundary component of $S_g^n$.

Every affine or co-affine representation of $\Mod(S_g^n)$ automatically descends to $\Mod(S_{g,n})$, since the corresponding cohomology classes in
\[
\mathcal{E}_{\Mod(S_g^n)}
\cong
\mathcal{E}_{\Mod(S_{g,n})}
\]
vanish on the boundary twists. In contrast, for bi-affine representations that are neither affine nor co-affine, the condition that the boundary twists act trivially imposes additional constraints on the corresponding cohomology classes in Theorem~\ref{thm: classify bi-affine for n bdry}.

A second approach is to apply the general classification of bi-affine representations given in Proposition~\ref{prop: classify bi-affine} and compute the contraction form
\[
\mathcal{B}_{\Mod(S_{g,n})}(-,-)
\]
explicitly. This leads to a complete classification of bi-affine representations of $\Mod(S_{g,n})$ as follows.

The capping map
\[\Mod(S_g^n)\longrightarrow \Mod(S_{g,n})\]
induces an isomorphism
\[\mathcal{E}_{\Mod(S_{g,n})}=H^1(\Mod(S_{g,n});H)
\longrightarrow
H^1(\Mod(S_g^n);H)
=\mathcal{E}_{\Mod(S_g^n)}.\]
Via this isomorphism, we also denote by
\[[\kappa_1],\ldots,[\kappa_n],\]
the corresponding generators of $\mathcal{E}_{\Mod(S_{g,n})}$. The classification is as follows.
\begin{theorem}\label{thm: classify bi-affine for n punctures}
   Let $g\ge 4$. Every bi-affine representation of $\Mod(S_{g,n})$ with core $H=H_1(S_g;\CC)$ of dimension $2g+m$ is of one of the following types:
   \begin{enumerate}
       \item Affine or co-affine. In either case, the isomorphism classes are classified by
       \[(\mathcal{E}_{\Mod(S_{g,n})}\otimes \CC^m )/\GL_m(\CC)\cong (\CC^n\otimes \CC^m)/\GL_m(\CC)\cong
      \bigsqcup_{r=0}^{\min(n,m)}\operatorname{Gr}(r,\CC^n).\]
      \item Neither affine nor co-affine. Such representations are classified by pairs of nonzero elements
      \[x=\sum_{k=1}^d\big(\sum_{i=1}^n s_i^{(k)}[\kappa_i]\big)\otimes v_k\in\mathcal{E}_{\Mod(S_{g,n})}\otimes\CC^d,\]
      and
      \[y=\sum_{\ell=1}^{m-d}\big(\sum_{i=1}^n t_i^{(\ell)}[\kappa_i]\big)\otimes w_\ell\in
      \mathcal{E}_{\Mod(S_{g,n})}\otimes\CC^{m-d},\]
      where $0<d<m$, and $\{v_k\}$ and $\{w_\ell\}$ are the standard bases of $\CC^d$ and $\CC^{m-d}$, respectively.

      For $1\le k\le d$ and $1\le \ell\le m-d$, let
      \[S^{(k)}=\sum_{i=1}^n s_i^{(k)},\qquad T^{(\ell)}=\sum_{i=1}^n t_i^{(\ell)}.\]
      Then the coefficients satisfy
      \[S^{(k)}T^{(\ell)}=0,\]
      and
      \[s_i^{(k)}T^{(\ell)}+t_i^{(\ell)}S^{(k)}+2(g-1)s_i^{(k)}t_i^{(\ell)}=0,\qquad1\le i\le n.\]
      Two such representations are isomorphic if the pairs differ by the natural action of
      \[\GL_d(\CC)\times\GL_{m-d}(\CC).\]
   \end{enumerate}
\end{theorem}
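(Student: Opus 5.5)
We will use the first approach described above: pull bi-affine representations of $\Mod(S_{g,n})$ back along the capping map $\Mod(S_g^n)\to\Mod(S_{g,n})$. Since $\mathcal{E}_{\Mod(S_{g,n})}\cong\mathcal{E}_{\Mod(S_g^n)}$ via this map, the affine/co-affine case (1) follows at once from Theorem~\ref{thm: classify bi-affine for n bdry}(1). Such representations automatically descend, because the classes $[\kappa_i]$ vanish on the central boundary twists. The orbit description $\bigsqcup_r\operatorname{Gr}(r,\CC^n)$ is just the rank stratification of $\CC^n\otimes\CC^m$ modulo $\GL_m(\CC)$.

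For case (2), start with a bi-affine representation of $\Mod(S_g^n)$ that is neither affine nor co-affine, with data $(x,y)$ as in Theorem~\ref{thm: classify bi-affine for n bdry}(2). The condition $S^{(k)}T^{(\ell)}=0$ is inherited from that theorem. The additional content is the descent criterion $\rho(T_{\partial_i})=I$ for all $i$.

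In the matrix form of Remark~\ref{rmk: matix form}, all crossed homomorphisms $\kappa_j,\kappa_j^*$ vanish on boundary twists, so $\rho(T_{\partial_i})$ differs from the identity only in the $\chi$-block. The descent condition is therefore $\chi(T_{\partial_i})=0$ for all $i$, with $\chi$ chosen as in Remark~\ref{rmk: matix form}. Since $\chi$ is only well defined up to adding a homomorphism to $\operatorname{Hom}(V/V_2,V_1)$, and $H^1(\Mod(S_g^n);\CC)=0$, the values $\chi(T_{\partial_i})$ are in fact canonical. To compute them we generalize the computation in the proof of Theorem~\ref{thm: boundary kills}. Apply $\rho$ to the surface braid relation \eqref{eq: surface braid relation}, or rather its $n$-boundary analogue from \cite{Braids}:
\[\prod_{r=1}^g[B_{r,i},A_{r,i}]=\Big(\prod_{j\ne i}C_{ij}\Big)T_{\partial_i}^{2g-2}.\]
As before, the left side contributes $-2g(2-2g)^2s_i^{(k)}t_i^{(\ell)}$ to the $(k,\ell)$ entry of $\chi$. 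To determine $\chi(C_{ij})$, we plan a second source of equations: the lantern/chain-type relations expressing $C_{ij}$ through boundary twists, namely $C_{ij}=T_{\partial_{ij}}T_{\partial_i}^{-1}T_{\partial_j}^{-1}$, where $\partial_{ij}$ surrounds the two boundaries. Together with the analogous braid relation for the merged boundary (a subsurface of type $S_g^{n-1}$ obtained by combining $\partial_i,\partial_j$, whose class is $s_i+s_j$), this gives a linear system in the unknowns $\chi(T_{\partial_i})$ and $\chi(T_{\partial_{ij}})$.

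Solving this system, we expect
\[\chi(T_{\partial_i})\propto s_i^{(k)}T^{(\ell)}+t_i^{(\ell)}S^{(k)}+2(g-1)s_i^{(k)}t_i^{(\ell)}.\]
This is consistent with the required normalization: summing the expressions over all $i$ gives the class of $\prod_i T_{\partial_i}$, which is a multiple of $S^{(k)}T^{(\ell)}$ and hence vanishes by the first condition. Conversely, if these quantities vanish, then every boundary twist acts trivially, so the representation descends; isomorphism classes correspond, since the central extension does not change the module category on representations killing the kernel.

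The main obstacle is the precise evaluation of $\chi(C_{ij})$, equivalently $\chi$ on multi-boundary twists, in terms of the classes. As a cross-check, I would first attempt the alternative second approach: compute $\mathcal{B}_{\Mod(S_{g,n})}$ directly via the Kawazumi--Morita contraction formula on $\Mod(S_{g,n})$. There $H^2(\Mod(S_{g,n});\CC)$ is spanned by $e_1$ and the Euler classes $e^{(i)}$ of the punctures, and the $e^{(i)}$-coefficient of $\mathcal{B}([\kappa_i],[\kappa_j])$ should reproduce exactly the stated quadratic conditions.
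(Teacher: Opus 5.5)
The reduction to $\Mod(S_g^n)$, part (1), and the descent criterion $\chi(T_{\partial_i})=0$ are fine. Your route is also legitimate in principle: via the Gysin sequence of $1\to\ZZ^n\to\Mod(S_g^n)\to\Mod(S_{g,n})\to1$, the canonical values $\chi(T_{\partial_i})$ are, up to a universal constant, the $c_1(\theta_i)$-coefficients of $\mathcal{B}_{\Mod(S_{g,n})}(x,y)$, so it is dual to the paper's.

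\textbf{The gap.} The whole content of part (2) is computing these values, and you only state what you expect. Moreover, the system you set up does not produce that answer.

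\emph{Wrong relative sign.} Suppress the indices $k,\ell$. With $C_{ij}=T_{\partial_{ij}}T_{\partial_i}^{-1}T_{\partial_j}^{-1}$ and left-hand side $-2g(2-2g)^2s_it_i$, the factors $C_{ij}$ and $T_{\partial_i}^{\pm(2g-2)}$ must enter with opposite signs, e.g.\ $\prod_r[B_{r,i},A_{r,i}]=(\prod_{j\ne i}C_{ij})\,T_{\partial_i}^{2-2g}$. As a sanity check, take $g=0$, $n=2$: in the annulus $T_{\partial_1}=T_{\partial_2}$ and $\partial_{12}$ bounds a disk, so your version would assert $T_{\partial_1}^{-4}=1$.

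\emph{The system does not close.} For $n\ge3$ the merged-boundary relation for $\partial_{ij}$ already involves $T_{\partial_{ijk}}$, so you cannot stay with the unknowns $\chi(T_{\partial_i}),\chi(T_{\partial_{ij}})$. You need one unknown and one relation for every $\partial_J$, $J\subseteq\{1,\dots,n\}$, plus a uniqueness argument.

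\emph{The literal system gives the wrong answer.} Solved as written, your relations give $\chi(T_{\partial_J})\propto s_JT+t_JS-2(g-1)s_Jt_J$, where $s_J=\sum_{i\in J}s_i$. The quadratic term has the wrong sign, and your expected formula is not a solution. For instance, at $n=2$, $s=(1,-1)$, $t=(1,0)$ the ratios $\chi(T_{\partial_1}):\chi(T_{\partial_2})$ are $(2g-3):1$ versus $-(2g-1):1$. With the corrected exponent one checks that $\chi(T_{\partial_J})=(2g-2)\bigl(s_JT+t_JS+2(g-1)s_Jt_J\bigr)$ solves the system. That, plus uniqueness, is what the proof actually needs, and none of it is in the proposal.

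\textbf{The consistency check is false.} First, $\sum_i\bigl(s_iT+t_iS+2(g-1)s_it_i\bigr)=2ST+2(g-1)\sum_i s_it_i$ is not a multiple of $ST$. Second, for the representation $\Phi$ of Proposition~\ref{prop: singular pt} one has $S=T=0$ but $\Phi(T_{\partial_1}T_{\partial_2})\neq I$. The twist whose $\chi$-value is a multiple of $ST$ is the one about the curve enclosing all boundary components, which bounds a copy of $S_g^1$; it is not $\prod_iT_{\partial_i}$.

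\textbf{The paper's route.} The paper avoids all relation bookkeeping:
\begin{itemize}
\item it uses the basis $e_1,c_1(\theta_1),\dots,c_1(\theta_n)$ of $H^2(\Mod(S_{g,n});\CC)$;
\item it pulls back two Kawazumi--Morita formulas along the injective forgetful maps: $\mathcal{B}([\kappa],[\kappa])=-e_1-4g(g-1)c_1(\theta)$ from $\Mod(S_{g,1})$, and $\mathcal{B}(k_0,k_0)=-c_1(\theta_i)-c_1(\theta_j)$ with $k_0=\frac{1}{2g-2}([\kappa_i]-[\kappa_j])$ from $\Mod(S_{g,2})$;
\item it polarizes to obtain $\mathcal{B}([\kappa_i],[\kappa_j])=-e_1-(2g-2)\bigl(c_1(\theta_i)+c_1(\theta_j)\bigr)$ for $i\neq j$, and then expands bilinearly.
\end{itemize}
Your proposed cross-check points this way but supplies none of these inputs.
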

\begin{proof}
By Proposition~\ref{prop: classify bi-affine}, it suffices to determine when the symmetric contraction form
\[\begin{aligned}\mathcal{B}_{\Mod(S_{g,n})}:\mathcal{E}_{\Mod(S_{g,n})}\times\mathcal{E}_{\Mod(S_{g,n})}&\longrightarrow H^2(\Mod(S_{g,n});\CC),\\
(x,y)&\longmapsto
\omega_*(x\cup y)
\end{aligned}\]
vanishes. Write
\[x=\sum_{i=1}^n s_i[\kappa_i],\qquad y=\sum_{i=1}^n t_i[\kappa_i],\]
then by bilinearity, it suffices to compute
\[\mathcal{B}_{\Mod(S_{g,n})}([\kappa_i],[\kappa_j]).\]

By \cite[Proposition~2.2]{Looijenga},
\[H^2(\Mod(S_{g,n});\CC)\cong\CC^{n+1},\]
generated by the first Miller-Morita-Mumford class $e_1$ together with the first Chern classes
\[c_1(\theta_i),\quad 1\le i\le n,\]
where $\theta_i$ is the pullback of the relative tangent bundle of the universal curve
\[\mathcal{M}_{g,1}\longrightarrow\mathcal{M}_g\]
along the forgetful map
\[\mathcal{M}_{g,n}\longrightarrow\mathcal{M}_{g,1}\]
forgetting all but the $i$-th marked point.

Forgetting all but the $i$-th puncture of $S_{g,n}$ yields the commutative diagram
\begin{equation}\label{eq: comm diagram k i}
\xymatrix@C=5em{
\mathcal{E}_{\Mod(S_{g,1})}\times\mathcal{E}_{\Mod(S_{g,1})}
\ar[r]^-{\mathcal{B}_{\Mod(S_{g,1})}}
\ar[d]^{\mathrm{Forget}^*\times\mathrm{Forget}^*}& H^2(\Mod(S_{g,1});\CC)
\ar[d]^{\mathrm{Forget}^*}\\
\mathcal{E}_{\Mod(S_{g,n})}\times\mathcal{E}_{\Mod(S_{g,n})}
\ar[r]^-{\mathcal{B}_{\Mod(S_{g,n})}}&
H^2(\Mod(S_{g,n});\CC).}
\end{equation}
By \cite[Example~1.4]{KawazumiMoritaStable},
\[\mathcal{B}_{\Mod(S_{g,1})}([\kappa_i],[\kappa_i])=-e_1-4g(g-1)c_1(\theta_i).\]
Since the vertical maps in \eqref{eq: comm diagram k i} are injective,
\begin{equation}\label{eq: pair k ii}
\mathcal{B}_{\Mod(S_{g,n})}([\kappa_i],[\kappa_i])
=
-e_1-4g(g-1)c_1(\theta_i).
\end{equation}

Next suppose that $i\neq j$. Forgetting all punctures of $S_{g,n}$ except the $i$-th and the $j$-th ones yields the commutative diagram
\begin{equation}\label{eq: comm diagram k i j}
\xymatrix@C=5em{
\mathcal{E}_{\Mod(S_{g,2})}\times\mathcal{E}_{\Mod(S_{g,2})}
\ar[r]^-{\mathcal{B}_{\Mod(S_{g,2})}}
\ar[d]^{\mathrm{Forget}^*\times\mathrm{Forget}^*}&
H^2(\Mod(S_{g,2});\CC)
\ar[d]^{\mathrm{Forget}^*}\\
\mathcal{E}_{\Mod(S_{g,n})}\times\mathcal{E}_{\Mod(S_{g,n})}
\ar[r]^-{\mathcal{B}_{\Mod(S_{g,n})}}&
H^2(\Mod(S_{g,n});\CC).}
\end{equation}
By \cite[Theorem~6.1]{KawazumiMoritaStable},
\[\mathcal{B}_{\Mod(S_{g,2})}(k_0,k_0)=-c_1(\theta_i)-c_1(\theta_j),\]
where
\[k_0=\frac{1}{2g-2}\bigl([\kappa_i]-[\kappa_j]\bigr)\]
is the class computed in Proposition~\ref{prop: singular pt}. Therefore,
\begin{equation}\label{eq: pair k i-j}
\mathcal{B}_{\Mod(S_{g,n})}
([\kappa_i]-[\kappa_j],[\kappa_i]-[\kappa_j])=
-(2g-2)^2\bigl(c_1(\theta_i)+c_1(\theta_j)\bigr).
\end{equation}
Using \eqref{eq: pair k ii}, the symmetry of $\mathcal{B}_{\Mod(S_{g,n})}$, and expanding \eqref{eq: pair k i-j}, we obtain
\begin{equation}\label{eq: pair k i j}
\mathcal{B}_{\Mod(S_{g,n})}([\kappa_i],[\kappa_j])=
-e_1-(2g-2)\bigl(c_1(\theta_i)+c_1(\theta_j)\bigr),
\qquad i\neq j.
\end{equation}

Combining \eqref{eq: pair k ii} and \eqref{eq: pair k i j}, we conclude that
\[\mathcal{B}_{\Mod(S_{g,n})}(x,y)=-ST\,e_1-2(g-1)\sum_{i=1}^n
\bigl(s_iT+t_iS+2(g-1)s_it_i\bigr)
c_1(\theta_i),\]
where 
\[S=\sum_{i=1}^n s_i,\qquad T=\sum_{i=1}^n t_i.\]
Since
\[e_1,\,c_1(\theta_1),\,\dots,\,c_1(\theta_n)\]
form a basis of
$H^2(\Mod(S_{g,n});\CC)$,
the contraction form vanishes if and only if
\[ST=0,\qquad
s_iT+t_iS+2(g-1)s_it_i=0,\quad
1\le i\le n.\]
This completes the proof.
\end{proof}

Combining Theorem~\ref{thm: any low-dim rep is bi-affine} with the above analysis, we obtain the following complete classification of linear representations of $\Mod(S_{g,2})$ of dimension at most $3g-3$, in which no representations other than affine or co-affine representations occur.
\begin{corollary}\label{cor: 2 puncture only affine}
Let $g\geq 3$ and $m\leq 3g-3$. Then every nontrivial linear representation
\[\rho:\Mod(S_{g,2})\to \GL_m(\CC)\]
is affine or co-affine. In either case, the isomorphism class is classified by
\[(\mathcal{E}_{\Mod(S_{g,2})}\otimes \CC^2)/\GL_2(\CC)
\cong(\CC^2\otimes\CC^2)/\GL_2(\CC).\]
\end{corollary}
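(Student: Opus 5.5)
The plan is to pull $\rho$ back along the capping map and combine three earlier results: Theorem~\ref{thm: any low-dim rep is bi-affine}, Theorem~\ref{thm: classify bi-affine for n punctures}, and (as a cross-check) Theorem~\ref{thm: boundary kills}. Capping the two boundary components with punctured disks gives a surjection $\Mod(S_g^2)\to\Mod(S_{g,2})$, so $\rho$ pulls back to a nontrivial representation of $\Mod(S_g^2)$ of dimension $m\le 3g-3$. By Remark~\ref{rmk: quotient group also bi-affine}, $\rho$ is therefore bi-affine with core $H$: we may take the same filtration $0\subset V_1\subset V_2\subset V$, because the submodules for the pulled-back representation are $\Mod(S_{g,2})$-submodules.

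It remains to exclude the case that is neither affine nor co-affine. Here I use Theorem~\ref{thm: classify bi-affine for n punctures} with $n=2$, which needs $g\ge4$. Take indices $k,\ell$ with $x_k=s_1[\kappa_1]+s_2[\kappa_2]\neq0$ and $y_\ell=t_1[\kappa_1]+t_2[\kappa_2]\neq0$. Such indices exist: if, for instance, all components of $x$ vanished, the extension $V_2$ would split and $V_1$ would be a trivial direct summand, so the representation would really be co-affine. Write $S=s_1+s_2$ and $T=t_1+t_2$; the theorem gives $ST=0$ and $s_iT+t_iS+2(g-1)s_it_i=0$ for $i=1,2$.

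Case $S=0$. Then $s_2=-s_1$ with $s_1\neq0$. The equations become $s_1T+2(g-1)s_1t_1=0$ and $-s_1T-2(g-1)s_1t_2=0$. Hence $T=-2(g-1)t_1=-2(g-1)t_2$, so $t_1=t_2$ and $T=2t_1$. Substituting gives $2t_1=-2(g-1)t_1$, so $t_1=0$ since $g\neq0$. That forces $y_\ell=0$, a contradiction.

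Case $T=0$. This is symmetric and forces $x_k=0$, again a contradiction.

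So every nontrivial $\rho$ is affine or co-affine. The classification by $(\mathcal{E}_{\Mod(S_{g,2})}\otimes\CC^2)/\GL_2(\CC)$ then follows from part (1) of Theorem~\ref{thm: classify bi-affine for n punctures}, together with Corollary~\ref{cor: lowest dim to consider}, which splits off trivial summands beyond dimension $2g+2$.

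\textbf{Main obstacle: the case $g=3$.} For $g=3$ we have $m\le6=2g$, so Korkmaz's theorems already force $\rho$ to be trivial or the standard action on $H$, which is affine. Genus $4$ and up is handled by the argument above.

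As an alternative route via braid relations, pull back to $\Mod(S_g^2)$ and note that both boundary twists act trivially, so $\rho(T_{\partial_1}T_{\partial_2}^{-1})=I$. Theorem~\ref{thm: boundary kills} then identifies $\rho$ with $\Phi$ plus a trivial summand. But $\Phi(T_{\partial_1})$ is a transvection along $b_{g+1}$, which is nontrivial, a contradiction.

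The step I expect to need the most care is justifying that a genuinely non-affine, non-co-affine $\rho$ has some nonzero component in each of $x$ and $y$. I would handle this by noting that if all components of $x$ vanish, the extension $0\to V_1\to V_2\to H\to 0$ splits as $V_2=V_1\oplus H'$, and then apply Proposition~\ref{prop: extend bi-affine} to $V/H'$ to reduce to the co-affine case; the argument for $y$ is dual.
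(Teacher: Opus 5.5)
Your proposal is correct and matches the paper's proof, which gives exactly these two arguments: pull back to $\Mod(S_g^2)$, then either solve the constraints of Theorem~\ref{thm: classify bi-affine for n punctures} in the cases $S=0$ and $T=0$, or apply Theorem~\ref{thm: boundary kills} and use that $\Phi(T_{\partial_i})\neq I$. Two of your choices tighten points the paper glosses over: you pick indices $k,\ell$ with $x_k\neq0\neq y_\ell$, and you treat $g=3$ separately via Korkmaz (here $m\le 2g$), which is needed because both of the paper's arguments rely on results stated only for $g\ge4$. One minor slip in terminology: when $x=0$ the representation is affine, not co-affine, in the paper's convention, and the tool that gives this is that an extension of a trivial module by a trivial module is trivial when $H^1(G;\CC)=0$.
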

\begin{remark}
This result has the following application. Any nonconstant holomorphic map
\[\mathcal{M}_{g,2}\longrightarrow \mathcal{A}_h,\]
where $\mathcal{M}_{g,2}$ is the moduli space of genus-$g$ curves with two marked points and $\mathcal{A}_h$ is the moduli space of $h$-dimensional principally polarized abelian varieties, with
\[h\le \frac{3g-3}{2},\]
must be the product of the period map
\begin{align*}
    J:\mathcal{M}_{g,2}&\longrightarrow \mathcal{M}_g\longrightarrow \mathcal{A}_g \\
    (X,x_1,x_2)&\longmapsto X \longmapsto \operatorname{Jac}(X)
\end{align*}
with a constant $(h-g)$-dimensional principally polarized abelian variety. This extends Farb’s result \cite{FarbRigidity}, which establishes the uniqueness of the period map $\mathcal{M}_{g,n}\to\mathcal{A}_g$ among nonconstant holomorphic maps $\mathcal{M}_{g,n}\to\mathcal{A}_h$ for $h\le g$. The additional ingredients needed for our application come from the theory of variations of Hodge structures. A complete proof will appear in a forthcoming joint paper with Zhong Zhang. This result is interesting as it shows that the canonical map
\[\mathcal{M}_g^2\hookrightarrow\mathcal{M}_{g+1}
\xrightarrow{J}
\mathcal{A}_{g+1}\]
does not extend to a holomorphic map
\[\mathcal{M}_{g,2}\longrightarrow\mathcal{A}_{g+1}.\]
\end{remark}

\begin{proof}[Proof of Corollary \ref{cor: 2 puncture only affine}]
We give two alternative proofs from different perspectives.

\noindent\textbf{First proof.} The first proof uses Theorem~\ref{thm: classify bi-affine for n punctures} directly. By Remark~\ref{rmk: quotient group also bi-affine}, $\rho$ is bi-affine with core $H$. It remains to rule out the case that $\rho$ is neither affine nor co-affine. By Theorem~\ref{thm: classify bi-affine for n punctures}, such a representation is determined by a pair of nonzero elements 
\[ x=\sum_{k=1}^d (s_1^{(k)}[\kappa_1]+s_2^{(k)}[\kappa_2])\otimes v_k \in \mathcal{E}_{\Mod(S_{g,2})}\otimes\CC^d, \]
and 
\[ y=\sum_{\ell=1}^{m-d-2g} (t_1^{(\ell)}[\kappa_1]+t_2^{(\ell)}[\kappa_2])\otimes w_\ell \in \mathcal{E}_{\Mod(S_{g,2})}\otimes\CC^{m-d-2g}, \]
where $0<d<m-2g$, satisfying 
\[ (s_1^{(k)}+s_2^{(k)})(t_1^{(\ell)}+t_2^{(\ell)})=0, \] 
and 
\[ s_i^{(k)}(t_1^{(\ell)}+t_2^{(\ell)}) +t_i^{(\ell)}(s_1^{(k)}+s_2^{(k)}) +2(g-1)s_i^{(k)}t_i^{(\ell)} =0, \qquad i=1,2. \] 
Suppose first that $s_1^{(k)}+s_2^{(k)}=0$. Since $x$ is nonzero, we have $s_1^{(k)}=-s_2^{(k)}\neq0$. The second relation then implies \[ t_1^{(\ell)}=t_2^{(\ell)} =\frac{1}{2-2g}(t_1^{(\ell)}+t_2^{(\ell)}). \] Consequently, \[ t_1^{(\ell)}=t_2^{(\ell)}=0, \] which contradicts the nontriviality of $y$. The case $t_1^{(\ell)}+t_2^{(\ell)}=0$ is symmetric. 
\medskip 

\noindent\textbf{Second proof.} The second proof does not use the explicit classification in Theorem~\ref{thm: classify bi-affine for n punctures}, but instead uses the fact that boundary twists act trivially after capping. Let 
\[ \widetilde{\rho}:\Mod(S_g^2)\longrightarrow\GL_m(\CC) \] 
be the composition of $\rho$ with the capping map 
\[ \operatorname{Cap}:\Mod(S_g^2)\longrightarrow\Mod(S_{g,2}). \] By Theorem~\ref{thm: any low-dim rep is bi-affine}, $\widetilde{\rho}$ is bi-affine with core $H$. It remains to show that $\widetilde{\rho}$ cannot be neither affine nor co-affine. The Dehn twists $T_{\partial_1}$ and $T_{\partial_2}$ about the boundary components of $S_g^2$ lie in the kernel of the capping map. Hence 
\begin{equation}\label{eq: boundary twist vanish} \widetilde{\rho}(T_{\partial_1}) = \widetilde{\rho}(T_{\partial_2}) = I, \end{equation} 
and therefore 
\[ \widetilde{\rho}(T_{\partial_1}T_{\partial_2}^{-1})=I. \] Assume that $\widetilde{\rho}$ is neither affine nor co-affine. Then by Theorem~\ref{thm: boundary kills}, $\widetilde{\rho}$ is isomorphic to the direct sum of the representation 
\[ \Phi:\Mod(S_g^2)\longrightarrow \GL(H_1(S_{g+1};\CC)), \] induced by the embedding 
\[ S_g^2\hookrightarrow S_{g+1} \]
obtained by gluing a pair of pants to the two boundary components of $S_g^2$, and a trivial representation. However, 
\[ \Phi(T_{\partial_i})\neq I,\qquad i=1,2, \] since each boundary twist acts nontrivially on $H_1(S_{g+1};\CC)$. This contradicts \eqref{eq: boundary twist vanish}. Therefore $\widetilde{\rho}$, and hence $\rho$, must be affine or co-affine. 
\end{proof}

\section{Uniqueness of symplectic representations of symmetric mapping class groups}\label{sec: 3}
For the rest of the paper, let $S=S_g$, let $p$ be a prime, and let
\[\pi:\widetilde{S}\longrightarrow S\]
be the unbranched cyclic cover determined by a nonzero class
$[\beta]\in H_1(S;\ZZ/p\ZZ)$. The groups of interest are
\[
\Mod(S,[\beta])=\operatorname{Stab}_{\Mod(S)}([\beta]),
\qquad
\Mod(\widetilde{S},\sigma)=C_{\Mod(\widetilde{S})}(\sigma),
\]
where $\sigma$ generates the deck group. By Birman--Hilden
\cite{BirmanHilden}, there is a central exact sequence
\begin{equation}\label{eq:BH-central}
1\longrightarrow\langle\sigma\rangle
\longrightarrow \Mod(\widetilde{S},\sigma)
\xrightarrow{\pi_*}\Mod(S,[\beta])
\longrightarrow1.
\end{equation}
Let
\[\Phi:\Mod(S,[\beta])\longrightarrow\Aut(H_1(S;\ZZ),\omega)\cong\Sp_{2g}(\ZZ)\]
be the standard symplectic representation, and put
$\widetilde\Phi=\Phi\circ\pi_*$. 

The main result of this section is the following classification of symplectic representations of $\Mod(S,[\beta])$ and $\Mod(\widetilde{S},\sigma)$ for $p=3$.

\begin{theorem}\label{thm: classify Mod to Sp}
Let $g\geq6$, $h\leq g$, and $p=3$.  Suppose that
\[\rho:\Mod(S,[\beta])\longrightarrow\Sp_{2h}(\ZZ)
\quad\text{or}\quad
\rho:\Mod(\widetilde{S},\sigma)\longrightarrow\Sp_{2h}(\ZZ)\]
is a homomorphism. Then either $\rho$ has finite abelian image, or $h=g$ and $\rho$ is conjugate to $\Phi$ (respectively to $\widetilde\Phi$) by an element of the extended symplectic group
\[\Delta_{2g}(\ZZ)=\{X\in\GL_{2g}(\ZZ):X^tJX=\pm J\},\]
where $J$ is the matrix of the symplectic form.
\end{theorem}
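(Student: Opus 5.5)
Plan: restrict $\rho$ to the copy of $\Mod(S_{g-1}^2)$ sitting inside $\Mod(S,[\beta])$, use the Section 2 classification to pin down that restriction, and then push the answer out to the whole group through generators and relations.

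\textbf{Setup.} Choose a nonseparating simple closed curve $c$ representing $[\beta]$, or a curve whose intersection pairing realizes $[\beta]$ as a cohomology class. Cut $S$ along $c$ to obtain $S_{g-1}^2$. Every mapping class supported on $S_{g-1}^2$ fixes $c$, so it lies in $\Mod(S,[\beta])$. Its lift lies in $\Mod(\widetilde S,\sigma)$: the cover restricted to $S_{g-1}^2$ is trivial, so we can lift to one of the three sheets and extend equivariantly. This gives a homomorphism $\Mod(S_{g-1}^2)\to\Mod(S,[\beta])$, and likewise into $\Mod(\widetilde S,\sigma)$.

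\textbf{Restricting $\rho$.} Compose with $\rho$ and extend scalars to $\CC$. We have $2h\le 2g\le 3(g-1)-3$ once $g\ge 6$, so Theorem~\ref{thm: any low-dim rep is bi-affine} applies with genus $g-1\ge5$. Hence the restriction is either trivial or bi-affine with core $H_1(S_{g-1};\CC)$, which has dimension $2g-2$.
\begin{itemize}
\item \emph{Trivial restriction.} Then $\rho$ kills all Dehn twists about nonseparating curves disjoint from $c$. Using the generating set of Dey--Dhanwani--Patil--Rajeevsarathy, together with conjugation and the braid relations between these twists and the remaining generators, we will show that the image is abelian. It should then be finite, because $H_1(\Mod(S,[\beta]))$ is finite. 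For the centralizer group the extra central $\sigma$ is handled the same way.
\item \emph{Bi-affine restriction.} Here $2h\le 2g$ forces extra room of at most $2$ dimensions. Compare with $\Phi$ restricted to $\Mod(S_{g-1}^2)$. By Proposition~\ref{prop: singular pt} this restriction is the self-dual bi-affine representation of dimension $2g$ indexed by $[\kappa_1]-[\kappa_2]$.
\end{itemize}

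\textbf{Forcing $h=g$ and identifying the restriction.} The image lies in $\Sp_{2h}$, so the symplectic form is preserved. This forces the restriction to be self-dual: it must be either $H\oplus\CC^{2}$ with $h=g$, or exactly the $\Phi$-type module. The case $h=g-1$ would make the core the whole space. In that case the boundary twists $T_{\partial_1}$ and $T_{\partial_2}$ are both conjugate in $\Mod(S,[\beta])$ to the twist about $c$.

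More generally, in $\Mod(S,[\beta])$ the two boundary twists both map to $T_c$. Hence $\rho(T_{\partial_1}T_{\partial_2}^{-1})=I$. Theorem~\ref{thm: boundary kills} then pins down the non-affine case as $\Phi$. The remaining affine, co-affine or split cases are excluded using the symplectic form plus the nontriviality of $T_c$, which is conjugate to the other nonseparating twists in the group, following Farb/Korkmaz.

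\textbf{Extending to the whole group.} Once $\rho|_{\Mod(S_{g-1}^2)}\cong\Phi|$, extend the conjugation to all of $\Mod(S,[\beta])$. The remaining generators are, for example, $T_c$ and twists about curves crossing $c$ three times or squares of such twists. Their images are determined by the commutation and braid relations with the image of $\Mod(S_{g-1}^2)$. This follows the same style of linear-algebra argument as Lemma~\ref{lem: determine T gamma2}, and it shows the images agree with $\Phi$ up to an element centralizing the restricted image. That centralizer is scalar, $\pm1$, which accounts for $\Delta_{2g}$ (including anti-symplectic conjugators).

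Finally, integrality: conjugation over $\CC$ is upgraded to $\Delta_{2g}(\ZZ)$ because the lattice $\ZZ^{2h}$ is invariant. The image of $\Phi$ contains a finite-index subgroup of $\Sp_{2g}(\ZZ)$, whose invariant unimodular lattices are limited. For $\Mod(\widetilde S,\sigma)$, $\rho(\sigma)$ is central of order $3$ commuting with an irreducible image, so it is a scalar cube root of unity in $\GL_{2g}(\ZZ)$, hence trivial. So $\rho$ factors through $\Mod(S,[\beta])$.

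\textbf{Main obstacle.} I expect the extension step to be the hardest: controlling the generators that cross $c$, where only relations specific to the $p=3$ generating set are available.
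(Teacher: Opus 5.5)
There is a genuine gap, and it sits where the $p=3$ content of the theorem lives: ruling out the affine/co-affine restriction. Grant, as in Lemma~\ref{lem: affine std rep}, that the invariant form forces $\rho|_{\Mod(S_{g-1}^2)}\cong H_1(S_{g-1};\CC)\oplus\CC^{2h-2g+2}$. Every affine or co-affine representation of $\Mod(S_{g-1}^2)$ kills the boundary twists, because $\kappa_i(T_{\partial_j})=0$. So in this case $\rho(T_c)=I$ is \emph{forced}, and the ``nontriviality of $T_c$'' you want to use is not available.

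Conjugacy does not rescue it either. An element of $\Mod(S,[\beta])$ carries $c$ only to curves $\gamma$ with $[\gamma]\equiv\pm[\beta]\pmod 3$. For $\gamma\subset S\setminus c$, this forces the image of $[\gamma]$ in $H_1(S_{g-1};\ZZ)$ to be $0$: that image is $0$ or primitive, and primitive classes are nonzero mod $3$. Hence $T_\gamma$ acts trivially in the split representation, and no contradiction arises. The Farb--Korkmaz trick needs all nonseparating twists to be conjugate, which holds in $\Mod(S_g)$ but fails in this finite-index subgroup.

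In fact no argument using only the restriction and conjugacy can work. The twisted Prym representation of Remark~\ref{rmk: GL false} is non-abelian of dimension $2g-2$. By Theorem~\ref{thm: any low-dim rep is bi-affine} and Lemma~\ref{lem: trivial to abelian}, its restriction to $\Mod(S_{g-1}^2)$ is exactly the standard action on $H_1(S_{g-1};\CC)$.

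The paper excludes this case by applying the integral symplectic target to the generators outside $\Mod(S_{g-1}^2)$:
\begin{enumerate}
\item $\rho(T_\alpha^3)$ is scalar because it centralizes $\rho(\Mod(S_{g-1}^1))$, so $\rho(F_k)^3=I$.
\item The lantern relation gives $\rho(F_k)=\rho(T_{a_k'}T_{a_1}^{-1})$, which is block-diagonal.
\item Eigenvalues $\zeta_3$ and $\zeta_3^2$ must occur with equal multiplicity, which forces $\rho(F_k)=I$. For $h=g$ this also needs a comparison of $F_2$, $F_3$ and $T_{c_2}$ on the induced $(2g-2)$-dimensional quotient.
\item Lemma~\ref{lem: generate Torelli} then makes the right side of $(T_\alpha^3T_{c_1})^3=T_d$ scalar, while the left side is a scalar times the cube of a nontrivial transvection.
\end{enumerate}
Nothing playing this role appears in your plan.

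The extension step also fails as stated. The generators outside $\Mod(S_{g-1}^2)$ are $T_\alpha^3$ (with $\alpha$ meeting $c$ once), the bounding pair maps $F_k$, and $\sigma$; they are not twists about curves meeting $c$ three times. More importantly, $\Phi|_{\Mod(S_{g-1}^2)}$ is reducible, since it fixes $[\beta]$. Its centralizer in $\GL_{2g}(\CC)$ is therefore not scalar: it contains every transvection $x\mapsto x+t\,\omega(x,[\beta])[\beta]$, $t\in\CC$.

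Moreover, $T_\alpha^3$ only centralizes $\Mod(S_{g-1}^1)$, and its relation with $T_\beta$ is $(T_\alpha^3T_\beta)^3=T_d$, not a braid relation. So commutation leaves an entire $2\times 2$ block on $\langle[\alpha],[\beta]\rangle$ undetermined. The paper pins this block down using that relation and its conjugate by $T_{\alpha'}^3T_\alpha^{-3}$, obtaining $r^2(r-3)=0$ for an off-diagonal entry $r$. Excluding $r=0$ again needs the $F_k$/Torelli argument above. Only then are $\rho(F_k)$ and $\rho(\sigma)$ shown to be scalars and killed, using that the abelianization is $3$-torsion and $\zeta_3I\notin\Sp_{2g}(\ZZ)$.

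Finally, anti-symplectic conjugators do not come from a centralizer $\{\pm1\}$. They arise in Lemma~\ref{lem: conjugate restrict to integral} when the normalized complex-symplectic conjugator is $i$ times an integral matrix.
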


\begin{remark}\label{rmk: GL false}
The restriction on the target group $\Sp_{2h}(\ZZ)$ is essential. Indeed, the Prym representation introduced by Looijenga in \cite{PrymRep} gives a non-abelian linear representation
\[\Mod(\widetilde{S},\sigma)\longrightarrow
U_{2g-2}(\ZZ[\zeta_p])\subset \GL_{2g-2}(\CC),\]
where $\zeta_p$ is a primitive $p$-th root of unity. By twisting this representation by a suitable character of $\Mod(\widetilde{S},\sigma)$, one can arrange that the deck transformation acts trivially. The resulting representation then descends to a non-abelian representation of $\Mod(S,[\beta])$ of dimension $2g-2$.
\end{remark}

\subsection{Preliminaries}
We collect here several preliminary results that will be used in later proofs. We first recall an explicit finite generating set for $\Mod(S,[\beta])$ and $\Mod(\widetilde{S},\sigma)$ from \cite[Theorem 2]{Dey}.

\begin{proposition}\label{prop: gens by Dey}
Let $g\ge3$ and $p\ge2$. Then $\Mod(S,[\beta])$ is finitely generated by the following elements:
\begin{itemize}
    \item the Dehn twists
    \[ T_{a_2},\ldots,T_{a_g}, T_{b_2},\ldots,T_{b_g},T_{c_1},\ldots,T_{c_{g-1}}, \]
    where the curves are indicated in Figure~\ref{fig: basis in surface};
    \item the bounding pair maps
    \[F_2,F_3,\ldots,F_{g-1}\]
    shown in Figure~\ref{fig: F bounding pair maps};
    \item a finite generating set for
    \[\Gamma_1(p)=\left\{
    \begin{pmatrix}
    a&b\\
    c&d
    \end{pmatrix}
    \in\SL_2(\ZZ)
    \mathrel{}\middle|\mathrel{} a,d\equiv1\pmod p,\ b\equiv0\pmod p\right\},\]
    identified with $\Mod(S_1^1,[\beta])$, where $S_1^1$ is a regular neighborhood of $a_1\cup b_1$.
\end{itemize}
Moreover, $\Mod(\widetilde{S},\sigma)$ is generated by lifts of the above generators together with the deck transformation $\sigma$.
\end{proposition}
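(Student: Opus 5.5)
The statement is a citation of \cite[Theorem~2]{Dey}, so I would reconstruct its proof along the lines used there. The plan is to view $\Mod(S,[\beta])$ as a finite-index subgroup of $\Mod(S)$ and to exploit a fibration-type exact sequence coming from the curve representing $[\beta]$.

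The first step is a normalization. Since $\Mod(S)$ acts transitively on the nonzero elements of $H_1(S;\ZZ/p\ZZ)$, we may choose coordinates so that $[\beta]$ is the mod-$p$ reduction of the class of $b_1$, that is, $[\beta]$ is dual to $a_1$. The stabilizer then contains every mapping class supported on the complement of a neighborhood of $a_1\cup b_1$. In particular it contains the Dehn twists $T_{a_i},T_{b_i}$ for $i\ge2$ and $T_{c_i}$, which act on homology by transvections preserving $[\beta]$; this needs a check from Figure~\ref{fig: basis in surface}, namely that $c_1$ meets $b_1$ algebraically zero times, or a suitable power is used. The stabilizer also contains the bounding pair maps $F_j$, which lie in Torelli.

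The second step uses a Lefschetz-type induction. Consider the orbit of the curve $b_1$ under $\Mod(S,[\beta])$ in the complex of nonseparating curves whose mod-$p$ class is $\pm[\beta]$, or equivalently curves with fixed homology class mod $p$. By Putman's connectivity results for such ``mod-$p$ homology curve complexes'' (valid for $g\ge3$), this complex is connected. Putman's lemma then says the group is generated by the stabilizer of a vertex together with elements moving that vertex to each of its neighbours along a set of representative edges. The vertex stabilizer is identified with the stabilizer of $[\beta]$ in $\Mod(S\setminus b_1)$. That group surjects onto $\Mod(S_{g-1}^2)$-type pieces generated by twists about $a_i,b_i$ ($i\ge2$) and the $c_i$, together with the handle-part $\Gamma_1(p)\cong\Mod(S_1^1,[\beta])$ on the torus neighbourhood of $a_1\cup b_1$. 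The kernel is handled by the Birman exact sequence, whose point-pushing elements are products of the $F_j$ and twists. The edge-moving elements are realized by the listed twists $T_{c_1}$ and the $F_j$.

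The main obstacle is the connectivity of the relevant curve complex modulo $p$, together with the bookkeeping showing that the bounding pair maps $F_2,\dots,F_{g-1}$ suffice to generate the point-pushing and Torelli-type kernel. For the bookkeeping I would first try Johnson's result that Torelli is finitely generated by bounding pair maps in a chain, combined with the fact that $\Mod(S,[\beta])\supset\mathcal I(S)$ and that its image in $\Sp_{2g}(\ZZ)$ is the stabilizer of a vector mod $p$. The latter is generated by elementary transvections together with $\Gamma_1(p)$ on the first symplectic pair. This reduces the problem to showing that the listed $F_j$ together with conjugates by the listed twists give all of Johnson's generators.

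For $\Mod(\widetilde S,\sigma)$, the Birman--Hilden sequence \eqref{eq:BH-central} shows that the lifts of the generators together with the kernel $\langle\sigma\rangle$ generate the group.
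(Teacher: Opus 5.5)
\textbf{Verdict: the proposal has genuine gaps.} The paper does not reprove this statement; it quotes \cite[Theorem~2]{Dey}.

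\textbf{The curve-complex route fails at the start.} Take the complex of nonseparating curves with mod-$p$ class $\pm[\beta]$, with the usual adjacency (disjointness). Adjacent curves are then integrally homologous up to sign. Indeed, if $\beta\cup\beta'$ did not separate $S$, the classes $[\beta],[\beta']$ would extend to a symplectic basis, so $[\beta']\mp[\beta]$ would be primitive and hence nonzero mod $p$.

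So every edge path stays inside one integral class $\pm x$. On the other hand, $\Mod(S,[\beta])$ carries $\beta$ to curves in every primitive class $x\equiv[\beta]\pmod p$; for example, $T_\alpha^p(\beta)$ has class $[\beta]\pm p[\alpha]$. The complex therefore has infinitely many components, and Putman's lemma cannot be applied. Moving between different integral lifts of $[\beta]$ is exactly the job of $\Gamma_1(p)$ (e.g.\ $T_\alpha^p$). Showing that the listed elements reach every lift is a statement about the image in $\Sp_{2g}(\ZZ)$, not a connectivity statement.

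\textbf{The generators are also assigned to the wrong roles.} The stabilizer of the vertex $\beta$ is essentially the image of $\Mod(S\setminus\beta)\cong\Mod(S_{g-1}^2)$. It contains $T_\beta$ and all the listed twists, since all those curves are disjoint from $\beta$ (compare the proof of Lemma~\ref{lem: trivial to abelian}). It contains neither $T_\alpha^p$ nor any $F_k$, because the curves $d_k,d_k'$ defining $F_k$ each meet $\beta$ once. Consequently:
\begin{itemize}
\item $\Gamma_1(p)$ is not part of the vertex stabilizer;
\item the $F_j$ cannot be the point-pushing elements in its Birman exact sequence;
\item $T_{c_1}$ fixes $\beta$, so it cannot serve as an edge-moving element.
\end{itemize}

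\textbf{Your fallback is the right route, but its two key steps are only asserted.} Your last paragraph names the route behind the cited result, as reflected in Lemma~\ref{lem: generate Torelli}: the extension $1\to\mathcal I(S)\to\Mod(S,[\beta])\to\Sp_{2g}(\ZZ)^{[\beta]}\to1$ together with Johnson's finite set of bounding pair maps. Let $H$ be the subgroup generated by the listed elements. You still need:
\begin{enumerate}
\item[(i)] A proof that the images of $T_{a_i},T_{b_i}$ ($i\ge2$), $T_{c_i}$ and $\Gamma_1(p)$ generate $\Sp_{2g}(\ZZ)^{[\beta]}$. This is a genuine algebraic statement, not a formality.
\item[(ii)] A proof that every Johnson generator lies in $H$.
\begin{itemize}
\item For generators supported in $S\setminus\beta$, you must show $H$ contains the image of $\Mod(S\setminus\beta)$. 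One way is Humphries generators on handles $2,\dots,g$, together with $T_{c_1}$, $T_\beta$, and the Birman exact sequence for capping one boundary component.
\item The remaining generators must be shown to be conjugate to some $F_k$ by elements already known to lie in $H$.
\end{itemize}
\end{enumerate}
Step (ii) is the content of \cite[Theorems~3.6 and~3.7]{Dey}. Without (i) and (ii) the proposition is not established. Your final Birman--Hilden step for $\Mod(\widetilde S,\sigma)$ is correct.
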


\begin{figure}
\centering
\includegraphics[width=0.55\linewidth]{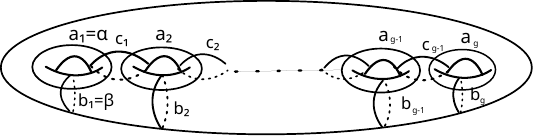}
\caption{\label{fig: basis in surface}The curves used in Proposition~\ref{prop: gens by Dey}.}
\end{figure}

\begin{figure}
\centering
\includegraphics[width=0.55\linewidth]{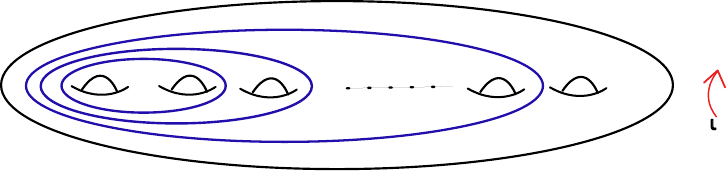}
\caption{\label{fig: F bounding pair maps}Each curve in the figure, together with its image under the hyperelliptic involution $\iota$, determines a bounding pair map $F_k$ for $2\le k \le g-1$.}
\end{figure}

We will also use the following consequence of the proof of \cite[Theorem 2]{Dey}.

\begin{lemma}\label{lem: generate Torelli}
The Torelli subgroup $\mathcal I(S)$ of $\Mod(S,[\beta])$ is finitely generated by a finite collection of bounding pair maps supported in $S\setminus\beta$, together with conjugates of the bounding pair maps
\[F_k,\qquad 2\le k\le g-1,\]
by elements of $\Mod(S,[\beta])$.
\end{lemma}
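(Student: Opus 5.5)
The plan is to prove the lemma with a "generation by conjugation" argument in the style of Putman, combined with Johnson's theorem that for $g\ge3$ the Torelli group $\mathcal I(S)$ is generated by finitely many genus-one bounding pair maps. Note first that $\mathcal I(S)\subset\Mod(S,[\beta])$, since Torelli elements act trivially on $H_1(S;\ZZ/p\ZZ)$. Fix a simple closed curve $\beta$ representing $[\beta]$. Let $X_0$ be a finite set of bounding pair maps supported in $S\setminus\beta$; it should contain Johnson's generators for the Torelli group of the complement, together with enough genus-one bounding pairs to realize every homological configuration of that type. Let $X_1$ be the set of all $\Mod(S,[\beta])$-conjugates of $F_2,\dots,F_{g-1}$. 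Set $X=X_0\cup X_1$ and $K=\langle X\rangle\subseteq\mathcal I(S)$. The goal is $K=\mathcal I(S)$.

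\textbf{Step 1 (orbit classification).} Classify genus-one bounding pairs $(\gamma,\delta)$ in $S$ up to the action of $\Mod(S,[\beta])$. Such a pair is determined, up to $\Mod(S)$, by the class $x=[\gamma]=[\delta]$ and a symplectic summand $\langle u,v\rangle$ of rank $2$ carried by the genus-one piece between $\gamma$ and $\delta$. Under $\Mod(S,[\beta])$, which surjects onto the stabilizer of $[\beta]$ in $\Sp_{2g}(\ZZ/p\ZZ)$, the extra invariant is the restriction of the functional $\omega([\beta],-)\bmod p$ to $\langle x,u,v\rangle$. There are two cases.
\begin{enumerate}
\item If this functional vanishes on $\langle x,u,v\rangle$, the pair can be moved off $\beta$, so its bounding pair map is conjugate to an element of $X_0$.
\item If it does not vanish, the pair is conjugate to one of the model bounding pairs defining the $F_k$ in Figure~\ref{fig: F bounding pair maps}. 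The index $k$ records the genus splitting, and the hyperelliptic picture is chosen so that $\beta$ crosses the pair.
\end{enumerate}
Combined with Johnson's theorem, this shows that $\mathcal I(S)$ is contained in the normal closure of $X$ in $\Mod(S,[\beta])$. So it suffices to prove that $K$ is normal in $\Mod(S,[\beta])$.

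\textbf{Step 2 (normality via generators).} Since $\Mod(S,[\beta])$ is generated by the finite set of Proposition~\ref{prop: gens by Dey}, it is enough to check that $sxs^{-1}\in K$ for every generator $s^{\pm1}$ and every $x\in X$. For $x\in X_1$ this holds by definition. For $x\in X_0$ there are two subcases.
\begin{enumerate}
\item If $s$ is one of the twists $T_{a_i},T_{b_i}$ ($i\ge2$), $T_{c_j}$, or an $F_k$, then $s$ preserves $\beta$ or is supported away from it. In that case $sxs^{-1}$ is again a bounding pair map in $S\setminus\beta$, and it lies in the Torelli group of the complement, hence in $\langle X_0\rangle$.
\item If $s$ comes from the $\Gamma_1(p)$ factor supported near $a_1\cup b_1$, then $sxs^{-1}$ may cross $\beta$. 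In that case I would write it, using the lantern relation and the relation $T_{s(\gamma)}=sT_\gamma s^{-1}$, as a product of an element of $\langle X_0\rangle$ and bounding pair maps of the second orbit type from Step 1, which lie in $X_1$.
\end{enumerate}

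\textbf{Main obstacle.} I expect the last subcase to be the hard part: conjugating the finitely many generators in $X_0$ by the $\Gamma_1(p)$ generators. This requires an explicit factorization in which every bounding pair produced either avoids $\beta$ or falls into an $F_k$-orbit. My first attempt would be to choose the elements of $X_0$ so that most are disjoint from $a_1\cup b_1$; those then commute with the $\Gamma_1(p)$ generators, and only a few remain to treat by hand. I would handle those remaining cases following the corresponding computation in the proof of \cite[Theorem 2]{Dey}.
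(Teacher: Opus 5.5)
There is a genuine gap. It starts in Step 1, where the dichotomy you state is false.

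\textbf{Step 1.} For $f\in\Mod(S,[\beta])$ one has $\omega([\beta],f_*x)=\omega(f_*^{-1}[\beta],x)\equiv\omega([\beta],x)\pmod p$. So this residue, up to sign, is a conjugacy invariant of a bounding pair. It equals $\pm1$ for every $F_k$, because $d_k$ and $d_k'$ each meet $\beta$ once. Consequently, a pair with $\omega([\beta],x)\equiv 0$ that cannot be moved off $\beta$ is conjugate neither to an element of $X_0$ nor to any $F_k$, and such pairs exist.

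Here is an example. Take $\beta=b_1$ and a genus-one pair with $[\gamma]=[\delta]=a_3$ whose genus-one side carries $\langle a_1,b_1+b_2\rangle$. Such a pair exists by change of coordinates, since $a_1,b_1+b_2,a_2-a_1,b_2,a_3,b_3,\dots$ is a symplectic basis.
\begin{itemize}
\item We have $\omega(b_1,a_3)=0$, so the pair is not conjugate to any $F_k$.
\item Conjugating the pair into $S\setminus\beta$ by some $f$ would produce a curve $f^{-1}(\beta)$ disjoint from $\gamma\cup\delta$, hence lying on one side, in the mod-$p$ class of $b_1$. But $b_1=(b_1+b_2)-b_2$ lies mod $p$ neither in $\langle a_1,b_1+b_2,a_3\rangle$ nor in the homology $\langle a_3,a_2-a_1,b_2,a_4,b_4,\dots\rangle$ of the other side.
\end{itemize}
For $p\ge5$, pairs with $\omega([\beta],x)\not\equiv0,\pm1$ give a second family of exceptions. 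Also, you say $k$ records a genus splitting, but all genus-one pairs have the same splitting, so at most the genus-one $F_k$ could occur in your case (2). Step 1 therefore does not show that $\mathcal I(S)$ lies in the normal closure of $X$.

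\textbf{Step 2.} The only nontrivial case of your normality check is conjugating elements of $X_0$ by $T_\alpha^{\pm p}$, and you leave it undone. By the invariant above, such a conjugate is never itself a conjugate of an $F_k$. You would need an actual factorization in which the crossings with $\beta$ cancel, and the proposal does not supply one. Two smaller issues:
\begin{itemize}
\item Your stated reason for $s=F_k$ is wrong: $F_k$ is a multitwist about curves meeting $\beta$, so it moves $\beta$. The conclusion still holds there, trivially, because $F_k\in K$.
\item Finite generation of the ``Torelli group of the complement'' by bounding pair maps in $S\setminus\beta$ is a nontrivial input that needs a reference.
\end{itemize}

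\textbf{What the paper does instead.} Neither an orbit classification nor normality of $K$ is needed, because the lemma only asks for a generating set of the stated form. The paper takes Johnson's explicit finite generating set of $\mathcal I(S)$ \cite{JohnsonTorelliI}, which consists of bounding pair maps attached to subchains of a fixed chain. It then cites \cite[Theorem~3.7]{Dey}: each of these generators is either supported in $S\setminus\beta$ or conjugate in $\Mod(S,[\beta])$ to some $F_k$. Sorting Johnson's generators into these two classes proves the lemma directly.
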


\begin{proof}
A finite generating set for $\mathcal I(S)$ was given by Johnson \cite{JohnsonTorelliI}; see also \cite[Theorem 3.6]{Dey}. Each bounding pair map in this generating set is either supported in $S\setminus\beta$ or is conjugate in $\Mod(S,[\beta])$ to one of the maps $F_k$, as shown in \cite[Theorem 3.7]{Dey}.
\end{proof}

\begin{remark}
For $p=2,3$, the congruence subgroup $\Gamma_1(p)$ is generated by
\[\begin{pmatrix}
1&0\\
-1&1
\end{pmatrix},
\qquad
\begin{pmatrix}
1&p\\
0&1
\end{pmatrix},\]
which are realized by $T_\beta$ and $T_\alpha^p$, respectively, in $\Mod(S,[\beta])$. This fails for $p\ge4$.
\end{remark}

We next record a relation that will be useful later.

\begin{lemma}\label{lem: alternate 2 chain relation}
Let $a,c$ be simple closed curves in $S$ that intersect each other once, and let $d$ be the boundary of a regular neighborhood of $a\cup c$. Then
\[(T_a^3T_c)^3=T_d.\]
\end{lemma}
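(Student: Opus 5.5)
The plan is to derive the relation from the standard two-chain relation, using the braid relation between $T_a$ and $T_c$ to move powers of $T_a$ around. Recall the $2$-chain relation (\cite[Proposition~4.12]{Primer}): if $a,c$ meet once and $d=\partial N(a\cup c)$, then
\[(T_aT_c)^6=T_d,\]
and moreover $T_d$ is central in $\Mod(N(a\cup c))$. Write $x=T_a$ and $y=T_c$, so that $xyx=yxy$ and $\Delta=xyx$ satisfies $\Delta^2=(xy)^3$. Then $\Delta^4=(xy)^6=T_d$. The goal is to show that $(x^3y)^3=\Delta^4$ in the braid group $B_3=\langle x,y\mid xyx=yxy\rangle$. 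This is a purely algebraic identity in $B_3$, which maps to the subgroup generated by $T_a,T_c$.

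The key steps are as follows.

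\emph{Conjugation by $\Delta$.} Conjugation by $\Delta$ swaps $x$ and $y$: $\Delta x\Delta^{-1}=y$ and $\Delta y\Delta^{-1}=x$. Equivalently, $x\Delta=\Delta y$ and $y\Delta=\Delta x$.

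\emph{Rewriting $x^3y$.} Using $xyx=\Delta$, we have $x^3y=x^2(xy)=x^2\Delta x^{-1}$. Hence
\[(x^3y)^3=x^2\Delta x\Delta x\Delta x^{-1}.\]

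\emph{Collecting the $\Delta$'s.} Now push every $\Delta$ to the right using $x\Delta=\Delta y$ and $y\Delta=\Delta x$. For the tail, $\Delta x\Delta x\Delta=\Delta\,\Delta y\,x\Delta=\Delta^2 yx\Delta=\Delta^3 xy$. This gives
\[(x^3y)^3=x^2\Delta^3xyx^{-1}.\]
Since $\Delta^2$ is central, $x^2\Delta^3=\Delta^2x^2\Delta=\Delta^2\Delta y^2=\Delta^3y^2$. Therefore
\[(x^3y)^3=\Delta^3y^2xyx^{-1}.\]

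\emph{Reducing the remainder.} It remains to show $y^2xyx^{-1}=\Delta$, i.e.\ $y^2xy=\Delta x=yxyx$. Equivalently, $yxy=xyx$, which is exactly the braid relation. This gives $(x^3y)^3=\Delta^4=T_d$.

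The only real bookkeeping risk is the order of the $\Delta$-swaps in the collecting step. Rather than trusting the manipulation, I would double-check it with the abelianization: $B_3\to\ZZ$ with $x,y\mapsto 1$ sends $(x^3y)^3$ to $12$ and $\Delta^4$ to $12$, which is consistent. I would also verify the identity via the standard faithful image of $B_3$ in $\SL_2(\ZZ)$ modulo the center. Finally, the relation $(T_aT_c)^6=T_d$ in $\Mod(S)$ (with $d$ possibly bounding on the other side) is taken directly from the Primer, so no further topology is needed.
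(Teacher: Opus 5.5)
Your proposal is correct and follows essentially the same approach as the paper: both reduce the claim to the $2$-chain relation $(T_aT_c)^6=T_d$ by manipulating words with the braid relation. The only difference is bookkeeping. The paper rewrites $(T_aT_c)^6=(T_a^2T_cT_a)^3=T_a^{-1}(T_a^3T_c)^3T_a$ and then uses that $T_d$ commutes with $T_a$. You instead verify $(x^3y)^3=\Delta^4$ directly in $B_3$ via the Garside element and the centrality of $\Delta^2$, and your computation checks out step by step.
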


\begin{proof}
The $2$-chain relation \cite[Proposition 4.12]{Primer} gives
\[(T_aT_c)^6=T_d.\]
We rewrite this using the braid relation $T_{a} T_{c} T_{a}=T_{c} T_{a} T_{c}$ in \cite[Proposition 3.11]{Primer}:
 \[\begin{aligned}
(T_{a} T_{c})^6&=\big(T_{a} (T_{c} T_{a} T_{c})\big)^3 = (T_{a}^2 T_{c} T_{a})^3 = T_{a}^{-1} (T_{a}^3 T_{c})^3 T_{a}=T_{d}. \end{aligned}\]
Since $d$ is disjoint from $a$, the Dehn twists $T_a$ and $T_d$ commute. Hence
\[(T_a^3T_c)^3=T_aT_dT_a^{-1}=T_d. \qedhere\]
\end{proof}

\begin{remark}\label{rmk: lift to star relation}
For $a=a_1$ and $c=c_1$ in Figure~\ref{fig: basis in surface}, the above $2$-chain relation lifts to the star relation \cite[Theorem 1]{GenerateModNonclosed} in $\widetilde{S}$, as illustrated in Figure~\ref{fig: starr}:

\begin{figure}
\centering
\includegraphics[width=0.275\linewidth]{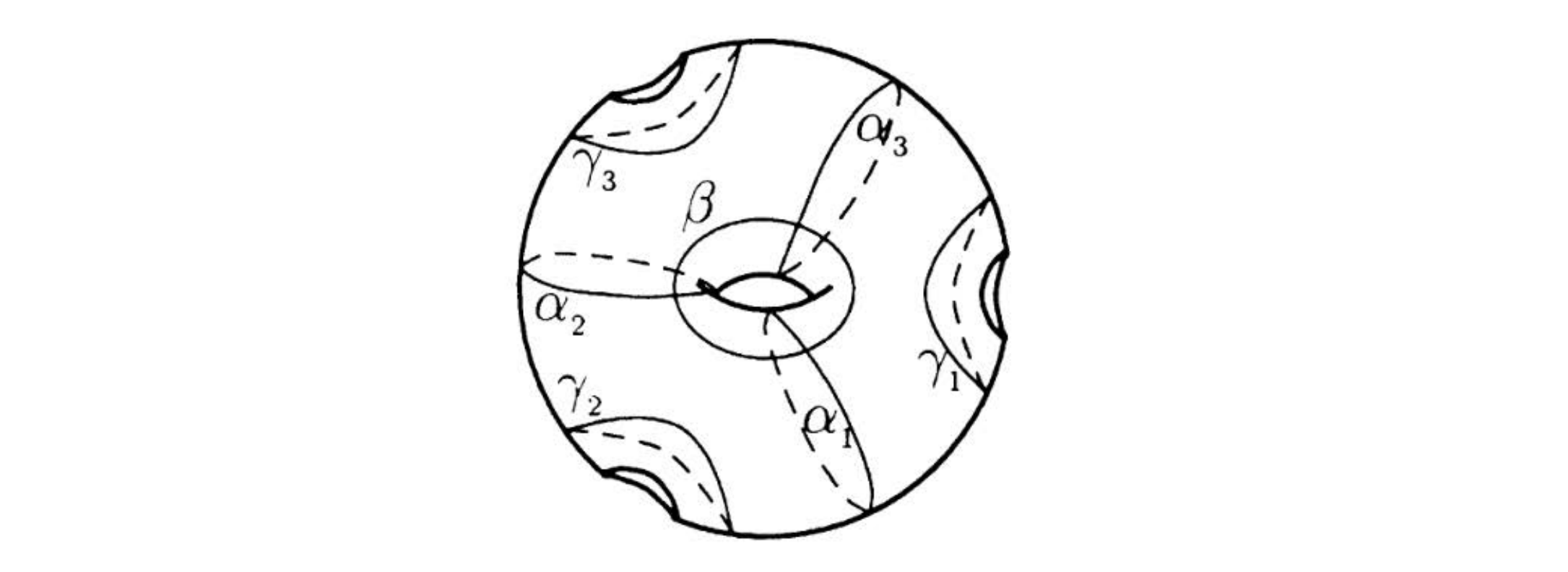}
\caption{\label{fig: starr}The star relation in $\widetilde{S}$.}
\end{figure}

\[(T_{\widetilde a}
 T_{\widetilde c_1}
 T_{\widetilde c_2}
 T_{\widetilde c_3})^3=
T_{\widetilde d_1}
T_{\widetilde d_2}
T_{\widetilde d_3},\]
where $\widetilde c_i$ and $\widetilde d_i$, $1\le i\le3$, are the lifts of $c$ and $d$, respectively, and $\widetilde a$ denotes the lift of $a$.
\end{remark}

We will also use the following result of the abelianization of $\Mod(S,[\beta])$ and $\Mod(\widetilde{S},\sigma)$. For $p=2$, these were obtained by Sato \cite[Theorem 0.2]{sato2}, while for odd primes they were obtained by the author in \cite[Theorem~1.1, Theorem~1.3]{zhong}.

\begin{lemma}\label{lem: abelianization}
Let $g\ge4$ and let $p$ be a prime. Then
\[
H_1(\Mod(\widetilde{S},\sigma);\ZZ)
\cong
\begin{cases}
\ZZ/4\ZZ,& p=2,\\
\ZZ/p\ZZ\oplus\ZZ/p\ZZ,& p\text{ odd},
\end{cases}
\]
and
\[
H_1(\Mod(S,[\beta]);\ZZ)
\cong
\begin{cases}
\ZZ/p\ZZ,
& g\not\equiv1\pmod p,\\
\ZZ/p\ZZ\oplus\ZZ/p\ZZ,
& g\equiv1\pmod p,\ p\text{ odd},\\
\ZZ/4\ZZ,
& g\equiv1\pmod p,\ p=2.
\end{cases}
\]
\end{lemma}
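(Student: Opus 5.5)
The two computations of $H_1(\Mod(S,[\beta]);\ZZ)$ and $H_1(\Mod(\widetilde S,\sigma);\ZZ)$ are in \cite{sato2} and \cite{zhong}. If I had to reprove them, I would show each answer is both an upper and a lower bound: the upper bound from the finite generating set of Proposition~\ref{prop: gens by Dey} plus relations, the lower bound from explicit abelian quotients.

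\textbf{Upper bound for $\Mod(S,[\beta])$.} Cutting along a curve representing $[\beta]$ gives a copy of $S_{g-1}^2$, and its mapping class group lies in $\Mod(S,[\beta])$. Every Dehn twist in the generating set that is disjoint from $\beta$ (namely $T_{a_i},T_{b_i}$ for $i\ge2$, and $T_{c_i}$) is a nonseparating twist in $S_{g-1}^2$. Since $g-1\ge3$, the lantern relation inside this subsurface, together with conjugacy of nonseparating twists there, makes these twists trivial in $H_1$.

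The bounding pair maps $F_k$ should then be rewritten. Using Lemma~\ref{lem: generate Torelli} and the conjugation relations from \cite{Dey}, I would reduce them to a single class $f=[F_2]$ and show $pf=0$. The remaining generators $T_\beta$ and $T_\alpha^p$ come from $\Gamma_1(p)$. The relation in Lemma~\ref{lem: alternate 2 chain relation}, with $a=\alpha$ and $c=c_1$, together with the braid relation $T_\beta T_{c_1}T_\beta=T_{c_1}T_\beta T_{c_1}$ (here $T_{c_1}$ is already trivial), should force $[T_\beta]=0$. A relation of the form $T_\alpha^{p}=(\text{product of conjugates})$ should then leave $[T_\alpha^p]$ as one class $t$ of order dividing $p$. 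In addition, a relation involving $(g-1)$ copies of a bounding pair, coming from a chain or star relation on the cut surface, should give $(g-1)f\in\langle t\rangle$. These steps together bound $H_1$ by $\ZZ/p$ when $g\not\equiv1\pmod p$, and by $(\ZZ/p)^2$ otherwise. For $p=2$ the same relations instead produce a cyclic group of order at most $4$.

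\textbf{Lower bound.} The first quotient comes from the map $\Mod(S,[\beta])\to\operatorname{Stab}_{\Sp_{2g}(\ZZ/p)}([\beta])$. This target is an affine-type group $(\ZZ/p)^{2g-1}\rtimes\Sp_{2g-2}(\ZZ/p)$ extended by a scalar. Its abelianization surjects onto $\ZZ/p$, detected by the action on $\langle\beta\rangle^\perp/\langle\beta\rangle$ and the $\Gamma_1(p)$ corner, and this detects $t$. The second quotient, when $g\equiv1\pmod p$, comes from the Johnson homomorphism: take $\tau(F)\in\wedge^3H$, contract it with $\omega$ to land in $H$, and pair the result with $[\beta]$ modulo $p$. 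This pairing is well defined on the Torelli part, but it extends to a homomorphism on all of $\Mod(S,[\beta])$ only up to a Morita-type correction $(g-1)\kappa$, by Proposition~\ref{prop: image of point pushing}-style contractions. That correction vanishes exactly when $p\mid g-1$, which is why this factor appears only in that case.

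\textbf{The lift $\Mod(\widetilde S,\sigma)$.} I would use the central extension \eqref{eq:BH-central}. Its five-term sequence gives
\[
\langle\sigma\rangle\to H_1(\Mod(\widetilde S,\sigma))\to H_1(\Mod(S,[\beta]))\to0,
\]
so the question is whether $\sigma$ survives. To detect it, I would use the determinant of the action on a $\zeta_p$-eigenspace of $H_1(\widetilde S;\CC)$, reduced to a finite-order character; this is the Prym representation of Remark~\ref{rmk: GL false}. The lifted star relation of Remark~\ref{rmk: lift to star relation} expresses $\sigma$-related products through the lifted twists, and should kill the extra $(g-1)$-dependent factor. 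This is how the answer for $\Mod(\widetilde S,\sigma)$ becomes independent of $g$.

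\textbf{Main obstacle.} I expect the hardest part to be the exact bookkeeping of the relation giving $(g-1)f$, and of the analogous relation after lifting. Pinning down precisely which multiples vanish requires explicit lantern and star relations crossing $\beta$. I would try to derive these first by lifting each relation to $\widetilde S$ and computing its image under the two characters above.
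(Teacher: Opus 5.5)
The paper proves this lemma only by citation, to Sato \cite{sato2} for $p=2$ and to \cite{zhong} for odd $p$. So your opening sentence is the paper's entire argument. The reproof you sketch, however, would not work as written.

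\textbf{Lower bound for $t=[T_\alpha^p]$.} Your detection of $t$ fails. No character of $\operatorname{Stab}_{\Sp_{2g}(\ZZ/p\ZZ)}([\beta])$ can see $t$, because $T_\alpha^p(x)=x+p\,\omega(\alpha,x)\alpha$ is the identity mod $p$. In fact this stabilizer is perfect for $g\ge4$: it is a unipotent radical of order $p^{2g-1}$ extended by $\Sp_{2g-2}(\ZZ/p\ZZ)$, with no nontrivial scalars since it fixes a vector.

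The $\ZZ/p$ that is present for every $g$ lives at level $p^2$. Fix an integral lift $\tilde\beta$ of $[\beta]$. Then $f\mapsto \tfrac1p\,\omega(\tilde\beta,f_*\tilde\beta)\bmod p$ is a well-defined homomorphism on $\Mod(S,[\beta])$; to check this, expand $(fh)_*\tilde\beta=f_*\tilde\beta+f_*(h_*\tilde\beta-\tilde\beta)$ and use $f^{-1}_*\tilde\beta\equiv\tilde\beta\bmod p$. It kills $T_\beta$, the image of $\Mod(S_{g-1}^2)$, and all Torelli elements, and it sends $T_\alpha^p$ to $\pm1$.

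Your Johnson/Morita character is right in spirit. Concretely, $\kappa\bmod(2g-2)$ kills point-pushing by Proposition~\ref{prop: image of point pushing}, so it descends to $\Mod(S_g)$. Pairing its values with $[\beta]$ then gives a homomorphism on $\Mod(S,[\beta])$ whenever $p\mid 2g-2$.

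\textbf{The case $p=2$.} Here $\sigma$ acts on the $(-1)$-eigenspace, of dimension $2g-2$, with determinant $(-1)^{2g-2}=1$. So the Prym determinant never detects $\sigma$, yet the stated answer requires $\sigma$ to survive in $H_1$ when $g$ is even. Moreover, all your characters are $\ZZ/p$-valued, so nothing in the sketch gives a lower bound for the non-split $\ZZ/4$ cases.

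\textbf{Upper bound.} There are several further problems.
\begin{enumerate}
\item For $p\ge5$, $\Gamma_1(p)$ is not generated by $T_\beta$ and $T_\alpha^p$, as the paper itself remarks. So there are additional generators you would have to dispose of.
\item You yourself list $T_{c_1}$ among the twists disjoint from $\beta$, so there is no braid relation between $T_\beta$ and $T_{c_1}$. The conclusion $[T_\beta]=0$ still holds, but for a different reason: $T_\beta$ is the image of a boundary twist of the perfect group $\Mod(S_{g-1}^2)$.
\item Lemma~\ref{lem: alternate 2 chain relation} is specific to exponent $3$, so it does not apply for general $p$.
\item Most importantly, two key relations are only asserted. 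One is $(g-1)[F_2]\in\langle t\rangle$; since the level-$p^2$ character vanishes on Torelli, this relation must in fact read $(g-1)[F_2]=0$. The other is the vanishing of $[\sigma]$ when $p\mid g-1$. These two steps are exactly where the dependence on $g \bmod p$ enters, so without them the upper bounds remain unproved.
\end{enumerate}
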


Finally, we recall the following consequence of the change-of-coordinates principle.
\begin{lemma}\label{lem: change curves in cut surface}
Let $\gamma$ and $\gamma'$ be nonseparating simple closed curves on $S_g$. Suppose that either both $\gamma$ and $\gamma'$ intersect $\beta$ once, or both are disjoint from $\beta$, with neither $\gamma$ nor $\gamma'$ isotopic to $\beta$. Then there exists $f\in\Mod(S_g)$ such that
\[f(\beta)=\beta\qquad\text{and}\qquad
f(\gamma)=\gamma'.\]
\end{lemma}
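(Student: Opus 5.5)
The statement to prove is Lemma~\ref{lem: change curves in cut surface}, a change-of-coordinates statement. I will argue by cutting $S_g$ along $\beta$ and appealing to the classification of surfaces.

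\textbf{Case 1: both curves disjoint from $\beta$.} Cut $S_g$ along $\beta$ to get $S_{g-1}^2$, with boundary components $\beta^+,\beta^-$. The curve $\gamma$ is nonseparating in $S_g$ and not isotopic to $\beta$. I claim its image in $S_{g-1}^2$ is not boundary-parallel and does not separate $\beta^+$ from $\beta^-$. It cannot separate them, because then $\gamma$ together with $\beta$ would be a bounding pair configuration, and one checks that this is still allowed. So the correct split is the following. Either $\gamma$ is nonseparating in $S_{g-1}^2$, or $\gamma$ separates $S_{g-1}^2$ into two pieces, each containing one of $\beta^\pm$; in the second case $\gamma$ is homologous to $\beta$ in $S_g$. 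The statement lists only "nonseparating, not isotopic to $\beta$", so both types could a priori occur. I therefore refine the argument by topological type: the pair $(\beta,\gamma)$ is determined up to homeomorphism by the topological type of $S_g\setminus(\beta\cup\gamma)$.

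If $\gamma$ is nonseparating in $S_{g-1}^2$, then cutting further gives $S_{g-2}^4$ with boundary pairs $\beta^\pm$ and $\gamma^\pm$. The same holds for $\gamma'$. Choose a homeomorphism between the two cut surfaces that respects the labeled boundary pairings and the orientations, and glue it back. This yields $f$ with $f(\beta)=\beta$ and $f(\gamma)=\gamma'$.

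If $\gamma$ instead separates $\beta^+$ from $\beta^-$, I would either note that the intended hypothesis excludes this configuration (for instance by adding the requirement that $[\gamma]\ne[\beta]$ in homology) or run the same argument within this type. In the application, only curves of a fixed type are compared, so this is harmless.

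\textbf{Case 2: both curves meet $\beta$ once.} A regular neighborhood of $\beta\cup\gamma$ is a copy of $S_1^1$, and its complement is $S_{g-1}^1$. The same is true for $\gamma'$. Choose an orientation-preserving homeomorphism between the two neighborhoods sending $\beta\mapsto\beta$ and $\gamma\mapsto\gamma'$; this exists because any two once-intersecting pairs in a torus with one boundary component are standard. Extend it across the complements $S_{g-1}^1\to S_{g-1}^1$ so that it agrees on the boundary, which is possible by the classification of surfaces and the fact that boundary homeomorphisms extend. If necessary, compose with the orientation of $\gamma$ being irrelevant, since we only need $f(\gamma)=\gamma'$ as unoriented curves.

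\textbf{Main obstacle.} The main delicacy is the Case 1 subtlety about whether $\gamma$ is homologous to $\beta$. I would handle it by stating the type explicitly.
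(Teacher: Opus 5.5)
Your argument is the standard change-of-coordinates argument: cut along the curves, compare the cut surfaces using the classification of surfaces, and reglue. That is exactly what the paper appeals to; it gives no proof beyond citing that principle. Your Case~2, and the part of Case~1 where $\gamma,\gamma'$ are nonseparating in $S_g\setminus\beta$, are fine.

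Your hedge in Case~1 should become a definite conclusion: as literally stated, the lemma is false for $g\ge 3$.

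\begin{itemize}
\item \textbf{Counterexample.} Take $\gamma=a_2$, and let $\gamma'$ be disjoint from $\beta$ with $\{\beta,\gamma'\}$ a bounding pair cutting off a genus-$1$ subsurface. Both curves are nonseparating in $S_g$, disjoint from $\beta$, and not isotopic to $\beta$. Yet any $f$ with $f(\beta)=\beta$ and $f(\gamma)=\gamma'$ would give $f_*[a_2]=\pm[\gamma']=\pm[\beta]=\pm f_*[\beta]$. This forces $[a_2]=\pm[\beta]$, which is absurd.
\item \textbf{Genus splits.} In the same way, bounding-pair-type curves with different genus splits cannot be equivalent.
\item \textbf{Correct hypothesis.} The disjoint case needs $\gamma,\gamma'$ to be nonseparating in $S_g\setminus\beta$, equivalently $[\gamma],[\gamma']\ne\pm[\beta]$. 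The sharp condition is that $S_g\setminus(\beta\cup\gamma)$ and $S_g\setminus(\beta\cup\gamma')$ have the same topological type, which is what your refined argument proves.
\end{itemize}

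This does not affect the paper, because the lemma is only applied to curves meeting $\beta$ once ($\alpha$, $d_k$, $d_k'$).

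Two clean-ups:
\begin{enumerate}
\item Delete your opening claim that $\gamma$ cannot separate $\beta^+$ from $\beta^-$, since you retract it in the next sentence.
\item The paper uses the lemma to produce $f\in\Mod(S,[\beta])$ with $p=3$, where $-[\beta]\ne[\beta]$. So note in Case~2 that you can orient $\gamma'$ so that $\hat{i}(\beta,\gamma')=\hat{i}(\beta,\gamma)$. Then choose the homeomorphism of regular neighborhoods to preserve the orientation of $\beta$, so that $f_*[\beta]=[\beta]$.
\end{enumerate}
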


\subsection{Restriction of the representation to $\Mod(S_{g-1}^2)$}

Realize $[\beta]$ by a simple closed curve $\beta$. The cyclic cover
$\widetilde{S}$ can then be constructed by taking $p$ copies of
$S\setminus\beta\cong S_{g-1}^2$ and gluing their boundary components
cyclically. This gives a homomorphism
\[\Mod(S_{g-1}^2)\cong \Mod(S\setminus\beta)
\longrightarrow\Mod(S,[\beta])
\quad \text{(resp.~}\Mod(\widetilde{S},\sigma)\text{)}.\]
The kernel of this map is generated by
\[T_{\partial_1}T_{\partial_2}^{-1},\]
where $\partial_1$ and $\partial_2$ denote the two boundary components of $S_{g-1}^2$.

We denote the composition of $\rho$ with this homomorphism by
\[\rho|_{\Mod(S_{g-1}^2)}:
\Mod(S_{g-1}^2)\longrightarrow \Sp_{2h}(\ZZ).\]
Viewing $\Sp_{2h}(\ZZ)$ as a subgroup of $\GL_{2h}(\CC)$ via the standard inclusion, Theorem~\ref{thm: any low-dim rep is bi-affine} implies that,
for $2h\leq 3(g-1)-3$,
the representation $\rho|_{\Mod(S_{g-1}^2)}$ is either trivial or
bi-affine with core
$H_1(S_{g-1};\CC)$.
Moreover, the bi-affine representations of $\Mod(S_{g-1}^2)$ have been classified in Corollary~\ref{cor: 2g+2 bi-affine rep}. We now analyze these possibilities case by case. Unless otherwise specified, all conjugations below are by elements of $\GL_{2h}(\CC)$.
\begin{lemma}\label{lem: trivial to abelian}
Let $p=2$ or $p=3$. If $\rho|_{\Mod(S_{g-1}^2)}$ is trivial, then the image of $\rho$ is abelian. 
\end{lemma}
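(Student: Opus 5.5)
We use the generating set of Proposition~\ref{prop: gens by Dey} and show that every generator outside $\Mod(S_{g-1}^2)$ either maps into an abelian group together with the others, or is forced to act trivially. The group generated by all the listed generators other than $F_2,\dots,F_{g-1}$ and the $\Gamma_1(p)$ generators is contained in the image of $\Mod(S\setminus\beta)\cong\Mod(S_{g-1}^2)$. Indeed, $T_{a_i},T_{b_i}$ for $i\ge2$ and $T_{c_1},\dots,T_{c_{g-1}}$ are all supported away from $\beta$; if a curve meets $\beta$, we use Lemma~\ref{lem: change curves in cut surface} to replace it by a conjugate disjoint from $\beta$. The bounding pair maps $F_k$ lie in the Torelli group.

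By Lemma~\ref{lem: generate Torelli}, the Torelli group is generated by bounding pair maps supported in $S\setminus\beta$ together with $\Mod(S,[\beta])$-conjugates of the $F_k$. So I first show $\rho(F_k)=I$. Each $F_k$ is a bounding pair $T_{x}T_{\iota(x)}^{-1}$ where both curves cross $\beta$ once. By Lemma~\ref{lem: change curves in cut surface}, $F_k$ is conjugate in $\Mod(S,[\beta])$ to a product of twists about curves in suitable position. Alternatively, I would try to write $F_k$ via a lantern or chain relation in terms of twists supported in $S\setminus\beta$ and the twists $T_\beta$, $T_{a_1}^p$. I expect this step to be the \textbf{main obstacle}. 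A more robust route is to note that $\rho$ kills the normal closure $N$ of $\rho^{-1}(1)\cap \Mod(S_{g-1}^2)$-image. Taking $\gamma,\gamma'$ disjoint from $\beta$, every nonseparating twist about a curve disjoint from $\beta$ lies in $N$, as does every conjugate of it by $\Mod(S,[\beta])$. A bounding pair $F_k$ is a product of such twists exactly when both of its curves are disjoint from $\beta$, which is not the case here. Instead, I would use that $T_x$ and $T_{\iota(x)}$ each commute with a large subgroup coming from $\Mod(S_{g-1}^2)$-twists, and that $F_k$ normalizes that subgroup.

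Concretely, the plan is as follows. The quotient $Q=\Mod(S,[\beta])/N$ is generated by the images of $T_\beta$, $T_{a_1}^p$ (giving $\Gamma_1(p)$ for $p=2,3$) and the $F_k$. Since $\beta$ is disjoint from $c_1,\dots$ and lies in the image of $\Mod(S_{g-1}^2)$ as a boundary twist, $T_\beta\in N$. It then remains to handle $T_{a_1}^p$ and $F_k$. Lemma~\ref{lem: alternate 2 chain relation} with $a=a_1$, $c=c_1$ gives $(T_{a_1}^3T_{c_1})^3=T_d$, where $d$ is disjoint from $\beta$. Modulo $N$ this reads $\bar T_{a_1}^{9}=1$, and also expresses $T_{a_1}^3$ as a well-defined element. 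For $F_k$, the curves $x$ and $\iota(x)$ cross $\beta$. Modulo $N$, the image of $F_k$ commutes with the image of $T_{a_1}^p$, because the commutator is a product of conjugates of twists disjoint from $\beta$ after applying the change of coordinates. Hence $Q$ is generated by pairwise commuting elements and is abelian. Therefore $\rho(\Mod(S,[\beta]))=\rho(\text{preimage})$ is abelian, since $\rho(N)=1$.

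For $\Mod(\widetilde S,\sigma)$, I add the deck transformation $\sigma$, which is central by \eqref{eq:BH-central}. I take the lifts of the generators above. Here the star relation of Remark~\ref{rmk: lift to star relation} replaces the 2-chain relation, and the same argument shows that the image is abelian.
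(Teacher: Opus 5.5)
Your reduction is the same as the paper's. The listed Dehn twists and $T_\beta$ (a boundary twist of $S\setminus\beta$) lie in the image of $\Mod(S_{g-1}^2)$, so the image of $\rho$ is generated by the images of $T_\alpha^p$, $F_2,\dots,F_{g-1}$ and, for the cover, $\sigma$. Everything then hinges on these elements commuting, and that step, which is the whole content of the lemma, is not established.

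Your reason for $[F_k,T_{a_1}^p]\in N$ (``the commutator is a product of conjugates of twists disjoint from $\beta$ after applying the change of coordinates'') is not an argument. Lemma~\ref{lem: change curves in cut surface} moves one curve while fixing $\beta$; it gives no way to rewrite a commutator. You never treat $[F_j,F_k]$, yet you conclude that $Q$ is generated by pairwise commuting elements. In $\Mod(\widetilde S,\sigma)$, commutation downstairs only gives commutation of lifts up to a power of the central element $\sigma$, and $\rho(\sigma)$ need not be trivial. So ``the same argument'' does not transfer.

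The missing observation is that no relation is needed, because the leftover generators commute on the nose. Write $F_k=T_{d_k}T_{d_k'}^{-1}$. The curve $\alpha=a_1$ is disjoint from $d_k$ and $d_k'$: in the lantern of Figure~\ref{fig: F lantern relation}, $a_1$ and $d_k$ are boundary curves and $d_k'$ is an interior one. The bounding pairs of Figure~\ref{fig: F bounding pair maps} are also disjoint from one another. Hence $T_\alpha^p$ and the $F_k$ commute in $\Mod(S,[\beta])$ itself, which is what the paper's one-line ``these elements pairwise commute'' rests on. For the cover, each of $\alpha,d_k,d_k'$ meets $\beta$ once, so it has connected preimage in $\widetilde S$. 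The lifts $T_{\widetilde\alpha}$ and $T_{\widetilde d_k}T_{\widetilde d_k'}^{-1}$ are therefore built from twists about pairwise disjoint curves and commute; other choices of lifts differ by central powers of $\sigma$. Finally, $\sigma$ is central by \eqref{eq:BH-central}. With this, the quotient $Q$ is unnecessary.

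Two side remarks. First, your initial target $\rho(F_k)=I$ is not just hard but false in general. Since $\Mod(S_{g-1}^2)$ is perfect, every homomorphism with abelian image satisfies the hypothesis. For $g\equiv 1\pmod 3$, the group $H_1(\Mod(S,[\beta]);\ZZ)\cong(\ZZ/3\ZZ)^2$ of Lemma~\ref{lem: abelianization} is not cyclic, so it is not generated by the class of $T_\alpha^3$, and some $F_k$ has nonzero class. A character $\Mod(S,[\beta])\to\ZZ/3\ZZ\subset\Sp_{2h}(\ZZ)$ detecting that class is an admissible $\rho$ with $\rho(F_k)\neq I$.

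Second, in your 2-chain step, $d=\partial N(a_1\cup c_1)$ is \emph{not} disjoint from $\beta$. The curve $\beta$ meets $a_1$ once and is disjoint from, but not isotopic to, $c_1$, so it leaves the one-holed torus $N(a_1\cup c_1)$ and crosses $d$. This is why the paper, when it uses this relation in Lemma~\ref{lem: obstruction by scalar}, routes $T_d$ through Lemma~\ref{lem: generate Torelli}. Neither the chain relation nor the star relation is needed for abelianness, so both can be dropped.
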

\begin{proof}
By the generating set in Proposition~\ref{prop: gens by Dey}, it suffices to consider the images of the remaining generators
\[T_{\alpha}^p,\ F_2,\ldots,F_{g-1},\ \sigma .\]
Indeed, all other generators lie in the subgroup $\Mod(S_{g-1}^2)$ and hence have trivial image under $\rho$ by assumption. Observe that these elements pairwise commute, therefore the image of $\rho$ is abelian.
\end{proof}

\begin{lemma}\label{lem: affine std rep}
    If $\rho|_{\Mod(S_{g-1}^2)}$ is affine or co-affine with core $H_1(S_{g-1};\CC)$, then
    $\rho|_{\Mod(S_{g-1}^2)}$ is conjugate to the direct sum of the standard action on $H_1(S_{g-1};\ZZ)$ and a trivial representation.
\end{lemma}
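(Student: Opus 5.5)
The plan is to show that the cohomology class classifying the affine (or co-affine) extension must vanish. For this we use the fact that $\rho|_{\Mod(S_{g-1}^2)}$ is a restriction of a representation of $\Mod(S,[\beta])$ (resp.~$\Mod(\widetilde S,\sigma)$) into $\Sp_{2h}(\ZZ)$, so it kills $T_{\partial_1}T_{\partial_2}^{-1}$ and preserves a nondegenerate symplectic form.

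Treat the affine case; the co-affine case is dual by the remark after Definition~\ref{def: biaffine rep}. By Theorem~\ref{thm: classify bi-affine for n bdry} and Corollary~\ref{cor: lowest dim to consider}, write the representation on $\CC^{2h}$ as $H\oplus\CC^{2h-2(g-1)}$ with off-diagonal block $\sigma=\sum_j(t_1^{(j)}\kappa_1+t_2^{(j)}\kappa_2)\otimes w_j$, where $H=H_1(S_{g-1};\CC)$.

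\textbf{Step 1: a nonzero class would force a two-sided extension.} The image preserves the symplectic form $\Omega$ on $\CC^{2h}$. Let $W$ be the trivial quotient, that is, $W=V/V_2$ with $V_2=H$ in the affine setup. Since $\Omega$ is nondegenerate and invariant, $V_2^{\perp}$ is an invariant subspace with $V/V_2^{\perp}\cong V_2^{*}\cong H$. The module $V$ therefore also has $H^*\cong H$ as a quotient. Since $H$ is irreducible and $2h\le 2g<4(g-1)$, composition-factor counting shows that $H$ occurs once. Hence $V_2\cap V_2^\perp$ is either $0$ or all of $V_2$. The second option is impossible, because $V_2$ would be isotropic of dimension $2g-2>h$ when $h\le g$ and $g\ge 6$ (this is also fine if $h=g$). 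So $\Omega|_{V_2}$ is nondegenerate and $V=V_2\oplus V_2^{\perp}$ as modules. The complement $V_2^\perp\cong V/V_2$ is a trivial module, so the extension splits and the class is zero.

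\textbf{Step 2: pass to an integral conjugacy.} The splitting of Step 1 is over $\CC$, but since $\rho$ is defined over $\ZZ$ (indeed over $\QQ$), the subspaces $V_2$ and $V_2^\perp$ are defined over $\QQ$. We therefore conjugate within $\GL_{2h}(\QQ)$ (over $\CC$ is acceptable per the convention) to the block form $\Psi\oplus I$. It remains to identify $\Psi$ with the standard action on $H_1(S_{g-1};\ZZ)$. For this we invoke Korkmaz's theorem for $2(g-1)$-dimensional representations: the restriction to $V_2$ is a nontrivial $2(g-1)$-dimensional representation of $\Mod(S_{g-1}^2)$, hence conjugate to the standard one.

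\textbf{Main obstacle.} I expect the delicate point to be Step 1, specifically justifying that an invariant symplectic form forces the affine extension to split. An alternative route is via Remark~\ref{rmk: self dual}: a symplectic module is self-dual, but an affine representation with nonzero class has a dual that is co-affine with nonzero class. The affine and co-affine families coincide only when the class is zero, since for a nonzero class the invariant vectors and the coinvariants have different dimensions ($\dim V^{G}=\dim V_1$ versus the trivial quotient). If this dimension comparison is cleaner, I would use it in place of the orthogonal-complement argument.
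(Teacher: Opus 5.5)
Your proof is correct, but it takes a different route from the paper's.

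\textbf{The paper's route.} The paper works directly with the cocycle. It assumes some column class $s_1[\kappa_1]+s_2[\kappa_2]$ is nonzero and restricts to a lifted point-pushing subgroup. There $\Psi$ is trivial, and by Proposition~\ref{prop: image of point pushing} the cocycle is a nonzero multiple of $[\gamma]$. Invariance of the symplectic form then forces $\omega_A(x,[\gamma])=0$ for all $x\in H$ and all $\gamma$. So $H$ would be isotropic, which is impossible since $2g-2>g$.

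\textbf{Your route.} You reach the same dimensional contradiction structurally. The intersection $V_2\cap V_2^{\perp}$ is a submodule of the irreducible module $V_2\cong H$. It cannot equal $V_2$, because $\dim V_2=2g-2>h$ rules out isotropy. Hence $V=V_2\oplus V_2^{\perp}$, with $V_2^{\perp}\cong V/V_2$ trivial.

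\textbf{Comparison.} Your argument avoids the explicit basis of $H^1(\Mod(S_{g-1}^2);H)$ and Morita's point-pushing formula. It proves something more general: in a $2h$-dimensional representation preserving a nondegenerate alternating form, any irreducible submodule of dimension $>h$ splits off orthogonally. The co-affine case then follows either by duality, as you say, or by applying the same argument to $V_1^{\perp}\cong (V/V_1)^*\cong H$. The paper's version stays inside the cocycle description it has just set up, and shows concretely which cocycle values clash with the form.

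\textbf{Superfluous parts.} Several pieces of your write-up are not needed:
\begin{enumerate}
\item The composition-factor count: irreducibility of $V_2$ alone gives the dichotomy $0$ or $V_2$.
\item The appeal to Korkmaz: $V_2\cong H$ already by the definition of affine with core $H$.
\item The rationality discussion in Step~2: the paper conjugates over $\CC$.
\end{enumerate}
Your alternative via $\dim V^{G}$ and self-duality also works, though you do not need it.
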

\begin{proof}
We only treat the co-affine case, since the affine case is dual. Since $h\le g$, the only nontrivial case is $h=g$. Then there exists $A\in \GL_{2g}(\CC)$ such that
\[\rho|_{\Mod(S_{g-1}^2)}(f)=
A
\begin{pmatrix}
\Psi(f)&\sigma_1(f)&\sigma_2(f)\\
0&1&0\\
0&0&1
\end{pmatrix}
A^{-1},
\]
where
\[
\Psi:\Mod(S_{g-1}^2)\longrightarrow \Sp_{2g-2}(\ZZ)
\]
is the standard action on $H_1(S_{g-1};\ZZ)$ and
\[[\sigma_i]=s_1^{(i)}[\kappa_1]+s_2^{(i)}[\kappa_2]\in H^1(\Mod(S_{g-1}^2);H_1(S_{g-1};\CC)),\qquad i=1,2.\]

We claim that $[\sigma_1]$ and $[\sigma_2]$ both vanish. Suppose, for example, that $[\sigma_1]\neq0$. Assume without loss of generality that $s_1^{(1)}\neq0$. Let $\pi_1(S_{g-1})\subset \Mod(S_{g-1}^2)$ denote a lift of the point-pushing subgroup of $\Mod(S_{g-1,1})$ under the map
\[\Mod(S_{g-1}^2)\longrightarrow \Mod(S_{g-1,1})\]
induced by capping the first boundary component of $S_{g-1}^2$ by a punctured disk and capping the
second one by a disk. By definition 
\[\Psi(\gamma)=I,\qquad \forall \gamma\in\pi_1(S_{g-1}).\]
Moreover, since
\[\sigma_1-s_1^{(1)}\kappa_1-s_2^{(1)}\kappa_2\]
is a principal crossed homomorphism, Proposition~\ref{prop: image of point pushing}
implies that
\[\sigma_1(\gamma)=s_1^{(1)}(2-2g)[\gamma]\in H_1(S_{g-1};\CC),
\qquad \forall\gamma\in\pi_1(S_{g-1}).\]

Let $\omega$ be the symplectic form on $\ZZ^{2g}$ preserved by $\rho$, and define the
non-degenerate symplectic form
\[\omega_A(x,y)=\omega(Ax,Ay)\]
on $A^{-1}\ZZ^{2g}$. This form is preserved by 
\[A^{-1}\rho|_{\Mod(S_{g-1}^2)}(f)A, \qquad \forall f\in \Mod(S_{g-1}^2).\]
Taking $f=\gamma\in\pi_1(S_{g-1})$, the preservation of $\omega_A$
implies that
\[\omega_A(x,s_1^{(1)}(2-2g)[\gamma])=0,
\qquad \forall x\in H_1(S_{g-1};\ZZ),\ \forall \gamma\in\pi_1(S_{g-1}).\]
Since $s_1^{(1)}\neq 0$, this implies that the subspace
$H_1(S_{g-1};\ZZ)$ is isotropic with respect to $\omega_A$.
However, a symplectic vector space of dimension $2g$ has maximal
isotropic dimension $g$, and $2g-2>g$ for $g\ge3$, which is a contradiction.

Therefore $[\sigma_1]=0$. The same argument shows that
$[\sigma_2]=0$. Consequently $\rho|_{\Mod(S_{g-1}^2)}$
is conjugate to the direct sum of the standard representation $\Psi$ and a
two-dimensional trivial representation.
\end{proof}
    
\begin{lemma}\label{lem: final rep}
     If $\rho|_{\Mod(S_{g-1}^2)}$ is nontrivial, and is neither affine nor co-affine, then $\rho|_{\Mod(S_{g-1}^2)}$ is conjugate to the standard action on $H_1(S_g;\ZZ)$.
\end{lemma}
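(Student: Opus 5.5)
Since $\rho$ lands in $\Sp_{2h}(\ZZ)$ with $h\le g$, the restriction $\rho|_{\Mod(S_{g-1}^2)}$ has dimension $2h\le 2g=2(g-1)+2$. A bi-affine representation that is neither affine nor co-affine needs both $V_1\neq 0$ and $V/V_2\neq0$, so its dimension is at least $2(g-1)+2$. Hence $h=g$ and we are in the $(2g)=(2(g-1)+2)$-dimensional case of Corollary~\ref{cor: 2g+2 bi-affine rep}(2), with $\dim V_1=\dim V/V_2=1$. The representation is therefore indexed by a pair
\[x=s_1[\kappa_1]+s_2[\kappa_2],\qquad y=t_1[\kappa_1]+t_2[\kappa_2],\]
both nonzero, with $(s_1+s_2)(t_1+t_2)=0$. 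The plan is to pin this pair down to the singular point $x=y=[\kappa_1]-[\kappa_2]$ (up to scalars) and then invoke Proposition~\ref{prop: singular pt}.

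The key input is that $\rho|_{\Mod(S_{g-1}^2)}$ factors through $\Mod(S,[\beta])$ (resp.~$\Mod(\widetilde S,\sigma)$). The kernel of $\Mod(S_{g-1}^2)\to\Mod(S,[\beta])$ contains $T_{\partial_1}T_{\partial_2}^{-1}$, and the same holds for the map to $\Mod(\widetilde S,\sigma)$, where the lift identifies the boundary curves cyclically. So $\rho(T_{\partial_1}T_{\partial_2}^{-1})=I$. Theorem~\ref{thm: boundary kills}, applied with genus $g-1\ge4$ (valid since $g\ge6$) and $m=2g=2(g-1)+2$, then gives that $\rho|_{\Mod(S_{g-1}^2)}$ is isomorphic over $\CC$ to $\Phi:\Mod(S_{g-1}^2)\to\GL(H_1(S_g;\CC))$. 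Here $\Phi$ is induced by gluing a pair of pants to the two boundary components, which is exactly the embedding $S\setminus\beta\hookrightarrow S$. Thus, after conjugation in $\GL_{2g}(\CC)$, $\rho|_{\Mod(S_{g-1}^2)}$ agrees with the standard action on $H_1(S_g;\CC)$.

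The remaining step is to upgrade ``conjugate over $\CC$'' to the form needed later, namely conjugacy to the standard action on the lattice $H_1(S_g;\ZZ)$. I would argue that the $\ZZ$-lattice $\ZZ^{2g}$ preserved by $\rho$ and the standard lattice give two $\Mod(S_{g-1}^2)$-invariant $\QQ$-forms of the same absolutely irreducible representation. (Irreducibility of $\Phi$ holds because $x,y\neq0$ means no invariant complement exists, and in fact $\Phi$ restricted to $\Mod(S_{g-1}^2)$ contains Dehn twists whose transvections generate $\Sp_{2g}(\ZZ)$-enough to be irreducible.) By Schur's lemma the intertwiner can be taken rational, and then scaled. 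This last step is the main obstacle: getting an honest integral conjugacy may require the stronger statement, used later together with Theorem~\ref{thm: classify Mod to Sp}, that the image is a finite-index subgroup of $\Sp_{2g}(\ZZ)$ and the invariant symplectic forms agree up to scalar. I would settle for conjugacy in $\GL_{2g}(\QQ)$, which is what ``conjugate'' means in this section, since all conjugations are stated to be by elements of $\GL_{2h}(\CC)$.
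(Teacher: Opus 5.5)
Your argument is the paper's: its entire proof is that $\rho(T_{\partial_1}T_{\partial_2}^{-1})=I$, so Theorem~\ref{thm: boundary kills} applies in genus $g-1$. Your dimension count, which forces $h=g$ and $m=2g=2(g-1)+2$, correctly makes explicit what the paper leaves implicit.

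Drop your final ``upgrade'' paragraph. The lemma only claims conjugacy in $\GL_{2g}(\CC)$, and integrality is handled later for the whole group by Lemma~\ref{lem: conjugate restrict to integral}. Its irreducibility claim is also false: $\Phi|_{\Mod(S_{g-1}^2)}$ fixes $[\beta]$ and so preserves the line $\CC[\beta]$. If you wanted rational conjugacy, it would follow from the Noether--Deuring theorem, not from Schur's lemma.
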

\begin{proof}
    Since the boundary twists satisfy \[\rho(T_{\partial_1}T_{\partial_2}^{-1})=I,\]
   the result follows directly from Theorem \ref{thm: boundary kills}.
\end{proof}

\subsection{Obstruction to the symplectic representation on $H_1(S_{g-1};\ZZ)$}
We now show that the case in Lemma \ref{lem: affine std rep} is obstructed by the structure of the symplectic group.
\begin{proposition}\label{prop: std sym rep fails}
Let $g\ge5$, $h\le g$ and $p=3$. Then 
\[\rho|_{\Mod(S_{g-1}^2)}:
\Mod(S_{g-1}^2)\longrightarrow \Sp_{2h}(\ZZ)\]
cannot be conjugate to the direct sum of the standard action on
$H_1(S_{g-1};\ZZ)$ and a trivial representation.
\end{proposition}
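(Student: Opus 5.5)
The plan is to argue by contradiction. Assume that, after conjugating in $\GL_{2h}(\CC)$,
\[\rho|_{\Mod(S_{g-1}^2)}\cong \Psi\oplus \CC^{2h-2g+2},\]
with $\Psi$ the standard action on $H_1(S_{g-1};\ZZ)$. Since $h\le g$, the trivial summand has dimension $0$ or $2$. I would then use the generators of Proposition~\ref{prop: gens by Dey} that lie outside $\Mod(S_{g-1}^2)$, mainly $T_\alpha^3$ (recall $\alpha=a_1$ meets $\beta$ once) and the bounding pair maps $F_k$. The aim is to show that their images cannot be compatible with both the relations in $\Mod(S,[\beta])$ and the integral symplectic structure.

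\textbf{Step 1: the form.} Transport the $\rho$-invariant symplectic form $\omega$ to $\omega_A$, as in the proof of Lemma~\ref{lem: affine std rep}. The restriction of $\omega_A$ to $H_1(S_{g-1};\CC)$ is $\Mod(S_{g-1}^2)$-invariant. Since $\operatorname{End}(H)=\CC$, it is a scalar multiple of the intersection form. It is nonzero, because otherwise $H_1(S_{g-1};\CC)$ would be an isotropic subspace of dimension $2g-2>h$. Hence $\CC^{2h}$ splits $\omega_A$-orthogonally as $H_1(S_{g-1};\CC)\oplus W$, where $W$ is the trivial summand. When $h=g$, $W$ is a nondegenerate symplectic plane.

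\textbf{Step 2: the images $X=\rho(T_\alpha^3)$ and $Y=\rho(T_{c_1})$.} Here $c_1\subset S\setminus\beta$ meets $\alpha$ once. $Y$ is the transvection along $[c_1]\in H_1(S_{g-1};\CC)$. $X$ commutes with $\rho(T_\gamma)$ for every $\gamma\subset S\setminus\beta$ disjoint from $\alpha$; these include transvections spanning all of $H_1(S_{g-1})$ except the line of $[c_1]$. Arguing as in Lemma~\ref{lem: determine T gamma2} (unipotence of $X$ follows as in Lemma~\ref{lem: abel and unipotent}), I expect $X$ to be forced into the shape
\[X(x)=x+\lambda\,\omega_A(u,x)\,u,\qquad u=[c_1]^{\vee}+w,\]
where $[c_1]^{\vee}$ is a vector of $H_1(S_{g-1})$ pairing nontrivially with $[c_1]$ and $w\in W$. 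Next apply Lemma~\ref{lem: alternate 2-chain relation}, $(X Y)^3=\rho(T_d)$, together with an analogous computation of $\rho(T_d)$; the curve $d$ can be rewritten using a chain in which every twist except $T_\alpha^3$ lies in $\Mod(S_{g-1}^2)$. On the $3$-dimensional block spanned by $[c_1]$, $[c_1]^\vee$ and $w$, this should pin down $\lambda$ (expected $\lambda=-3$, matching $\Gamma_1(3)$) and give a relation between $\omega_A(w,\cdot)$ and $\lambda$.

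\textbf{Step 3: the bounding pair maps $F_k$ (main obstacle).} The data above are still consistent: when $h=g-1$ the honest $\Gamma_1(3)$-type action exists. The contradiction must therefore come from the $F_k$ and from Lemma~\ref{lem: generate Torelli}. I would conjugate $F_k$ by $T_\alpha^3$ and by elements of $\Mod(S_{g-1}^2)$, using Lemma~\ref{lem: change curves in cut surface}, so that it becomes a product of a bounding pair map in $S\setminus\beta$ and one crossing $\beta$. Using Step 2, I would express $\rho(F_k)$ explicitly. The lantern relation involving $\alpha$, $\beta$-crossing curves and $F_k$ should then force an integral identity of the form $3\mu=\pm1$, or a quadratic condition such as $3\,\omega_A(w,w')^2=1$, which has no solution in $\ZZ$. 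Deriving exactly which relation yields this mod-$3$ obstruction, and checking it uses the integrality of $\Sp_{2h}(\ZZ)$ (Remark~\ref{rmk: GL false} shows the statement fails over $\CC$), is the step I expect to be hardest. I would first test the lantern relation on a genus-two subsurface containing $\alpha$, $c_1$ and the first curve of $F_2$.
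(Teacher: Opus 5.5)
Your Step~1 is fine, but the proof has a genuine gap. It starts with a false claim in Step~2 and leaves Step~3 without a working mechanism.

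\textbf{Step 2.} The claim that the twists commuting with $T_\alpha^3$ span $H$ ``except the line of $[c_1]$'' is false. The representation $\Psi$ acts on the homology of the surface obtained by capping the two sides of $\beta$. There $[c_1]=\pm[b_2]$, and $b_2$ lies in $S_{g-1}^1=S\setminus\operatorname{nbhd}(\alpha\cup\beta)$; this is why $\rho(T_{c_1})=\rho(T_{b_2})$. So $T_\alpha^3$ centralizes all of $\Mod(S_{g-1}^1)$, whose image on $H=H_1(S_{g-1};\CC)$ is $\Sp_{2g-2}(\ZZ)$. By Schur's lemma, $X=\rho(T_\alpha^3)$ is a scalar $\lambda$ on $H$, and equals $\lambda I\oplus A$ on $H\oplus W$ when $h=g$. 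It is not a transvection along $[c_1]^\vee+w$, and it commutes with $Y$, so no block computation can produce ``$\lambda=-3$''. Unipotence of $X$ is also not available from \cite{reps3g-3}, since $T_\alpha\notin\Mod(S,[\beta])$.

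The correct picture is what actually drives the contradiction. On one side, $(XY)^3=\lambda^3Y^3$ is not scalar. On the other, $d$ is separating, so $T_d\in\mathcal I(S)$. By Lemma~\ref{lem: generate Torelli}, $T_d$ is a product of bounding pair maps in $S\setminus\beta$ (trivial image) and conjugates of the $F_k$. You cannot compute $\rho(T_d)$ inside $\Mod(S_{g-1}^2)$, because $d$ crosses $\beta$. Everything therefore reduces to showing each $\rho(F_k)$ is scalar, which is Lemma~\ref{lem: obstruction by scalar}.

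\textbf{Step 3.} This is where the proof must happen, and the integral identity you anticipate ($3\mu=\pm1$ or $3\,\omega_A(w,w')^2=1$) is not how integrality enters. The missing ideas are these.
\begin{enumerate}
\item[(i)] $F_k^3=T_{d_k}^3T_{d_k'}^{-3}$. Since $d_k$ and $d_k'$ each meet $\beta$ once, Lemma~\ref{lem: change curves in cut surface} makes both cubes conjugate in $\Mod(S,[\beta])$ to $T_\alpha^3$, whose image is scalar on $H$. Hence $\rho(F_k)^3$ is the identity on $H$.
\item[(ii)] The lantern relation of Figure~\ref{fig: F lantern relation} gives $\rho(F_k)=\rho(T_{a_k'}T_{a_1}^{-1})$. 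For $k=2$, commutation with the twists about $a_2,b_2$ and about $a_3,b_3,c_3,\dots,a_g,b_g$ forces $\rho(F_2)|_H=\operatorname{diag}(\lambda_1,\lambda_1,\lambda_2,\dots,\lambda_2)$.
\item[(iii)] Integrality is used only through the characteristic polynomial: an element of $\Sp_{2h}(\ZZ)$ has $\zeta_3$ and $\zeta_3^2$ as eigenvalues with equal multiplicity.
\end{enumerate}
For $h=g-1$, (iii) forces $\lambda_1=\lambda_2=1$. The rotation of Figure~\ref{fig: rotate} then gives $\rho(F_k)=I$ for all $k$.

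For $h=g$, the two extra dimensions can absorb one pair $\zeta_3^{\pm1}$, so you only get $\lambda_2=1$ and $\lambda_1=\zeta_3^t$. Killing $t$ needs a further step. Write $F_3$ via $(T_{a_3'}T_{a_2'}^{-1})(T_{a_2'}T_{a_1}^{-1})$ and use commutation with $T_{c_2}$. This shows $\zeta_3^t$ would occur with multiplicity $4$ on $H$, which two extra dimensions cannot balance, so $t=0$.
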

We first establish the following obstruction lemma.
\begin{lemma}\label{lem: obstruction by scalar}
    Let $p=3$. There is no homomorphism
    \[
    \rho:\Mod(S,[\beta])\longrightarrow\GL_{2g-2}(\CC)
    \quad\text{or}\quad
    \rho:\Mod(\widetilde{S},\sigma)\longrightarrow\GL_{2g-2}(\CC)
    \]
    satisfying the following conditions:
    \begin{enumerate}
        \item The restriction $\rho|_{\Mod(S_{g-1}^2)}$ is conjugate to the standard action on
        $H_1(S_{g-1};\ZZ)$.
        \item $\rho(F_k)$ (or the image of a chosen lift of $F_k$ in
        $\Mod(\widetilde{S},\sigma)$) is a scalar for $2\leq k\leq g-1$, where
        $F_k$ are the bounding pair maps in Figure \ref{fig: F bounding pair maps}.
    \end{enumerate}
\end{lemma}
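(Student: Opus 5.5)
The plan is to show that conditions (1) and (2) force $\rho$ of every generator in Proposition~\ref{prop: gens by Dey} outside $\Mod(S_{g-1}^2)$ to be a scalar. Then I will find a relation that mixes such a scalar generator with a Dehn twist inside $\Mod(S_{g-1}^2)$ and show it cannot hold in the standard representation. Put $V=\CC^{2g-2}$. After conjugating, assume $\rho|_{\Mod(S_{g-1}^2)}$ is exactly the standard action $\Psi$ on $H_1(S_{g-1};\CC)$. This action is absolutely irreducible. Indeed, the twists about $a_2,b_2,\dots,a_g,b_g$ already generate a subgroup acting through $\Sp_{2g-2}(\ZZ)$, which is Zariski dense in $\Sp_{2g-2}(\CC)$.

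\textbf{Step 1: the extra generators act by scalars.} The remaining generators are $T_\alpha^3$ (with $\alpha=a_1$ meeting $\beta$ once), the maps $F_k$, and in the covering case $\sigma$. By (2), each $\rho(F_k)$ is a scalar. In the covering case, $\sigma$ is central, so Schur's lemma makes $\rho(\sigma)$ a scalar. For $T_\alpha^3$, note that $\alpha$ is disjoint from $a_2,b_2,\dots,a_g,b_g$. So $\rho(T_\alpha^3)$ commutes with an irreducible family, and by Schur $\rho(T_\alpha^3)=\lambda I$. Hence the image of $\rho$ is $\Psi(\Mod(S_{g-1}^2))\cdot Z$, where $Z$ is a group of scalars. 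Since the image of $\Psi$ lies in $\Sp_{2g-2}$, the determinant of $\rho(x)$ is $\mu^{2g-2}$ whenever $x$ acts by $\mu I$ modulo $\Sp$. Combined with Lemma~\ref{lem: abelianization}, whose abelianization is $3$-torsion for $p=3$, I expect to conclude that every scalar occurring is a root of unity of order dividing $3(2g-2)$. Ideally I would pin down $\lambda^3=1$.

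\textbf{Step 2: a relation giving the contradiction.} Apply Lemma~\ref{lem: alternate 2-chain relation} with $a=a_1$ and $c=c_1$:
\[(T_{a_1}^3T_{c_1})^3=T_d.\]
The curve $c_1$ is disjoint from $\beta$, so $T_{c_1}\in\Mod(S_{g-1}^2)$ and $\rho(T_{c_1})$ is a transvection $\Psi(T_{c_1})$ on $H_1(S_{g-1};\CC)$. Applying $\rho$ gives
\[\lambda^3\,\Psi(T_{c_1})^3=\rho(T_d).\]
The left side has a single eigenvalue $\lambda^3$, and $\Psi(T_{c_1})^3-I$ has rank one. The goal is to compute $\rho(T_d)$ independently and get a contradiction. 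If $T_d$, or a conjugate of it in $\Mod(S,[\beta])$, can be written through Lemma~\ref{lem: change curves in cut surface} as a product of elements of $\Mod(S_{g-1}^2)$ and the $F_k$, then $\rho(T_d)$ is a scalar times a known element of $\Psi(\Mod(S_{g-1}^2))$. Comparing Jordan types then gives the contradiction. For example, if $\rho(T_d)$ is a scalar times an element acting trivially on the relevant piece of homology, it cannot equal a nontrivial transvection cube up to scalar.

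In the covering case I would run the same argument with the lifted star relation of Remark~\ref{rmk: lift to star relation}. There the lifts $\widetilde c_1,\widetilde c_2,\widetilde c_3$ and $\widetilde d_i$ are permuted by $\sigma$. The scalar $\rho(\sigma)$ together with centrality lets one collapse the product in the same way.

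\textbf{Main obstacle.} The hardest step is Step 2, specifically identifying $\rho(T_d)$. The curve $d$ bounding a neighborhood of $a_1\cup c_1$ need not lie in $S\setminus\beta$, so $\rho(T_d)$ is not given directly by $\Psi$. I would first try to express $T_d$ through a lantern- or chain-type relation in terms of twists disjoint from $\beta$ and the bounding pair maps $F_k$, which act as scalars. If that fails, I would fall back on the determinant and trace. Taking determinants gives $\lambda^{3(2g-2)}\det\rho(T_d)^{-1}=1$. Taking traces of both sides and using the rank-one structure of $\Psi(T_{c_1})^3-I$ should then rule out every admissible root of unity $\lambda$. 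This is where the hypothesis $p=3$, rather than general $p$, should enter.
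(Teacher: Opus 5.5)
Your outline has the right skeleton, and it matches the paper's: the same relation $(T_\alpha^3T_{c_1})^3=T_d$, with $\rho(T_\alpha^3)=\lambda I$, and a contradiction of the form ``left side non-scalar, right side scalar''. But the step you call the main obstacle is the entire content of the lemma, and you leave it open.

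\textbf{The missing step.} Since $d$ is separating, $T_d\in\mathcal{I}(S)\subset\Mod(S,[\beta])$. Lemma~\ref{lem: generate Torelli} then writes $T_d$ as a product of bounding pair maps supported in $S\setminus\beta$ and $\Mod(S,[\beta])$-conjugates of the $F_k$. The former act trivially by hypothesis (1): $H_1(S\setminus\beta;\ZZ)\to H_1(S;\ZZ)$ is injective, so the two curves of such a pair are already homologous in $S\setminus\beta$ and give the same transvection of $H_1(S_{g-1};\CC)$. A conjugate $hF_kh^{-1}$ maps to $\rho(h)\rho(F_k)\rho(h)^{-1}=\rho(F_k)$, which is a scalar by hypothesis (2). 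Hence $\rho(T_d)$ is scalar. On the other hand, $\bigl(\rho(T_\alpha^3)\rho(T_{c_1})\bigr)^3=\lambda^3\rho(T_{c_1})^3$ is a nonzero multiple of a nontrivial unipotent matrix, so it is not scalar, whatever $\lambda$ is. In the covering case the star relation of Remark~\ref{rmk: lift to star relation} is handled the same way, because two lifts of an element of $\Mod(S,[\beta])$ differ by a power of $\sigma$, whose image is scalar. The decomposition you hoped to assemble by hand from lantern or chain relations is exactly what Lemma~\ref{lem: generate Torelli} supplies. Simply conjugating $T_d$ into $\Mod(S_{g-1}^2)$ cannot work: the mod-$3$ class $[\beta]$ has nonzero components in the homology of \emph{both} sides of $d$ (it pairs nontrivially with $[\alpha]$ and is not $\equiv\pm[c_1]$). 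This property is $\Mod(S,[\beta])$-invariant and fails for every separating curve disjoint from $\beta$. This is precisely where hypothesis (2) is used.

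\textbf{Other problems.} Your fallback does not work. As long as $\rho(T_d)$ is unknown, taking determinants and traces of $\lambda^3\rho(T_{c_1})^3=\rho(T_d)$ only computes $\det\rho(T_d)$ and $\operatorname{tr}\rho(T_d)$, and cannot produce a contradiction. No information about $\lambda$ is needed, so the appeal to Lemma~\ref{lem: abelianization} is superfluous. The hypothesis $p=3$ enters only because $T_\alpha^3\in\Mod(S,[\beta])$, which is what makes the cubed chain relation of Lemma~\ref{lem: alternate 2 chain relation} usable. Finally, your Step 1 justification is incorrect as stated. The twists about $a_2,b_2,\dots,a_g,b_g$ have image a block-diagonal copy of $\SL_2(\ZZ)^{g-1}$ preserving each plane $\langle a_i,b_i\rangle$. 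This action is reducible, and its commutant consists of block scalars, not just scalars. The correct argument is that $T_\alpha^3$ centralizes all of $\Mod(S_{g-1}^1)$, where $S_{g-1}^1=S\setminus\operatorname{nbhd}(\alpha\cup\beta)$ (so it also commutes with $T_{c_2},\dots,T_{c_{g-1}}$), and the image of $\Mod(S_{g-1}^1)$ is all of $\Sp_{2g-2}(\ZZ)$.
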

\begin{proof}
    Suppose that such a homomorphism $\rho$ exists. We first observe that the image of
    $T_\alpha^3\in\Mod(S,[\beta])$ (or its lift to $\Mod(\widetilde{S},\sigma)$) is a scalar.
    Indeed, $T_\alpha^3$ centralizes the subgroup
    \[
    \Mod(S_{g-1}^1)\subset \Mod(S_{g-1}^2),
    \]
    and by the first assumption the image of this subgroup under $\rho$ is conjugate to
    \[
    \Sp_{2g-2}(\ZZ)\subset \GL_{2g-2}(\CC).
    \]
    Hence $\rho(T_\alpha^3)$ lies in the centralizer of
    $\Sp_{2g-2}(\ZZ)$ in $\GL_{2g-2}(\CC)$, which consists only of scalar matrices.

    We first consider the case of $\Mod(S,[\beta])$. Let $\alpha$ and $c_1$ be the
    simple closed curves intersecting once as in Figure \ref{fig: basis in surface}.
    The rewritten two-chain relation from Lemma \ref{lem: alternate 2 chain relation}
    gives
    \begin{equation}\label{eq: 2 chain a c}
        (T_\alpha^3T_{c_1})^3=T_d,
    \end{equation}
    where $d$ is the boundary component of a regular neighborhood of
    $\alpha\cup c_1$. Since $d$ is a separating simple closed curve in $S$, we have
    \[
    T_d\in\mathcal I(S).
    \]
    By Lemma \ref{lem: generate Torelli}, the element $T_d$ can be expressed as a
    product of bounding pair maps supported in $S\setminus\{\beta\}$ and conjugates of
    the bounding pair maps $F_k$. Any bounding pair map supported in $S\setminus\{\beta\}$ maps to the identity under $\rho$ by the
    first assumption, and conjugates of
    the bounding pair maps $F_k$ map to scalars by the second assumption.
    Therefore $\rho(T_d)$ is a scalar matrix.

    On the other hand, since by the first assumption
    \[\rho(T_{c_1})=\rho(T_{b_2})\]
    is a nontrivial unipotent transvection, applying $\rho$ to
    \eqref{eq: 2 chain a c} gives a contradiction: the left-hand side is not scalar,
    while the right-hand side is.

    It remains to consider the case of $\Mod(\widetilde{S},\sigma)$. Pulling back the
    two-chain relation to the cover gives the corresponding star relation described in
    Remark \ref{rmk: lift to star relation}. Although the lift of the separating curve
    $d$ is a union of three non-separating simple closed curves in $\widetilde{S}$, the
    same argument still applies. Indeed, any two lifts of an element of
    $\Mod(S,[\beta])$ differ by a power of the deck transformation $\sigma$. Since
    $\sigma$ centralizes the image of $\Mod(S_{g-1}^1)$, its image under $\rho$ is a
    scalar. Therefore, the image of any lift of $T_d$ is scalar, and the star relation yields a contradiction.
\end{proof}

We now apply this obstruction to prove Proposition
\ref{prop: std sym rep fails}.
\begin{proof}[Proof of Proposition \ref{prop: std sym rep fails}]
Suppose for contradiction that $\rho|_{\Mod(S_{g-1}^2)}$ 
is conjugate to the direct sum of the standard action on
$H_1(S_{g-1};\ZZ)$ and a trivial representation. We derive a contradiction using Lemma \ref{lem: obstruction by scalar}.

Since $h\le g$, we consider the cases $h=g-1$ and $h=g$ separately.

\noindent\textbf{(1) The case $h=g-1$.} By assumption
\[\rho|_{\Mod(S_{g-1}^2)}:\Mod(S_{g-1}^2)\to \Sp_{2g-2}(\ZZ)\] is conjugate to the standard action on $H_1(S_{g-1};\ZZ)$ by some $A\in \GL_{2g-2}(\CC)$. By Lemma \ref{lem: obstruction by scalar}, it suffices to show that $\rho(F_k)$ is a scalar for every $2\le k\le g-1$. The same argument applies to the chosen lifts of $F_k$ in $\Mod(\widetilde{S},\sigma)$, since $\rho(\sigma)$ centralizes \[\operatorname{Im}(\rho|_{\Mod(S_{g-1}^2)})=A^{-1}\cdot \Sp_{2g-2}(\ZZ)\cdot A\] and is therefore a scalar.

We first prove that
\[\rho(F_k)^3=\rho(T_{d_k}^3 T_{d'_k}^{-3})=I.\]
Since both $d_k$ and $d_k'$ have geometric intersection number $1$ with $\beta$, by Lemma \ref{lem: change curves in cut surface} there exist
$f,f'\in\Mod(S,[\beta])$ satisfying
\[f(\alpha)=d_k,\qquad f'(\alpha)=d_k'.\]
Since $\rho(T_\alpha^3)$ centralizes the image of
$\Mod(S_{g-1}^1)$, namely $\Sp_{2g-2}(\ZZ)$, it is a scalar. Therefore,
\[\rho(T_{d_k}^3)=\rho(f T_{\alpha}^3 f^{-1})=\rho(f)\rho(T_{\alpha}^3) \rho(f)^{-1}=\rho(T_{\alpha}^3),\]
and similarly $\rho(T_{d'_k}^3)=\rho(T_{\alpha}^3)$. Hence
\[\rho(F_k)^3=\rho(T_{d_k}^3)\rho(T_{d'_k}^3)^{-1}=\rho(T_{\alpha}^3)\rho(T_{\alpha}^3)^{-1}=I.\]

\begin{figure}
\centering\includegraphics[width=0.65\linewidth]{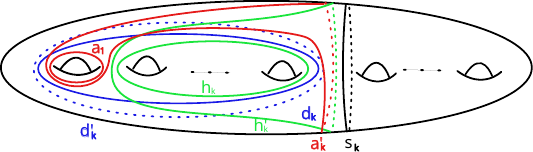}\caption{Lantern relation associated with $F_k$.\label{fig: F lantern relation}}\end{figure}

Now consider the lantern relation
(\cite[Proposition~5.1]{Primer}) illustrated in Figure
\ref{fig: F lantern relation},
\[T_{a_1}T_{h_k}T_{d_k}T_{s_k}=T_{a'_k}T_{h'_k}T_{d'_k},\]
which gives
\[F_k=T_{d_k}T_{d'_k}^{-1}=T_{s_k}^{-1}\cdot(T_{h'_k}T_{h_k}^{-1})\cdot (T_{a'_k}T_{a_1}^{-1}).\]
Since $\rho|_{\Mod(S_{g-1}^2)}$ is conjugate to the standard action on $H_1(S_{g-1};\ZZ)$, we have
\[\rho(T_{s_k})=I,
\qquad\rho(T_{h'_k}T_{h_k}^{-1})=I.\]
Consequently,
\[\rho(F_k)=\rho(T_{a'_k}T_{a_1}^{-1}).\]

We first consider the case $k=2$. Observe that both $a_1$ and $a_2'$ are disjoint from the curves 
\[a_2,b_2\]
and 
\[a_3,b_3,c_3,a_4,b_4,c_4,\cdots,c_{g-1},a_g,b_g,\]
shown in Figure \ref{fig: basis in surface}. Hence
$T_{a_2'}T_{a_1}^{-1}$ commutes with the Dehn twists about all of these curves. It follows that
\[\rho(F_2)=\rho(T_{a'_2}T_{a_1}^{-1})=A^{-1}\cdot \operatorname{diag}(\lambda_1,\lambda_1,\lambda_2,\cdots,\lambda_2)\cdot A.\]
Since $\rho(F_2)^3=I$ and $\rho(F_2)\in \Sp_{2g-2}(\ZZ)$, every eigenvalue of $\rho(F_2)$ is a power of $\zeta_3=e^{\frac{2\pi i}{3}}$, and $\zeta_3$ and $\zeta_3^2$ occur with the same multiplicity. Since $g\ge 4$, this forces
\[\lambda_1=\lambda_2=1,\]
and hence
\[\rho(F_2)=\rho(T_{a'_2}T_{a_1}^{-1})=I.\]

\begin{figure}
\centering
\includegraphics[width=0.275\linewidth]{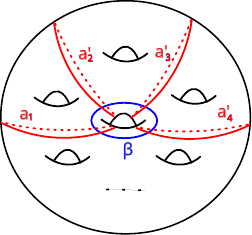}
\caption{Rotation.\label{fig: rotate}}
\end{figure}

Now let $k>2$. Observe that
\[\rho(F_k)=\rho(T_{a'_k}T_{a_1}^{-1})=\rho(T_{a'_k}T_{a'_{k-1}}^{-1})\rho(T_{a'_{k-1}}T_{a'_{k-2}}^{-1}) \cdots\rho(T_{a'_{2}}T_{a_1}^{-1}). \]
Furthermore, Figure \ref{fig: rotate} shows that there exists
$f_k\in\Mod(S,[\beta])$ by rotation that satisfies
\[f_k(a_1)=a_{k-1}',\qquad f_k(a_2')=a'_{k},\qquad f(\beta)=\beta.\]
Hence 
\[T_{a_k'}T_{a_{k-1}'}^{-1}=f(T_{a_2'}T_{a_1}^{-1})f^{-1},\]
so
\[\rho(T_{a_k'}T_{a_{k-1}'}^{-1})=\rho(f)\rho(T_{a_2'}T_{a_1}^{-1})\rho(f)^{-1}=I.\]
Therefore $\rho(F_k)=I$ for every $2\le k\le g-1$, contradicting
Lemma \ref{lem: obstruction by scalar}.

\noindent\textbf{(2) The case $h=g$.} By assumption, $\rho|_{\Mod(S_{g-1}^2)}$ is conjugate to the direct sum of the
standard action on $H_1(S_{g-1};\ZZ)$ and a trivial $2$-dimensional
representation. First, observe that the other generators of
$\Mod(S,[\beta])$ (respectively, $\Mod(\widetilde{S},\sigma)$) that are not
supported in the subsurface
\[S_{g-1}^1=S\setminus\operatorname{nbhd}(\alpha\cup\beta)\]
commute with $\Mod(S_{g-1}^1)$. Hence they preserve the subspace
\[\operatorname{span}\{\operatorname{Im}(\rho(T_\gamma)-I)
\mid \gamma\subset S_{g-1}^1\},\]
which, after conjugation, is identified with $H_1(S_{g-1};\ZZ)$. Therefore,
$\rho$ induces a representation
\[\overline{\rho}:\Mod(S,[\beta])\longrightarrow\GL_{2g-2}(\CC)\]
(and similarly for $\Mod(\widetilde{S},\sigma)$). We emphasize that the image of $\overline{\rho}$ is not necessarily contained in
$\Sp_{2g-2}(\ZZ)$. Instead, the image of the original representation $\rho$ which is a two-dimensional extension of $\overline{\rho}$ lies inside $\Sp_{2g}(\ZZ)$. Therefore, some additional care is required when analyzing the eigenvalues of the images of $F_k$.

Our strategy is again to prove that
\[\overline{\rho}(F_k)=I,\qquad 2\le k\le g-1,\]
and then obtain a contradiction from Lemma \ref{lem: obstruction by scalar}. The same argument as above gives
\[\overline{\rho}(F_k)^3=I,\quad 2\le k \le g-1,\]
and that 
\[\overline{\rho}(F_2)=\overline{\rho}(T_{a'_{2}}T_{a_1}^{-1})=\operatorname{diag}(\lambda_1,\lambda_1,\lambda_2,\cdots,\lambda_2).\]
Since $g\ge5$ and a two-dimensional extension of
$\overline{\rho}(F_2)$ lies in $\Sp_{2g}(\ZZ)$, we have
\[\lambda_2=1,\qquad\lambda_1=\zeta_3^t\]
for some $t\in\{0,1,2\}$.

We now show that $t=0$. Consider 
\[\overline{\rho}(F_3)=\overline{\rho}(T_{a'_3}T_{a_1}^{-1})=\overline{\rho}(T_{a'_3}T_{a'_{2}}^{-1})\overline{\rho}(T_{a'_{2}}T_{a_1}^{-1}). \]
The curves $a_3'$ and $a_2'$ are disjoint from the curves
\[a_2,b_2,a_3,b_3,\]
and 
\[a_4,b_4,c_4,\cdots,c_{g-1},a_g,b_g.\]
Therefore $\overline{\rho}(T_{a_3'}T_{a_2'}^{-1})$ commutes with the images of the corresponding Dehn twists, and hence 
\[
\overline{\rho}(T_{a_3'}T_{a_2'}^{-1})=\operatorname{diag}
(\mu_1,\mu_1,\mu_2,\mu_2,\mu_3,\ldots,\mu_3).
\]
Since $\overline{\rho}(T_{a_3'}T_{a_2'}^{-1})$ is conjugate to $\overline{\rho}(F_2)=\overline{\rho}(T_{a'_{2}}T_{a_1}^{-1})$, the eigenvalues of this
matrix agree with those of $\overline{\rho}(F_2)$ with the same
multiplicities. Hence (since $g\ge 5$)
\[\mu_3=1,\]
and either
\[\mu_1=\lambda_1,\qquad \mu_2=1,\]
or
\[\mu_1=1,\qquad \mu_2=\lambda_1.\]
Moreover, since
\[\overline{\rho}(F_3)=\overline{\rho}(T_{a'_3}T_{a'_{2}}^{-1})\overline{\rho}(T_{a'_{2}}T_{a_1}^{-1})=\operatorname{diag}
(\zeta_3^t\mu_1,\zeta_3^t\mu_1,\mu_2,\mu_2,1,\ldots,1). \]
commutes with $\rho(T_{c_2})$, we obtain
\[\zeta_3^t\mu_1=\mu_2,\]
and hence
\[\mu_1=1,\qquad \mu_2=\lambda_1=\zeta_3^t.\]
It follows that $\overline{\rho}(F_3)$ has eigenvalue $\zeta_3^t$ with
multiplicity $4$ and eigenvalue $1$ with multiplicity $2g-6$. Since a two-dimensional extension of
$\overline{\rho}(F_3)$ lies in $\Sp_{2g}(\ZZ)$, this forces $t=0$, and hence
\[
\overline{\rho}(F_2)=I.
\]
Applying the same argument as in the case $h=g-1$, we obtain
\[\overline{\rho}(F_k)=I,\qquad 2\le k\le g-1,\]
contradicting Lemma \ref{lem: obstruction by scalar}.
\end{proof}

\subsection{Finishing the proof of Theorem~\ref{thm: classify Mod to Sp}}
Combining the results from the previous sections, we now finish the proof of
Theorem~\ref{thm: classify Mod to Sp}. Consider a homomorphism
\[
\rho:\Mod(S,[\beta])\longrightarrow\Sp_{2h}(\ZZ)
\quad\text{or}\quad
\rho:\Mod(\widetilde{S},\sigma)\longrightarrow\Sp_{2h}(\ZZ),
\]
with $g\ge 6$, $h\le g$ and $p=3$. We have analyzed the possible restrictions of $\rho$ to
$\Mod(S_{g-1}^2)$:
\begin{enumerate}
    \item If $\rho|_{\Mod(S_{g-1}^2)}$ is trivial, then the image of $\rho$ is abelian by Lemma \ref{lem: trivial to abelian}.
    \item If $\rho|_{\Mod(S_{g-1}^2)}$ is affine or co-affine, then by Lemma \ref{lem: affine std rep} it is conjugate to the direct sum of the standard action on $H_1(S_{g-1};\ZZ)$ and a trivial representation. This possibility is ruled out by Proposition \ref{prop: std sym rep fails}.
    \item If
    $\rho|_{\Mod(S_{g-1}^2)}$ is nontrivial and neither affine nor co-affine, by Lemma \ref{lem: final rep}, it is conjugate to the standard action on $H_1(S_g;\ZZ)$. 
\end{enumerate}
In the last case, it remains to prove that $\rho$ itself is conjugate to the standard action on $H_1(S_g;\ZZ)$.
By the generating set of $\Mod(S,[\beta])$ and
$\Mod(\widetilde{S},\sigma)$ given in Proposition \ref{prop: gens by Dey}, it suffices to determine the images of
\[T_\alpha^3,\qquad F_k\ (2\le k\le g-1),\qquad \text{and }\sigma.\]
We also need to control the conjugating matrix, reducing it from an arbitrary
element of $\GL_{2g}(\CC)$ to the extended symplectic group. We discuss these steps in order.

\subsubsection{Determine the image of $T_{\alpha}^3$}
\begin{proposition}\label{prop: image of T a cubic}
Suppose that
\[\rho|_{\Mod(S_{g-1}^2)}\colon\Mod(S_{g-1}^2)\to\Sp_{2g}(\ZZ)\]
is conjugate to the standard action on $H_1(S_g;\ZZ)$. Then there is a conjugation by an element in $\GL_{2g}(\CC)$ such that both $\rho|_{\Mod(S_{g-1}^2)}$ and 
$\rho(T_\alpha^3)$ (or the image of its lift to $\Mod(\widetilde{S},\sigma)$) are the standard actions on $H_1(S_g;\ZZ)$.
\end{proposition}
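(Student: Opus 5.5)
The plan is to write $\rho(T_\alpha^3)$ in block form using the commuting subgroup $\Mod(S_{g-1}^1)$, and then pin down the blocks with the relation of Lemma~\ref{lem: alternate 2 chain relation} for the pair $(\alpha,\beta)$, together with integrality and symplecticity.

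\textbf{Step 1: set-up and block form.} First conjugate so that $\rho|_{\Mod(S_{g-1}^2)}$ is exactly the standard action on $H_1(S_g;\ZZ)$, where $S_{g-1}^2=S\setminus\beta$ and the glued pair of pants contributes $a_g=[\alpha]$ and $b_g=[\beta]$. Write
\[H_1(S_g;\CC)=H'\oplus W,\qquad H'=H_1(S_{g-1};\CC),\qquad W=\operatorname{span}\{a_g,b_g\}.\]
The element $T_\alpha^3$, or any lift of it to $\Mod(\widetilde S,\sigma)$, commutes with $\Mod(S_{g-1}^1)$, where $S_{g-1}^1=S\setminus\operatorname{nbhd}(\alpha\cup\beta)$. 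Hence $\rho(T_\alpha^3)$ preserves two subspaces:
\begin{itemize}
\item $H'=\operatorname{span}\{\operatorname{Im}(\rho(T_\gamma)-I)\mid \gamma\subset S_{g-1}^1\}$;
\item $W$, which is the fixed subspace of $\rho(\Mod(S_{g-1}^1))$.
\end{itemize}
Since $\Sp_{2g-2}(\ZZ)$ acts irreducibly on $H'$ with commutant $\CC$, Schur's lemma gives
\[\rho(T_\alpha^3)=\lambda I_{H'}\oplus N,\qquad \lambda\in\CC^*,\ N\in\GL(W).\]
The subgroup $\Mod(S_{g-1}^2)$ is unaffected by this, and further conjugation is not available: the centralizer of the standard image of $\Mod(S_{g-1}^2)$ in $\GL_{2g}(\CC)$ consists only of scalars. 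So the goal is to prove directly that $\lambda=1$ and that $N$ is the standard cube transvection $b_g\mapsto b_g\pm3a_g$, $a_g\mapsto a_g$.

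\textbf{Step 2: the key relation.} Apply Lemma~\ref{lem: alternate 2 chain relation} with $a=\alpha$ and $c=\beta$. This gives
\[(T_\alpha^3T_\beta)^3=T_\partial,\]
where $\partial$ bounds a regular neighborhood of $\alpha\cup\beta$. Both sides lie in $\Mod(S,[\beta])$. The curve $\partial$ is disjoint from $\beta$, so $T_\partial\in\Mod(S_{g-1}^2)$. Since $\partial$ is separating in $S_g$, its standard image is $\rho(T_\partial)=I$. The twist $\rho(T_\beta)$ is known: it is the identity on $H'$ and the standard transvection $t_\beta$ on $W$. The relation therefore splits as
\[\lambda^3=1,\qquad (Nt_\beta)^3=I.\]
For $\Mod(\widetilde S,\sigma)$, lifting this relation only introduces a power of $\sigma$. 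The image $\rho(\sigma)$ centralizes $\rho(\Mod(S_{g-1}^2))$, which acts irreducibly on $H_1(S_g;\CC)$, so $\rho(\sigma)$ is scalar. Adjusting the chosen lift of $T_\alpha^3$ by a power of $\sigma$ then reduces this case to the same relation, up to a cube root of unity.

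\textbf{Step 3: pinning down $\lambda$ and $N$ (main obstacle).} To get $\lambda=1$, I would use that $\rho(T_\alpha^3)$ is conjugate to a matrix in $\Sp_{2g}(\ZZ)$. Its eigenvalue $\lambda$ has multiplicity $2g-2$. Its eigenvalues are closed under inversion and Galois conjugation, and $2g-2>2$. If $\lambda$ were a primitive cube root of unity, $\bar\lambda$ would also need multiplicity at least $2g-2$, which is impossible. Hence $\lambda=1$.

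Determining $N$ is the main obstacle, because $(Nt_\beta)^3=I$ alone only forces $\operatorname{tr}(Nt_\beta)=-1$, which leaves a family of solutions. I would add the following constraints.
\begin{itemize}
\item Since $\lambda=1$, the invariant symplectic form restricted to the two blocks forces $N\in\SL(W)$.
\item Let $c_1$ be the curve of Figure~\ref{fig: basis in surface} that meets $\alpha$ once and is disjoint from $\beta$. Its twist is a standard transvection in a class with nonzero $a_g$- or $b_g$-component. Applying Lemma~\ref{lem: alternate 2 chain relation} to $(\alpha,c_1)$ gives $(T_\alpha^3T_{c_1})^3=T_d$ with $d$ separating.
\item I would try to show that $T_d$ is conjugate in $\Mod(S,[\beta])$ to a separating twist supported in $S\setminus\beta$, so that $\rho(T_d)=I$, possibly up to a scalar as in Lemma~\ref{lem: obstruction by scalar}.
\end{itemize}
The resulting equation $(\rho(T_\alpha^3)\rho(T_{c_1}))^3=I$ couples $N$ with the $H'$-component of $c_1$. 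Expanding it, I expect it to force $N(a_g)=a_g$. Then $N$ is a transvection along $a_g$, and the trace condition fixes its coefficient to be $\pm3$, matching the standard action.

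If the conjugacy argument for $T_d$ fails, the fallback is to use unipotency instead. The element $T_\alpha^3$ is conjugate in $\Mod(S,[\beta])$ to $T_{d_k}^3$, so the two have the same eigenvalues. Combining $\det N=1$, the trace equation, and the integrality of the characteristic polynomial should then force $N$ to be unipotent with $N(a_g)=a_g$.
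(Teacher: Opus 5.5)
There is a genuine gap. Your block form $\lambda I_{H'}\oplus N$, the $(\alpha,\beta)$ chain relation giving $\lambda^3=1$ and $(Nt_\beta)^3=I$, and the eigenvalue argument for $\lambda=1$ all match the paper. The gap starts with a false claim in Step~1.

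\textbf{The centralizer is larger than the scalars, so your target is false.} The standard image of $\Mod(S_{g-1}^2)$ fixes $b_g=[\beta]$, so it is not irreducible on $H_1(S_g;\CC)$. Its centralizer in $\GL_{2g}(\CC)$ is $\{\mu I+yE\}$, where $E(x)=\omega(x,b_g)\,b_g$; thus $E(a_g)=b_g$, and $E$ kills $b_g$ and $H'$. Indeed $T_\beta=T_{\partial_1}$ is central in $\Mod(S_{g-1}^2)$, and $t_\beta-I$ is a multiple of $E$.

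Consequently your goal, that $N$ is literally the standard block with $N(a_g)=a_g$, is false. Take $\rho(f)=\Phi(T_\beta fT_\beta^{-1})$. This is a homomorphism $\Mod(S,[\beta])\to\Sp_{2g}(\ZZ)$, and it is exactly the standard action on $\Mod(S_{g-1}^2)$, since $T_\beta$ commutes with everything supported in $S\setminus\beta$. Yet $\rho(T_\alpha^3)$ is the cube of the transvection along $a_g\pm b_g$, which moves $a_g$. Every relation holds for this $\rho$, so no relation (in particular not the $(\alpha,c_1)$ one) can force $N(a_g)=a_g$.

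What is true, and what the paper proves, is that $N$ becomes standard after one more conjugation by some $I+yE$. Such a conjugation leaves $\rho|_{\Mod(S_{g-1}^2)}$ unchanged, and the paper ends with exactly this step, with $y=\frac{s-1}{3}$ in its coordinates. Your argument that $\rho(\sigma)$ is scalar also relies on the false irreducibility. It is repairable: $\rho(\sigma)=\mu I+yE$, $\rho(\sigma)^3=I$ and $E^2=0$ force $y=0$.

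\textbf{Your extra relation is not available.} Let $d$ bound a neighborhood of $\alpha\cup c_1$. Then $T_d$ is not conjugate in $\Mod(S,[\beta])$ to any twist supported in $S\setminus\beta$. The projections of $[\beta]$ to $H_1$ of the torus cut off by $d$ and to its complement are $\pm[c_1]$ and, up to sign, the primitive $H'$-part of $[c_1]$. Both are nonzero mod $3$. For a separating curve disjoint from $\beta$ one of them vanishes, and this property is invariant under $\Mod(S,[\beta])$.

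More generally, any curve $c$ disjoint from $\beta$ meeting $\alpha$ once has $[c]=\pm b_g+h$ with $h$ zero or primitive. So the only curves for which such a conjugacy could hold have $[c]=\pm b_g$, and their relation duplicates the $(\alpha,\beta)$ one. The paper controls $\rho(T_d)$ only in the quotient $\CC^{2g}/\CC b_g$, after showing that the $F_k$ act trivially there, and only to exclude the degenerate case below.

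Your fallback fails too. The conditions $\det N=1$, $(Nt_\beta)^3=I$ and integrality of the characteristic polynomial still allow $N$ with eigenvalues $\zeta_3^{\pm1}$. The conjugacy of $T_\alpha^3$ with $T_{d_k}^3$ adds nothing.

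\textbf{The missing idea is the paper's second relation.}
\begin{enumerate}
\item Take $\alpha'=f(\alpha)$ with $f\in\Mod(S\setminus\beta)$, so that $\rho(f)$ is known. Choose it so that $\alpha'$ is disjoint from $\alpha$, meets $\beta$ once, and satisfies $[\alpha']=a_g-x$ with $x\in H'$ primitive.
\item Conjugate the $(\alpha,\beta)$ chain relation by $T_{\alpha'}^3T_\alpha^{-3}$.
\item With $Q=\rho(f)\rho(T_\alpha^3)\rho(f)^{-1}\rho(T_\alpha^3)^{-1}$, the image $\rho(T_{\beta'})=Qt_\beta Q^{-1}$ is computable: it is the transvection along $b_g-rx$, where $r$ is the $a_g$-coefficient of $N(b_g)$.
\item When $Nt_\beta\neq I$, so that $\operatorname{tr}(Nt_\beta)=-1$, the relation $(\rho(T_\alpha^3)\rho(T_{\beta'}))^3=I$ gives $r^2(r-3)=0$.
\item The case $r=0$ is excluded separately; this also covers the case $Nt_\beta=I$, which you overlooked. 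Then $r=3$ determines $N$ up to the centralizer conjugation above.
\end{enumerate}
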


\begin{proof}
    We discuss $T_\alpha^3\in\Mod(S,[\beta])$. Everything below also applies to a chosen lift in $\Mod(\widetilde{S},\sigma)$, since all the relations we use have corresponding lifts in $\Mod(\widetilde{S},\sigma)$.

    Write everything in matrix form with respect to the standard symplectic basis
    \[[a_1]=[\alpha],[b_1]=[\beta],[a_2],[b_2],\ldots,[a_g],[b_g]\]
    of $H_1(S_g;\ZZ)$ as in Figure \ref{fig: basis in surface}, with $\rho|_{\Mod(S_{g-1}^2)}$ acting by the standard representation.
    
    Since $T_\alpha^3$ centralizes the subgroup $\Mod(S_{g-1}^1) \subset \Mod(S_{g-1}^2)$, its image $\rho(T_\alpha^3)$ centralizes \[\rho(\Mod(S_{g-1}^1))=\Sp_{2g-2}(\ZZ),\] which implies that
    \[\rho(T_\alpha^3)=\begin{pmatrix}
    A & \textbf{0} \\ \textbf{0} & \lambda I_{2g-2}
    \end{pmatrix},\qquad \text{ for some }\lambda\in\CC, \text{ and }A\in \GL_2(\CC).\]
    First, the rewritten two-chain relation (Lemma \ref{lem: alternate 2 chain relation}) for $\alpha$ and $\beta$ gives
    \begin{equation}\label{eq: chain relation 1}
        (T_{\alpha}^3 T_{\beta})^3=T_d,
    \end{equation}
    where $d$ is the boundary of a regular neighborhood of $\alpha\cup \beta$. Since $d$ is a separating simple closed curve in the subsurface $S_{g-1}^2$, we have $\rho(T_d)=I$. Applying $\rho$ to this relation gives
    \begin{equation}\label{eq: A relation 1}
        \left(A\cdot \begin{pmatrix}
    1 & 0\\-1& 1
    \end{pmatrix} \right)^3=I,
    \end{equation}
    and $\lambda^3=1$. Since $\rho(T_\alpha^3)$ is conjugate to a symplectic matrix, we have $\lambda=1$ and $A\in \SL_2(\CC)$. It remains to determine $A$, for which we use one more relation.
    
    Let $\eta$ be a simple closed curve in the subsurface
    $S_{g-1}^2=S\setminus\{\beta\}$ that is disjoint from $b_2$ and satisfies
    \[[\eta]=[\beta]+[b_2]\in H_1(S_g;\ZZ).\]
    Set
    \[\alpha'=T_\eta T_\beta^{-1}T_{b_2}^{-1}(\alpha).\]
    Then $\alpha'$ is a simple closed curve that intersects $\beta$ once, is disjoint from $\alpha$, and satisfies
    \[[\alpha']=[\alpha]-[b_2]\in H_1(S_g;\ZZ).\]
    Now set
    \begin{equation}\label{eq: chain relation 2}
        \beta'=T_{\alpha'}^3T_\alpha^{-3}(\beta),\qquad d'=T_{\alpha'}^3T_\alpha^{-3}(d).
    \end{equation}
   Then the rewritten two-chain relation (Lemma \ref{lem: alternate 2 chain relation}) for $\alpha$ and $\beta'$ gives
   \[(T_\alpha^3T_{\beta'})^3=T_{d'},\]
   which is precisely the conjugate of the relation \eqref{eq: chain relation 1} by $T_{\alpha'}^3T_\alpha^{-3}$. At first sight, this conjugation seems to give no additional information, but in fact the additional constraint comes from a more careful analysis of $\rho(T_{\beta'})$. From our assumption on $\rho|_{\Mod(S_{g-1}^2)}$, we have
   \[U=\rho(T_\eta T_\beta^{-1}T_{b_2}^{-1})=\begin{pmatrix}
   1 & 0 & 0 & 0 & \mathbf{0}\\
   0 & 1 & -1 & 0 & \mathbf{0} \\
   0 & 0 & 1 & 0 & \mathbf{0} \\
   -1 &  0 & 0 & 1 & \mathbf{0} \\
   \mathbf{0} & \mathbf{0} & \mathbf{0} & \mathbf{0} & I_{2g-4}
   \end{pmatrix},\]
   then 
   \begin{align*}
       \rho(T_{\beta'})&=\rho(T_{T_{\alpha'}^3T_\alpha^{-3} (\beta)}) \\
       & =\rho(T_{\alpha'}^3T_\alpha^{-3}) \rho(T_{\beta})\rho(T_{\alpha'}^3T_\alpha^{-3})^{-1} \\
       &=U\rho(T_{\alpha}^3)U^{-1}\rho(T_{\alpha}^3)^{-1} \rho(T_{\beta})\rho(T_{\alpha}^3)U\rho(T_{\alpha}^3)^{-1}U^{-1}.
   \end{align*}
   Write 
   \[A=\begin{pmatrix}
   s & r \\ t & q
   \end{pmatrix}, \qquad sq-tr=1,\]
   then a direct computation gives
   \[\rho(T_{\beta'})=\begin{pmatrix}
   1 & 0 & 0 & 0 & \mathbf{0}\\
   -1 & 1 & r & 0 & \mathbf{0} \\
   0 & 0 & 1 & 0 & \mathbf{0} \\
   r &  0 & -r^2 & 1 & \mathbf{0} \\
   \mathbf{0} & \mathbf{0} & \mathbf{0} & \mathbf{0} & I_{2g-4}
   \end{pmatrix}.\]
   Then applying $\rho$ to the relation \eqref{eq: chain relation 2} gives
   \begin{equation}\label{eq: A relation 2}
       \left(\begin{pmatrix}
       A & \mathbf{0} & \mathbf{0} \\
       \mathbf{0} &1 & 0 \\
       \mathbf{0} & 0 & 1
       \end{pmatrix}\cdot \begin{pmatrix}
   1 & 0 & 0 & 0 \\
   -1 & 1 & r & 0  \\
   0 & 0 & 1 & 0  \\
   r &  0 & -r^2 & 1 
   \end{pmatrix} \right)^3=I,
   \end{equation}
   where we used $\rho(T_{d'})=I$ since it is conjugate to $\rho(T_d)=I$.
   We now solve for $r$ and $A$ using the relations \eqref{eq: A relation 1} and \eqref{eq: A relation 2}.
   
  \noindent\textbf{(1) Exclude $r=0$.} First, we show that $r\neq 0$. Suppose, for contradiction, that $r=0$. Then
  \[A\cdot \begin{pmatrix}1&0\\ -1&1\end{pmatrix}\]
  is a lower triangular matrix of order $3$. Thus, either
  \[A\cdot \begin{pmatrix}1&0\\ -1&1\end{pmatrix}=I_2,\qquad A=\begin{pmatrix}1&0\\1&1\end{pmatrix},\]
  or
  \[A\cdot \begin{pmatrix}1&0\\-1&1\end{pmatrix}\]
  has distinct eigenvalues $\zeta_3$ and $\zeta_3^2$, in which case the diagonal entries of $A$ are $\zeta_3$ and $\zeta_3^2$.

  In either case, the images under $\rho$ of the bounding pair maps $F_k$ ($2\le k\le g-1$) (and of $\sigma$) preserve $\CC[b_1]$, since $F_k$ (and $\sigma$) commute with $T_\alpha^3$, and $\CC[b_1]$ is either
  \[\operatorname{Im}\bigl(\rho(T_\alpha^3)-I\bigr)\]
  or the $\zeta_3^k$-eigenspace of $\rho(T_\alpha^3)$. 

  Now consider the generating set of $\Mod(S,[\beta])$ (resp.~ $\Mod(\widetilde{S},\sigma)$) given in Proposition \ref{prop: gens by Dey}. Every generator preserves $\CC[b_1]$, and hence $\rho$ induces a representation (and resp.~ of $\Mod(\widetilde{S},\sigma)$)
  \[\widehat{\rho}\colon \Mod(S,[\beta])\longrightarrow\GL\bigl(\CC^{2g}/\CC[b_1]\bigr)=\GL_{2g-1}(\CC).\]
  By definition, we have
  \[\widehat{\rho}(T_\alpha^3)=I,\qquad
  \widehat{\rho}(T_{c_1})=\begin{pmatrix}1&0&0&\mathbf 0\\
  0&1&0&\mathbf 0\\
  -1&1&1&\mathbf 0\\
  \mathbf 0&\mathbf 0&\mathbf 0&I_{2g-4}
  \end{pmatrix},\]
   where $c_1$ is the simple closed curve in Figure \ref{fig: basis in surface}, and the matrix is expressed with respect to the basis
   \[[a_1],[a_2],[b_2],\ldots,[a_g],[b_g].\]
    The rewritten two-chain relation (Lemma \ref{lem: alternate 2 chain relation}) for $\alpha$ and $c_1$ gives
    \begin{equation}\label{eq: two chain a c re}
        (T_\alpha^3T_{c_1})^3=T_d,
    \end{equation}
    where $d$ is the boundary of a regular neighborhood of
    $\alpha\cup c_1$. Thus, it suffices to show that $\widehat{\rho}(T_d)=I$, since applying $\widehat{\rho}$ to the above relation would then yield a contradiction. 
    
    We now prove this. By Lemma \ref{lem: generate Torelli}, the element $T_d$ is a product of bounding pair maps supported in $S_{g-1}^2$ and conjugates of the bounding pair maps $F_k$. A bounding pair map supported in $S_{g-1}^2$ maps to the identity under $\widehat{\rho}$ by our assumption on $\rho|_{\Mod(S_{g-1}^2)}$. For $F_k$, first note that \[\widehat{\rho}(F_k)^3=\widehat{\rho}(T_{d_k}^3)\widehat{\rho}(T_{d_k'}^3)^{-1}=I\]
    since $\widehat{\rho}(T_{d_k}^3)$ and $\widehat{\rho}(T_{d_k'}^3)$ are conjugate to $\widehat{\rho}(T_\alpha^3)=I$. Then, following essentially the same argument as in the proof of Proposition \ref{prop: std sym rep fails}, we see that each $\widehat{\rho}(F_k)$ is diagonal. Moreover, since the target of $\rho$ is $\Sp_{2g}(\ZZ)$, an eigenvalue analysis as in the proof of Proposition \ref{prop: std sym rep fails} shows that $\widehat{\rho}(F_k)$ must in fact be the identity matrix. Hence $\widehat{\rho}(T_d)=I$, and therefore $r\neq 0$.
    
    A minor remark concerning the case of \(\Mod(\widetilde{S},\sigma)\) is that $\widehat{\rho}(\sigma)$ is scalar, since it commutes with
    $\widehat{\rho}(T_{c_1})$ and with $\widehat{\rho}(f)$ for any $f\in \Mod(S_{g-1}^2)$. Then the above argument applies without change.

    \noindent \textbf{(2) Solve $r\neq 0$.} In this case, the order-$3$ matrix
    \[A\cdot \begin{pmatrix}1&0\\ -1&1\end{pmatrix}=\begin{pmatrix}s-r & r\\ t-q&q\end{pmatrix}\]
    must have distinct eigenvalues $\zeta_3$ and $\zeta_3^2$. Hence its trace is
    \[s+q-r=\zeta_3+\zeta_3^2=-1.\]
    A direct computation of the relation \eqref{eq: A relation 2} shows that its $(4,3)$-entry is
    \[r^2\big( r(s+q+2)-r^2-3)\big)=0.\]
    Combining the trace formula, we have
     \[r^2\big( r(s+q+2)-r^2-3)\big)=r^2\big( r(r+1)-r^2-3)\big)=r^2(r-3)=0.\]
     Since $r\neq 0$, it follows that $r=3$, and hence
     \[A=\begin{pmatrix}
     s & 3 \\ \frac{-s^2+2s-1}{3} & 2-s
     \end{pmatrix}.\]
     We now conjugate $\rho$ by 
     \[\begin{pmatrix}
     1 & 0 & \mathbf{0} \\ 
     \frac{s-1}{3} & 1 &\mathbf{0} \\
     \mathbf{0} & \mathbf{0} & I_{2g-2}
     \end{pmatrix}.\]
     After this conjugation, we have
     \[\rho(T_{\alpha}^3)= \begin{pmatrix}
     1 & 3 & \mathbf{0} \\ 
     0 & 1 &\mathbf{0} \\
     \mathbf{0} & \mathbf{0} & I_{2g-2}
     \end{pmatrix},\]
     and $\rho|_{\Mod(S_{g-1}^2)}$ remains unchanged. This completes the proof.
\end{proof}

\subsubsection{Determining the image of $F_k$ and $\sigma$}

\begin{proposition}
Suppose that
\[\rho|_{\Mod(S_{g-1}^2)}:\Mod(S_{g-1}^2)\longrightarrow\Sp_{2g}(\ZZ)\]
is conjugate to the standard action on $H_1(S_g;\ZZ)$. Then the image under $\rho$ of each bounding pair map
\[F_k\in\Mod(S,[\beta]),\qquad 2\le k\le g-1,\]
shown in Figure~\ref{fig: F bounding pair maps} is the identity matrix. In the case of $\Mod(\widetilde{S},\sigma)$, both a lift of $F_k$ and the deck transformation $\sigma$ have trivial image under $\rho$.
\end{proposition}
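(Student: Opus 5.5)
The plan is to compare $\rho$ with the standard representation $\Phi$ (resp.~$\widetilde\Phi$) on the remaining generators. By Proposition~\ref{prop: image of T a cubic}, after conjugating we may assume that $\rho$ and $\Phi$ agree on $\Mod(S_{g-1}^2)$ and on $T_\alpha^3$. We work throughout in the symplectic basis $[a_1]=[\alpha],[b_1]=[\beta],[a_2],[b_2],\dots,[a_g],[b_g]$.

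\textbf{Step 1: the cube of $\rho(F_k)$.} First I would show that $\rho(F_k)^3=I$. Both $d_k$ and $d_k'$ meet $\beta$ once. By Lemma~\ref{lem: change curves in cut surface} there are $f,f'$ with $f(\beta)=\beta$, $f(\alpha)=d_k$, and similarly $f'(\alpha)=d_k'$. Composing with a further mapping class if necessary, I will arrange that $f$ and $f'$ preserve the orientation of $\beta$ and lie in the image of $\Mod(S\setminus\beta)\cong\Mod(S_{g-1}^2)$. The reason this should be possible is that a mapping class fixing the oriented curve $\beta$ is isotopic to one supported off $\beta$, up to powers of $T_\beta$, and $T_\beta$ is itself in that image. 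With such $f$,
\[\rho(T_{d_k}^3)=\rho(f)\rho(T_\alpha^3)\rho(f)^{-1}=\Phi(f)\Phi(T_\alpha^3)\Phi(f)^{-1}=\Phi(T_{d_k}^3),\]
and likewise for $d_k'$. Hence $\rho(F_k)^3=\Phi(F_k^3)=I$, because $F_k$ lies in the Torelli group.

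\textbf{Step 2: the shape of $\rho(F_k)$.} Next I would pin down $\rho(F_k)$ using centralizers, as in the proof of Proposition~\ref{prop: std sym rep fails}. The element $F_k$ commutes with the Dehn twists about the curves of Figure~\ref{fig: basis in surface} that are disjoint from $d_k\cup d_k'$; these curves all lie in $S_{g-1}^2$. Their images under $\rho$ are standard transvections $T_x$. A matrix commuting with $T_x$ preserves both $\CC x$ and $x^\perp$. So $\rho(F_k)$ preserves the symplectic planes $\langle a_j,b_j\rangle$ for $j>k$, and also the complementary subspace. Using the chain twists $T_{c_j}$ as well, it acts on the far block by a single scalar. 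An order-$3$ element of $\Sp_{2g}(\ZZ)$ has eigenvalues $\zeta_3$ and $\zeta_3^2$ with equal multiplicities. With $g\ge6$, this eigenvalue bookkeeping forces the scalar on the large block to be $1$.

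\textbf{Step 3: the remaining small block (main obstacle).} I then need to handle the small block spanned by $a_1,b_1,\dots,a_k,b_k$. Here I would use the relation $\rho(F_k)=\rho(T_{a_k'}T_{a_1}^{-1})$ coming from the lantern relation in Figure~\ref{fig: F lantern relation}. I would write this as the telescoping product of the conjugates $\rho(T_{a'_j}T_{a'_{j-1}}^{-1})$ given by the rotations of Figure~\ref{fig: rotate}. One point needs checking: the rotation $f_k$ must lie in the image of $\Mod(S_{g-1}^2)$, or at least its image must be determined, so that the conjugates are controlled. Granting this, I would run the same argument as in case (2) of Proposition~\ref{prop: std sym rep fails}. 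The factor $\rho(T_{a_2'}T_{a_1}^{-1})$ is $\operatorname{diag}(\lambda,\lambda,1,\dots,1)$ with $\lambda^3=1$. Commutation with $\rho(T_{c_2})$ forces the adjacent blocks to match. The multiplicity constraint in $\Sp_{2g}(\ZZ)$ then forces $\lambda=1$. This gives $\rho(F_2)=I$, and by conjugation $\rho(F_k)=I$ for every $k$. Controlling this small block, and in particular the block involving $a_1,b_1$ where $T_\alpha^3$ acts, is the step I expect to be the real difficulty.

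\textbf{Step 4: the deck transformation.} For $\Mod(\widetilde{S},\sigma)$, the element $\sigma$ is central. So $\rho(\sigma)$ commutes with $\rho(\Mod(S_{g-1}^2))$, whose image acts irreducibly on $\CC^{2g}$ in the standard case. Hence $\rho(\sigma)$ is a scalar in $\Sp_{2g}(\ZZ)$, so it equals $\pm I$, and $\sigma^3=1$ forces $\rho(\sigma)=I$. Any two lifts of $F_k$ differ by a power of $\sigma$, so the chosen lift of $F_k$ also maps to $I$, and the argument above applies verbatim to it.
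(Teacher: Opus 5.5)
There is a genuine gap in Step~3. Your Step~1 and the overall plan match the paper: the lantern identity $\rho(F_k)=\rho(T_{a_k'}T_{a_1}^{-1})$, centralizers of standard transvections, eigenvalue counting, and conjugation by the rotations. However, the decisive computation of $\rho(F_2)$ is never carried out, and the shape you import for it is not justified here. The form $\operatorname{diag}(\lambda,\lambda,1,\dots,1)$ and the appeal to $T_{c_2}$ come from case (2) of Proposition~\ref{prop: std sym rep fails}. That case concerns the $(2g-2)$-dimensional quotient $\overline{\rho}$ on $H_1(S_{g-1})$, where there is no $\langle a_1,b_1\rangle$ block. In the present setting, commuting with the twists about $a_2,b_2$ and $a_j,b_j,c_j$ ($j\ge3$) only shows that $\rho(F_2)=B\oplus\lambda_2I_2\oplus\lambda_3I_{2g-4}$. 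This is with respect to $\langle a_1,b_1\rangle\oplus\langle a_2,b_2\rangle\oplus\langle a_j,b_j: j\ge3\rangle$, and $B$ is an arbitrary $2\times2$ matrix with $B^3=I$. Multiplicities in $\Sp_{2g}(\ZZ)$ force $\lambda_3=1$, but nothing more. For instance, $\lambda_2=1$ with $B$ having eigenvalues $\zeta_3,\zeta_3^2$, or $B=\zeta_3I$ with $\lambda_2=\zeta_3^2$, both give balanced spectra.

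The missing inputs are two commutation relations. First, $F_2$ commutes with $T_\alpha^3$, whose image is now the standard one by Proposition~\ref{prop: image of T a cubic}. Hence $B=\left(\begin{smallmatrix}\lambda_1&\mu\\0&\lambda_1\end{smallmatrix}\right)$, and $B^3=I$ gives $\mu=0$. Second, $F_2$ commutes with $T_{c_1}$, which gives $\lambda_1=\lambda_2$. Only then is the spectrum $\lambda_1$ with multiplicity $4$ against $1$ with multiplicity $2g-4$, which forces $\rho(F_2)=I$. Once $\rho(F_2)=I$, the rotated conjugates are trivial whatever $\rho(f_k)$ is, so your worry about $f_k$ lying in the image of $\Mod(S_{g-1}^2)$ is unnecessary. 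Similarly, your Step~2 far-block argument only works for small $k$ (far block of dimension $>g$). That is harmless, since only $k=2$ is needed.

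Step~4 also contains an error: the standard image of $\Mod(S_{g-1}^2)$ is not irreducible on $\CC^{2g}$. It fixes $b_1=[\beta]$, and its commutant contains the nilpotent map $N(x)=\omega(b_1,x)b_1$. So commuting with $\rho(\Mod(S_{g-1}^2))$ only yields $\rho(\sigma)=\lambda I+\mu N$. You must also use that $\sigma$ commutes with $T_\alpha^3$, as the paper does. Since $N$ does not commute with the standard image of $T_\alpha^3$, this forces $\mu=0$. With that repair, the rest of your argument works: $\rho(\sigma)$ is a scalar in $\Sp_{2g}(\ZZ)$, hence $\pm I$, hence $I$ because $\sigma^3=1$. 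This is a slightly more direct substitute for the paper's abelianization argument.
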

\begin{proof}
First, we claim that it suffices to prove that the image under $\rho$ of each $F_k$ (resp.\ its lift in $\Mod(\widetilde{S},\sigma)$ and $\sigma$) is a scalar matrix. Let
\[\Phi\colon\Mod(S,[\beta])\longrightarrow\Sp_{2g}(\ZZ)
\qquad\text{and}\qquad\widetilde{\Phi}\colon\Mod(\widetilde{S},\sigma)\longrightarrow\Sp_{2g}(\ZZ)\]
denote the standard actions on $H_1(S_g;\ZZ)$. Suppose that $\rho|_{\Mod(S_{g-1}^2)}$ is the standard action on $H_1(S_g;\ZZ)$; we ignore conjugation from now on. By Proposition~\ref{prop: image of T a cubic}, $\rho(T_\alpha^3)$ is also the standard action of $T_\alpha^3$ on $H_1(S_g;\ZZ)$. Once we know that the images of the $F_k$ (resp.\ their lifts in $\Mod(\widetilde{S},\sigma)$ and $\sigma$) are scalar matrices, it follows that
\[\rho\Phi^{-1}\qquad\text{or}\qquad\rho\widetilde{\Phi}^{-1}\]
is scalar on every generator of $\Mod(S,[\beta])$ (resp.\ $\Mod(\widetilde{S},\sigma)$) in Proposition~\ref{prop: gens by Dey}. Moreover, $\rho\Phi^{-1}$ (resp.\ $\rho\widetilde{\Phi}^{-1}$) is a homomorphism:
\[(\rho\Phi^{-1})(xy)=\rho(xy)\Phi(xy)^{-1}
=\rho(x)\rho(y)\Phi(y)^{-1}\Phi(x)^{-1}
=\rho(x)\Phi(x)^{-1}\rho(y)\Phi(y)^{-1},\]
where we use the fact that $\rho(y)\Phi(y)^{-1}$ is scalar. The same argument applies to $\rho\widetilde{\Phi}^{-1}$. Hence $\rho\Phi^{-1}$ (resp.\ $\rho\widetilde{\Phi}^{-1}$) factors through the abelianization of $\Mod(S,[\beta])$ (resp.\ $\Mod(\widetilde{S},\sigma)$). By Lemma~\ref{lem: abelianization}, this abelianization is completely known and consists entirely of $3$-torsion. Since $\zeta_3 I\notin\Sp_{2g}(\ZZ)$,
it follows that
\[\rho=\Phi\qquad\text{or}\qquad\rho=\widetilde{\Phi},\]
respectively. Consequently, the images of the $F_k$ (resp.\ their lifts in $\Mod(\widetilde{S},\sigma)$ and $\sigma$) are the identity.

We now show that the image under $\rho$ of $F_k$ (resp.\ their lifts in $\Mod(\widetilde{S},\sigma)$ and $\sigma$) are scalar. It is immediate to obtain that $\rho(\sigma)$ is scalar since it commutes with $\rho(\Mod(S_{g-1}^2))$ and with $\rho(T_\alpha^3)$. We then determine the image of $F_k$ or its lift in  $\Mod(\widetilde{S},\sigma)$ without differentiation. First, observe that
\[\rho(F_k)^3=\rho(T_{d_k}^3)\rho(T_{d_k'}^3)^{-1}=I,\qquad 2\le k\le g-1.\]
Indeed, $\rho(T_{d_k}^3)$ and $\rho(T_{d_k'}^3)$ are the standard actions of $T_{d_k}^3$ and $T_{d_k'}^3$ on $H_1(S_g;\ZZ)$. These actions agree because $d_k$ and $d_k'$ are homologous in $S_g$. More explicitly, by Lemma~\ref{lem: change curves in cut surface}, there exist
\[
f_k,f_k'\in\Mod(S)
\]
such that
\[
f_k(\alpha)=d_k,\qquad f_k(\beta)=\beta,
\]
and
\[
f_k'(\alpha)=d_k',\qquad f_k'(\beta)=\beta.
\]
After isotopy, we may assume that $f_k$ and $f_k'$ fix $\beta$ pointwise, so that
\[
f_k,f_k'\in\Mod(S_{g-1}^2).
\]
By the assumption and Proposition~\ref{prop: image of T a cubic}, the images of $f_k$, $f_k'$, and $T_\alpha^3$ under $\rho$ are their standard actions on $H_1(S_g;\ZZ)$. Consequently, the same is true for $T_{d_k}^3$ and $T_{d_k'}^3$.

Next, as shown in the proof of Proposition~\ref{prop: std sym rep fails}, the lantern relation in Figure~\ref{fig: F lantern relation} gives
\[
\rho(F_k)=\rho(T_{a_k'}T_{a_1}^{-1}).
\]
In particular, for $k=2$, the curves $a_2'$ and $a_1=\alpha$ are disjoint from the curves
\[
a_2,b_2
\]
and
\[
a_3,b_3,c_3,a_4,b_4,c_4,\ldots,c_{g-1},a_g,b_g
\]
shown in Figure~\ref{fig: basis in surface}. Hence $T_{a_2'}T_{a_1}^{-1}$ commutes with the Dehn twists about all these curves. It also commutes with $T_\alpha^3$. These commuting relations imply that
\[
\rho(F_2)
=
\operatorname{diag}\left(
\begin{pmatrix}
\lambda_1 & \mu\\
0 & \lambda_1
\end{pmatrix},
\lambda_2,\lambda_2,
\lambda_3,\ldots,\lambda_3
\right).
\]
Since $\rho(F_2)^3=I$, we have $\mu=0$. Moreover, since $F_2$ commutes with $T_{c_1}$ in Figure \ref{fig: basis in surface}, we have
\[
\lambda_1=\lambda_2.
\]
Since $\lambda_i^3=1$ and $g\ge5$, while $\rho(F_2)$ is conjugate to a matrix in $\Sp_{2g}(\ZZ)$, it follows that 
\[
\lambda_1=\lambda_3=1.
\]
Thus
\[
\rho(F_2)=I.
\]
Finally, the proof of Proposition~\ref{prop: std sym rep fails} shows that every $\rho(F_k)$ is a product of conjugates of $\rho(F_2)$. Hence
\[\rho(F_k)=I,\qquad  2\le k\le g-1,\]
which completes the proof.
\end{proof}

\subsubsection{Controlling the conjugating matrix}
Up to this point, we have established that, unless $\rho$ has abelian image, it is conjugate to the standard action on $H_1(S_g;\ZZ)$ by some element of $\GL_{2g}(\CC)$. We now show that the conjugating matrix can be chosen in the extended symplectic group
\[\Delta_{2g}(\ZZ)=\left\{X\in\GL_{2g}(\ZZ):X^tJX=\pm J
\right\}.\]

Note that the image of the standard action of $\Mod(S,[\beta])$ (or $\Mod(\widetilde{S},\sigma)$) on $H_1(S_g;\ZZ)$ is
\[\Sp_{2g}(\ZZ)^{[\beta]}=\operatorname{Stab}_{\Sp_{2g}(\ZZ)}([\beta]),\]
which is a finite-index subgroup of $\Sp_{2g}(\ZZ)$ since $[\beta]\in H_1(S_g;\ZZ/p\ZZ)^*$.

We first restrict the conjugating matrix to $\Sp_{2g}(\CC)$ by the following lemma.
\begin{lemma}
Suppose $\Gamma$ is a finite-index subgroup of $\Sp_{2g}(\ZZ)$, and let $A\in\GL_{2g}(\CC)$ be such that
\[A\Gamma A^{-1}\subset \Sp_{2g}(\CC).\]
Then a scalar multiple of $A$ lies in $\Sp_{2g}(\CC)$.
\end{lemma}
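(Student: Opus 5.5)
The plan is to turn the conjugation condition into the statement that $A$ carries the symplectic form $J$ to a form invariant under $\Gamma$. From this we get uniqueness of invariant forms via Zariski density. The argument runs as follows.

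For $\gamma\in\Gamma$ we have $A\gamma A^{-1}\in\Sp_{2g}(\CC)$, that is,
\[(A\gamma A^{-1})^tJ(A\gamma A^{-1})=J.\]
Rearranging gives $\gamma^t(A^tJA)\gamma=A^tJA$. So the matrix $B=A^tJA$ defines a nondegenerate alternating bilinear form on $\CC^{2g}$. This form is invariant under every element of $\Gamma$.

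Next I would show that $B$ must be a scalar multiple of $J$. By the Borel density theorem, the finite-index subgroup $\Gamma$ of the lattice $\Sp_{2g}(\ZZ)$ is Zariski dense in $\Sp_{2g}(\CC)$. The condition $\gamma^tB\gamma=B$ is polynomial in $\gamma$, so it persists on the Zariski closure. Hence $B$ is invariant under all of $\Sp_{2g}(\CC)$.

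An elementary alternative avoids Borel density. A finite-index subgroup contains $T^N$ for each elementary symplectic transvection $T$ and some $N\ge1$. These powers generate a subgroup acting irreducibly on $\CC^{2g}$; indeed this follows from the standard argument that invariant subspaces of the transvections $x\mapsto x+N\omega(v,x)v$, with $v$ running over a basis, are trivial. Now $J^{-1}B$ commutes with the $\Gamma$-action on $\CC^{2g}$, because both $J$ and $B$ are $\Gamma$-invariant. Schur's lemma then gives $J^{-1}B=cI$, that is, $B=cJ$, and $c\neq0$ since $B$ is nondegenerate.

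Finally, choose $\mu\in\CC$ with $\mu^2=c^{-1}$. Then
\[(\mu A)^tJ(\mu A)=\mu^2cJ=J,\]
so $\mu A\in\Sp_{2g}(\CC)$, as claimed.

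The main obstacle is justifying irreducibility, equivalently Zariski density, of $\Gamma$. I would cite Borel density directly. If a self-contained argument is preferred, the transvection-power argument gives the absolute irreducibility that Schur's lemma over $\CC$ requires.
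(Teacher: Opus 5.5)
Your proposal is correct and is essentially the paper's proof: both rewrite the conjugation condition as $\Gamma$-invariance of the nondegenerate alternating form $A^tJA$, deduce $A^tJA=cJ$, and rescale $A$ by a square root of $c^{-1}$. The only difference is that the paper asserts the uniqueness of $\Gamma$-invariant forms without proof, while you justify it via Borel density and Schur's lemma. One caution on your ``elementary alternative'': if $v$ runs over a symplectic basis $e_1,f_1,\dots,e_g,f_g$, the transvections $x\mapsto x+N\omega(v,x)v$ preserve each plane $\langle e_i,f_i\rangle$ and so act reducibly for $g\ge2$; you should instead let $v$ range over all integral vectors (some power $T_v^{N_v}$ lies in $\Gamma$ for each one), or over a basis whose $\omega$-intersection graph is connected.
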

\begin{proof}
For every $X\in\Gamma$, we have
\[AXA^{-1}\in\Sp_{2g}(\CC),\]
and hence
\[(AXA^{-1})^tJ(AXA^{-1})=J.\]
Multiplying on the left by $A^t$ and on the right by $A$ gives
\[X^t(A^tJA)X=A^tJA,\qquad \forall X\in \Gamma.\]
This means that $\Gamma$ preserves the non-degenerate alternating bilinear form defined by $A^tJA$. Since $\Gamma$ is a finite-index subgroup of $\Sp_{2g}(\ZZ)$, any $\Gamma$-invariant alternating bilinear form on $\CC^{2g}$ is a scalar multiple of the standard symplectic form. Hence
\[A^tJA=\mu J, \qquad \text{ for some }\mu\in\CC^*.\]
Then $\mu^{-\frac12} A\in\Sp_{2g}(\CC)$ since it preserves the symplectic form $J$.
\end{proof}

We now further restrict the conjugating matrix to $\Delta_{2g}(\ZZ)$ by the following lemma.
\begin{lemma}\label{lem: conjugate restrict to integral}
Let $g\ge 2$\footnote{The statement does not hold for $g=1$. Indeed, if $[\beta]$ lifts to the second basis vector of $\ZZ^2$, then
\[A=\begin{pmatrix}
1/\sqrt{p}&0\\0&\sqrt{p}\end{pmatrix}\in\SL_2(\mathbb R)\]
conjugates $\SL_2(\ZZ)^{[\beta]}$ into $\SL_2(\ZZ)$. This is essentially the reason that the commensurator of $\SL_2(\ZZ)$ in $\SL_2(\mathbb R)$ is larger than $\SL_2(\QQ)$.}, and let $p$ be any prime.
If $A\in\Sp_{2g}(\CC)$ satisfies
\[A\Sp_{2g}(\ZZ)^{[\beta]}A^{-1}\subset \Sp_{2g}(\ZZ),\]
then either
\[A\in\Sp_{2g}(\ZZ)\]
or
\[iA\in \left\{X\in\GL_{2g}(\ZZ):X^tJX=- J\right\}.\]
\end{lemma}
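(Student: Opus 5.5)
The plan is to show first that $A$ is a scalar multiple of a rational matrix, and then that it preserves the lattice $\ZZ^{2g}$.

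Put $\Gamma=\Sp_{2g}(\ZZ)^{[\beta]}$. Since $\Gamma$ has finite index in $\Sp_{2g}(\ZZ)$, it contains a principal congruence subgroup $\Gamma(N)$, and it is Zariski dense in $\Sp_{2g}$. The map $X\mapsto AXA^{-1}$ sends $\Gamma$ into $\Sp_{2g}(\ZZ)$, and by Zariski density it is the restriction of the algebraic automorphism $\operatorname{Ad}(A)$ of $\Sp_{2g}$. For each $X\in\Gamma$ and each $\tau\in\operatorname{Aut}(\CC/\QQ)$, applying $\tau$ entrywise to $AXA^{-1}$ leaves it unchanged, because it is integral. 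So $\tau(A)^{-1}A$ centralizes $\Gamma$, hence centralizes $\Sp_{2g}(\CC)$, and is therefore a scalar $c_\tau$.

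Next I normalize. Pick an entry $a_{ij}\neq 0$ of $A$ and set $B=a_{ij}^{-1}A$. Then $\tau(B)=B$ for all $\tau$, so $B\in\GL_{2g}(\QQ)$. Since $A\in\Sp_{2g}(\CC)$, we get $B^tJB=a_{ij}^{-2}J=\mu J$ with $\mu\in\QQ^*$. Clearing denominators, I replace $B$ by a primitive integral matrix $B_0$ with $A=\lambda B_0$, $\lambda\in\CC^*$, and $B_0^tJB_0=\nu J$ for some $\nu\in\ZZ\setminus\{0\}$. Then $\lambda^2\nu=1$.

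\textbf{Lattice step (main obstacle).} The lattice $L=B_0^{-1}\ZZ^{2g}\otimes\QQ\cap\ldots$ is awkward to handle directly. Instead I use that $\Gamma$ preserves the lattice $\Lambda=B_0^{-1}\ZZ^{2g}$, since $B_0\Gamma B_0^{-1}\subset\Sp_{2g}(\ZZ)$, and that $\Gamma$ also preserves $\ZZ^{2g}$. The goal is to show that $\Lambda$ is a scalar multiple of $\ZZ^{2g}$. I would classify the $\Gamma$-invariant lattices commensurable with $\ZZ^{2g}$ prime by prime. Locally at $\ell$, such a lattice corresponds to a vertex of the Bruhat--Tits building fixed by the closure of $\Gamma$ in $\Sp_{2g}(\ZZ_\ell)$. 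For $\ell\neq p$ this closure is all of $\Sp_{2g}(\ZZ_\ell)$, whose fixed lattices are exactly $\ell^k\ZZ_\ell^{2g}$, because $\Sp_{2g}(\mathbb F_\ell)$ acts irreducibly on $\mathbb F_\ell^{2g}$. For $\ell=p$ the closure is the stabilizer of $[\beta]$ mod $p$, and $\beta$ is a nonzero dual vector mod $p$. In that case the invariant lattices could be $p^k\ZZ_p^{2g}$ or the intermediate lattice $p^k(\ker\beta+p\ZZ_p^{2g})$. I would rule out the intermediate one by using that the image of the stabilizer in $\GL(\ker\bar\beta/\ldots)$ acts irreducibly on the relevant graded pieces when $g\ge2$. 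A lattice of that form is not the image of $\ZZ_p^{2g}$ under a similitude, since its discriminant/Gram structure differs from that of a scaled symplectic-unimodular lattice. This is where $g\ge2$ enters, consistent with the footnote's $g=1$ counterexample.

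\textbf{Conclusion.} Given the lattice step, $\Lambda=r\ZZ^{2g}$ with $r\in\QQ^*$, and primitivity forces $B_0\in\GL_{2g}(\ZZ)$. Then $\nu=\det(B_0)^{1/g}$ up to sign considerations, so $\nu=\pm1$, because $B_0^tJB_0=\nu J$ is unimodular only for $\nu=\pm1$. If $\nu=1$, then $\lambda=\pm1$ and $A=\pm B_0\in\Sp_{2g}(\ZZ)$. If $\nu=-1$, then $\lambda=\pm i$, so $iA=\mp B_0$ lies in $\{X\in\GL_{2g}(\ZZ):X^tJX=-J\}$.
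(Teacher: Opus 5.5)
Your route differs from the paper's, and its first and last steps are sound. The Galois argument gives $A=\lambda B_0$ with $B_0$ primitive integral and $B_0^tJB_0=\nu J$. Once $B_0^{-1}\ZZ^{2g}=r\ZZ^{2g}$ is known, primitivity, $\det(B_0)^2=\nu^{2g}$ and $\lambda^2\nu=1$ give the two cases.

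\textbf{The gap: the lattice step at $p$.} Your list of invariant lattices there is incomplete and unproved. The stabilizer of $\bar v=[\beta]$ preserves the whole flag $0\subset\langle\bar v\rangle\subset\bar v^{\perp}\subset\mathbb{F}_p^{2g}$. So besides $p^k\ZZ_p^{2g}$ and $p^k(\bar v^{\perp}+p\ZZ_p^{2g})$ there is a second intermediate family, $p^k(\ZZ_p v+p\ZZ_p^{2g})$ with $v$ a lift of $\bar v$. You list only the hyperplane one, under either convention for $\beta$.

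This omission matters: it is exactly the lattice behind the footnote. For $g=1$ the two intermediate families coincide, and $A^{-1}\ZZ^2=p^{-1/2}(\ZZ e_2+p\ZZ^2)$ for the matrix there. Also, irreducibility on graded pieces is what would prove the list complete; it is the discriminant that eliminates entries.

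\textbf{How to close it.} Use that $\Gamma$ contains $T_u^p$ for every $u\in\ZZ^{2g}$, with $T_u^p(x)-x=p\,\omega(u,x)u$. Hence a $\Gamma$-invariant lattice $M\subseteq\ZZ_\ell^{2g}$ containing a primitive vector equals $\ZZ_\ell^{2g}$ when $\ell\neq p$ (no strong approximation needed), and contains $p\ZZ_p^{2g}$ when $\ell=p$. At $p$ this reduces you to $\operatorname{Stab}(\bar v)$-invariant subspaces of $\mathbb{F}_p^{2g}$. The transvections $T_{\bar w}$ with $\bar w\in\bar v^{\perp}$, which lie in the image of $\Gamma$, show these subspaces are exactly the flag above.

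Finally, $\omega$ restricted to $B_0^{-1}\ZZ_p^{2g}$ is $\nu^{-1}$ times a unimodular form. So its Gram determinant has $p$-adic valuation $-2g\operatorname{ord}_p(\nu)\equiv 0\pmod{2g}$. The two intermediate families (up to scaling, of index $p$ and $p^{2g-1}$ in $\ZZ_p^{2g}$) have valuations $\equiv 2$ and $\equiv -2\pmod{2g}$. Both are nonzero exactly when $g\ge2$, which rules them out.

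\textbf{Comparison with the paper.} The paper never rationalizes $A$ or classifies lattices. It reads off integrality of $pA_{s1}A_{t1}$, $A_{si}A_{ti}$ and $p(A_{s1}A_{ti}+A_{t1}A_{si})$ from the transvections $T_{Av_1}^p$, $T_{Av_i}$ and $T_{A(pv_1+v_i)}$. It then builds the explicit scalar $m=2p^2A_{k1}^3$ with $mA$ integral and $m^2\in\ZZ$. It uses $g\ge2$ only through $\omega(Bv_3,Bv_4)=m^2$, to show $|m^2|$ is a perfect square.

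That argument is fully elementary but ad hoc. The repaired version of yours is more conceptual: it locates the hypothesis $g\ge2$ as a congruence modulo $2g$, explains the footnote's counterexample, and applies to any finite-index $\Gamma$ whose invariant lattices are known.
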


\begin{proof}
    For a vector $v\in\CC^{2g}$, define the transvection about $v$ by\[T_v\colon \CC^{2g}\longrightarrow\CC^{2g},\qquad u\longmapsto u+\omega(v,u)v,\]
    where $\omega$ is the fixed symplectic form on $\CC^{2g}$. Since $A\in\Sp_{2g}(\CC)$, we have
    \[AT_vA^{-1}=T_{Av}.\]
    Let $v_1,\ldots,v_{2g}$ be a symplectic basis of $(\CC^{2g},\omega)$ satisfying
    \[\omega(v_{2i-1},v_{2i})=1,\qquad\omega(v_i,v_j)=0\quad\text{if }|i-j|\ge 2.\]
    Suppose that $[v_2]=[\beta]\in(\ZZ/p\ZZ)^{2g}$. By definition,
    \[T_{v_1}^p\in\Sp_{2g}(\ZZ)^{[\beta]},\qquad T_{v_i}\in\Sp_{2g}(\ZZ)^{[\beta]}\quad\text{for }2\le i\le 2g.\]
    Write $A=(A_{ij})$. We divide the proof into three steps.
    
    \noindent \textbf{Step 1.} We first show that there exists $m\in\CC^*$ with $m^2\in\ZZ$ such that $mA$ has integral entries. This step also holds when $g=1$. By assumption,
    \[AT_{v_1}^pA^{-1}=T_{Av_1}^p\in\Sp_{2g}(\ZZ)\]
    and
    \[AT_{v_i}A^{-1}=T_{Av_i}\in\Sp_{2g}(\ZZ),\qquad 2\le i\le 2g.\]
    Write
     \begin{align*}
        T_{A(v_1)}^p(v_k)&=v_k+p\cdot\omega(A(v_1),v_k)A(v_1) \\
        &=v_k+p\cdot \omega(\sum\limits_{s=1}^{2g}A_{s1}v_s,v_k)\sum\limits_{t=1}^{2g}A_{t1}v_t,
    \end{align*}
    and for each $2\le i \le 2g$
     \begin{align*}
        T_{A(v_i)}(v_k)&=v_k+\omega(A(v_i),v_k)A(v_i) \\
        &=v_k+\omega(\sum\limits_{s=1}^{2g}A_{si}v_s,v_k)\sum\limits_{t=1}^{2g}A_{ti}v_t.
    \end{align*}
    The integrality of these transvections implies
    \begin{equation}\label{eq: integral 1}
     pA_{s1}A_{t1}\in\ZZ,\qquad A_{si}A_{ti}\in\ZZ\quad\text{for }2\le i\le 2g,\quad 1\le s,t\le 2g.\end{equation}
    Moreover, for $2\le i\le 2g$, we have
    \[T_{pv_1+v_i}\in\Sp_{2g}(\ZZ)^{[\beta]}.\]
    Therefore
    \[AT_{pv_1+v_i}A^{-1}=T_{A(pv_1+v_i)}\in\Sp_{2g}(\ZZ),\]
    which is
    \begin{align*}
        T_{A(pv_1+v_i)}(v_k)&= v_k+\omega(A(pv_1+v_i),v_k)A(pv_1+v_i)\\
        &=v_k+\omega(\sum\limits_{s=1}^{2g}(pA_{s1}+A_{si})v_s,v_k)\sum\limits_{t=1}^{2g}(pA_{t1}+A_{ti})v_t.
    \end{align*}
    Again by integrality, we obtain
    \[(pA_{s1}+A_{si})(pA_{t1}+A_{ti})\in\ZZ.\]
    Together with \eqref{eq: integral 1}, this gives
    \begin{equation}\label{eq: integral 2}
    pA_{s1}A_{ti}+pA_{t1}A_{si}\in\ZZ,\qquad 2\le i\le 2g,\quad 1\le s,t\le 2g.\end{equation}
    Choose $k$ such that $A_{k1}\neq0$, and set
    \[m=2p^2A_{k1}^3.\]
    By \eqref{eq: integral 1}, we have
    \[m^2=4(pA_{k1}^2)^2(pA_{k1}^2)\in\ZZ\setminus\{0\}.\]
    For the first column of $mA$, we have
    \[mA_{s1}=2p^2A_{k1}^3A_{s1}=2(pA_{k1}^2)(pA_{k1}A_{s1})\in\ZZ
    ,\]
    by \eqref{eq: integral 1}.
    For $2\le i\le2g$, we have
    \[\begin{aligned}mA_{si}&=2p^2A_{k1}^3A_{si}\\&=2pA_{k1}^2\bigl(pA_{k1}A_{si}+pA_{s1}A_{ki}\bigr)-(pA_{k1}A_{s1})(2pA_{k1}A_{ki}),\end{aligned}\]
    which is integral by \eqref{eq: integral 1} and \eqref{eq: integral 2}. Thus $mA$ is an integral matrix.
    
    \noindent \textbf{Step 2.} We next show that, when $g\ge2$, the integer $|m^2|$ is a perfect square. Suppose otherwise. Then there exists a prime $q$ and an integer $N>0$ such that
    \[q^{2N-1}\mid m^2,\qquad q^{2N}\nmid m^2.\]
    Write the integral matrix
    \[B=mA=(B_{ij}).\]
    By \eqref{eq: integral 1}, we have
    \[A_{ij}^2\in\ZZ\qquad\text{for }j\ge2.\]
    Hence
    \begin{equation}\label{eq: integral 3}
        B_{ij}^2=m^2A_{ij}^2, \qquad j\ge2,
    \end{equation}
    is divisible by $m^2$, and therefore
    \[q^N\mid B_{ij},\qquad j\ge2.\]
    Since $g\ge2$, we compute
    \[\begin{aligned}\omega(Bv_3,Bv_4)=\omega(mAv_3,mAv_4)=m^2\omega(Av_3,Av_4)=m^2\omega(v_3,v_4)=m^2\end{aligned},\]
    which also equals to
   \[\omega(Bv_3,Bv_4)=\sum_{s,t=1}^{2g}B_{s3}B_{t4}\omega(v_s,v_t).\]
   Since $B_{s3}$ and $B_{t4}$ are divisible by $q^N$, we obtain
   \[q^{2N}\mid\omega(Bv_3,Bv_4).\]
   Thus $q^{2N}\mid m^2$, contradicting the choice of $q$ and $N$. 
    
    \noindent \textbf{Step 3.} 
    Finally, we show that $A$ or $iA$ is integral. Since $m^2=\pm n^2$ for some $n\in \mathbb{N}_+$, we have
    \[A=\frac{1}{n}B\qquad\text{or}\qquad A=\frac{i}{n}B,\]
    for some $B\in\operatorname{Mat}_{2g\times2g}(\ZZ)$. 
    By \eqref{eq: integral 3}, we have
    \[n|B_{ij},\quad j \ge 2,\]
    therefore the last $2g-1$ columns of $\frac{1}{n}B$ are integral. It remains to consider the first column of $\frac{1}{n}B$. By \eqref{eq: integral 1}, we have
    \[pA_{s1}^2\in\ZZ,\]
    which implies 
    \[p(\frac{B_{s1}}{n})^2=p\frac{m^2A_{s1}^2}{n^2}\in \ZZ.\]
    Comparing the prime factorizations of $B_{s1}$ and $n$, it follows that
    \[n|B_{s1}.\]
    Hence $\frac{1}{n}B$ is integral. 
    
    If $A=\frac{1}{n}B$, then $A\in\Sp_{2g}(\ZZ)$. If $A=\frac{i}{n}B$, then $iA=-\frac{1}{n}B$ is integral and
    \[(iA)^tJ(iA)=-A^tJA=-J.\qedhere\]
\end{proof}
The proof of Theorem \ref{thm: classify Mod to Sp} is now accomplished.

\section{Rigidity of the period map up to finite covers}\label{sec: 4}
In this section, we give a geometric application of the algebraic results established in Theorem~\ref{thm: classify Mod to Sp}. We study the uniqueness of the period map among nonconstant holomorphic maps between certain moduli spaces. This generalizes the work of Farb~\cite{FarbRigidity} and Serván~\cite{CarlosPrym}.

\subsection{Background and setup}
We work in the category of complex orbifolds considered in \cite[Remark 2.1]{FarbRigidity}. We recall the relevant definitions.
\begin{definition}
A \textbf{complex orbifold} is a quotient space $X/\Gamma$ of a complex manifold $X$ by a group $\Gamma$ acting properly discontinuously on $X$ by biholomorphic automorphisms.

Let $X/\Gamma$ and $Y/\Lambda$ be complex orbifolds, and let \[\rho\colon\Gamma\to\Lambda\]
be a group homomorphism. A map
\[F\colon X/\Gamma\longrightarrow Y/\Lambda\]
is a \textbf{holomorphic map of complex orbifolds} if it is induced by a holomorphic map
\[\widetilde F\colon X\longrightarrow Y\]
satisfying
\[\widetilde F(f\cdot x)=\rho(f)\widetilde F(x),\qquad
f\in\Gamma,\ x\in X.\]
\end{definition}

\begin{example}
We will be particularly interested in the following examples of complex orbifolds.
\begin{enumerate}
    \item The moduli space of smooth closed complex curves of genus $g$:
    \[\mathcal{M}_g=\Teich(S_g)/\Mod(S_g), \]
    where $\Teich(S_g)$ denotes the Teichm\"uller space of $S_g$. It is well known that
    \[\Teich(S_g)\cong\CC^{3g-3}\]
    as a complex manifold, and $\Mod(S_g)$ acts on it properly discontinuously.
    \item The moduli space of principally polarized abelian varieties of dimension $h$:
    \[ \mathcal{A}_h =\mathfrak{H}_h/\Sp_{2h}(\ZZ),\]
    where $\mathfrak{H}_h$ is the Siegel upper half space, consisting of symmetric $h\times h$ complex matrices with positive-definite imaginary part. The symplectic group $\Sp_{2h}(\ZZ)$ acts properly discontinuously on $\mathfrak{H}_h$ by fractional linear transformations.
\end{enumerate}
\end{example}

The main example we consider is the moduli space of genus-$g$ curves equipped with a $p$-sheeted (unbranched) normal covering. It can be described as
\[R_g^{(p)}=\left\{
(X,\theta_X)
\mathrel{}\middle|\mathrel{}
\begin{array}{c}
X\text{ is a smooth curve of genus }g,\\
\theta_X\in H^1(X;\ZZ/p\ZZ)^*
\end{array}
\right\}\bigg/\sim,\]
where $(X_1,\theta_{X_1})\sim(X_2,\theta_{X_2})$ if and only if there exists a biholomorphism $f\colon X_1\to X_2$ such that
\[f^*(\theta_{X_2})=\theta_{X_1}.\]
Equivalently, by covering space theory, each nonzero class $\theta_X$ determines an unbranched $p$-fold cyclic cover $Y\to X$.

There are two natural complex orbifold structures on $R_g^{(p)}$, introduced in \cite[Section~2]{CarlosPrym}.
\begin{definition}\label{def: two orbifold structures}
The moduli space $R_g^{(p)}$ admits two natural complex orbifold structures, as follows.
\begin{enumerate}
    \item Fix a nonzero element
    \[[\beta]\in H_1(S_g;\ZZ/p\ZZ).\]
    Recall that
    \[\Mod(S,[\beta]) =\operatorname{Stab}_{\Mod(S_g)}([\beta]). \]
    The first orbifold structure is
    \[\widehat{R}_g^{(p)}=\Teich(S_g)/\Mod(S,[\beta]).\]
    \item Let 
    $ \widetilde{S}\to S_g$
    be the $p$-fold unbranched cyclic cover determined by $[\beta]$, and let $\sigma$ be a generator of its deck transformation group. Recall that
    \[\Mod(\widetilde{S},\sigma) = C_{\Mod(\widetilde{S})}(\sigma)\]
    is the centralizer of $\sigma$ in $\Mod(\widetilde{S})$. Let
    \[ \Teich(\widetilde{S})^\sigma\]
    denote the fixed-point locus of $\sigma$ in $\Teich(\widetilde{S})$. The second orbifold structure is
    \[  R_g^{(p)}=
    \Teich(\widetilde{S})^\sigma/\Mod(\widetilde{S},\sigma).\]
\end{enumerate}

The two orbifold structures have the same underlying set of points, but the second action is not effective: the deck transformation $\sigma$ acts trivially on $\Teich(\widetilde{S})^\sigma$. We distinguish the two orbifold structures by denoting the first by $\widehat{R}_g^{(p)}$ and retaining $R_g^{(p)}$ for the second.
\end{definition}

\begin{example}
We will be interested in the following holomorphic maps between these complex orbifolds arising from algebraic geometry.
\begin{enumerate}
    \item The \textbf{period map}:
    \[\begin{aligned}
    J\colon \mathcal{M}_g &\longrightarrow \mathcal{A}_g,\\
    X &\longmapsto \operatorname{Jac}(X)
    =\frac{\Omega^1(X)^\vee}{H_1(X;\ZZ)},
    \end{aligned}\]
    where $\Omega^1(X)^\vee\cong\CC^g$ denotes the dual of the space of holomorphic $1$-forms on $X$. Here $H_1(X;\ZZ)$ is realized as a lattice in $\Omega^1(X)^\vee$ via
    \[ \begin{aligned}
    H_1(X;\ZZ)&\longrightarrow \Omega^1(X)^\vee,\\
    \gamma&\longmapsto
    \left(\omega\longmapsto\int_\gamma\omega\right).
    \end{aligned} \]
    The Jacobian $\operatorname{Jac}(X)$ carries a principal polarization induced by the intersection pairing on $H_1(X;\ZZ)$. On orbifold fundamental groups, the period map $J$ induces the standard symplectic representation
    \[\Phi: \Mod(S_g)\longrightarrow\Aut(H_1(S_g;\ZZ),\omega)=\Sp_{2g}(\ZZ).\]
    \item The \textbf{Prym map}:
    \[ \begin{aligned}
    \text{Prym}\colon R_g^{(p)}&\longrightarrow
    \mathcal{A}_{(p-1)(g-1)},\\
    (Y\xrightarrow{\pi} X)&\longmapsto
    \operatorname{Prym}(Y/X)=\frac{\operatorname{Jac}(Y)}{\pi^*\big(\operatorname{Jac}(X)\big)},
    \end{aligned}\]
    where $\operatorname{Prym}(Y/X)$ denotes the Prym variety associated to the cyclic cover $Y\to X$.
\end{enumerate}
\end{example}

These maps exhibit a remarkable rigidity. More precisely:
\begin{enumerate}
    \item Farb \cite[Theorem~1.1]{FarbRigidity} proved that, for $g\ge3$, the period map $J$ is the unique nonconstant holomorphic map from $\mathcal{M}_g$ to $\mathcal{A}_{h}$ for $h\le g$.
    \item For $p=2$ and $g\ge 4$, Serv\'an \cite[Theorem~1.1]{CarlosPrym} proved that the Prym map is the unique nonconstant holomorphic map from $R_g^{(2)}$ to $\mathcal{A}_{h}$ for $h\le g-1$. Note that the Prym map is not well defined on the other orbifold structure $\widehat{R}_g^{(2)}$. Moreover, Serván \cite[Theorem~1.2]{CarlosPrym} proved that every holomorphic map from $\widehat{R}_g^{(2)}$ to $\mathcal{A}_{h}$ is constant for $h\le g-1$.
\end{enumerate}
   
\subsection{Rigidity of the period map for $p=3$}\label{sec: rigidity}
We now generalize the above rigidity results to the case $p=3$. For $p\ge3$, the target of the Prym map has dimension greater than $g$ when $g\ge3$. Instead, there are natural holomorphic maps from both $R_g^{(p)}$ and $\widehat{R}_g^{(p)}$ to $\mathcal{A}_g$, induced by the period map. First, consider
\[\widehat{R}_g^{(p)}=\Teich(S_g)/\Mod(S,[\beta]).\]
Since $\Mod(S,[\beta])$ is a finite-index subgroup of $\Mod(S_g)$, the quotient map
\[\widehat{R}_g^{(p)}\longrightarrow\mathcal{M}_g\]
is a finite orbifold covering. We denote by $J^{(p)}$ the composition
\[\widehat{R}_g^{(p)}\to\mathcal{M}_g\xrightarrow{J}\mathcal{A}_g.\]
Now consider the other orbifold structure
\[R_g^{(p)}=\Teich(\widetilde{S})^\sigma/\Mod(\widetilde{S},\sigma).\]
The covering map $\widetilde{S}\to S_g$ induces a holomorphic map
\[\Teich(\widetilde{S})^\sigma\longrightarrow\Teich(S_g)\]
by sending a $\sigma$-invariant complex structure on $\widetilde{S}$ to the induced complex structure on $S_g$. This map is equivariant with respect to the group homomorphism
\[\Mod(\widetilde{S},\sigma)\longrightarrow\Mod(S,[\beta]).\]
Consequently, it descends to a holomorphic map
\[\nu\colon R_g^{(p)}\longrightarrow\widehat{R}_g^{(p)}.\]
Composing $\nu$ with the period map gives a holomorphic map
\[J^{(p)}\circ\nu\colon R_g^{(p)}\xrightarrow{\,\nu}\widehat{R}_g^{(p)}\xrightarrow{\,J^{(p)}\,}\mathcal A_g.\]
We show that, when $p=3$, these are the unique nonconstant holomorphic maps to $\mathcal A_h$ for $h\le g$.
\begin{theorem}\label{thm: unique holomorphic map}
Let $g\ge6$ and $h\le g$. If
\[F\colon R_g^{(3)}\to\mathcal A_h
\qquad\text{or}\qquad
F\colon\widehat{R}_g^{(3)}\to\mathcal A_h\]
is a nonconstant holomorphic map of complex orbifolds, then $h=g$ and
\[F=J^{(3)}\circ\nu \qquad\text{or}\qquad F=J^{(3)},\]
respectively.
\end{theorem}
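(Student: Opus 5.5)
The plan follows Farb's strategy from \cite{FarbRigidity}. Write $\Gamma$ for $\Mod(S,[\beta])$ or $\Mod(\widetilde{S},\sigma)$, and $X$ for $\Teich(S_g)$ or $\Teich(\widetilde{S})^\sigma$. A holomorphic map of orbifolds $F$ lifts to a holomorphic map $\widetilde F\colon X\to\mathfrak H_h$ that is equivariant with respect to some homomorphism $\rho\colon\Gamma\to\Sp_{2h}(\ZZ)$. We apply Theorem~\ref{thm: classify Mod to Sp} to $\rho$, which leaves two cases: either $\rho$ has finite abelian image, or $h=g$ and $\rho$ is conjugate to $\Phi$ (resp.\ $\widetilde\Phi$) by some $X_0\in\Delta_{2g}(\ZZ)$.

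\textbf{Finite image case.} Let $\Gamma'=\ker\rho$, a finite-index subgroup. Then $\widetilde F$ descends to a holomorphic map $X/\Gamma'\to\mathfrak H_h$. Since $\mathfrak H_h$ is a bounded symmetric domain, the coordinate functions of this map are bounded holomorphic functions on a finite cover of moduli space. We then argue, as Farb does, that the map is constant. One route is to use that $X/\Gamma'$ is a quasi-projective variety whose Deligne--Mumford-type compactification has boundary of codimension $\ge2$ in the relevant sense. This holds for $g\ge3$, and it continues to hold for the finite cover: the Satake-type compactification of $\widehat R_g^{(3)}$ has small boundary, or alternatively we use the density of Teichm\"uller disks. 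The Riemann extension theorem then extends the bounded functions across the boundary, and compactness forces them to be constant. For $\Teich(\widetilde S)^\sigma$, we use that $\nu$ identifies it biholomorphically with $\Teich(S_g)$, since every complex structure on $S_g$ lifts uniquely to a $\sigma$-invariant one. So both cases reduce to the same argument.

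\textbf{Standard representation case.} Conjugate by $X_0$. If $X_0^tJX_0=-J$, we compose with the antiholomorphic involution $Z\mapsto-\bar Z$ of $\mathfrak H_g$; the point is that it intertwines conjugation by such elements. After this, $\widetilde F$ and the lifted period map $\widetilde J$ are both $\Phi$-equivariant. The goal is to show $\widetilde F=\widetilde J$, and we follow Farb's comparison argument. Restrict both maps to Teichm\"uller disks $\mathbb D\to X$ that are stabilized by a pseudo-Anosov element, or better, to disks through Jenkins--Strebel differentials. Both $\widetilde F$ and $\widetilde J$ are holomorphic, hence distance non-increasing from the Kobayashi metric (the Teichm\"uller metric) to the Kobayashi metric of $\mathfrak H_g$. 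Using equivariance, look at the boundary behaviour along orbits of a Dehn twist $T_c$. Its image $\Phi(T_c)$ is a parabolic transvection, so the two maps have the same asymptotic behaviour near the corresponding cusp. The difference should then be controlled by a Schwarz-lemma/maximum-principle argument, forcing $\widetilde F=\widetilde J$ along a dense family of disks, and hence everywhere.

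\textbf{Alternative route and main obstacle.} As in Section~\ref{sec: PVHS}, one can instead view $\widetilde F$ as a period map of a polarized VHS with monodromy $\Phi$. The space of such VHS is rigid (Deligne's theorem), which pins down the Hodge filtration up to the holomorphic/antiholomorphic choice already handled. I expect the main obstacle to be the finite-image case: proving that holomorphic maps from finite covers of $\mathcal M_g$ to $\mathcal A_h$ with finite monodromy are constant, which requires checking the codimension-of-boundary estimate for $\widehat R_g^{(3)}$. The non-effective action of $\sigma$ on the second orbifold structure needs only bookkeeping, since $\rho(\sigma)=I$ there by Theorem~\ref{thm: classify Mod to Sp}.
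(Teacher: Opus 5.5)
There is a genuine gap in the case $\rho\sim\Phi$. Your finite-image case agrees with the paper's. The obstacle you anticipate there is not real: bounded holomorphic functions extend across any proper analytic subset of a normal projective compactification, so no codimension estimate is needed.

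Your main argument for $\rho\sim\Phi$ cannot work. Equivariance under the stabilizer of a Teichm\"uller disk does not determine a holomorphic map into $\mathfrak H_g$, since that stabilizer is typically just a cyclic parabolic group. Already for $g=1$, every map $z\mapsto z+it$ with $t\ge 0$ is holomorphic $\mathbb H\to\mathbb H$, commutes with $z\mapsto z+1$, and has the same cusp behaviour. There is also no ``difference'' of two maps into $\mathfrak H_g$ to which a Schwarz lemma or maximum principle could be applied. Nothing in your sketch produces even one point where $\widetilde F=\widetilde J$.

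The paper's mechanism is global. It uses the equivariant geodesic homotopy, then the Borel--Narasimhan theorem, which reduces $F=J^{(3)}$ to agreement at a single point. That point comes from a Lefschetz curve $C\subset\mathcal M_g(3)$ with $\pi_1(C)$ surjecting and a puncture of local monodromy $T_\gamma^3$. On $C$ the homotopy is made holomorphic (Antonakoudis--Aramayona--Souto) and algebraic (Kobayashi--Ochiai), and Saito's rigidity then forces it to be constant. Your ``alternative route'' is a legitimate substitute for this: uniqueness of a polarizable VHS on an irreducible local system, applied on the smooth quasi-projective cover $\mathcal M_g(3)$, where the monodromy $\Sp_{2g}(\ZZ,3)$ is Zariski dense. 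It is essentially the paper's Section~\ref{sec: PVHS}, and it should be your main argument rather than an afterthought.

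Second, the anti-symplectic case is not disposed of by composing with $\tau(Z)=-\overline Z$. That makes the lift $\Phi$-equivariant but \emph{antiholomorphic}. It is then not a period map and cannot be compared with $\widetilde J$ by either of your arguments; what is needed is a contradiction, not $\widetilde F=\widetilde J$.

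The paper gets the contradiction from the K\"ahler form $\omega_{\mathcal A_g}$. In this case $\tau\circ F$ is homotopic to $J^{(3)}$ (resp.\ $J^{(3)}\circ\nu$). Take a smooth complete curve $C$ whose $\pi_1$ surjects onto the orbifold fundamental group. Homotopy invariance gives $\int_C (J^{(3)})^*\omega_{\mathcal A_g}=\int_C(\tau\circ F)^*\omega_{\mathcal A_g}=-\int_C F^*\omega_{\mathcal A_g}$. The left side is positive because $J^{(3)}|_C$ is nonconstant, and the right side is nonpositive because $F$ is holomorphic.

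Equivalently, in your VHS language: apply uniqueness of the Hodge structure to the holomorphic $F$ itself. This forces the anti-symplectic isomorphism of local systems to carry $F^1$ onto $F^1$, which contradicts the sign in the Riemann bilinear relations.
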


\begin{remark}
If we assume only that $F$ is continuous, then part of the proof of Theorem~\ref{thm: unique holomorphic map} also shows that either $F$ is homotopic to $J^{(3)}\circ\nu$ (resp.~$J^{(3)}$), or there exists a finite cover $\widetilde{R}_g^{(3)}$ of $R_g^{(3)}$ (resp.~$\widehat{R}_g^{(3)}$) such that the lift of $F$ to $\widetilde{R}_g^{(3)}$ is homotopic to a constant map.
\end{remark}
\begin{proof}
The proof is based on Theorem~\ref{thm: classify Mod to Sp}, which classifies homomorphisms from $\Mod(S,[\beta])$ and $\Mod(\widetilde{S},\sigma)$ to $\Sp_{2h}(\ZZ)$ for $h\le g$. We then follow the approach developed by Farb \cite{FarbRigidity} in his proof of the global rigidity of the period map. The proof is divided into the following steps. In the next subsection, we also give an alternative approach to one of the steps using the theory of variations of Hodge structures, following an approach suggested to Farb by Richard Hain.

\noindent\textbf{Step 1: Uniqueness up to homotopy.}
We first show that $F$ is homotopic to $J^{(3)}\circ\nu$ (resp.~$J^{(3)}$). The map $F$ induces a homomorphism on orbifold fundamental groups
\[F_*\colon\pi_1^{\mathrm{orb}}(R_g^{(3)})=\Mod(\widetilde{S},\sigma)
\longrightarrow
\pi_1^{\mathrm{orb}}(\mathcal A_h)=\Sp_{2h}(\ZZ),\]
or
\[F_*\colon\pi_1^{\mathrm{orb}}(\widehat{R}_g^{(3)})
=\Mod(S,[\beta])
\longrightarrow
\pi_1^{\mathrm{orb}}(\mathcal A_h)=\Sp_{2h}(\ZZ).\]
By Theorem~\ref{thm: classify Mod to Sp}, $F_*$ either has abelian image, or is conjugate to the standard action on $H_1(S_g;\ZZ)$ by an element of the extended symplectic group $\Delta_{2g}(\ZZ)$ (in this case $h=g$).

Suppose first that $F_*$ has abelian image. The abelianization of $\Mod(\widetilde{S},\sigma)$ (resp.~$\Mod(S,[\beta])$) is known in Lemma~\ref{lem: abelianization} and is finite. Hence there exists a finite cover
\[\widetilde{R}_g^{(3)}\longrightarrow R_g^{(3)}\qquad
(\text{resp. }\widetilde{R}_g^{(3)}\longrightarrow\widehat{R}_g^{(3)})\]
such that the lift of $F$ to $\widetilde{R}_g^{(3)}$ induces the trivial homomorphism on orbifold fundamental groups, which further lifts to a holomorphic map
\[\widetilde{F}\colon\widetilde{R}_g^{(3)}\longrightarrow\mathfrak{H}_h.\]
Since $\mathfrak{H}_h$ is biholomorphic to a bounded domain, $\widetilde{F}$ is constant. Hence $F$ is constant, contradicting our assumption.

Therefore, $F_*$ is conjugate to the standard symplectic representation on $H_1(S_g;\ZZ)$
by an element
\[A\in\Delta_{2g}(\ZZ)=\{X\in\GL_{2g}(\ZZ):X^tJX=\pm J\}.\]
The standard symplectic representation is precisely the representation on orbifold fundamental groups induced by $J^{(3)}\circ\nu$ (resp.~$J^{(3)}$).

Suppose first that $A\in\Sp_{2g}(\ZZ)$. Since $\Sp_{2g}(\ZZ)$ acts by isometries on the Siegel upper half space $\mathfrak{H}_g$, there is an equivariant homotopy from
\[\widetilde F\colon\Teich(\widetilde{S})^\sigma\longrightarrow\mathfrak{H}_g\qquad \text{(resp. }\
\widetilde F\colon\Teich(S_g)\longrightarrow\mathfrak{H}_g\text{)}\]
to
\[\widetilde{J^{(3)}\circ\nu}\colon
\Teich(\widetilde{S})^\sigma\longrightarrow\mathfrak{H}_g \qquad
\text{(resp. }\
\widetilde{J^{(3)}}\colon\Teich(S_g)\longrightarrow\mathfrak{H}_g\text{)}.\]
This homotopy descends to a homotopy from $F$ to $J^{(3)}\circ\nu$ (resp.~$J^{(3)}$).

The case in which $A$ reverses the symplectic form can be excluded by the same argument as in \cite[Section 4.3.2]{CarlosPrym}. Consider the anti-holomorphic involution
\[\tau\colon\mathfrak{H}_g\longrightarrow\mathfrak{H}_g,
\qquad
\tau(Z)=-\overline{Z}.\]
If $A$ is anti-symplectic, the preceding argument implies that
$\tau\circ\widetilde F$ is equivariantly homotopic to
$\widetilde{J^{(3)}}$ (resp.~$\widetilde{J^{(3)}\circ\nu}$). This is impossible. Indeed, there exists a generic smooth complete curve (see Step 5 below)
\[C\subset R_g^{(3)}
\qquad\text{(resp. }C\subset\widehat{R}_g^{(3)}\text{)}\]
whose inclusion induces a surjection
\[\pi_1(C)\longrightarrow \pi_1^{\mathrm{orb}}(R_g^{(3)})
\qquad\text{(resp. }\pi_1^{\mathrm{orb}}(\widehat{R}_g^{(3)})\text{)}.\]
Integrating the pullbacks to $C$ of the K\"ahler form on $\mathcal A_g$ via $F$ and via $J^{(3)}\circ\nu$ (resp.~$J^{(3)}$), and applying Stokes' theorem, shows that $F$ is constant on $C$. Since $C$ is generic, this implies that $F$ is constant, which is a contradiction.

\noindent\textbf{Step 2: The Borel--Narasimhan theorem.}
We next apply the Borel--Narasimhan theorem \cite[Theorem~3.6]{BorelNarasimhan}, following Step~2 of Farb's proof \cite{FarbRigidity}. The theorem states that if $X$ is a connected complex manifold admitting no nonconstant plurisubharmonic function bounded above, and $Y$ is a complex manifold covered by a bounded domain in $\CC^N$, then two holomorphic maps \[F,G\colon X\to Y\] that agree at one point and induce the same map on fundamental groups must agree everywhere. The same statement applies to the orbifolds considered here.

In our setting, $Y=\mathcal A_g$ satisfies the required condition since $\mathfrak{H}_g$ is biholomorphic to a bounded symmetric domain in $\CC^{\frac{g(g+1)}{2}}$. On the other hand, both $R_g^{(3)}$ and $\widehat{R}_g^{(3)}$ are quasiprojective complex varieties, and hence admit no nonconstant plurisubharmonic functions bounded above by \cite[Proposition~2.1]{BorelNarasimhan}. Thus the Borel--Narasimhan theorem applies, and it remains only to find a point where $F$ and $J^{(3)}\circ\nu$ (resp.~$J^{(3)}$) agree.

\noindent\textbf{Step 3: Improving the homotopy to be holomorphic.}
Let $H_t$ ($0\le t\le 1$) be the homotopy from $F$ to
$J^{(3)}\circ\nu$ (resp.~$J^{(3)}$) obtained in Step~1.
Let $C$ be a smooth (not necessarily compact) curve in
$R_g^{(3)}$ (resp.~$\widehat{R}_g^{(3)}$). 

Following Step~3 of
Farb's proof \cite{FarbRigidity}, we can modify the homotopy so that $H_t|_C$ is holomorphic for every $t\in[0,1]$. Indeed, the argument carries over without change, since it only uses the fact that the target is covered by a bounded domain. More precisely, one first deforms $H|_C$ to a geodesic homotopy and then uses the argument of Antonakoudis--Aramayona--Souto \cite[Section~4]{AAS}, together with a variant of the Wirtinger inequality, to show that each $H_t|_C$ isholomorphic.

\noindent\textbf{Step 4: Improving the homotopy to be algebraic.}
Following Step~4 of Farb's proof \cite{FarbRigidity}, we further show
that each $H_t|_C$ is algebraic, i.e.\ a morphism of algebraic
varieties. By \cite[Theorem~2]{KO}, $H_t|_C$ extends to a holomorphic
map from the projective closure of $C$ to the Satake compactification
of $\mathcal{A}_g$. Chow's theorem then implies that $H_t|_C$ is
algebraic.

\noindent\textbf{Step 5: Finding a curve on which the period map is rigid.}
We construct a non-compact smooth curve
\[C\subset R_g^{(3)}
\qquad\text{(resp. }C\subset\widehat{R}_g^{(3)}\text{)}\]
such that
$J^{(3)}\circ\nu|_C$ (resp.~$J^{(3)}|_C$) is rigid, i.e.~an isolated point of $\operatorname{Mor}(C,\mathcal{A}_g)$.

We use Saito's rigidity theorem \cite[Theorem~8.6]{SaitoRigid}, which implies that a map $C\to\mathcal{A}_g$ is rigid provided that
\begin{enumerate}
    \item the monodromy of the pullback of the standard polarized variation of Hodge structures on $\mathcal{A}_g$ is irreducible;
    \item the local monodromy around some point in the boundary of $C$ has infinite order.
\end{enumerate}

We first construct such a curve for $\widehat{R}_g^{(3)}$; its preimage under $\nu$ (which is a biholomorphism in the complex category) then gives the desired curve for $R_g^{(3)}$. For $\widehat{R}_g^{(3)}$, we work in the finite cover given by the moduli space of genus-$g$ curves with a level-$3$ structure
\[\mathcal{M}_g(3)=\Teich(S_g)/\Mod(S_g,3),\]
where
\[\Mod(S_g,3)=\operatorname{Ker}\big(\Mod(S_g)\to
\Sp_{2g}(\ZZ/3\ZZ)\big).\]
Since
\[\Mod(S_g,3)\subset\Mod(S,[\beta]),\]
this gives a finite cover
\[\pi\colon\mathcal{M}_g(3)\longrightarrow\widehat{R}_g^{(3)}.\]
Moreover, $\operatorname{Mod}(S_g,3)$ acts freely on $\operatorname{Teich}(S_g)$, so $\mathcal{M}_g(3)$ is a complex manifold.

The moduli space of principally polarized abelian varieties with a level-$3$ structure is
\[\mathcal{A}_g(3)=\mathfrak{H}_g/\Sp_{2g}(\mathbb{Z},3),\]
where
\[\Sp_{2g}(\mathbb{Z},3)=
\operatorname{Ker}\big(\Sp_{2g}(\mathbb{Z})\to
\Sp_{2g}(\mathbb{Z}/3\mathbb{Z})\big).\]
The map $J^{(3)}$ lifts to a holomorphic map
\[\widehat{J^{(3)}}\colon\mathcal{M}_g(3)\longrightarrow\mathcal{A}_g(3),\]
giving a commutative diagram
\[\xymatrix{
\mathcal{M}_g(3)\ar[r]^{\widehat{J^{(3)}}}\ar[d]^{\pi}&\mathcal{A}_g(3)\ar[d]\\
{\widehat{R}_g^{(3)}}\ar[r]^{J^{(3)}}&\mathcal{A}_g}.\]
It therefore suffices to find a smooth noncompact curve
$C\subset\mathcal{M}_g(3)$, disjoint from the orbifold locus of $\pi$, such that
$\widehat{J^{(3)}}|_C$ is rigid. Its image under $\pi$ is the desired curve in $\widehat{R}_g^{(3)}$.

Let $\mathcal{O}$ denote the orbifold locus of
\[\pi\colon\mathcal{M}_g(3)\longrightarrow\widehat{R}_g^{(3)}.\]
The locus $\mathcal{O}$ has complex dimension at most $2g-1$ by the Riemann--Hurwitz formula. Hence
\[\operatorname{codim}_{\mathcal{M}_g(3)}\mathcal{O}
\ge
(3g-3)-(2g-1)=g-2\ge2.\]
It follows that
\[\pi_1(\mathcal{M}_g(3)\setminus\mathcal{O})\cong\pi_1(\mathcal{M}_g(3)).\]

Let $\overline{\mathcal{M}}_g^{DM}$ be the Deligne--Mumford compactification of $\mathcal{M}_g$. Let $\overline{\mathcal{M}}_g^{DM}(3)$ be the normalization of $\overline{\mathcal{M}}_g^{DM}$ in the function field of $\mathcal{M}_g(3)$, which is a projective compactification of $\mathcal{M}_g(3)$. We then obtain a smooth projective compactification
\[\mathcal{M}_g(3)\setminus\mathcal{O}\subset \overline{\mathcal{M}_g(3)\setminus\mathcal{O}},\]
whose boundary is a normal-crossings divisor. 

Choose a projective embedding \[\overline{\mathcal{M}_g(3)\setminus\mathcal{O}}\hookrightarrow\mathbb{P}^{3g-3},\]
and take $3g-4$ generic hyperplanes
$P_1,\ldots,P_{3g-4}$. Set
\[\overline{C}=\big(\overline{\mathcal{M}_g(3)\setminus\mathcal{O}}\big)\cap P_1\cap \cdots \cap P_{3g-4},\qquad C=\overline{C}\cap \left( \mathcal{M}_g(3)\setminus\mathcal{O}\right).\]
By Bertini's theorem $C$ a smooth curve. We now verify that $C$ satisfies the desired conditions.
\begin{enumerate}
    \item By the Lefschetz hyperplane theorem, the map \[\pi_1(C)\longrightarrow\pi_1(\mathcal{M}_g(3)\setminus\mathcal{O})\cong\pi_1(\mathcal{M}_g(3))\]
    is surjective. The monodromy of the pullback of the standard polarized variation of Hodge structures on $\mathcal{A}_g(3)$ via $\widehat{J^{(3)}}|_C$ is the composition
    \[\pi_1(C)\longrightarrow\pi_1(\mathcal{M}_g(3))\longrightarrow\pi_1(\mathcal{A}_g(3))=\Sp_{2g}(\mathbb{Z},3),\]
    which is therefore surjective. Since $\Sp_{2g}(\mathbb{Z},3)$ is Zariski dense in $\Sp_{2g}$, the resulting representation is irreducible.
    \item Let 
    \[Z\subset\overline{\mathcal{M}}_g^{DM}\setminus\mathcal{M}_g\]
    be the codimension-$1$ stratum parametrizing stable curves obtained by pinching a nonseparating simple closed curve. Let 
    \[Z_3\subset \overline{\mathcal{M}_g(3)\setminus\mathcal{O}} \setminus \left(\mathcal{M}_g(3)\setminus\mathcal{O}\right)\]
    be an irreducible component lying over $Z$. Since $Z_3$ is an effective divisor and the hyperplane class $[P]$ is ample, we have 
    \[[Z_3]\cdot [P]^{3g-4}>0.\]
    Hence $\overline{C}$ intersects $Z_3$. At a puncture of $C$ in $\overline{C}\cap Z_3$, the associated stable curve is obtained by pinching a nonseparating simple closed curve $\gamma$ on $S_g$. The local monodromy around this puncture is $T_\gamma^3$, which has infinite order.
\end{enumerate}

\noindent\textbf{Step 6: Finishing the proof.}
Let $C$ be the curve constructed in Step~5. By Step~4, each
$H_t|_C$ is algebraic, and hence $H_t|_C$ gives a path in
$\operatorname{Mor}(C,\mathcal{A}_g)$ from $F|_C$ to
$J^{(3)}\circ\nu|_C$ (resp.~$J^{(3)}|_C$). Since the latter is an
isolated point of $\operatorname{Mor}(C,\mathcal{A}_g)$ by Step~5,
this path is constant. Therefore
\[F|_C=J^{(3)}\circ\nu|_C\qquad\text{(resp.~$F|_C=J^{(3)}|_C$)}.\]
Thus $F$ and $J^{(3)}\circ\nu$ (resp.~$J^{(3)}$) agree at a point of $C$, so Step~2 implies that they agree everywhere.
\end{proof}

\subsection{An alternative step using variations of Hodge structures}\label{sec: PVHS}
We give an alternative argument replacing Steps 3--5 above, which does not use the rigidity theorem and instead uses polarized variations of Hodge structures (PVHS), following the method suggested by Richard Hain to Farb in \cite[Section 3]{FarbRigidity}.

Let $C\subset \mathcal{M}_g(3)$ be the smooth curve constructed in Step 5 above, with the key property that the inclusion induces a surjection
\[\pi_1(C)\to \pi_1(\mathcal{M}_g(3)).\]
The maps $F$ and $J^{(3)}$ lift to holomorphic maps 
\[\widehat{F},\widehat{J^{(3)}}:\mathcal{M}_g(3) \to \mathcal{A}_g(3).\]
Their restrictions to $C$ are algebraic by the same argument as in Step 4 above.

Let $\mathcal{V}_g$ be the standard polarized variation of Hodge structure of weight one on $\mathcal{A}_g(3)$, with
\[\mathcal{V}_g(A)=H^1(A;\ZZ),\qquad A\in \mathcal{A}_g(3).\] Pulling back $\mathcal{V}_g$ along $\widehat{F}|_C$ and $\widehat{J^{(3)}}|_C$ gives two polarized variation of Hodge structures
on $C$, which we denote by $\mathcal{V}_F$ and $\mathcal{V}_J$. Their underlying local systems have monodromy representations
\[\pi_1(C)\twoheadrightarrow \pi_1(\mathcal{M}_g(3))
\xrightarrow{\widehat{F}_*}
\pi_1(\mathcal A_g(3)),\]
and 
\[\pi_1(C)\twoheadrightarrow \pi_1(\mathcal{M}_g(3))
\xrightarrow{\widehat{J^{(3)}}_*}
\pi_1(\mathcal A_g(3)),\]
respectively. Both monodromy representations are irreducible, since their images are $\Sp_{2g}(\ZZ,3)$, which is Zariski dense in $\Sp_{2g}$. Hence both $\mathcal{V}_F$ and $\mathcal{V}_J$ are simple PVHS. Moreover, by Step 1, the two monodromy representations are conjugate by an element of $\Sp_{2g}(\ZZ)$, which induces an isomorphism between $\mathcal{V}_F$ and $\mathcal{V}_J$ as local systems. 

By the theorem of the fixed part, this isomorphism is fiberwise an isomorphism of Hodge structures. Therefore, by the rigidity theorem of Schmid \cite[Theorem 7.24]{Schmid}, it extends to an isomorphism of polarized variations of Hodge structure
\[\mathcal{V}_F\cong \mathcal{V}_J.\]
For $p\ge 3$, the space $\mathcal{A}_g(p)$ is a fine moduli space of principally polarized abelian varieties with level-$p$ structures. It follows that
\[\widehat{F}|_C=\widehat{J^{(p)}}|_C.\]
Consequently, for every $x\in \pi(C)$, we have $F(x)=J^{(p)}(x)$, which replaces Steps 3--5 above.

\appendix
\setcounter{table}{0}
\renewcommand{\thetable}{A\arabic{table}}

\bibliographystyle{alpha}
\bibliography{ref}

\end{document}